\theoremstyle{plain}
\newtheorem{theorem}{Theorem}[section]
\newtheorem{lemma}[theorem]{Lemma} 
\newtheorem{definition}[theorem]{Definition} 
\newtheorem{corollary}[theorem]{Corollary} 
\newtheorem{proposition}[theorem]{Proposition} 
\newtheorem{claim}[theorem]{Claim} 
\newtheorem{question}[theorem]{Question} 
\newtheorem{remark}[theorem]{Remark} 
\newtheorem{itheorem}{Theorem}
\newtheoremstyle{derp}
{3pt}
{3pt}
{}
{}
{\upshape}
{:}
{.5em}
{}
\theoremstyle{derp}
\newtheorem{example}{Example}
\newcommand{\Z}{\mathbb{Z}}
\newcommand{\N}{\mathbb{N}}
\newcommand{\ID}{\mathrm{id}}
\newcommand{\val}{\mathrm{v}}
\newcommand\xqed[1]{%
  \leavevmode\unskip\penalty9999 \hbox{}\nobreak\hfill
  \quad\hbox{#1}}
\newcommand\qee{\xqed{$\fullmoon$}}
\newcommand{\Aut}{\mathrm{Aut}}
\newcommand{\card}{\#}
\newcommand{\Sym}{\mathrm{Sym}}
\newcommand{\Alt}{\mathrm{Alt}}
\newcommand{\smart}{\mathcal{S}}
\newcommand{\smb}{{\blacktriangleright}}
\newcommand{\smd}{{\blacktriangleleft}}
\newcommand{\smp}{{\rhd}}
\newcommand{\smq}{{\lhd}}
\newcommand{\dec}{{\mathrm{dec}}}
\newcommand{\final}{{\mathrm{final}}}
\newcommand{\init}{{\mathrm{init}}}
\newcommand{\aup}{{\mathrm{up}}}
\newcommand{\adown}{{\mathrm{down}}}
\newcommand{\sign}{{\mathrm{sign}}}
\newcommand{\len}{{\mathrm{len}}}
\newcommand{\encode}{{\mathrm{encode}}}
\newcommand{\NC}{{\mathrm{NC}}}
\newcommand{\RTM}{{\mathrm{RTM}}}
\newcommand{\dright}{{d_{0}}}
\newcommand{\dleft}{{d_{1}}}
\newcommand{\dforw}{{d_{\rightarrow}}}
\newcommand{\dback}{{d_{\leftarrow}}}
\newenvironment{turing}[1][]{\arraycolsep=1.4pt\Big[\NiceMatrix[l,#1,columns-width=4mm]}{\endNiceMatrix\Big]}
\newenvironment{turingc}[1][]{\arraycolsep=1.4pt\Big[\NiceMatrix[c,#1,columns-width=4mm]}{\endNiceMatrix\Big]}
\newenvironment{tpattern}[1][]{\arraycolsep=1.4pt\Big(\NiceMatrix[l,#1,columns-width=4mm]}{\endNiceMatrix\Big)}
\newenvironment{tpatternc}[1][]{\arraycolsep=1.4pt\Big(\NiceMatrix[c,#1,columns-width=4mm]}{\endNiceMatrix\Big)}
\newcommand{\midwid}{2.6cm}
\newcommand{\sidwid}{1.3cm}
\newcommand{\didwid}{0.7cm}
\colorlet{nicegreen}{black!20!green}
\colorlet{niceblue}{black!25!blue}
\colorlet{nicered}{black!10!red}
\newcommand{\widt}[2]{\makebox[0pt][l]{\ensuremath{#2}}\phantom{\ensuremath{\hspace{#1}}}}
\newcommand{\TMtrans}{\rightarrow}
\newcommand{\bitape}{\mathcal{X}}
\newcommand{\ctape}{\mathcal{C}}
\newcommand{\encoding}{E}
\newcommand{\pencoding}{F}
\newcommand{\GRencoding}{\tilde{\encoding}}
\newcommand{\GRpencoding}{\tilde{\pencoding}}
\newcommand{\origsmart}[1]{#1^{(\textrm{o})}}
\newcommand{\tmgroup}{\mathcal{G}}
\newcommand{\duckset}{D}
\newcommand{\ghostset}{G}
\newcommand{\rfun}{\hspace*{-0.05em}/\hspace*{-0.2em}}
\newcommand{\rduck}[2]{#1\rfun_{#2}}
\newcommand{\plusorbit}[1]{+#1}
\newcommand{\wgh}[1]{#1'}
\newcommand{\wghintext}[1]{#1}
\newcommand{\addc}[1]{\mathrm{add}_{#1}}
\newcommand{\duckdomain}{U}
\newcommand{\duckimage}{V}
\newcommand{\cbstructcond}{\mathcal{CB}}
\newcommand{\rcbdist}{l_\mathrm{r}}
\newcommand{\lcbdist}{l_\mathrm{l}}
\title{Distortion element in the automorphism group of a full shift}
\author{%
  Antonin Callard \\
  Université de Caen Normandie, GREYC, Caen, France \\
  \texttt{contact@acallard.net} \\
  \url{https://orcid.org/0000-0002-4673-4881}
  \and
  Ville Salo \\
  University of Turku, Finland \\
  \texttt{vosalo@utu.fi} \\
  \url{https://orcid.org/0000-0002-2059-194X}
}
\begin{document}
\maketitle

\begin{abstract}
  We show that there is a distortion element in a finitely-generated subgroup $G$ of the automorphism group of the full shift, namely an element of infinite order whose word norm grows polylogarithmically. As a corollary, we obtain a lower bound on the entropy dimension of any subshift containing a copy of $G$, and that a sofic shift's automorphism group contains a distortion element if and only if the sofic shift is uncountable. We obtain also that groups of Turing machines and the higher-dimensional Brin-Thompson groups $mV$ admit distortion elements; in particular, $2V$ (unlike $V$) does not admit a proper action on a CAT$(0)$ cube complex. In each case, the distortion element roughly corresponds to the SMART machine of Cassaigne, Ollinger and Torres-Avil\'es.
\end{abstract}

\section{Introduction}
\label{sec:Intro}

We begin with an introduction to automorphism groups and the topic of distortion in Section~\ref{sec:AutomorphismsAndDistortionIntro}, as this is the motivation and context for our main results listed in Section~\ref{sec:Results}.

The proofs of the main results are based on rather different ideas, namely conveyor belts, ``ducking'', dynamics of Turing machines, permutation groups, and also some ideas from computer science, namely reversible computation and logical gates. Some background for these ideas is given in Section~\ref{sec:TMGatesBackground}.

\subsection{Automorphism groups and distortion}
\label{sec:AutomorphismsAndDistortionIntro}

A recent trend in symbolic dynamics is the study of automorphism groups of subshifts. Typical activities include the study of restrictions that dynamical properties of the subshift put on these groups, and in turn constructing complicated automorphism groups or subgroups thereof.

The former activity has been most successful in the low-complexity setting, see~\cite{PaSc22} for a recent account of the state of the art. For example, minimal subshifts with upper entropy dimension less than $1/2$ have amenable automorphism groups \cite{2016-CK-minimal-exponential}, and (as discussed in more depth below) zero-entropy subshifts do not admit elements with exponential distortion \cite{2018-CFKP}.

The latter activity has been most successful on sofic shifts. In particular, a lot is known about the finitely-generated subgroups of automorphism groups of full shifts: see~\cite{Sa20} for a listing of properties that have been exhibited. For instance, let us mention that these groups $G$, while not finitely-generated, contain finitely-generated ``f.g.-universal subgroups'', namely ones that contain isomorphic copies of all finitely-generated subgroups of $G$. The class of subgroups of automorphism groups is also quite robust, being closed under graph products \cite{Sa18a,Sa20}. A classical reference for the study of automorphism groups of transitive SFTs (an important subclass of sofic shifts) is \cite{1988-BLR}.

In this paper, we study the group-theoretic notion of distortion, introduced by Gromov~\cite{Gr93}, in the context of automorphism groups of subshifts. If $G$ is a finitely-generated group, we say $g \in G$ is a \emph{distortion element}, or \emph{distorted}, if $g$ is of infinite order and the word norm $|g^n|$ grows sublinearly (with respect to some, or equivalently any, finite generating set). For groups that are not finitely-generated, we say that an element is distorted if it is distorted in some finitely-generated subgroup. While distortion elements are usually allowed to have finite order, in this paper we focus on distorted elements of infinite order.

Two basic examples of groups with distortion elements are the Heisenberg group with presentation $\langle a, b \;|\; [[a,b], a], [[a,b], b] \rangle$, where the element $[a,b]$ has quadratic distortion, meaning we can represent an element of the form $[a,b]^{\Omega(n^2)}$ by composing $n$ generators; and the Baumslag-Solitar group $\mathrm{BS}(1,2)$ with presentation $\langle a, b \;|\; a^b = a^2 \rangle$ where $a$ is easily seen to be exponentially distorted, meaning the word norm of $a^n$ grows logarithmically.

The previous examples show that distortion elements can appear in nilpotent and metabelian linear groups. It is known that they cannot appear in biautomatic groups~\cite{GeSh91}, certain types of mapping class groups \cite{FaLuMi01}, and the outer automorphism group of the free group~\cite{Al02}. See~\cite{SwJa11,RoMa18,CaFr06,GuLi19,CaCo20,FrHa06,Pe20,Na21} for other distortion-related works.

\smallskip
Getting back to automorphism groups, it is an open problem (that we solve in the present paper) whether the automorphism group of any subshift can contain a distortion element~\cite{2018-CFKP}. It is not known whether the Heisenberg group~\cite{1990-KR} or the Baumslag-Solitar group $\mathrm{BS}(1,2)$ embed in $\Aut(A^\Z)$, or indeed in the automorphism group of any subshift, and these problems stay open. (It is also open whether the additive group of dyadic rationals $\Z[\frac12] \leq \mathrm{BS}(1,2)$ embeds in $\Aut(A^\Z)$~\cite{1988-BLR}.) 

Besides being an interesting group-theoretic notion, the quest for distortion elements in automorphism groups of subshifts is motivated by several purely symbolic dynamical considerations. First,~\cite[Theorem~1.2]{2016-CK-minimal-exponential} shows that finitely-generated torsion-free subgroups of the automorphism group of a subshift of polynomial complexity are virtually nilpotent. See~\cite[Theorem 5.5]{2016-DDMP} for a similar conclusion for inverse limits of bounded step nilsystems. If we could rule out distortion in such examples, we could conclude virtual abelianness.

Second, it is known that the Baumslag-Solitar group, more generally any group with an exponentially distorted element, does not embed in the automorphism group of a zero-entropy subshift~\cite{2018-CFKP}. More precisely, it was observed there that the Morse-Hedlund theorem allows one to translate a distortion element into a lower bound on the complexity of a subshift. This is notable, as this is the only known restriction for automorphism groups of general zero-entropy subshifts. Thus, distortion looks like a natural candidate for restrictions on automorphism groups of general subshifts (as far as the authors know, no restrictions are known on countable subgroups of automorphism groups of general subshifts).

Third, distortion is tied to an intrinsic notion in automorphism group theory, namely the growth of the \emph{radius} (a.k.a.\ range) of the automorphism, when seen as a cellular automaton. Namely, distortion in the group sense implies sublinear growth of the radius~\cite{2019-CFK}. It is not immediately obvious that even sublinear radius growth is possible (indeed this was left open in~\cite{2019-CFK}), but several examples of sublinear radius growth have been constructed. The most relevant for us is the observation from~\cite{2017-GS} that one can even obtain sublinear radius growth in the automorphism group of a full shift: the so-called \emph{SMART machine}, when simulated by an automorphism, gives rise to such growth.

While distortion elements have not previously been exhibited in automorphism groups of subshifts, some facts are known about their dynamics (mostly related the notion of radius). Links to expansive directions and Lyapunov exponents are shown in~\cite{2019-CFK}. A related result is shown in~\cite{BiDoMa22}, namely distortion elements of automorphism groups of general expansive systems can not themselves be expansive. Links to the dimension group action and inertness are discussed in~\cite{2019-CFK,Sc20}.

\subsection{Results}
\label{sec:Results}

The main result of the present paper is that the automorphism group of some full shift (thus any full shift by standard embedding theorems~\cite{1990-KR}), contains a distortion element with ``quasi-exponential'' distortion, in the sense that the distortion function grows like $\exp(\sqrt[4]{\Omega(n)})$. It is more convenient to work directly with word norms than with the distortion function, so we take this approach in the paper. Note that for well-behaved functions, the word norm growth is just the inverse of the distortion function.

\begin{restatable}{itheorem}{distortioneveryfullshift}
\label{thm:distortion-every-full-shift}
For any non-trivial alphabet $A$, the group $\Aut(A^\Z)$ has an element $g$ of infinite order such that $|g^n|_F = O(\log^4 n)$ for some finite set $F$.
\end{restatable}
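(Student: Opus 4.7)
The plan is to take the SMART machine $T$ of Cassaigne--Ollinger--Torres-Avilés (mentioned in the introduction as having sublinear radius growth), realize it as an automorphism $g$ of a full shift via a ``conveyor belt'' encoding, and then construct a finite generating set witnessing polylogarithmic word norm growth for the powers $g^n$. The reversibility of SMART and the fact that its head stays within a sublinear window of its start are the two essential dynamical inputs.

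I would proceed in three steps. \textbf{Step 1} (encoding): introduce an auxiliary sofic subshift whose configurations encode a bi-infinite sequence of finite ``belts'' each carrying a tape of $T$ and a head, and let $g$ be the cellular automaton that applies one step of $T$ on every belt in parallel. Because $T$ is reversible this is an automorphism of the subshift. I would then transfer $g$ to an automorphism of a full shift $A^\Z$ by the usual embedding theorems (growing $A$ and merging the belt markers into the alphabet), keeping in mind that the final statement allows any non-trivial alphabet by the classical Kim--Roush-style embedding results. \textbf{Step 2} (infinite order): use aperiodicity of $T$ on sufficiently long tapes to exhibit belts on which $g^n$ acts non-trivially for every $n \neq 0$. \textbf{Step 3} (word norm bound): design a finite set $F \subset \Aut(A^\Z)$ consisting of the shift, a handful of local reversible gates, and ``ducking'' automorphisms that slide small encoded patterns past each other without interacting. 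The target inequality $|g^n|_F = O(\log^4 n)$ would be obtained by a nested doubling/conjugation scheme: use the sublinear head motion of SMART to simulate $N$ TM steps inside a window of size $N^{1/\alpha}$, then use ducking conjugators to amplify each level of simulation; four levels of nesting of a ``$h_i g_i h_i^{-1} = g_i^2$''-style identity compound the logarithms, producing the $\log^4$ exponent.

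The main obstacle is \textbf{Step 3}, and more specifically producing the doubling conjugators inside $\Aut(A^\Z)$. Squaring an element via conjugation typically requires an ambient element that ``scales'' space, but shift-commutativity forbids literal scaling; this is precisely where the conveyor-belt encoding and the ducking automorphisms earn their keep, by making room for two disjoint copies of an encoded computation to sit on the tape and be swapped/merged with finitely many local moves. Verifying that these manipulations can be carried out reversibly, by genuine elements of $\Aut(A^\Z)$, and that the bookkeeping only costs $O(\log n)$ generators at each of four nested levels, is the technical heart of the argument; everything else (existence of SMART, transferring CAs to full shifts, infinite order) is either known or routine once the encoding is chosen correctly.
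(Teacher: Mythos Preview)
Your Steps 1 and 2 are basically right and match the paper. Step 3 is where the real content lies, and your proposed mechanism is not the one that works.

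You propose to get the $\log^4$ bound from a ``nested doubling/conjugation scheme'' built on identities of the shape $h_i g_i h_i^{-1} = g_i^2$. But producing such a conjugator is exactly the problem of embedding a Baumslag--Solitar relation $\mathrm{BS}(1,2)$ into $\Aut(A^\Z)$, and the paper explicitly flags this as an \emph{open problem} (see the introduction and Section~\ref{sec:OpenQuestions}). There is no known way to conjugate an infinite-order automorphism of a full shift to its square, and the paper does not attempt this. So your Step 3, as stated, is not a plan but a restatement of a harder open question. Relatedly, your description of ``ducking'' as sliding patterns past each other is not what the paper means: the ducking trick (Section~\ref{sec:engineering-ducking-trick}) is a device for realizing a piecewise-defined bijection $f=\bigsqcup f_i$ reversibly, by carrying an extra bit $\{d_\rightarrow,d_\leftarrow\}$ in the state and building each piece as a conjugate of the bit-flip.

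The actual mechanism is arithmetic, not doubling. On a cyclic tape of length $\ell$, SMART has exactly four orbits, each of length $2\ell\cdot 3^\ell$ (Lemma~\ref{lem:smart-action-as-cycles}). The paper constructs an explicit encoding $E_\ell$ that conjugates the SMART step to ``add $1$ to a base-$3$ counter of length $\ell$'' (Section~\ref{sec:smart-encoding-overall}). Hence $(T_{\ell,\smart})^n = E_\ell^{-1}\circ(+n)\circ E_\ell$, and the point is that both $E_\ell$ and the addition $+n$ can be written as words of length $O(\ell^4)$ (resp.\ $O(\ell^3)$) in a fixed finite set of ``Turing-machine instruction'' generators, \emph{uniformly in $n$}. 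This uses Barrington-style commutator conditioning (Lemmas~\ref{lem:ghost-conditioning}--\ref{lem:gate-conditioning}) rather than any scaling. Since SMART moves only $O(\log n)$ cells in $n$ steps, one takes $\ell=\Theta(\log n)$, and the $\log^4 n$ is simply $\ell^4$. Transporting this from cyclic tapes to the full shift uses the conveyor belts plus a ``two-scale trick'' (Section~\ref{sec:full-shift-main-proof}) to create and erase temporary belts of the right length; no squaring-by-conjugation appears anywhere.
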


Here, by a \emph{non-trivial alphabet} we mean a finite set $A$ with $2 \leq |A| < \infty$; we also use the standard shorthand $\log^4 n = (\log n)^4$.

A simple counting argument shows that the word norms of $n$th powers of a group element cannot be $o(\log n)$ with respect to a fixed finite generating set. For our specific automorphism, one can strengthen this: the radius of $g^n$ as a cellular automaton is $\Theta(\log n)$, so the true growth of word norms of powers of our automorphism is between $\Omega(\log n)$ and $O(\log^4 n)$.

Our theorem solves the second subquestion of~\cite[Question~5.1]{2018-CFKP} in the affirmative. Most of the present paper deals with the proof of this theorem. The element $g$ in this theorem is essentially the SMART machine~\cite{2017-COT}, so morally this also confirms a conjecture of~\cite{2017-GS}, although the embedding we use is slightly more involved than the specific one considered in~\cite{2017-GS}. The group we use in the proof is given in Lemma~\ref{lem:better-bounds-for-smart}.

\smallskip
The generators of our group are relatively simple, but we have little idea what kind of group they generate. The finitely-generated group $\langle F \rangle$ can of course be taken to be larger (the distortion function can only become faster-growing this way), so one can take a more canonical choice of generators, say, all reversible cellular automata with biradius $1$ (on the huge alphabet we use).

One can perform some further massage to get a simpler-sounding example: it is known that the automorphism group of a full shift contains so-called finitely-generated f.g.-universal subgroups, namely ones containing copies of all f.g.\ groups of reversible cellular automata~\cite{Sa22}. Any such group can be used in the result (although the element $g$ will be more complicated). In particular, one can pick as $F$ the symbol permutations and the partial shift $\sigma \times \mathrm{id}$ on the product full shift $\{0,1\}^\Z \times \{0,1,2\}^\Z$.

\medskip
From the main theorem, we obtain several corollaries of interest, which are proved in Section~\ref{sec:Corollaries}. First, we obtain the characterization of the class of sofic shifts whose automorphism groups have distortion elements.

\begin{restatable}{itheorem}{Sofic}
\label{thm:Sofic}
Let $X$ be a sofic shift. Then $\Aut(X)$ contains a distortion element if and only if $X$ is uncountable.
\end{restatable}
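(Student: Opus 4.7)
The proof splits into two directions.

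For the implication $X$ uncountable $\Rightarrow$ $\Aut(X)$ has a distortion element: since any sofic shift is either countable or has positive topological entropy, $X$ has positive entropy. Standard marker constructions (see \cite{1988-BLR, Sa20}) then yield an embedding $\Aut(A^\Z) \hookrightarrow \Aut(X)$ for some non-trivial alphabet $A$. Embeddings preserve distortion: if $g$ is distorted in some $\langle F \rangle \leq \Aut(A^\Z)$, the image of $g$ has the same word-norm growth with respect to the image of $F$ and remains of infinite order. Theorem~\ref{thm:distortion-every-full-shift} then furnishes the required distortion element of $\Aut(X)$.

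For the converse $X$ countable $\Rightarrow$ no distortion in $\Aut(X)$: the plan is to show that $\Aut(X)$ is virtually abelian, which rules out distortion (in a virtually abelian group, every infinite-order element has linear word-norm growth on a finite-index abelian subgroup). Two structural facts about countable sofic shifts drive the argument: $X$ has only finitely many periodic orbits $O_1, \ldots, O_k$ (each minimal subsystem of a countable space is a single periodic orbit, and a sofic shift has finitely many minimal subsystems, corresponding to strongly connected components of a presenting automaton); and every point of $X$ is bi-asymptotic to periodic orbits (its one-sided $\omega$-limit sets are closed $\sigma$-invariant countable sets, hence periodic orbits).

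I would then pass to the finite-index subgroup of $\Aut(X)$ fixing every periodic point pointwise, and partition the non-periodic points by their asymptotic types $T_{ij}$ (points bi-asymptotic to $O_i$ on the left and $O_j$ on the right). For each type, bounded radius forces that on points with sufficiently long transition window, the automorphism acts as a pure shift $\sigma^{c_{ij}(g)}$ by an integer depending only on the type. The remaining action, on the finitely many short-window transition classes, decomposes into a permutation of finitely many $\sigma$-orbits combined with shifts within each; this contributes a further virtually abelian factor. Combining these, $g \mapsto (c_{ij}(g))_{i,j}$ defines a homomorphism into a finitely generated free abelian group, and its kernel is itself virtually abelian, yielding the claim.

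The main obstacle I expect is the rigorous verification that on long transitions an automorphism acts as a fixed-amount pure shift (using bijectivity together with bounded radius), and the careful bookkeeping needed to glue the ``shift-on-long'' and ``finite-on-short'' contributions into an honestly virtually abelian structure rather than merely a metabelian one. The remaining steps --- the finite-index reductions, the virtual-abelian-implies-no-distortion implication, and the reduction of the uncountable direction to Theorem~\ref{thm:distortion-every-full-shift} --- are routine.
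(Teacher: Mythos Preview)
Your uncountable direction is correct and matches the paper: both argue via an embedding $\Aut(A^\Z)\hookrightarrow\Aut(X)$ and then invoke Theorem~\ref{thm:distortion-every-full-shift}.

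For the countable direction your approach diverges from the paper's and has a genuine gap. The assertion that an automorphism fixing the periodic points acts as a pure shift $\sigma^{c_{ij}(g)}$ on all long-window points of a fixed asymptotic type $T_{ij}$ is false, not merely hard to verify. Take $X$ over $\{0,1,2\}$ consisting of all configurations with at most one $1$ and at most one $2$; this is countable sofic with the single periodic point $0^\Z$, so every non-periodic point lies in the single type $T_{00}$. The map $g$ that moves each $2$ one step to the right (swapping positions with a $1$ upon collision) is a radius-$1$ automorphism of $X$, yet on the long-window configuration $\ldots 0\,1\,0^n\,2\,0\ldots$ it produces $\ldots 0\,1\,0^{n+1}\,2\,0\ldots$, which lies in a different $\sigma$-orbit. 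Thus within a single asymptotic type an automorphism can move between infinitely many $\sigma$-orbits rather than translate each by a common amount, so your dichotomy ``pure shift on long windows / finite permutation on short windows'' breaks down. Whether $\Aut(X)$ is nonetheless virtually abelian for every countable sofic $X$ is a separate question; your sketch does not settle it.

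The paper avoids this structural question entirely by citing \cite{SaTo12}: every automorphism of a countable sofic shift is either of finite order or admits a \emph{spaceship}, a non-spatially-periodic configuration $x=\ldots uuuvwww\ldots$ with $f^n(x)=\sigma^m(x)$ for some $m\neq 0$. Since $f^{kn}$ then agrees with $\sigma^{km}$ on the infinite orbit closure of $x$, the radius of $f^{kn}$ grows linearly in $k$, ruling out distortion directly. This requires only a single witness configuration, not a global structure theorem for $\Aut(X)$.
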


It is well-known that for sofic shifts,  uncountability is equivalent to having positive entropy.
\smallskip

As another immediate consequence, using the argument of~\cite{2018-CFKP} we obtain that the automorphism group of a full shift cannot be embedded in the automorphism group of a low-complexity subshift. Recall that the \emph{lower entropy dimension}~\cite{2011-Meyerovitch} of a subshift is defined by the formula
\[ \underline{D}(X) = \liminf_{k \to \infty} \frac{\log(\log N_k(X))}{\log k}, \]
where $N_k(X)$ is the number of words of length $k$ that appear in $X$. The lower entropy dimension of a (one-dimensional) subshift with positive entropy is of course $1$. The \emph{upper entropy dimension} is defined analogously, with $\limsup$ in place of $\liminf$.

\begin{restatable}{lemma}{LowComplexity}
\label{lem:LowComplexity}
Let $X$ be a subshift with lower entropy dimension less than $1/d$. If $f \in \Aut(X)$ satisfies $|f^n| = O(\log^d n)$, then $f$ is periodic.
\end{restatable}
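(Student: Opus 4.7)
The plan is to construct an auxiliary $1$-dimensional subshift $Y$ from the pair $(X,f)$, bound its factor complexity in terms of that of $X$ via the radii of the $f^n$, and then apply the Morse--Hedlund theorem to $Y$ to extract a period that forces $f^p=\ID_X$. Suppose for contradiction that $f$ has infinite order, fix a finite generating set $F\ni f$ with $|f^n|_F\le C\log^d n$, and let $r$ be the maximal radius of an element of $F$. Then each $f^n$ is a cellular automaton of radius $R_n\le rC\log^d n$ for all large $n$; this is the only place the distortion hypothesis enters.

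Define $\Psi\colon X\to A^{\Z}$ by $\Psi(x)_n=f^n(x)_0$. Each coordinate map is continuous, so $\Psi$ is continuous for the product topology; by compactness of $X$, the image $Y=\Psi(X)$ is closed. The identity $\sigma\Psi=\Psi f$, together with $f$ being a bijection of $X$, makes $Y$ shift-invariant, so $Y$ is a subshift of $A^{\Z}$. Any length-$L$ factor of $Y$ can be written as $\Psi(z)_{[0,L-1]}$ for some $z\in X$, via $z=f^i(x)$ to translate a factor starting at position $i$, and this tuple depends only on $z_{[-R_{L-1},R_{L-1}]}$. Hence
\[
N_L(Y)\;\le\;N_{2R_{L-1}+1}(X),
\]
a bound involving $X$ only through a window of size $O(\log^d L)$.

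Now pick $\alpha$ with $\underline{D}(X)<\alpha<1/d$; by definition there are arbitrarily large $k$ with $N_k(X)<2^{k^\alpha}$. For each such $k$, choose $L$ as the largest integer with $2R_{L-1}+1\le k$, so that $L$ is of order $2^{(k/(2rC))^{1/d}}$. Combining the two bounds yields $N_L(Y)<2^{k^\alpha}<L$, where the last inequality holds for large $k$ because $\alpha<1/d$. Morse--Hedlund then forces $Y$ to be a finite union of periodic orbits, so there is a common period $p\ge 1$ with $\Psi(x)_{n+p}=\Psi(x)_n$ for every $x\in X$ and $n\in\Z$. Setting $n=0$, replacing $x$ by $\sigma^i x$, and using $f^p\sigma^i=\sigma^i f^p$ gives $(f^p x)_i=x_i$ for every $x$ and $i$, i.e.\ $f^p=\ID_X$, which contradicts infinite order. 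The main obstacle is the complexity bound $N_L(Y)\le N_{2R_{L-1}+1}(X)$ combined with the parameter balancing: one must exploit $\alpha<1/d$ so that the subexponential complexity of $X$ at scale $k$ translates, through the double-exponential choice $L\sim 2^{(k/C')^{1/d}}$, into the Morse--Hedlund regime $N_L(Y)<L$; everything else is a soft compactness/continuity computation.
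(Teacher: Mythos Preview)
Your proof is correct and follows essentially the same approach as the paper: your subshift $Y=\Psi(X)$ is exactly what the paper calls the \emph{trace subshift} of $f$, and both arguments bound its complexity by $N_{O(\log^d L)}(X)$ and then invoke Morse--Hedlund. The paper phrases this as a contrapositive (infinite order forces $N_{\lfloor C\log^d n\rfloor}(X)>n$ for all $n$, hence $\underline{D}(X)\ge 1/d$), while you argue by contradiction along a subsequence of scales $k$; these are the same computation. One tiny point: the window needed for $\Psi(z)_{[0,L-1]}$ is governed by $\max_{0\le j\le L-1}R_j$ rather than $R_{L-1}$, but since this maximum is still $O(\log^d L)$ your bound and the rest of the argument are unaffected.
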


\begin{itheorem}
\label{thm:LowerEntropyDim}
The group $\Aut(A^\Z)$ has a finitely-generated subgroup $G$ such that every subshift $X$ with $G \leq \Aut(X)$ has lower entropy dimension at least $1/4$.
\end{itheorem}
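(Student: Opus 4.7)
The plan is to combine Theorem~\ref{thm:distortion-every-full-shift} with Lemma~\ref{lem:LowComplexity} in essentially one step. I would take $G = \langle F \rangle \leq \Aut(A^\Z)$ to be the finitely generated subgroup supplied by Theorem~\ref{thm:distortion-every-full-shift}, together with its infinite-order element $g$ satisfying $|g^n|_F = O(\log^4 n)$, and then argue that this very $G$ witnesses the statement.

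So let $X$ be an arbitrary subshift with $G \leq \Aut(X)$, and denote the inclusion by $\iota \colon G \hookrightarrow \Aut(X)$. The next step is to transport the distortion bound through $\iota$. Since $\iota$ is injective, $\iota(g)$ still has infinite order; and any representation $g^n = f_{i_1} f_{i_2} \cdots f_{i_\ell}$ of length $\ell = O(\log^4 n)$ over $F \cup F^{-1}$ maps to a representation of $\iota(g)^n$ of the same length over $\iota(F) \cup \iota(F)^{-1}$, so $|\iota(g)^n|_{\iota(F)} = O(\log^4 n)$ inside the finitely generated group $\iota(G) \leq \Aut(X)$.

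At this point I would apply Lemma~\ref{lem:LowComplexity} with $d = 4$ to the element $\iota(g) \in \Aut(X)$: if $\underline{D}(X) < 1/4$ held, the lemma would force $\iota(g)$ to be periodic, contradicting that it has infinite order. Hence $\underline{D}(X) \geq 1/4$, as required.

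There is no real obstacle here: once Theorem~\ref{thm:distortion-every-full-shift} and Lemma~\ref{lem:LowComplexity} are in hand, the only point that merits any care is the standard observation that word-norm bounds for a group element transfer verbatim under any group homomorphism, because generator expressions can be pushed forward. The substantive content of the theorem sits entirely in the earlier construction of $g$ and in the Morse--Hedlund-style argument underlying Lemma~\ref{lem:LowComplexity}.
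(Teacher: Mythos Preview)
Your proposal is correct and matches the paper's own treatment: the paper simply states that Theorem~\ref{thm:LowerEntropyDim} is an immediate corollary of Lemma~\ref{lem:LowComplexity} (combined with Theorem~\ref{thm:distortion-every-full-shift}), and your argument spells out precisely that deduction, including the routine observation that word-norm bounds transfer along the embedding $G \hookrightarrow \Aut(X)$.
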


Theorem~\ref{thm:LowerEntropyDim} is of course an immediate corollary of Lemma~\ref{lem:LowComplexity}. It states a \emph{low-complexity restriction} on the automorphism group, i.e.\ it states that automorphism groups of subshifts with low enough complexity (growth of the number of admissible words) cannot have some property. The above theorem seems to be the first low-complexity restriction on automorphism groups where
\begin{enumerate}
\item the complexity bound is superpolynomial,
\item there are no additional dynamical restrictions, and
\item the prevented behavior can be exhibited in the automorphism group of another subshift.
\end{enumerate}
There are previously known restrictions satisfying any two of these items. For 1.\&2., zero entropy prevents exponential distortion \cite{2018-CFKP}; for 1.\&3., \cite{2016-CK-minimal-exponential} shows that if $X$ is minimal and has upper entropy dimension less than $1/2$, then it is amenable (while $\Aut(A^\Z)$ is not); for 2.\&3.\ (very low complexity restrictions) there are many results, see~\cite{PaSc22}.

\medskip
The subgroup where our distortion element lies can itself be seen as a group of Turing machines, indeed restricting its action to a certain sofic subshift directly gives rise to a subgroup of the group $\RTM(n, k)$ studied in~\cite{BaKaSa16}, leading to the following theorem.

\begin{restatable}{itheorem}{TuringMachines}
\label{thm:TuringMachines}
Let $n \geq 2, k \geq 1$. Then the group of Turing machines $\RTM(n, k)$ contains a distortion element; indeed there is a finitely-generated subgroup $G = \langle F \rangle$ and an element $f$ such that $|f^n|_F = O(\log^4 n)$.
\end{restatable}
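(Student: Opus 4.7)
The plan is to transport the distortion element from Theorem~\ref{thm:distortion-every-full-shift} into $\RTM(n,k)$ by restricting the action of the group $G = \langle F \rangle$ from Lemma~\ref{lem:better-bounds-for-smart} to an invariant sofic subshift $Y \subseteq A^\Z$ consisting of valid ``Turing machine tape'' configurations --- namely, configurations containing a unique head symbol on an otherwise ordinary tape. Since the generators of $G$ are designed to mimic SMART-style reversible Turing machine operations, their restrictions to $Y$ act as reversible Turing machines and thus define elements of $\RTM(n_0,k_0)$ for some specific $(n_0,k_0)$ determined by the alphabet and head states used in the construction. This yields a homomorphism $\rho \colon G \to \RTM(n_0,k_0)$ whose image is generated by the finite set $\rho(F)$. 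Because word norms cannot grow under a homomorphism, $|\rho(g)^m|_{\rho(F)} \leq |g^m|_F = O(\log^4 m)$, so the sublinear bound transfers immediately.

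The hard part will be to check that $\rho$ does not collapse the distortion element, i.e., that $\rho(g)$ has infinite order in $\RTM(n_0,k_0)$. It suffices to exhibit a single $y \in Y$ whose $\rho(g)$-orbit is infinite, which should follow from the fact that $g$ essentially implements the SMART machine of~\cite{2017-COT}: on an appropriate head-on-blank-tape configuration, successive applications of $\rho(g)$ move the head to cells arbitrarily far from its starting position, so all powers $\rho(g)^m$ act distinctly on $y$. Hence $\rho(g)$ is an infinite-order element with $|\rho(g)^m|_{\rho(F)} = O(\log^4 m)$, giving the required distortion element in $\RTM(n_0,k_0)$.

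Finally, to obtain the result for all parameters $n \geq 2$ and $k \geq 1$, I would invoke the embedding theory of $\RTM$ groups developed in~\cite{BaKaSa16}: once distortion is established in $\RTM(n_0,k_0)$, standard encodings (trading tape alphabet for states or vice versa, and padding with inert states or symbols) embed $\RTM(n_0,k_0)$ into $\RTM(n,k)$ for every admissible pair, and a distortion element pulls back along any embedding. The novel content beyond Theorem~\ref{thm:distortion-every-full-shift} is therefore confined to pinpointing the correct invariant Turing-machine subshift $Y$ and verifying that $g$ still acts with at least one infinite orbit after restriction to $Y$; everything else is bookkeeping inherited from the main construction.
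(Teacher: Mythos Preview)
Your proposal is correct and follows essentially the same route as the paper: restrict the generators of $\tmgroup_*$ to the invariant sofic subshift of configurations carrying at most one head (yielding elements of $\RTM(18,96)$), observe that the SMART action on a bi-infinite tape gives infinite order, and note that word norm cannot increase under the restriction homomorphism. The only point worth flagging is the final generalization to arbitrary $(n,k)$: the paper does not invoke a blanket embedding theorem from~\cite{BaKaSa16} but instead pads the tape alphabet inside the construction with a set $S$ of inert ``size-$1$ conveyor belts'' so as to reach tape-alphabet size $n^m$ exactly, and then uses the explicit block-encoding embeddings $\RTM(n^m,kn^m)\hookrightarrow\RTM(n^m,k)\hookrightarrow\RTM(n,k)$; you should make sure the embeddings you cite actually cover the case where $18$ is not a power of $n$.
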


All groups of Turing machines in turn embed in the higher-dimensional Brin-Thompson $mV$ for $m \geq 2$ introduced by Brin~\cite{Br04}, and we obtain the following.

\begin{restatable}{itheorem}{TwoV}
\label{thm:TwoV}
The Brin-Thompson group $mV$ contains a distortion element; indeed there is an element $f$ such that $|f^n| = O(\log^4 n)$.
\end{restatable}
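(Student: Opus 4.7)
The plan is to deduce Theorem~\ref{thm:TwoV} from Theorem~\ref{thm:TuringMachines} via the embedding $\RTM(n,k) \hookrightarrow mV$ (for $m \geq 2$) alluded to in the paragraph preceding the statement. Such an embedding is essentially folklore in the $nV$ literature: the phase space $n^{\Z} \times [k]$ of a reversible Turing machine is identified with a clopen subset of the Cantor cube $C^m$, using separate Cantor factors to encode the tape to the left and to the right of the head (and packaging the internal state into a finite clopen partition). A single Turing-machine step then acts by cutting finitely many dyadic bricks, permuting their contents, and regluing them, which is precisely the type of transformation that generates $mV$. The case $m=2$ is already enough because the rest of the $mV$ hierarchy sits inside $2V$ up to the first two coordinates; one simply lets the remaining factors act trivially.

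Granting this embedding, I would apply Theorem~\ref{thm:TuringMachines} to obtain a finitely-generated subgroup $G = \langle F \rangle \leq \RTM(n,k)$ and an element $f \in G$ of infinite order with $|f^n|_F = O(\log^4 n)$, and then push $F$ and $f$ forward along the embedding to obtain $F' \subset mV$ and $f' \in mV$ of infinite order, still satisfying $|(f')^n|_{F'} = O(\log^4 n)$.

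To produce the unsubscripted bound $|f^n| = O(\log^4 n)$ required by the theorem, I would invoke Brin's theorem that $mV$ is itself finitely generated, say by a finite set $S$. Then $S \cup F'$ is also a finite generating set, and every element of $F'$ can be written as a bounded-length word in $S$, so
\[ |(f')^n|_S \;\leq\; \Bigl(\max_{s \in F'} |s|_S\Bigr) \cdot |(f')^n|_{F'} \;=\; O(\log^4 n), \]
which gives distortion of $f'$ in $mV$ with respect to $S$ in the sense of Section~\ref{sec:AutomorphismsAndDistortionIntro}.

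The only step with nontrivial content is the embedding $\RTM(n,k) \hookrightarrow mV$, and this is the main place where one must be careful: one needs to verify that the Turing-machine transitions are realized by genuine $mV$-elements (bijections that are affine on finitely many dyadic bricks), and that distinct Turing machines give rise to distinct elements of $mV$. Everything after that is a general principle, namely that a distortion element in a finitely-generated subgroup of a finitely-generated group remains distorted in the ambient group, together with the preservation of infinite order under an injective homomorphism.
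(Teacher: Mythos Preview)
Your proposal is correct and follows essentially the same route as the paper: invoke Theorem~\ref{thm:TuringMachines}, push the distortion element through the embedding $\RTM(n,k)\hookrightarrow 2V\hookrightarrow mV$ (the paper records this as Lemma~\ref{lem:TMin2V}), and then observe that enlarging to a finite generating set of $mV$ can only improve the distortion bound. One wording slip: you wrote that ``the rest of the $mV$ hierarchy sits inside $2V$,'' but you mean the opposite inclusion $2V\hookrightarrow mV$ (as your next clause about letting the remaining factors act trivially makes clear).
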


This theorem provides a new restriction for geometries of $2V$. Namely, it is known that Thompson's group $V$ admits a proper action by isometries on a CAT$(0)$ cube complex~\cite{Fa05}. By~\cite[Theorem~1.5]{Ha21}, a group with distortion elements does not admit such an action, thus:

\begin{corollary}
\label{cor:2Vnocubeaction}
The Brin-Thompson group $mV$ does not act properly on a CAT$(0)$ cube complex for $m \geq 2$.
\end{corollary}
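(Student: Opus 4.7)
The plan is essentially to read off the corollary from Theorem~\ref{thm:TwoV} and the quoted theorem of Haglund, so the proof is short; the only thing to check carefully is that the notion of distortion in the statement of Haglund's theorem matches the one produced by Theorem~\ref{thm:TwoV}.

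First, recall that $mV$ is itself finitely generated (this goes back to Brin~\cite{Br04} for $m=2$, and generalizes to $m \geq 2$). Theorem~\ref{thm:TwoV} produces an element $f \in mV$ of infinite order with $|f^n|_F = O(\log^4 n)$ for some finite generating set $F$ of a finitely-generated subgroup $H \leq mV$ containing $f$. Since $mV$ is finitely generated, pick any finite generating set $S$ of $mV$. Each generator in $F$ can be written as an $S$-word of length at most some constant $C$, so $|f^n|_S \leq C \cdot |f^n|_F = O(\log^4 n)$. Thus the powers of $f$ grow sublinearly in the word norm of $mV$, and since $f$ has infinite order, $f$ is a genuine distortion element of the finitely-generated group $mV$.

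Now I would invoke~\cite[Theorem~1.5]{Ha21}, which asserts that if a finitely-generated group $G$ acts properly by isometries on a CAT$(0)$ cube complex, then every infinite-order element of $G$ is undistorted (its powers grow at least linearly in the word norm). Applying this contrapositively: the existence of our distorted $f \in mV$ rules out any proper isometric action of $mV$ on a CAT$(0)$ cube complex, giving the corollary.

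The only possible obstacle is making sure that the two notions of distortion agree, which is precisely what the first paragraph of the sketch handles. Nothing else is required; the work has already been done in Theorem~\ref{thm:TwoV} and in~\cite{Ha21}.
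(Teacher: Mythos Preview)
Your proposal is correct and follows essentially the same approach as the paper, which simply remarks that the corollary is immediate from Theorem~\ref{thm:TwoV} together with~\cite[Theorem~1.5]{Ha21}. Your extra care in passing from distortion in a subgroup to distortion in $mV$ is harmless but unnecessary here, since the statement and proof of Theorem~\ref{thm:TwoV} already yield distortion with respect to a finite generating set of $mV$ itself.
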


Of course, a similar fact is true for the other groups where we exhibit distortion elements.

\medskip
We conclude with previously known (but possibly not well-known) related distortion facts that are easy to prove. First, the fact the automorphism group of a full shift contains finitely-generated \emph{subgroups} that are distorted is essentially classical, namely $F_2 \times F_2$ embeds in $\Aut(A^\Z)$~\cite{1990-KR} and has subgroups with arbitrarily bad (recursive) distortion essentially by~\cite{Mi68}. To give a more down-to-earth example, $\Z_2 \wr \Z^2$, which embeds in $\Aut(A^\Z)$ by \cite{Sa20}, contains a polynomially distorted copy of itself, by a nice geometric argument~\cite{DaOl11}. One can also construct distorted subgroups directly by more intrinsic automorphism group techniques.

Second, in the setting of general expansive homeomorphisms, finding distortion elements is very easy. Namely, if $\mathbb{S}$ is the invertible natural extension of the $\times 2$-map on the circle, $\mathbb{S}^n$ contains a natural copy of $\mathrm{GL}(n, \Z)$ by simply summing tracks to each other~\cite{KoSa21}. For $n = 3$, the group $\mathrm{GL}(n, \Z)$ contains the Heisenberg group, thus has distortion elements.

\subsection{Turing machines and gates}
\label{sec:TMGatesBackground}

While our results in the previous section are stated fully in terms of homeomorphism groups, our \emph{proof methods} rather belong to the theories of dynamical Turing machines and of reversible gates. In this section, we outline some history of these ideas.

\subsubsection{Turing machines}
\label{sec:TMTI}

As mentioned in Section~\ref{sec:AutomorphismsAndDistortionIntro}, our automorphism group element simulates a ``Turing machine'', i.e.\ a dynamical system where a single head moves over an infinite tape of arbitrary data (over a fixed finite alphabet), and all the action happens near the head (which may move around the tape, such movement depending on the content of the tape; or modify said content). The dynamics of Turing machines, also known as one-head machines, is an important branch of symbolic dynamics. This can be seen as initiated in the 1997 paper of K\r{u}rka~\cite{1997-Kurka}, which explicitly defined the moving-head and moving-tape dynamics of Turing machines (although many relevant dynamical ideas appeared in the literature before this~\cite{Ho66,Ro75,Mo91}).

One of the most-studied behaviors of Turing machines is aperiodicity, meaning that the action of the Turing machine has no periodic points. This property is particularly interesting in the moving tape model, where the head is seen as fixed and only the tape moves.  K\r{u}rka originally conjectured that Turing machines cannot be aperiodic, but an explicit aperiodic Turing machine was exhibited in 2002 by Blondel, Cassaige and Nichitiu~\cite{BlCaNi02} (inspired by a technique of Hooper from 1966~\cite{Ho66}). Later, reversible aperiodic Turing machines (ones whose action is a homeomorphism) were found, the first by Kari and Ollinger~\cite{2008-KO}. This culminated in the discovery of the SMART machine $\smart$ by Cassaigne, Ollinger and Torres-Avilés~\cite{2017-COT}, a machine with only four states and three tape-letters, which is reversible and aperiodic, and whose moving-tape dynamics is a minimal homeomorphism on the Cantor space, see also~\cite{2018-Ollinger-talk}.

Turing machines, in the moving-head dynamics where the tape is not shifted and the head moves over it, can be directly seen as automorphisms of a sofic shift~\cite{BaKaSa16}. In fact, it is well-known that Turing machines can be ``embedded'' into automorphism groups of full shifts $\Aut(A^\Z)$. There are multiple ways of doing so; in this paper, we use the conveyor belt technique similar to the one used in~\cite{2017-GS}.

\smallskip
For the purpose of establishing distortion, the first important consideration, already discussed in Section~\ref{sec:AutomorphismsAndDistortionIntro}, is the ``speed'' of a Turing machine: a Turing machine with positive speed, meaning the existence of tape contents such that the head moves to infinity at a positive rate, could not possibly give rise to a distortion element. This is because the linear movement of the head (even on a single configuration) means that the radius of powers of the corresponding automorphism must grow at a linear rate as well, which prevents distortion.

It was shown in~\cite{Je14} that all aperiodic Turing machines have zero speed, and in~\cite{2017-GS} this was strengthened by proving that the maximal offset by which such a machine can move in $t$ time steps is $O(t/\log t)$. For the SMART machine $\smart$, more is known: in $t$ steps, it can only move by an offset of at most $O(\log t)$. This makes $\smart$ a perfect candidate for a distortion element of a subshift automorphism group, and indeed it was conjectured in~\cite{2017-GS} that it is one.

\subsubsection{Gates}
\label{sec:GatesTI}

The next ideas come from the study of reversible gates. By this, we refer to the study of permutation groups acting on (a sublanguage of) $A^n$, where $A$ is a finite alphabet, that are generated by ``reversible gates'': i.e.\ permutations that only consider a bounded subset of coordinates at a time. More precisely, if $k \leq n$ and $\pi \in \Sym(A^k)$, then we can apply $\pi$ to the subword starting at $i$ by the formula $\hat \pi(u \cdot v \cdot w) = u \cdot \pi(v) \cdot w$ where $u \in A^i, v \in A^k, w \in A^{n - k - i}$. From now on, we use the term ``gate'' for reversible gates, and ``classical gate'' to refer to the usual not necessarily reversible gates (in the few places where they are needed).

A fundamental lemma in this topic is that $\Alt(A^n)$ admits a generating set with bounded $k$, namely it is generated by the even permutations of $A^2$ if the cardinality $\card A$ is at least $3$ (when we consider them as gates, and allow their applications at any position $i = 0,  \ldots, n - 2$). A more complete statement appears in~\cite{Sa18}, while earlier proofs are given in~\cite{Se16,BoKaSa17,Bo19}.

\smallskip
The connection between gates and Turing machines is as follows. Let us consider generalized Turing machines in the sense of~\cite{BaKaSa16}, meaning the machine can look at and modify multiple cells at once, although only at a bounded distance from the head. Now, walking on a cyclic tape containing an element of $A^n$, we can apply permutations of $A^k$ at different relative positions $i$: simply move by $i$ steps, apply the permutation locally, and then move back by $-i$ steps. The above paragraph translates to the fact that there is a finite set of generalized Turing machines that can perform any even permutation of the tape content (relative to the head position). Actually, it turns out that since Turing machines carry a state, $k = 1$ suffices, i.e.\ the generating Turing machines need not be of a generalized type.

\subsection*{Acknowledgements}

We would like to thank Anthony Genevois for pointing out Corollary~\ref{cor:2Vnocubeaction}. We thank Pierre Guillon for helpful discussions.

\section{Definitions}\label{sec:defs}

\subsection{General notions}

We have $\N = \{0,1,2,\ldots\}$, $\Z_+ = \N \setminus \{0\}$, and $\Z_{\ell} = \Z/\ell\Z$ is integers modulo $\ell$. For $S$ a finite set, we denote by $\card S$ the cardinality of $S$. For $i,j \in \N$, denote $\llbracket i,j \rrbracket = \{ n \in \N : i \leq n \leq j \}$ and $\llbracket n \rrbracket = \llbracket 0, n-1 \rrbracket$. If $w \in \{0,1,\ldots,k-1\}^*$, write $\val_k(w)$ for the value $w$ represents in base $k$ (the leftmost digit having the highest significance by default), i.e.\ $\val_k(w) = \sum_{i=0}^{|w|-1} k^{|w|-1-i} w_{i}$; and we write $n_{(k)} \in \{0, \ldots, k-1\}^*$ for the number $n \in \N$ written in base $k$ (with length determined from context or specified in text), i.e.\ $\val_k(n_{(k)}) = n$.

For $\Sigma$ a finite set, called an \emph{alphabet}, denote $\Sigma^* = \bigcup_{n = 0}^{\infty} \Sigma^n$ the set of finite words over $\Sigma$. For $w \in \Sigma^*$, denote $\len(w)$ the length of $w$, i.e.\ the integer $n$ such that $w \in \Sigma^n$. For a word $w \in \Sigma^*$, denote $\overline{w}$ the reverse (or ``mirror image'') of $w$, i.e.\ if $w = w_0 \cdot w_1 \cdots w_{n-1}$, then $\overline{w} = w_{n-1} \cdot w_{n-2} \cdots w_0$. For $w \in \Sigma^n$ and $J \subseteq \llbracket 0, n-1 \rrbracket$, define $w|_{J} = w_{j_0} \cdot w_{j_1} \cdots w_{j_k}$ the restriction of $w$ to $J$, for $J = \{j_0,\dots,j_k\}$ and $j_0 \leq \dots \leq j_k$. Given $a \in \Sigma$ and $j \in \llbracket 0,n-1 \rrbracket$, the cylinder $[a]_j$ denotes the set of words $\{w \in \Sigma^n \mid w_j = a\}$. Usually our alphabets are \emph{non-trivial}, by which we mean $|\Sigma| \geq 2$.

\medskip

In Lemma~\ref{lem:nc1-conditioning}, we denote $\NC^1$ for ``Nick's Class'' of complexity of level $1$, i.e.\ the class of languages $L \subseteq \Sigma^*$ such that $L$ is decidable by Boolean circuits with a polynomial number of gates, with at most two inputs and depth $O(\log n)$ (see for example~\cite{ArBa09}). The reader need not be familiar with this class to follow our argument. The main technical result we need is Barrington's theorem from~\cite{1989-Barrington} (but this is also proved from scratch in our context).

\medskip

For $a,b$ elements of a group, the commutator of $a$ and $b$ is $[a, b] = a^{-1}b^{-1}ab$. The conjugation convention is $a^b = b^{-1} \circ a \circ b$. If $\pi \in \Sym(A)$ is a permutation, we may regard it as a permutation of $A \times B$ by $\pi((a, b)) = (\pi(a), b)$. Our groups always act from the left. If $g_1, \ldots, g_n$ are commuting elements of a group, we write $\prod_{i=1}^n g_i$ for their ordered product $g_n \cdots g_1$. In groups of bijections on a set (which almost all our groups are), we denote composition by $\circ$.

Given a finitely generated group $G$ generated by the finite set $S$, a \emph{presentation} of $g \in G$ is a word $w = s_n \dots s_1 \in (S \cup S^{-1})^*$ such that $g = s_n \cdots s_1$, and we write $w \equiv g$. The \emph{word norm $\lVert g \rVert_S$ of $g \in G$ relative to $S$} is then the length of a shortest presentation of $g$, i.e. $\lVert g \rVert_S = \min \{ n \in \N : \exists w \in (S \cup S^{-1})^n, w \equiv g \}$. This word norm is also the distance in the Cayley graph of $G$ between $1_G$ and $g$. In this context, an element $g \in G$ is said to be \emph{distorted} if $\lVert g^n \rVert_S = o(n)$.

\medskip
For sets $X,Y$ and $Z$, we say that a map $f : X \to Y$ \emph{lifts} into $\tilde{f} : X \times Z \to Y \times Z$ (or that $\tilde{f}$ is the \emph{lift} of $f$) if $\tilde{f}(x,z) = (f(x),z)$. For $S \subseteq X$ a subset of $X$, and $f : X \to X$, we call \emph{extended restriction} of $f$ to $S$ the map $f\rfun_S : X \to X$ defined as:
\[ f\rfun_S(x) = \begin{cases}
  f(x) & \text{if } x \in S \\
  x & \text{otherwise}
\end{cases}
\]
i.e.\ $f\rfun_S$ is the extension of the restriction $f|_S$ back to the full domain $X$, by fixing elements outside $S$.

\subsection{Subshifts and cellular automata}

Let $\Sigma$ be a finite alphabet. An element $x \in \Sigma^\Z$ is called a \emph{configuration}. An element $w \in \Sigma^*$ is called a \emph{word} or a \emph{pattern}, and a pattern $w \in \Sigma^*$ is said to \emph{appear} in a configuration $x \in \Sigma^\Z$, denoted $w \sqsubseteq x$, if there exists some $i \in \Z$ such that $x_{i+j} = w_j$ for every $j \in \llbracket 0,\len(w)-1 \rrbracket$.

We endow $\Sigma^\Z$ with the product topology. This topology is generated by the cylinders $[a]_j = \{ x \in \Sigma^\Z : x_j = a \}$ for $a \in \Sigma$ and $j \in \Z$. The \emph{left shift} $\sigma : \Sigma^\Z \to \Sigma^\Z$ defined by $\sigma(x)_i = x_{i+1}$ is a $\Z$ action on $\Sigma^\Z$. Closed and shift-invariant subsets $X$ of $\Sigma^\Z$ are called \emph{subshifts}. For $X$ a subshift and $n \in \N$, we denote $\mathcal{L}_n(X)$ the set of finite words of length $n$ that appear in $X$, and $\mathcal{L}(X) = \bigcup_{n \in \N} \mathcal{L}_n(X)$ its \emph{language}.

We say that a subshift $X$ is \emph{sofic} if $\mathcal{L}(X)$ is a regular language.

\medskip
If $X$ and $Y$ are subshifts, a continuous and shift-equivariant map $f : X \to Y$ is called a \emph{morphism}. It is an \emph{endomorphism} if $X = Y$ and an \emph{automorphism} if, in addition, it is bijective (in which case $f^{-1}$ is also an endomorphism). Endomorphisms are sometimes called \emph{cellular automata}, and automorphisms \emph{reversible cellular automata}. For $f : X \to Y$ a morphism between two subshifts, its \emph{radius} (as a cellular automaton) is the minimal $r$ such that $f(x)_i$ is a function of $x_{\llbracket i-r, i+r \rrbracket}$. The \emph{biradius} of an automorphism is the maximum of the radii of $f$ and $f^{-1}$.

\subsection{Turing machines}

In this article, we use Turing machines as a specific kind of action on subshifts. We note that despite the terminology, it is not necessarily helpful to think of them as computational devices.\footnote{We will later perform computation in a group of Turing machines, but this computation is not related to the usual type of Turing machine computation, in that the iteration of a single machine is \emph{not} going going to be used to perform computation.}

\medskip
Let $Q$ be a finite set called the \emph{state set}, and $\Gamma$ be a finite set called the \emph{tape alphabet}. In the model of~\cite{2008-KO},\footnote{This model is equivalent to the usual definition of Turing machines, but handles reversibility better.} a \emph{Turing machine} is a triple $\mathcal{M} = (\Gamma, Q, \Delta)$, where $\Delta \subseteq (Q \times \{+1,-1\} \times Q) \cup (Q \times \Gamma \times Q \times \Gamma)$ is the \emph{transition table}. A transition $(q,\delta,q') \in Q \times \{+1,-1\} \times Q$ is called a \emph{move transition}, and a transition $(q,a,q',b) \in Q \times \Gamma \times Q \times \Gamma$ is called a \emph{matching transition}.

In the rest of this paper, we focus on the action of Turing machines on two families of objects: bi-infinite tapes, and finite cyclic tapes.

\paragraph*{Bi-infinite tapes}~\\
In the alphabet $\Gamma \cup (Q \times \Gamma)$, elements of $H = Q \times \Gamma$ are called \emph{heads}. Denote
\[ \bitape = \{x \in {(\Gamma \cup (Q \times \Gamma))}^\Z \mid \forall i,j \in \Z: i \neq j \implies x_i \in \Gamma \vee x_j \in \Gamma \} \]
the set of bi-infinite tapes with at most one head somewhere. We can associate to $\mathcal{M}$ its so-called \emph{moving-head model}~\cite{1997-Kurka}, i.e.\ the binary relation $\rightarrow_\mathcal{M}$ on $\bitape$ defined by $x \rightarrow_\mathcal{M} x$ if $x \in \bitape$ contains no head (i.e.\ $x \in \Gamma^\Z$); and if $x \in \bitape$ contains a head at position, say $i_0 \in \Z$ with $x_{i_0} = (q,a)$ for some $q \in Q$ and $a \in \Gamma$, then $x \rightarrow_\mathcal{M} x'$ if there exists $t \in \Delta$ such that:
\begin{align*}
  \text{If $t = (q,a,q',b) \in \Delta$}: & \qquad
  x'_i = \begin{cases}
    (q',b) & \text{if } i=i_0 \\
    x_j & \text{otherwise}
  \end{cases} \\
  \text{If $t = (q,\delta,q') \in \Delta$}: & \qquad
  x'_i = \begin{cases}
    a & \text{if } i=i_0 \\
    (q',x_{i}) & \text{if } i=i_0+\delta\\
    x_i & \text{otherwise}
  \end{cases}
\end{align*}

The binary relation $\TMtrans_\mathcal{M}$ on $\bitape$ (denoted $\TMtrans$ for short if the context is clear) is the \emph{reachability} relation. We write $\TMtrans_\mathcal{M}^k$ its $k$-th power, and $\TMtrans_\mathcal{M}^*$ its transitive closure. We say that $\mathcal{M}$ reaches the configuration $x'$ from $x$ in $k \in \N$ steps if $x \TMtrans_{\mathcal{M}}^k x'$. A transition $x \TMtrans_\mathcal{M}^* x'$ is called a \emph{move}.

\smallskip
The machine $\mathcal{M}$ is \emph{deterministic} if $\rightarrow_\mathcal{M}$ defines a partial function, \emph{complete deterministic} if it defines a total function (which is then continuous and, obviously, shift-commuting), and \emph{complete reversible} (or \emph{reversible} for short) if it defines a bijection (which is then a homeomorphism). When $\mathcal{M}$ is complete deterministic (which all our machines are), when using the relation $\rightarrow_\mathcal{M}$ as a function we write it as $T_\mathcal{M} : \bitape \to \bitape$, which is an endomorphism of the subshift~$\bitape$. Similarly, when the machine $\mathcal{M}$ is reversible, it is an automorphism of~$\bitape$.

\paragraph*{Finite cyclic tapes}~\\
The set of \emph{cyclic configurations of length $\ell \in \Z_+$} 
is the set 
\[ \ctape_\ell = \{ x \in {(\Gamma \cup (Q \times \Gamma))}^{\Z/\ell\Z}\mid \forall i,j \in \Z/\ell\Z, i \neq j \implies x_i \in \Gamma \vee x_j \in \Gamma \} \]
of finite configurations containing at most one head. We always assume $\ell \geq 2$ in what follows (the case $\ell = 1$ makes sense, but requires notational modifications and is the least interesting case anyway).

The machine $\mathcal{M}$ defines a binary relation $\rightarrow_\mathcal{M}$ on $\ctape_\ell$ by considering these finite tapes as cyclic, i.e.\ we define $x \rightarrow_\mathcal{M} x$ if $x \in \ctape_\ell$ contains no head (i.e.\ $x \in \Gamma^{\Z/\ell\Z}$); and if $x \in \ctape_\ell$ contains a head at position, say $i_0 \in \Z/\ell\Z$ with $x_{i_0} = (q,a)$ for some $q \in Q$ and $a \in \Gamma$, then $x \rightarrow_\mathcal{M} x'$ if there exists $t \in \Delta$ such that:
\begin{align*}
  \text{If $t = (q,a,q',b) \in \Delta$}: & \qquad
  x'_i = \begin{cases}
    (q',b) & \text{if } i=i_0 \\
    x_i & \text{otherwise}
  \end{cases} \\
  \text{If $t = (q,\delta,q') \in \Delta$}: & \qquad
  x'_i = \begin{cases}
    a & \text{if } i=i_0 \\
    (q', x_i) & \text{if } i=i_0+\delta \bmod \ell \\
    x_i & \text{otherwise}
  \end{cases}
\end{align*}
As above, the relation $\TMtrans_\mathcal{M}$ is called the \emph{reachability} relation. If $\mathcal{M}$ is complete deterministic, the function $\rightarrow_\mathcal{M}$ will be denoted by $T_{\ell,\mathcal{M}} : \ctape_\ell \to \ctape_\ell$. Note that it is an endomorphism of the shift action of $\Z$ (or $\Z_{\ell}$) which translates the cyclic tape around.

\medskip
Determinism, completeness and reversibility are characterized by obvious combinatorial properties. In particular, $\mathcal{M} = (\Gamma, Q, \Delta)$
is complete deterministic if and only if exactly one transition applies at any time:
\[ \forall (q,a) \in (Q \times \Gamma): \card\{ t \in \Delta \mid t = (q,a,\cdot,\cdot) \text{ or } t = (q,\cdot,\cdot) \} = 1. \] Defining the reverse of a transition by $(q,\delta,q')^{-1} = (q',-\delta,q)$ and $(q,a,q',b)^{-1} = (q',b,q,a)$, this reverse relation extends to transition tables with $\Delta^{-1} = \{t^{-1} \mid t \in \Delta\}$: the \emph{reverse} of  $\mathcal{M}$ is then defined by $\mathcal{M}^{-1} = (\Gamma,Q,\Delta^{-1})$, and $\mathcal{M}$ is reversible if both $\mathcal{M}$ and $\mathcal{M}^{-1}$ are complete deterministic.

\smallskip
Finally, for any machine $\mathcal{M} = (Q,\Gamma,\Delta)$, denote by $m: \N \to \N$ its \emph{movement function}, i.e.\ $m(n)$ is the maximal number of cells the machine can visit in $n$ steps. More precisely, $m(n)$ is the length $r-l+1$ of the largest interval $\llbracket l, r \rrbracket \subseteq \Z$ such that there exists a sequence of $n$ steps of computation $x_0 \rightarrow_\mathcal{M} x_1 \rightarrow_\mathcal{M} \dots \rightarrow_\mathcal{M} x_n$ (with $x_i \in \bitape $) such that for every position $i \in \llbracket l, r \rrbracket$, at least one of the tapes $x_k$ ($0 \leq k \leq n)$ has its head at position~$i$.

\section{The SMART machine on cyclic tapes}\label{sec:smart}
Let SMART be the Turing machine $(Q,\Gamma,\Delta)$, where $Q = \{\smb_1,\smd_1,\smp_1,\smq_1\} \cup \{\smb_2,\smd_2,\smp_2,\smq_2\}$, $\Gamma = \{0,1,2\}$ and $\Delta$ is the following transition table:

\begin{center}
  \begin{tikzpicture}[scale=1.3]
    \clip (-3,-3) rectangle (3,3);
    \node[white,fill=niceblue,circle,text depth=2pt] at (-1.85,1.15) (b2) {$\smb_2$};
    \node[white,fill=niceblue,circle,text depth=2pt] at (-1.15,1.85) (b1) {$\smb_1$};
    \node[white,fill=nicegreen,circle,text height=7pt] at (1.15,1.85) (q2) {$\smq_2$};
    \node[white,fill=nicegreen,circle,text height=7pt] at (1.85,1.15) (q1) {$\smq_1$};
    \node[white,fill=black,circle,text height=7pt] at (-1.85,-1.15) (p1) {$\smp_1$};
    \node[white,fill=black,circle,text height=7pt] at (-1.15,-1.85) (p2) {$\smp_2$};
    \node[white,fill=nicered,circle,text depth=2pt] at (1.85,-1.15) (d2) {$\smd_2$};
    \node[white,fill=nicered,circle,text height=7pt] at (1.15,-1.85) (d1) {$\smd_1$};

    \draw[-latex,line width=0.8mm,niceblue] (b2.north east) to node[auto,niceblue] {$\blacktriangleright$} (b1.south west);
    \draw[-latex,line width=0.8mm,niceblue] (b1.south east) to node[auto,niceblue] {$0|1$} (d2.north west);
    \draw[-latex,line width=0.8mm,niceblue] (b1.east) to node[auto,niceblue,text width=0.45cm] {$1|1$\\$2|2$} (q2.west);
    \draw[-latex,line width=0.8mm,nicered] (d2.south west) to node[auto,nicered] {$\blacktriangleleft$} (d1.north east);
    \draw[-latex,line width=0.8mm,nicered] (d1.north west) to node[auto,nicered] {$0|1$} (b2.south east);
    \draw[-latex,line width=0.8mm,nicered] (d1.west) to node[auto,nicered,text width=0.45cm] {$1|1$\\$2|2$} (p2.east);
    \draw[-latex,line width=0.8mm,black] (p2.north west) to node[auto,black] {$\blacktriangleright$} (p1.south east);
    \draw[-latex,line width=0.8mm,black] (p1.north) to node[auto,black,text width=0.45cm] {$0|2$\\$1|0$} (b2.south);
    \draw[-latex,line width=0.8mm,black,distance=3.2cm,out=135,in=135] (p1.north west) to node[auto,black] {$2|0$} (q2.north west);
    \draw[-latex,line width=0.8mm,nicegreen] (q2.south east) to node[auto,nicegreen] {$\blacktriangleleft$} (q1.north west);
    \draw[-latex,line width=0.8mm,nicegreen] (q1.south) to node[auto,nicegreen,text width=0.45cm] {$0|2$\\$1|0$} (d2.north);
    \draw[-latex,line width=0.8mm,nicegreen,distance=3.2cm,out=-45,in=-45] (q1.south east) to node[auto,nicegreen] {$2|0$} (p2.south east);
  \end{tikzpicture}

  \small An arrow from $q$ to $q'$ labeled $\blacktriangleright$ (resp. $\blacktriangleleft$) denotes a transition $(q,+1,q')$ (resp.~$(q,-1,q')$). An arrow from $q$ to $q'$ labeled $a|b$ denotes a transition $(q,a,q',b)$.
\end{center}

We refer to $\smb_1,\smb_2,\smd_1,\smd_2$ (resp.\ $\smp_1,\smp_2,\smq_1,\smq_2$) as \emph{filled} and \emph{hollow triangles}.

\begin{remark}
The SMART machine was introduced with a slightly different formalism in~\cite{2017-COT}, and slightly revised in~\cite{2018-Ollinger-talk} (states were renamed and permuted). The machine above adapts the latter in the model of~\cite{2008-KO} for Turing machines: in other words, we duplicate the states. We kindly advise readers already familiar with the SMART machine to read these definitions and propositions carefully. 

Namely, while our SMART machine is in a sense completely equivalent, in the formulas in Proposition~\ref{prop:smart-moves} describing traversals of SMART over zeroes, the patterns corresponding to filled and hollow initial states are of the same length (unlike the corresponding ones in~\cite{2017-COT}). This will be helpful later, when we encode the position in the sweep into the corresponding area on the tape without any extra space.
\end{remark}

In this section, we consider the action of this machine on finite patterns (denoted with rounds brackets) like
\[ \begin{tpatternc}
  1 & 1 & 0^k & 2 \\
  \smb_2 & & &
\end{tpatternc} \]
The argument applies whether or not these are finite subpatterns of a finite cyclic tape, or of an infinite configuration. When specifying a move (with some number of transition steps) between two patterns, it is implicit that the initial and final patterns have the same domain, and the machine does not exit this domain during the intermediate steps. Complete cyclic configurations (where the notation specifies the contents of all $\ell$ cells) will be denoted similarly, but with square brackets.

\begin{proposition}\label{prop:smart-moves}
  (Adapted from~\cite[Lemma 1]{2017-COT}) Let $f(k) = 3^{k+1}-2$. For all~$k$, $s_* \in \{0,1,2\}$ and $s_+ \in \{1,2\}$, the following moves hold:
  \footnotesize
  \begin{align*}
      M_\smb(k):&\ \begin{tpattern}
          s_+ & 0^k & s_* \\
          \smb_2 & &
      \end{tpattern}
      \TMtrans^{\scriptscriptstyle{f(k)}}
      \begin{tpattern}
          s_+ & 0^k & s_* \\
          & & \smb_1
      \end{tpattern}
      \qquad &
      M_\smd(k):&\ \begin{tpattern}
          s_* & 0^k & s_+ \\
          & & \smd_2
      \end{tpattern} 
      \TMtrans^{\scriptscriptstyle{f(k)}}
      \begin{tpattern}
          s_* & 0^k & s_+ \\
          \smd_1 & &
      \end{tpattern} \\
      M_\smp(k):&\ \begin{tpattern}
          s_* & 0^k & s_+ \\
          \smp_2 & &
      \end{tpattern}
      \TMtrans^{\scriptscriptstyle{f(k)}}
      \begin{tpattern}
          s_* & 0^k & s_+\\
          & & \smp_1
      \end{tpattern}
      \quad &
      M_\smq(k):&\ \begin{tpattern}
          s_+ & 0^k & s_* \\
          & & \smq_2
      \end{tpattern}
      \TMtrans^{\scriptscriptstyle{f(k)}}
      \begin{tpattern}
          s_+ & 0^k & s_*\\
          \smq_1 & &
      \end{tpattern}
  \end{align*}
  \normalsize  
  Additionally, the cell containing $s_*$ is only visited at the last (resp. first) step of the sequences of transitions $M_\smb$ and $M_\smd$ (resp. $M_\smp$ and $M_\smq$). And the cell containing $s_+$ is never modified.
\end{proposition}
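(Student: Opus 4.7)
I would prove all four moves by simultaneous induction on $k$, noting that the transition table's mirror symmetry (swap $\smb \leftrightarrow \smd$, $\smp \leftrightarrow \smq$, and left/right) identifies $M_\smb(k)$ with $M_\smd(k)$ and $M_\smp(k)$ with $M_\smq(k)$, so it suffices to treat $M_\smb$ and $M_\smp$ in detail. The base case $k = 0$ has $f(0) = 1$: each move consists of the single move-transition of the corresponding triangle (e.g.\ $\smb_2 \xrightarrow{\smb} \smb_1$), and the auxiliary properties are immediate since no matching transition fires, so $s_+$ is untouched and $s_*$ is visited only at the initial or final step, as required.

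For the inductive step ($k \geq 1$), I would decompose each move into seven phases realizing the recurrence $f(k) = 3f(k-1) + 4$. For $M_\smb(k)$ the phases are: (i) apply the IH $M_\smb(k-1)$ on positions $0, \ldots, k$ with inner $s_*' = 0$, leaving the tape unchanged and placing $\smb_1$ at position $k$; (ii) the transition $\smb_1 \xrightarrow{0|1} \smd_2$ at position $k$; (iii) apply $M_\smd(k-1)$ on positions $0, \ldots, k$ with inner $s_*' = s_+$ and $s_+' = 1$, ending with $\smd_1$ at position $0$; (iv) the transition $\smd_1 \xrightarrow{s_+|s_+} \smp_2$, possible because $s_+ \in \{1,2\}$; (v) apply $M_\smp(k-1)$ on positions $0, \ldots, k$ with the same inner $s_*',\, s_+'$, ending with $\smp_1$ at position $k$; (vi) the transition $\smp_1 \xrightarrow{1|0} \smb_2$ restoring position $k$ to $0$; (vii) the move $\smb_2 \xrightarrow{\smb} \smb_1$ to position $k+1$. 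The step count is $3f(k-1) + 4 = f(k)$ and the tape is restored to $s_+ \cdot 0^k \cdot s_*$. For $M_\smp(k)$ the decomposition is structurally parallel: an initial move $\smp_2 \xrightarrow{\smp} \smp_1$, then the write $\smp_1 \xrightarrow{0|2} \smb_2$ at position $1$, followed by nested IH calls $M_\smb(k-1), M_\smq(k-1), M_\smp(k-1)$ on positions $1, \ldots, k+1$, separated by the single-step transitions $\smb_1 \xrightarrow{s_+|s_+} \smq_2$ and $\smq_1 \xrightarrow{2|0} \smp_2$.

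The auxiliary claims propagate phase by phase. For $M_\smb(k)$: position $k+1$ is not visited during phases (i)--(vi) because each inner IH call keeps the head within $\{0, \ldots, k\}$, and it is first (and only) visited at phase (vii); position $0$ (holding $s_+$) is visited in several phases but never modified, using the IH in phases (i), (iii), (v), direct inspection in phase (iv), and the crucial observation that whenever $s_+$ plays the role of an inner move's $s_*'$ (phases (iii) and (v)), the IH guarantees that cell is touched only at a pure movement step, which never writes. The main (modest) obstacle is purely organizational: matching each phase's entry and exit state/tape conditions with those required and guaranteed by the corresponding IH call for all four moves, and verifying that the ``$s_*$ visited only at first/last step'' clause of each inner move indeed falls on a movement transition, so that the outer $s_+$ (or $s_*$) remains genuinely unmodified.
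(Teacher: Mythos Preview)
Your proof is correct and follows essentially the same approach as the paper: simultaneous induction on $k$, treating only $M_\smb$ and $M_\smp$ by symmetry, with the same seven-phase decomposition realizing $f(k)=3f(k-1)+4$ (three nested recursive calls interleaved with four single transitions). You give a more careful account of the auxiliary claims about $s_*$ and $s_+$ than the paper does, but the argument is otherwise identical.
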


\begin{proof}
  This proof adapts the proof of~\cite[Lemma 1]{2017-COT}, and highlights the recursive/nested aspects of these moves. In the case $k = 0$ one can check that indeed the formula describes a single transition. We reason by induction, and assume $M_\smb(k)$, $M_\smd(k)$, $M_\smp(k)$ and $M_\smq(k)$ hold. We only prove $M_\smb(k+1)$ and $M_\smp(k+1)$, by symmetry between $\smb$ and $\smd$ (resp.\ $\smp$ and $\smq$). Since $f(k+1) = 3f(k) + 4$ we should find $3$ recursions, and $4$ extra steps. This is what happens:

  \bigskip
  \noindent\begin{minipage}{0.49\textwidth}
    \centering
    $M_\smb(k+1)$
    \smallskip
    \footnotesize
    \begin{align*}
      && \begin{tpattern}
        s_+ & 0^k & 0 & s_* \\
        \smb_2 & & &
      \end{tpattern} \\
      && \text{Apply } M_\smb(k) \\
      & \TMtrans^{f(k)} &
      \begin{tpattern}
        s_+ & 0^k & 0 & s_* \\
        & & \smb_1 &
      \end{tpattern} \\
      && \text{Apply one step} \\
      & \TMtrans &
      \begin{tpattern}
        s_+ & 0^k & 1 & s_* \\
        & & \smd_2 &
      \end{tpattern} \\
      && \text{Apply } M_\smd(k) \\
      & \TMtrans^{f(k)} &
      \begin{tpattern}
        s_+ & 0^k & 1 & s_* \\
        \smd_1 & & &
      \end{tpattern} \\
      && \text{Apply one step} \\
      &\TMtrans &
      \begin{tpattern}
        s_+ & 0^k & 1 & s_* \\
        \smp_2 & & &
      \end{tpattern} \\
      && \text{Apply } M_\smp(k) \\
      & \TMtrans^{f(k)} &
      \begin{tpattern}
        s_+ & 0^k & 1 & s_* \\
        & & \smp_1 &
      \end{tpattern} \\
      && \text{Apply one step} \\
      &\TMtrans &
      \begin{tpattern}
        s_+ & 0^k & 0 & s_* \\
        & & \smb_2 &
      \end{tpattern} \\
      && \text{Apply one step} \\
      &\TMtrans &
      \begin{tpattern}
        s_+ & 0^k & 0 & s_* \\
        & & & \smb_1
      \end{tpattern}
    \end{align*}
    \normalsize
  \end{minipage}\hfill
  \begin{minipage}{0.49\textwidth}
      \centering
      $M_\smp(k+1)$
      \smallskip
      \footnotesize
      \begin{align*}
        && \begin{tpattern}
          s_* & 0 & 0^k & s_+ \\
          \smp_2 & & &
        \end{tpattern} \\
        && \text{Apply one step} \\
        & \TMtrans &
        \begin{tpattern}
          s_* & 0 & 0^k & s_+ \\
          & \smp_1 & &
        \end{tpattern} \\
        && \text{Apply one step} \\
        & \TMtrans &
        \begin{tpattern}
          s_* & 2 & 0^k & s_+ \\
          & \smb_2 & &
        \end{tpattern} \\
        && \text{Apply } M_\smb(k) \\
        & \TMtrans^{f(k)} &
        \begin{tpattern}
          s_* & 2 & 0^k & s_+ \\
          & & & \smb_1
        \end{tpattern} \\
        && \text{Apply one step} \\
        & \TMtrans &
        \begin{tpattern}
          s_* & 2 & 0^k & s_+ \\
          & & & \smq_2
        \end{tpattern} \\
        && \text{Apply } M_\smq(k) \\
        & \TMtrans^{f(k)} &
        \begin{tpattern}
          s_* & 2 & 0^k & s_+ \\
          & \smq_1 & &
        \end{tpattern} \\
        && \text{Apply one step} \\
        &\TMtrans &
        \begin{tpattern}
          s_* & 0 & 0^k & s_+ \\
          & \smp_2 & &
        \end{tpattern} \\
        && \text{Apply } M_\smp(k) \\
        & \TMtrans^{f(k)} &
        \begin{tpattern}
          s_* & 0 & 0^k & s_+ \\
          & & & \smp_1
        \end{tpattern}
      \end{align*}
      \normalsize
  \end{minipage}
  \qedhere
\end{proof}

\subsection{Action of SMART on cyclic tapes}

This section studies the action of SMART on cyclic tapes of length $\ell \geq 2$. We call \emph{initial configurations} the following four cyclic configurations:

\footnotesize
\begin{align*}
    C_\smb = \begin{turing}
        0 & 0^{\ell-1} \\
        \smb_1 &
    \end{turing}
    \qquad & 
    C_\smd = \begin{turing}
        0 & 0^{\ell-1} \\
        \smd_1 &
    \end{turing} \\
    C_\smp = \begin{turing}
        0 & 0^{\ell-1} \\
        \smp_1 &
    \end{turing}
    \qquad  & 
    C_\smq = \begin{turing}
        0 & 0^{\ell-1}\\
        \smq_1 &
    \end{turing}
\end{align*}
\normalsize

\begin{proposition}\label{prop:smart-eventually-shifts}
  Let $\ell \geq 2$. The action of the $(2\cdot3^\ell)\text{-th}$ power of SMART on $C_\smb$ and $C_\smp$ (resp. $C_\smd$ and $C_\smq$) is a right-shift (resp. left-shift). Furthermore, the intermediate configurations are all distinct even up to a shift.
\end{proposition}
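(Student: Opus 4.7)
The plan is to establish the claim for $C_\smb$ by an explicit trace of SMART's action and deduce the other three cases from analogous traces. For $C_\smb$, I would decompose the $2 \cdot 3^\ell$ computation steps into three kinds of sub-blocks: nine matching transitions (which only change the head's state and possibly the cell content under the head), three raw single-step move transitions (arising at moments where the precondition $s_+ \in \{1, 2\}$ of Proposition~\ref{prop:smart-moves} fails), and six sub-sweeps drawn from $M_\smb(\ell - 2)$, $M_\smd(\ell - 2)$, $M_\smp(\ell - 2)$, $M_\smq(\ell - 2)$, each contributing $f(\ell - 2) = 3^{\ell-1} - 2$ steps. The total is $9 + 3 + 6(3^{\ell-1} - 2) = 6 \cdot 3^{\ell-1} = 2 \cdot 3^\ell$, matching the claim.

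Concretely, I would begin from $C_\smb$ and apply the matching $\smb_1 \to \smd_2$, yielding the tape with $\smd_2(1)$ at position $0$ and $0$'s elsewhere. Viewed cyclically from position $1$, this fits the precondition of $M_\smd(\ell-2)$ with $s_* = 0$ at position $1$ and $s_+ = 1$ at position $0$; applying the sub-sweep gives the tape with $1$ at position $0$, $\smd_1(0)$ at position $1$, and $0$'s elsewhere. I iterate: matching transitions ($\smd_1 \to \smb_2$, $\smb_1 \to \smq_2$, etc.) interleaved with applications of $M_\smb(\ell-2)$, $M_\smq(\ell-2)$, $M_\smb(\ell-2)$, $M_\smq(\ell-2)$, and $M_\smp(\ell-2)$ in that specific order. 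At three moments, the precondition $s_+ \neq 0$ fails (when the head reads $0$ in a state of the form $\smd_2$, $\smp_2$, or $\smb_2$), and a single raw move is performed instead. The trace eventually resets the tape to all-zeros with the head at position $1$ in state $\smb_1$, i.e., $\sigma^{-1}(C_\smb)$; a small base case (say $\ell = 2$ or $\ell = 3$) handles the edge situations in which some of the sub-sweeps degenerate.

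The main obstacle is the distinctness of intermediate configurations up to shift. Within each sub-sweep, the recursive structure from the proof of Proposition~\ref{prop:smart-moves} gives an injective labeling of the sub-sweep's intermediate configurations by an index in $\llbracket 0, f(\ell-2) \rrbracket$; moreover, each such configuration contains a distinguished non-zero tape cell (the symbol $s_+$, which is preserved throughout the sub-sweep) that acts as a positional anchor and rules out non-trivial coincidences by cyclic shift. Across different sub-sweeps, the global pattern of non-zero tape cells (as a cyclic word over $\{0,1,2\}$) together with the head state distinguishes the sub-sweeps. The handful of near-all-zero configurations at the extreme ends of the trace are verified by direct case analysis. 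Finally, the cases $C_\smd$, $C_\smp$, $C_\smq$ follow from analogous traces---$C_\smd$ and $C_\smq$ yielding left-shifts, $C_\smp$ a right-shift---with the six sub-sweeps performed in a correspondingly permuted order; the total step count is again $2 \cdot 3^\ell$ in each case.
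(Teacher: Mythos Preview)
Your decomposition into six level-$(\ell-2)$ sweeps plus twelve single steps is correct and does produce the right-shift after $2\cdot 3^\ell$ steps; I checked the full trace $M_\smd, M_\smb, M_\smq, M_\smb, M_\smq, M_\smp$ and it closes up exactly as you say (no separate base case is actually needed for $\ell=2$). However, this is a one-level unrolling of the paper's much shorter argument. The paper applies $M_\smd(\ell-1)$ and $M_\smp(\ell-1)$ \emph{directly} on the cyclic tape, letting the length-$(\ell+1)$ pattern self-overlap in its first and last cell at position~$0$; this is legitimate precisely because Proposition~\ref{prop:smart-moves} guarantees that the $s_*$-cell is touched only at the very first or last step of the sweep. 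That observation collapses your eighteen-step trace to seven lines: one matching, $M_\smd(\ell-1)$, one matching, $M_\smp(\ell-1)$, two more single steps.

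Where your plan has a genuine gap is the distinctness-up-to-shift claim. Your ``positional anchor'' $s_+$ is not the unique nonzero tape cell during a sweep: the machine writes many $1$'s and $2$'s in the interior as it recurses, so a nontrivial cyclic shift could a priori match these up with the anchor, and your argument does not exclude this. The ``injective labelling by the recursive structure'' is essentially the content of Section~\ref{sec:smart-analysis}, which relies on Lemma~\ref{lem:smart-action-as-cycles} and hence on the very proposition you are proving, so you cannot invoke it here. The paper avoids all of this with a one-line reduction you are missing: since the action is bijective and shift-commuting, any coincidence $x_j=\sigma^k(x_i)$ with $0\le i<j<2\cdot 3^\ell$ forces $x_{j-i}=\sigma^{-k}(C_\smb)$. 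But $C_\smb$ has an all-zero tape, while position~$0$ carries a nonzero value at every time $1\le t\le 2\cdot 3^\ell-2$ (it is the preserved $s_+$-cell throughout both self-overlapping sweeps), and at $t=2\cdot 3^\ell-1$ the state is $\smb_2\neq\smb_1$. That reduction is the key idea you should add.
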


\begin{proof}
  By symmetries between $\smb$ and $\smd$ (resp. $\smp$ and $\smq$), we prove the result for $C_\smb$ and $C_\smp$.

  \footnotesize
  \noindent\begin{minipage}{0.49\textwidth}
      \begin{align*}
          && \begin{turing}
            0 & 0 & 0^{\ell-2} \\
            \smb_1 &
          \end{turing} \\
          && \text{ Apply one step} \\
          &\TMtrans &
          \begin{turing}
            1 & 0 & 0^{\ell-2} \\
            \smd_2 &
          \end{turing} \\
          && \text{Apply } M_\smd(\ell-1) \\
          &\TMtrans^{\scriptscriptstyle{f(\ell-1)}} &
          \begin{turing}
            1 & 0 & 0^{\ell-2} \\
            \smd_1 &
          \end{turing} \\
          && \text{ Apply one step} \\
          &\TMtrans &
          \begin{turing}
            1 & 0 & 0^{\ell-2} \\
            \smp_2 &
          \end{turing} \\
          && \text{Apply } M_\smp(\ell-1) \\
          &\TMtrans^{\scriptscriptstyle{f(\ell-1)}} &
          \begin{turing}
            1 & 0 & 0^{\ell-2} \\
            \smp_1 &
          \end{turing} \\
          && \text{ Apply one step} \\
          &\TMtrans &
          \begin{turing}
            0 & 0 & 0^{\ell-2} \\
            \smb_2 &
          \end{turing} \\
          && \text{ Apply one step} \\
          &\TMtrans &
          \begin{turing}
            0 & 0 & 0^{\ell-2} \\
            & \smb_1
          \end{turing}
      \end{align*}
  \end{minipage}\hfill
  \begin{minipage}{0.49\textwidth}
      \begin{align*}
        && \begin{turing}
          0 & 0 & 0^{\ell-2} \\
          \smp_1 & &
        \end{turing} \\
        && \text{ Apply one step} \\
        &\TMtrans &
        \begin{turing}
          2 & 0 & 0^{\ell-2} \\
          \smb_2 & &
        \end{turing} \\
        && \text{Apply } M_\smb(\ell-1) \\
        &\TMtrans^{\scriptscriptstyle{f(\ell-1)}} &
        \begin{turing}
          2 & 0 & 0^{\ell-2} \\
          \smb_1 & &
        \end{turing} \\
        && \text{ Apply one step} \\
        &\TMtrans &
        \begin{turing}
          2 & 0 & 0^{\ell-2} \\
          \smq_2 & &
        \end{turing} \\
        && \text{Apply } M_\smq(\ell-1) \\
        &\TMtrans^{\scriptscriptstyle{f(\ell-1)}} &
        \begin{turing}
          2 & 0 & 0^{\ell-2} \\
          \smq_1 & &
        \end{turing} \\
        && \text{ Apply one step} \\
        &\TMtrans &
        \begin{turing}
          0 & 0 & 0^{\ell-2} \\
          \smp_2 & &
        \end{turing} \\
        && \text{ Apply one step} \\
        &\TMtrans &
        \begin{turing}
          0 & 0 & 0^{\ell-2} \\
          & \smp_1 &
        \end{turing}
      \end{align*}
  \end{minipage}\normalsize

  \medskip
  We used moves $M_\smb(\ell-1),M_\smd(\ell-1),M_\smp(\ell-1)$ and $M_\smq(\ell-1)$ in patterns that overlap themselves on their first and last letters in the cyclic tape. This is valid, because the cell containing $s_*$ is only visited at the last (resp. first) step of $M_\smb$ and $M_\smd$ (resp. $M_\smp$ and $M_\smq$).
  
  For the last claim, by shift-commutation and bijectivity of the action, it is enough to show that a shifted copy of the initial configuration does not appear before the last step. This is clear from looking at the first columns, which have positive values on all but the first step and the two last steps.
\end{proof}

\begin{lemma}\label{lem:smart-action-as-cycles}
  For $\ell \geq 1$, the action of SMART on cyclic tapes of length $\ell$ is composed of four disjoint cycles of length $2\ell \cdot 3^\ell$, which are the orbits of the four initial configurations.
  Additionally, the action of the $(2\cdot3^\ell)\text{-th}$ power of SMART on a cyclic tape is a right-shift (resp.\ left-shift) on the orbits of $C_\smb$ and $C_\smp$ (resp. $C_\smd$ and $C_\smq$).
\end{lemma}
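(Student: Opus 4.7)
The plan is to combine Proposition~\ref{prop:smart-eventually-shifts} with a counting argument.

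\emph{Step 1: orbit size.} By Proposition~\ref{prop:smart-eventually-shifts}, for each $X \in \{\smb,\smd,\smp,\smq\}$, $T^{2\cdot 3^\ell}$ sends $C_X$ to a one-position shift of itself, so iterating $\ell$ times gives $T^{2\ell \cdot 3^\ell}(C_X) = C_X$. Conversely, if $m$ is the true period of $C_X$, write $m = 2\cdot 3^\ell \cdot q + r$ with $0 \leq r < 2\cdot 3^\ell$; by shift-commutativity of $T$, $T^r(C_X)$ must be a shift of $C_X$. The ``distinct even up to a shift'' clause of Proposition~\ref{prop:smart-eventually-shifts} forces $r = 0$, and then $\sigma^q(C_X) = C_X$ forces $\ell \mid q$. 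Hence $\orb(C_X)$ has size exactly $2\ell \cdot 3^\ell$.

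\emph{Step 2: disjointness.} I would identify the head state at every configuration of $\orb(C_\smb)$ whose tape equals $0^\ell$. Tracing the macro-cycle from $C_\smb$ to its shift as in the proof of Proposition~\ref{prop:smart-eventually-shifts}, the tape is $0^\ell$ only at the initial configuration (head state $\smb_1$) and at the penultimate step of the macro-cycle (head state $\smb_2$). During the recursive invocations of $M_\smd(\ell-1)$ and $M_\smp(\ell-1)$, the boundary cells $s_*$ and $s_+$ of the outer pattern---identified as cell $0$ in the cyclic overlap, both carrying value $1$---are never modified by Proposition~\ref{prop:smart-moves}, so cell $0$ stays nonzero throughout those sub-moves and no intermediate all-zero tape ever appears. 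Iterating across all $\ell$ macro-cycles, the $2\ell$ all-zero-tape configurations in $\orb(C_\smb)$ all have head state in $\{\smb_1, \smb_2\}$, so $C_\smd, C_\smp, C_\smq$ (with head states $\smd_1, \smp_1, \smq_1$) cannot lie in $\orb(C_\smb)$. The analogous trajectory analysis for $C_\smd, C_\smp, C_\smq$ yields pairwise disjointness of all four orbits.

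\emph{Step 3: covering and the shift formula.} The total number of one-head configurations in $\ctape_\ell$ is $\ell \cdot |Q \times \Gamma| \cdot |\Gamma|^{\ell-1} = 8\ell \cdot 3^\ell = 4 \cdot (2\ell \cdot 3^\ell)$, so four pairwise-disjoint orbits of size $2\ell \cdot 3^\ell$ exhaust them---no other nontrivial cycle exists. The shift claim about $T^{2\cdot 3^\ell}$ extends from the initial configurations to their full orbits by shift-commutativity: for $x = T^i(C_X) \in \orb(C_X)$, one has $T^{2\cdot 3^\ell}(x) = T^i(T^{2\cdot 3^\ell}(C_X))$, which is the appropriate (right or left) shift of $x$. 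I expect Step~2 to be the main obstacle, since it requires carefully ruling out transient all-zero tapes inside the recursive sub-moves; this hinges on the ``$s_*$ and $s_+$ never modified'' clause of Proposition~\ref{prop:smart-moves} applied in the cyclic overlap.
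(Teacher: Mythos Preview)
Your proof is correct and follows the same approach as the paper's: orbit size from Proposition~\ref{prop:smart-eventually-shifts}, disjointness by identifying which head states can appear over an all-zero tape, and the counting argument $8\ell\cdot 3^\ell = 4\cdot(2\ell\cdot 3^\ell)$. The paper compresses your Step~2 into the single remark ``disjoint (by looking at the first column in the previous proof)'', which is exactly your observation that cell~$0$ stays nonzero throughout the recursive sub-moves; your write-up simply makes explicit that this relies on the ``$s_+$ never modified'' clause of Proposition~\ref{prop:smart-moves} applied at the cyclic overlap (note that strictly speaking only $s_+$ is guaranteed unmodified, but since $s_*$ and $s_+$ coincide as the same cell here, your conclusion stands).
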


\begin{proof}
  This is an immediate consequence of Proposition~\ref{prop:smart-eventually-shifts}: the orbits are each of length $2\ell \cdot 3^\ell$ (number of shifts $\times$ number of steps for each shift), and are disjoint (by looking at the first column in the previous proof). As there are $8\ell \cdot 3^\ell$ different cyclic configurations containing a head (eight different states with $\ell$ possible positions, and a ternary tape of length $\ell$), any configuration belongs to one of these four orbits: this concludes the proof.
\end{proof}

\subsection{Encoding SMART configurations}\label{sec:smart-encoding-overall}

Recall that for the SMART machine, $\Gamma = \{0,1,2\}$ and $Q \simeq \{ \smb,\smd,\smp,\smq\} \times \{1,2\} $. Lemma~\ref{lem:smart-action-as-cycles} implies that the set of cyclic SMART configurations of length $\ell$ that contain a head
\[ \{ x \in (\Gamma \cup (Q \times \Gamma))^{\Z/\ell\Z} \mid \exists! i \in \Z/\ell\Z, x_i \in Q \times \Gamma \} \]
is conjugate (as a finite dynamical system, or as a permutation) to a disjoint union of four (depending on whether the head is in state $\smb, \smd, \smp$ or $\smq$) disjoint systems of counters ranging in $\llbracket 0, \ell-1 \rrbracket \times \{1,2\} \times \{0,1,2\}^\ell$ (respectively for the position of the head, the second component $\{1,2\}$ of $Q$, and the tape of alphabet $\Gamma)$. Each of these counters encodes $2\ell \cdot 3^\ell$ different values, which is exactly the length of any SMART cycle by Lemma~\ref{lem:smart-action-as-cycles}.

\smallskip
We pick a natural conjugacy $\encoding_\ell : \ctape_\ell \to \ctape_\ell$, a shift-invariant bijection that encodes a SMART configuration into its orbit position in base $2\ell \cdot 3^\ell$. We refer to the conjugacy $\encoding_\ell$ as the \emph{encoding map}.

More precisely, if $w \in \ctape_\ell$ contains a head, then by Lemma~\ref{lem:smart-action-as-cycles} there exists some $0 \leq n < 2\ell \cdot 3^\ell$ such that $w = (T_{\ell,\smart})^n(C_q)$ for some initial configuration $C_q$ ($q \in \{\smb,\smd,\smp,\smq\}$), and the map $\encoding_\ell$ encodes the tuple $(q,n)$ in $\ctape_\ell$ as
\[ \encoding_\ell(w) = \sigma^{-\varepsilon \cdot a} \left( \begin{turing}
  c_1 & c_2 & \dots & c_\ell \\
  q_b & & &
\end{turing} \right) \]
where:
\begin{itemize}
  \item $q$ is stored in the first component of the state $\{\smb,\smd,\smp,\smq\}$;
  \item $b \cdot c \in \{1,2\} \cdot \{0,1,2\}^\ell$ encodes $n \bmod 2\cdot3^\ell$, i.e.\ $c$ is a ternary word satisfying $v_{(3)}(c) = n \bmod 3^\ell$, and $b$ stores $(\lfloor n/3^\ell \rfloor \bmod 2)+1$ in the second component of the state;
  \item $a$ is the quotient of $n$ by $2 \cdot 3^\ell$, and is encoded in how much is the cyclic configuration is shifted.
  \item $\varepsilon = +1$ if $q \in \{\smb,\smp\}$ (resp. $\varepsilon = -1$ if $q \in \{\smd,\smq\}$) shifts to the right (resp. left) if $q \in \{\smb,\smp\}$ (resp. $q \in \{\smd,\smq\}$).
\end{itemize}
and if $w \in \ctape_\ell$ contains no head (i.e. $w \in \Gamma^{\Z/\ell\Z})$, then we set $\encoding_\ell(w) = w$.

\smallskip
In other words, given a configuration $w = (T_{\ell,\smart})^n(C_q)$ for some initial configuration $C_q$ ($q \in \{\smb,\smd,\smp,\smq\}$) and $0 \leq n < 2\ell \cdot 3^\ell$, the map $\encoding_\ell$ encodes the tuple $(q,n)$ as plainly (and humanly readable) as possible. 

\bigskip
In the next two sections (Sections~\ref{sec:smart-analysis} and~\ref{sec:smart-encoding}), we detail how this bijection can be computed inductively, i.e.\ we define piecewise-defined bijective maps $\pencoding_{\mathrm{init}}$, $\pencoding_{k \to k+1}$ (for $0 \leq k \leq \ell-2$) and $\pencoding_{\ell,\mathrm{final}}$ acting on $\ctape_\ell$ such that
\[ \encoding_{\ell} = \pencoding_{\ell,\mathrm{final}} \circ \Big( \prod_{k=0}^{\ell-2} \pencoding_{k \to k+1} \Big) \circ \pencoding_{\mathrm{init}}. \] 

The precise definition of these maps is not strictly necessary to understand the rest of this paper (it is only used in lemmas~\ref{lem:encoding-case-in-group} and \ref{lem:encoding-steps-in-group}, where we need the piecewise defined functions to satisfy the requirements described in Section~\ref{sec:engineering-ducking-trick}). On a first reading, we recommend the reader simply remembers the idea of encoding configurations into a counter, and goes directly go to Section~\ref{sec:Finitary} about the finitary distortion of the SMART machine.

\subsection{Analysis of SMART configurations}\label{sec:smart-analysis}

We now explain how, given a cyclic SMART configuration $w$ of length $\ell$, we determine which orbit it belongs in and its position in this orbit, i.e.\ the number of steps required to obtain it from its corresponding initial configuration $C_\smb,C_\smd,C_\smp$ or $C_\smq$.

\medskip
We say that a cyclic configuration $w \in \ctape_\ell$ is performing the $j\text{-th}$ step of computation of $M_\smb(k)$ (resp.\ $M_\smd(k), M_\smp(k), M_\smq(k)$), for $0 \leq k \leq \ell-1$ and $0 \leq j \leq f(k)$, if it contains the $j\text{-th}$ pattern of the sequence of transitions $M_\smb(k)$ (resp. $M_\smd(k), M_\smp(k), M_\smq(k)$) of Proposition~\ref{prop:smart-moves}. At this point, it may not be clear that this is unique, but this will follow from our argument.

If a configuration is performing some step of computation from one of the moves $M_\smb(k)$, $M_\smd(k)$, $M_\smp(k)$ or $M_\smq(k)$, we refer to this move as its \emph{computation of level $k$}.

\paragraph*{Initialization}
We call the following patterns \emph{special patterns of level $k$ (for $1 \leq k \leq \ell-1$)}:
\footnotesize
\begin{align*}
  s(\smb_2,k) = & \begin{tpattern}
    s_+ & \widt{\didwid}{0^{k-1}} & 0 \\
    & & \smb_2
  \end{tpattern}
  & s(\smb_1,k) = & \begin{tpattern}
    s_+ & \widt{\didwid}{0^{k}} & s_* \\
    & & \smb_1
  \end{tpattern} \\
  s(\smd_2,k) = & \begin{tpattern}
    0 & \widt{\didwid}{0^{k-1}} & s_+ \\
    \smd_2 & &
  \end{tpattern}
  & s(\smd_1,k) = & \begin{tpattern}
    s_* & \widt{\didwid}{0^{k}} & s_+ \\
    \smd_1 & &
  \end{tpattern} \\
  s(\smp_2,k) = & \begin{tpattern}
    s_* & \widt{\didwid}{0^{k}} & s_+ \\
    \smp_2 & &
  \end{tpattern}
  & s(\smp_1,k) = & \begin{tpattern}
    0 & \widt{\didwid}{0^{k-1}} & s_+ \\
    \smp_1 & &
  \end{tpattern} \\
  s(\smq_2,k) = & \begin{tpattern}
    s_+ & \widt{\didwid}{0^{k}} & s_* \\
    & & \smq_2
  \end{tpattern}
  & s(\smq_1,k) = & \begin{tpattern}
    s_+ & \widt{\didwid}{0^{k-1}} & 0 \\
    & & \smq_1
  \end{tpattern}
\end{align*}
\normalsize
and the following are \emph{special patterns of level $\ell$}:
\footnotesize
\begin{align*}
  \begin{tpattern}
    0 & 0^{\ell-1} \\
    \smb_2
  \end{tpattern} \qquad
  \begin{tpattern}
    0 & 0^{\ell-1} \\
    \smd_2
  \end{tpattern} \\
  \begin{tpattern}
    0 & 0^{\ell-1} \\
    \smp_2
  \end{tpattern} \qquad
  \begin{tpattern}
    0 & 0^{\ell-1} \\
    \smq_2
  \end{tpattern}
\end{align*}
\normalsize
The latter appear exactly in the shifts of the configurations $\smart^{-1}(C_q)$, for $C_q$ the initial configurations ($q \in \{\smb,\smd,\smp,\smq\}$).

\medskip
By the proof of Proposition~\ref{prop:smart-moves}, we see that if a cyclic configuration contains a special pattern $s(\smb_2,k)$, $s(\smb_1,k)$, $s(\smd_2,k)$ or $s(\smd_1,k)$ (resp. $s(\smp_2,k)$, $s(\smp_1,k)$, $s(\smq_2,k)$ or $s(\smq_1,k)$), then it performs the last two steps of $M_\smb(k)$ or $M_\smd(k)$ respectively (resp. the first two steps of $M_\smp(k)$ or $M_\smq(k)$).

\begin{claim}
Given a cyclic configuration $w$ of length $\ell$ containing a head, exactly one of the following holds:
\begin{itemize}
  \item $w$ is the shift of an initial configuration.
  \item $w$ performs some step of computation of level $0$ from either $M_\smb(0)$, $M_\smd(0)$, $M_\smp(0)$ or $M_\smq(0)$.
  \item $w$ contains a special pattern of level $1 \leq k \leq \ell$.
\end{itemize}
\end{claim}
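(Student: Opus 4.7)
The plan is to exploit the recursive structure of the SMART computation. First I invoke Lemma~\ref{lem:smart-action-as-cycles} to place $w$ in the orbit of some initial configuration $C_q$; since shifts preserve all three properties in the claim, it suffices to analyze the configurations arising during a single shift period. Proposition~\ref{prop:smart-eventually-shifts} describes this period as two nested level-$(\ell-1)$ moves (namely $M_\smd(\ell-1), M_\smp(\ell-1)$ for $q = \smb$, with analogous structure in the other three cases) separated by four single transitions, and the recursive identities in the proof of Proposition~\ref{prop:smart-moves} further decompose each level-$(k+1)$ move into three nested level-$k$ moves separated by two leading (for $M_\smp, M_\smq$) or trailing (for $M_\smb, M_\smd$) single ``extra'' transitions. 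Every configuration $w$ in the orbit thus admits a well-defined deepest level $k$ at which it belongs to the execution of some $M_\bullet(k)$.

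The heart of the proof is then a straightforward induction on this deepest level. The base case $k = 0$ is immediate, since $M_q(0)$ consists of only two configurations (its start and end), placing $w$ in case (b). For the inductive step at level $k + 1$, consider a configuration $w$ during the execution of some level-$(k+1)$ move, say $M_\smb(k+1)$. Either $w$ lies inside one of the three nested level-$k$ moves and the induction hypothesis applies, or $w$ is one of the two trailing extras appearing in the last four transitions, namely
\[ \begin{tpatternc} s_+ & 0^k & 0 & s_* \\ & & \smb_2 & \end{tpatternc} \qquad \text{and} \qquad \begin{tpatternc} s_+ & 0^k & 0 & s_* \\ & & & \smb_1 \end{tpatternc}, \]
which visibly contain the special patterns $s(\smb_2, k+1)$ and $s(\smb_1, k+1)$ respectively, giving case (c). For $M_\smp(k+1)$ the two extras instead appear at the beginning and match $s(\smp_2, k+1), s(\smp_1, k+1)$; the $M_\smd, M_\smq$ cases follow by the left/right symmetry of the transition table. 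The same dichotomy applied at the outermost level of the shift period accounts for the endpoints $C_q$ and its shift (case (a)), the interiors of the nested level-$(\ell-1)$ moves (by induction), and a single truly extra configuration with $q_2$-head and all-zero tape, matching a level-$\ell$ special pattern (case (c)).

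Mutual exclusivity follows by inspecting the head state and its immediate neighborhood. Case (a) has the head in a $q_1$-state reading $0$ on an all-zero tape; case (b) has a nonzero $s_+ \in \{1,2\}$ either read by the head (in a $q_2$-state) or immediately adjacent to it (in a $q_1$-state); and every level-$k$ special pattern with $k \geq 1$ separates the head from the nearest $s_+$ by at least one $0$-cell, while the level-$\ell$ patterns have an all-zero tape around a $q_2$-head. These three local signatures are pairwise incompatible. The main technical obstacle I anticipate is the careful bookkeeping of the $s_*, s_+$ parameters through the recursion, and especially the wrap-around of width-$(\ell+1)$ patterns on the cyclic tape of length $\ell$ at the outermost recursion level, which forces identifications such as $s_* = s_+$ in the outer $M_\bullet(\ell-1)$ moves but does not otherwise affect the argument.
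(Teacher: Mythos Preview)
Your proof is correct but takes a genuinely different route from the paper's. The paper gives a one-sentence direct argument: it simply observes that the eight level-$0$ patterns together with all special patterns disjointly exhaust the non-initial headed configurations, because the level is determined by the distance from the head to the nearest nonzero symbol in the direction dictated by the state. In other words, the paper reads off the classification purely from the local shape of the tape around the head, without ever invoking the orbit structure or the recursion of Proposition~\ref{prop:smart-moves}.

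You instead work top-down: you first place $w$ in an orbit via Lemma~\ref{lem:smart-action-as-cycles}, then unwind the recursive decomposition of that orbit (Proposition~\ref{prop:smart-eventually-shifts} at the outermost level, the recurrence in Proposition~\ref{prop:smart-moves} below that) and classify by the deepest level at which $w$ sits. This is heavier machinery for what is really a local combinatorial fact, but it does buy you something: your argument makes explicit \emph{why} the three cases arise dynamically, which is exactly the information the next paragraphs of Section~\ref{sec:smart-analysis} need anyway. The paper's proof is shorter and self-contained; yours foreshadows the inductive analysis that follows. One small remark on your exclusivity paragraph: the phrase ``separates the head from the nearest $s_+$ by at least one $0$-cell'' is slightly imprecise for patterns like $s(\smb_1,k)$ where the head may read a nonzero $s_*$; what actually distinguishes case~(c) from case~(b) there is that the relevant \emph{directed} neighbor (left for $\smb_1$, right for $\smd_1$, etc.) is $0$, which is precisely the paper's ``appropriate direction'' criterion.
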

\begin{proof}
The patterns of $M_\smb(0)$, $M_\smd(0)$, $M_\smp(0)$ and $M_\smq(0)$ (eight in total), along with the special patterns of every level, disjointly cover all the non-initial configurations with a head (the level is determined by the distance to the nearest nonzero symbol in an appropriate direction).
\end{proof}

\begin{figure}
  \begin{mdframed}[backgroundcolor=black!5!white]
    $M_\smb(k)$ is performed in:
    \footnotesize
    \[
      \begin{array}{lrcl}
        M_\smb(k+1) \text{ (at step $0$):}
        & \begin{tpatternc}
          s_+ & 0^k & 0 & \textcolor{gray}{s_*} \\
          \smb_2 & & &
          \CodeAfter \tikz \draw (1-3) circle (1.5mm);
        \end{tpatternc}
        & \TMtrans^* &
        \begin{tpatternc}
          s_+ & 0^k & 0 & \textcolor{gray}{s_*} \\
          & & \smb_1 &
          \CodeAfter \tikz \draw (1-3) circle (1.5mm);
        \end{tpatternc} \\
        M_\smd(k+1) \text{ (at step $f(k)+1$):}
        & \begin{tpatternc}
          \textcolor{gray}{s_*} & 1 & 0^k & s_+ \\
          & \smb_2 & &
          \CodeAfter \tikz \draw (1-4) circle (1.8mm); \tikz \draw (1-2) circle (1.5mm);
        \end{tpatternc}
        & \TMtrans^* &
        \begin{tpatternc}
          \textcolor{gray}{s_*} & 1 & 0^k & s_+ \\
          & & & \smb_1
          \CodeAfter \tikz \draw (1-4) circle (1.8mm); \tikz \draw (1-2) circle (1.5mm);
        \end{tpatternc} \\
        M_\smp(k+1) \text{ (at step $2$):}
        & \begin{tpatternc}
          \textcolor{gray}{s_*} & 2 & 0^k & s_+ \\
          & \smb_2 & &
          \CodeAfter \tikz \draw (1-4) circle (1.8mm); \tikz \draw (1-2) circle (1.5mm);
        \end{tpatternc}
        & \TMtrans^* &
        \begin{tpatternc}
          \textcolor{gray}{s_*} & 2 & 0^k & s_+ \\
          & & & \smb_1
          \CodeAfter \tikz \draw (1-4) circle (1.8mm); \tikz \draw (1-2) circle (1.5mm);
        \end{tpatternc}
      \end{array}
    \]
    \normalsize

    $M_\smd(k)$ is performed in:
    \footnotesize
    \[
      \begin{array}{lrcl}
        M_\smd(k+1) \text{ (at step $0$):}
        & \begin{tpatternc}
          \textcolor{gray}{s_*} & 0 & 0^k & s_+ \\
          & & & \smd_2
          \CodeAfter \tikz \draw (1-2) circle (1.5mm);
        \end{tpatternc}
        & \TMtrans^* &
        \begin{tpatternc}
          \textcolor{gray}{s_*} & 0 & 0^k & s_+ \\
          & \smd_1 & &
          \CodeAfter \tikz \draw (1-2) circle (1.5mm);
        \end{tpatternc} \\
        M_\smb(k+1) \text{ (at step $f(k)+1$):}
        & \begin{tpatternc}
          s_+ & 0^k & 1 & \textcolor{gray}{s_*} \\
          & & \smd_2 &
          \CodeAfter \tikz \draw (1-1) circle (1.8mm); \tikz \draw (1-3) circle (1.5mm);
        \end{tpatternc}
        & \TMtrans^* &
        \begin{tpatternc}
          s_+ & 0^k & 1 & \textcolor{gray}{s_*} \\
          \smd_1 & & &
          \CodeAfter \tikz \draw (1-1) circle (1.8mm); \tikz \draw (1-3) circle (1.5mm);
        \end{tpatternc} \\
        M_\smq(k+1) \text{ (at step $2$):}
        & \begin{tpatternc}
          s_+ & 0^k & 2 & \textcolor{gray}{s_*} \\
          & & \smd_2 &
          \CodeAfter \tikz \draw (1-1) circle (1.8mm); \tikz \draw (1-3) circle (1.5mm);
        \end{tpatternc}
        & \TMtrans^* &
        \begin{tpatternc}
          s_+ & 0^k & 2 & \textcolor{gray}{s_*} \\
          \smd_2 & & &
          \CodeAfter \tikz \draw (1-1) circle (1.8mm); \tikz \draw (1-3) circle (1.5mm);
        \end{tpatternc}
      \end{array}
    \]
    \normalsize

    $M_\smp(k)$ is performed in:
    \footnotesize
    \[
      \begin{array}{lrcl}
        M_\smp(k+1) \text{ (at step $2f(k)+4$):}
        & \begin{tpatternc}
          \textcolor{gray}{s_*} & 0 & 0^k & s_+ \\
            & \smp_2 & &
            \CodeAfter \tikz \draw (1-2) circle (1.5mm);
        \end{tpatternc}
        & \TMtrans^* &
        \begin{tpatternc}
          \textcolor{gray}{s_*} & 0 & 0^k & s_+ \\
          & & & \smp_1
          \CodeAfter \tikz \draw (1-2) circle (1.5mm);
        \end{tpatternc} \\
        M_\smb(k+1) \text{ (at step $2f(k)+2$):}
        & \begin{tpatternc}
          s_+ & 0^k & 1 & \textcolor{gray}{s_*} \\
          \smp_2 & & &
          \CodeAfter \tikz \draw (1-1) circle (1.8mm); \tikz \draw (1-3) circle (1.5mm);
        \end{tpatternc}
        & \TMtrans^* &
        \begin{tpatternc}
          s_+ & 0^k & 1 & \textcolor{gray}{s_*} \\
          & & \smp_1 &
          \CodeAfter \tikz \draw (1-1) circle (1.8mm); \tikz \draw (1-3) circle (1.5mm);
        \end{tpatternc} \\
        M_\smq(k+1) \text{ (at step $f(k)+3$):}
        & \begin{tpatternc}
          s_+ & 0^k & 2 & \textcolor{gray}{s_*}\\
          \smp_2 & & &
          \CodeAfter \tikz \draw (1-1) circle (1.8mm); \tikz \draw (1-3) circle (1.5mm);
        \end{tpatternc}
        & \TMtrans^* &
        \begin{tpatternc}
          s_+ & 0^k & 2 & \textcolor{gray}{s_*}\\
          & & \smp_1 &
          \CodeAfter \tikz \draw (1-1) circle (1.8mm); \tikz \draw (1-3) circle (1.5mm);
        \end{tpatternc}
      \end{array}
    \]
    \normalsize

    $M_\smq(k)$ is performed in:
    \footnotesize
    \[
      \begin{array}{lrcl}
        M_\smq(k+1) \text{ (at step $2f(k)+4$):}
        & \begin{tpatternc}
          s_+ & 0^k & 0 & \textcolor{gray}{s_*} \\
          & & \smq_2 &
          \CodeAfter \tikz \draw (1-3) circle (1.5mm);
        \end{tpatternc}
        & \TMtrans^* &
        \begin{tpatternc}
          s_+ & 0^k & 0 & \textcolor{gray}{s_*} \\
          \smq_1 & & &
          \CodeAfter \tikz \draw (1-3) circle (1.5mm);
        \end{tpatternc} \\
        M_\smd(k+1) \text{ (at step $2f(k)+2$):}
        & \begin{tpatternc}
          \textcolor{gray}{s_*} & 1 & 0^k & s_+ \\
          & & & \smq_2
          \CodeAfter \tikz \draw (1-4) circle (1.8mm); \tikz \draw (1-2) circle (1.5mm);
        \end{tpatternc}
        & \TMtrans^* &
        \begin{tpatternc}
          \textcolor{gray}{s_*} & 1 & 0^k & s_+ \\
          & \smq_1 & &
          \CodeAfter \tikz \draw (1-4) circle (1.8mm); \tikz \draw (1-2) circle (1.5mm);
        \end{tpatternc} \\
        M_\smp(k+1) \text{ (at step $f(k)+3$):}
        & \begin{tpatternc}
          \textcolor{gray}{s_*} & 2 & 0^k & s_+ \\
          & & & \smq_2
          \CodeAfter \tikz \draw (1-4) circle (1.8mm); \tikz \draw (1-2) circle (1.5mm);
        \end{tpatternc}
        & \TMtrans^* &
        \begin{tpatternc}
          \textcolor{gray}{s_*} & 2 & 0^k & s_+ \\
          & \smq_1 & &
          \CodeAfter \tikz \draw (1-4) circle (1.8mm); \tikz \draw (1-2) circle (1.5mm);
        \end{tpatternc}
      \end{array}
    \]
    \normalsize
    \end{mdframed}

    \caption{Bottom-up analysis of SMART configurations: $k \to k+1$}
    \captionsetup{width=.82\linewidth,font=small}
    \caption*{Intermediate steps of moves of level $k+1$ (patterns of length $k+3$) corresponding to sub-moves of level $k$. We darken the part of the pattern of length $k+2$ performing the sub-move of level $k$. Circled ternary letters are not modified by the sub-move of level $k$, hence can be used to perform a case-analysis.}
    \label{fig:bottom-up-analysis-smart}
\end{figure}

\paragraph*{Inductive analysis $k \to k+1$ (for $k+1 \leq \ell-1$):}
Let $k \leq \ell-2$ be an integer, and $w$ be a non-initial cyclic configuration. If $w$ performs some computation of level $k$ (for $0 \leq k \leq \ell-2$), then it performs some computation of level $k+1$: indeed, Figure~\ref{fig:bottom-up-analysis-smart} shows that any computation of level $k$ belongs to some computation of level $k+1$, and that the latter is uniquely determined by considering the value of two cells (circled on the figure) which are left unmodified by the computation of level $k$.

Additionally, if we know that $w$ performs the $j\text{-th}$ step of some computation of level $k$ (for $0 \leq j \leq f(k)$), then the same case-analysis determines $j'$ ($0 \leq j' \leq f(k+1)$) such that $w$ performs the $j'\text{-th}$ step of its computation of level~$k+1$. 

\bigskip
\begin{example}
\label{ex:FindingStep1}
For example, consider the cyclic configuration of length $\ell = 6$:
  \footnotesize
  \[ 
    w = \begin{turingc}
    1 & 2 & 1 & 1 & 2 & 0 \\
    & & & \smb_2 & &
  \end{turingc}
  \]\normalsize

  First, $w$ performs step $0$ of $M_\smb(0)$ (by simply considering all the computations of level $0$). Indeed, the highlighted pattern inside the configuration below is the $0\text{-th}$ pattern of the move $M_\smb(0)$:

  \footnotesize
  \[ 
    w = \begin{turingc}
      \textcolor{gray}{1} & \textcolor{gray}{2} & \textcolor{gray}{1} & 1 & 2 & \textcolor{gray}{0} \\
      & & & \smb_2 & &
      \CodeAfter \tikz \draw [black,decorate,decoration = {brace,raise=0pt,amplitude=3pt,aspect=0.5}] ($(1-|4) + (0.05,0)$) -- ($(1-|6) + (-0.05,0)$);
  \end{turingc}
  \]\normalsize

  \smallskip
  Then, by looking at Figure~\ref{fig:bottom-up-analysis-smart} (four cases, with three subcases each), we deduce succesively that:
  \begin{enumerate}
    \item By considering Figure~\ref{fig:bottom-up-analysis-smart} (case $M_\smb(0)$, second subcase)
    \footnotesize
    \[ 
      w = \begin{turingc}
        \textcolor{gray}{1} & \textcolor{gray}{2} & \textcolor{gray}{1} & 1 & 2 & \textcolor{gray}{0} \\
        & & & \smb_2 & &
        \CodeAfter \tikz \draw (1-4) circle (1.5mm); \tikz \draw (1-5) circle (1.5mm);
    \end{turingc}
    \]\normalsize
    we deduce that $w$ performs step $2 = 0 + (f(0)+1)$ of $M_\smd(1)$. Additionally, the computation extends to the left, i.e.\ the move $M_\smd(1)$ appears in the following highlighted pattern:

    \footnotesize
    \[ 
      w = \begin{turingc}
        \textcolor{gray}{1} & \textcolor{gray}{2} & 1 & 1 & 2 & \textcolor{gray}{0} \\
        & & & \smb_2 & &
        \CodeAfter \tikz \draw [black,decorate,decoration = {brace,raise=0pt,amplitude=3pt,aspect=0.5}] ($(1-|3) + (0.05,0)$) -- ($(1-|6) + (-0.05,0)$);
    \end{turingc}
    \]\normalsize
    
    \item By considering Figure~\ref{fig:bottom-up-analysis-smart} (case $M_\smd(1)$, third subcase)
    \footnotesize
    \[ 
      w = \begin{turingc}
        \textcolor{gray}{1} & \textcolor{gray}{2} & 1 & 1 & 2 & \textcolor{gray}{0} \\
        & & & \smb_2 & &
        \CodeAfter \tikz \draw (1-3) circle (1.5mm); \tikz \draw (1-5) circle (1.5mm);
    \end{turingc}
    \]\normalsize
    we deduce that $w$ performs step $4 = 2 + 2$ of $M_\smq(2)$. Additionally, the computation extends to the right and the move $M_\smq(2)$ appears in the following highlighted pattern:

    \footnotesize
    \[ 
      w = \begin{turingc}
        \textcolor{gray}{1} & \textcolor{gray}{2} & 1 & 1 & 2 & 0 \\
        & & & \smb_2 & &
        \CodeAfter \tikz \draw [black,decorate,decoration = {brace,raise=0pt,amplitude=3pt,aspect=0.5}] ($(1-|3) + (0.05,0)$) -- (1-|7);
    \end{turingc}
    \]\normalsize

    \item By considering Figure~\ref{fig:bottom-up-analysis-smart} (case $M_\smq(2)$, first subcase)
    \footnotesize
    \[ 
      w = \begin{turingc}
        \textcolor{gray}{1} & \textcolor{gray}{2} & 1 & 1 & 2 & 0 \\
        & & & \smb_2 & &
        \CodeAfter \tikz \draw (1-3) circle (1.5mm); \tikz \draw (1-6) circle (1.5mm);
    \end{turingc}
    \]\normalsize
    we deduce that $w$ performs step $58 = 4 + (2f(2)+4)$ of $M_\smq(3)$. Additionally, the computation extends to the right and the move $M_\smq(3)$ appears in the following highlighted pattern (remember that configurations are cyclic)
    \footnotesize
    \[ 
      w = \begin{turingc}
        1 & \textcolor{gray}{2} & 1 & 1 & 2 & 0 \\
        & & & \smb_2 & &
        \CodeAfter 
        \tikz{\clip ($(1-|3) + (0,-0.3)$) rectangle ($(1-|7) + (0,0.3)$);
        \draw [black,decorate,decoration = {brace,raise=0pt,amplitude=3pt,aspect=0.5}] ($(1-|3) + (0.05,0)$) -- ($(1-|7) + (0.4,0)$);}
        \tikz{\clip ($(1-|1) + (0,-0.3)$) rectangle ($(1-|2) + (0,0.3)$);
        \draw [black,decorate,decoration = {brace,raise=0pt,amplitude=3pt,aspect=0.5}] ($(1-|1) + (-0.8,0)$) -- ($(1-|2) + (-0.05,0)$);}
    \end{turingc}
    \]\normalsize
    
    \item By considering Figure~\ref{fig:bottom-up-analysis-smart} (case $M_\smq(3)$, second subcase)
    \footnotesize
    \[ 
      w = \begin{turingc}
        1 & \textcolor{gray}{2} & 1 & 1 & 2 & 0 \\
        & & & \smb_2 & &
        \CodeAfter \tikz \draw (1-3) circle (1.5mm); \tikz \draw (1-1) circle (1.5mm);
    \end{turingc}
    \]\normalsize
    we deduce that $w$ performs step $218 = 58 + (2f(3)+2)$ of $M_\smd(4)$. Additionally, the computation extends to the left and the move $M_\smd(4)$ appears in the following highlighted pattern:

    \footnotesize
    \[ 
      w = \begin{turingc}
        1 & 2 & 1 & 1 & 2 & 0 \\
        & & & \smb_2 & &
        \CodeAfter 
        \tikz{\clip ($(1-|2) + (0,-0.3)$) rectangle ($(1-|7) + (0,0.3)$);
        \draw [black,decorate,decoration = {brace,raise=0pt,amplitude=3pt,aspect=0.5}] ($(1-|2) + (0.05,0)$) -- ($(1-|7) + (0.4,0)$);}
        \tikz{\clip ($(1-|1) + (0,-0.3)$) rectangle ($(1-|2) + (0,0.3)$);
        \draw [black,decorate,decoration = {brace,raise=0pt,amplitude=3pt,aspect=0.5}] ($(1-|1) + (-0.8,0)$) -- ($(1-|2) + (-0.05,0)$);}
    \end{turingc}
    \]\normalsize

    \item By considering Figure~\ref{fig:bottom-up-analysis-smart} (case $M_\smd(4)$, second subcase)
    \footnotesize
    \[ 
      w = \begin{turingc}
        1 & 2 & 1 & 1 & 2 & 0 \\
        & & & \smb_2 & &
        \CodeAfter \tikz \draw (1-2) circle (1.5mm); \tikz \draw (1-1) circle (1.5mm);
    \end{turingc}
    \]\normalsize
    we deduce that $w$ performs step $460 = 218 + (f(4)+1)$ of $M_\smb(5)$.

    On the final step, we should extend to the right; since we run out of cells on the tape, this means that we should interpret the circled $2$-cell as both the first and last cell of the $M_\smb(5)$-computation, i.e. the move $M_\smb(5)$ appears in the following highlighted self-overlapping pattern:

    \footnotesize
    \[ 
      w = \begin{turingc}
        1 & 2 & 1 & 1 & 2 & 0 \\
        & & & \smb_2 & &
        \CodeAfter 
        \tikz{\clip ($(1-|2) + (0,-0.3)$) rectangle ($(1-|7) + (0,0.3)$);
        \draw [black,decorate,decoration = {brace,raise=0pt,amplitude=3pt,aspect=0.5}] ($(1-|2) + (0.05,0)$) -- ($(1-|7) + (0.8,0)$);}
        \tikz{\clip ($(1-|1) + (0,-0.3)$) rectangle ($(1-|3) + (0,0.3)$);
        \draw [black,decorate,decoration = {brace,raise=0pt,amplitude=3pt,aspect=0.5}] ($(1-|1) + (-1.6,-0.01)$) -- ($(1-|3) + (-0.05,-0.01)$);}
    \end{turingc}
    \]\normalsize
  \end{enumerate}
\end{example}

So we claim that $w$ performs step $460 = 218 + (f(4)+1)$ of $M_\smb(5)$. And one can verify by a direct calculation (for example by computer) that

\[
      \begin{tpatternc}
          2 & 0 & 0 & 0 & 0 & 0 & {\color{gray}{s_*}} \\
          \smb_2 & & & & & &
      \end{tpatternc}
      \TMtrans^{\scriptscriptstyle{460}}
      \begin{tpatternc}
          2 & 1 & 1 & 2 & 0 & 1 & {\color{gray}{s_*}} \\
          & & \smb_2 & & & &
      \end{tpatternc}
\]
as we just deduced. \qee

\paragraph*{Conclusion:}
Let $w \in \ctape_\ell$ be a cyclic tape of length $\ell$ containing a head. By the claim above, there are three different cases.

Either $w$ is the shift of an initial configuration, in which case the state of $w$ determines which orbit $w$ belongs to, and counting the shift (and multiplying it by $2 \cdot 3^\ell$) is enough to know the position of the configuration $w$ in its orbit. A similar reasoning applies for configurations $w$ that contain a special pattern of level $\ell$.

\smallskip
On the other hand, assume $w$ contains some computation of level $0$. Then $w$ contains computations of every level $k$ for $0 \leq k \leq \ell-1$ by the previous induction. Similarly, if $w$ contains a special pattern of level $k$ for some $0 \leq k \leq \ell-1$, then $w$ corresponds to either the last two steps of $M_\smb(k)$ or $M_\smd(k)$, or the first two steps of $M_\smp(k)$ or $M_\smq(k)$. Then $w$ corresponds to computations of every level $k'$ for $k \leq k' \leq \ell-1$ by the previous induction.

Finally, the structure of each SMART cycle (detailed in the proof of Proposition~\ref{prop:smart-eventually-shifts}) enables to conclude about which orbit the configuration $w$ belong to, and its position in said orbit.

\bigskip
\begin{example}
\label{ex:FindingStep2}
  Consider once again the cyclic configuration of length $\ell = 6$

  \footnotesize
  \[ 
    w = \begin{turingc}
    1 & 2 & 1 & 1 & 2 & 0 \\
    & & & \smb_2 & &
  \end{turingc}
  \]\normalsize

  By the previous example, $w$ performs step $460$ of $M_\smb(5)$ on the following (self-overlapping) pattern:

  \footnotesize
  \[ 
    w = \begin{turingc}
      1 & 2 & 1 & 1 & 2 & 0 \\
      & & & \smb_2 & &
      \CodeAfter 
      \tikz{\clip ($(1-|2) + (0,-0.3)$) rectangle ($(1-|7) + (0,0.3)$);
      \draw [black,decorate,decoration = {brace,raise=0pt,amplitude=3pt,aspect=0.5}] ($(1-|2) + (0.05,0)$) -- ($(1-|7) + (0.8,0)$);}
      \tikz{\clip ($(1-|1) + (0,-0.3)$) rectangle ($(1-|3) + (0,0.3)$);
      \draw [black,decorate,decoration = {brace,raise=0pt,amplitude=3pt,aspect=0.5}] ($(1-|1) + (-1.6,-0.01)$) -- ($(1-|3) + (-0.05,-0.01)$);}
  \end{turingc}
  \]\normalsize

  Considering the proof of Proposition~\ref{prop:smart-eventually-shifts}, only the orbits of $C_\smp$ and $C_\smq$ have a cell containing tape-letter $2$ that is left unchanged during a computation of level $\ell-1$. Additionally, out of these two, only the orbit of $C_\smp$ contains the move $M_\smb(5)$.

  So $w$ belongs in the orbit of $C_\smp$. From the proof of Proposition~\ref{prop:smart-eventually-shifts}, we deduce that, modulo $2 \cdot 3^\ell$, the position of $w$ in said orbit is $1+460$. Finally, $2$ being the second cell of the tape, one shift already happened in the orbit: we conclude that the position of $w$ in the orbit of $C_\smp$ is $2\cdot 3^\ell + 461 = 1919$.

Indeed, one can verify by a direct calculation that
\[
      \begin{turingc}
          0 & 0 & 0 & 0 & 0 & 0 \\
          \smp_1 & & & & &
      \end{turingc}
      \TMtrans^{\scriptscriptstyle{1919}}
      \begin{turingc}
          1 & 2 & 1 & 1 & 2 & 0 \\
          & & & \smb_2 & &
      \end{turingc}
\]
as we just deduced. \qee
\end{example}

\subsection{Encoding cyclic configurations into their orbit positions in \texorpdfstring{$\ctape_\ell$}{\ctape\_{}l}}\label{sec:smart-encoding}

\smallskip
Denote $\smart$ the SMART machine introduced above. Recall the encoding map $\encoding_\ell : \ctape_\ell \to \ctape_\ell$ defined in Section~\ref{sec:smart-encoding-overall} as
\[ \encoding_\ell(w) = \sigma^{-\varepsilon \cdot a} \left( \begin{turing}
  c_1 & c_2 & \dots & c_\ell \\
  q_b & & &
\end{turing} \right) \]
if $w$ contains a head (in which case, there exists $n$ and $q \in \{\smb,\smd,\smp,\smq\}$ such that $w = T_{\ell,\smart}^n(C_q)$, and we set $b \cdot c \in \{1,2\} \cdot \{0,1,2\}^\ell$ to encode $n \bmod 2 \cdot 3^\ell$, and $a$ is the quotient of $n$ by $2 \cdot 3^\ell$); and if $w \in \ctape_\ell$ contains no head, then we set $\encoding_\ell(w) = w$.

\paragraph*{Inductive encoding}

\begin{figure}
  \begin{mdframed}[backgroundcolor=black!5!white]
    \footnotesize
    \[
      \begin{array}{lrcl}
        \text{Special $\smb_1$ higher level:}
        & \begin{tpattern}
          0 & s_* \\
          & \smb_1
        \end{tpattern}
        & \mapsto &
        \begin{tpattern}
          0 & s_* \\
          & \smb_1
        \end{tpattern} \\
        \text{Special $\smb_2$ higher level:}
        & \begin{tpattern}
          0 & s_* \\
          \smb_2 &
        \end{tpattern}
        & \mapsto &
        \begin{tpattern}
          0 & s_* \\
          & \smb_2
        \end{tpattern} \\
        M_\smb(0):
        & \begin{tpattern}
          1 & s_* \\
          \smb_2 &
        \end{tpattern}
        & \mapsto &
        \begin{tpattern}
          1 & s_* \\
          & \smb_1
        \end{tpattern} \\
        & \begin{tpattern}
          2 & s_* \\
          \smb_2 & 
        \end{tpattern}
        & \mapsto & 
        \begin{tpattern}
          1 & s_* \\
          & \smb_2
        \end{tpattern} \\
        & \begin{tpattern}
          1 & s_* \\
          & \smb_1
        \end{tpattern}
        & \mapsto &
        \begin{tpattern}
          2 & s_* \\
          & \smb_1
        \end{tpattern} \\
        & \begin{tpattern}
          2 & s_* \\
          & \smb_1
        \end{tpattern}
        & \mapsto & 
        \begin{tpattern}
          2 & s_* \\
          & \smb_2
        \end{tpattern}
      \end{array}
    \]
    \normalsize

    \medskip
    \footnotesize
    \[
      \begin{array}{lrcl}
        \text{Special $\smd_1$ higher level:}
        & \begin{tpattern}
          s_* & 0 \\
          \smd_1 &
        \end{tpattern}
        & \mapsto &
        \begin{tpattern}
          s_* & 0 \\
          \smd_1 & 
        \end{tpattern} \\
        \text{Special $\smd_2$ higher level:}
        & \begin{tpattern}
          s_* & 0 \\
          & \smd_2
        \end{tpattern}
        & \mapsto &
        \begin{tpattern}
          s_* & 0 \\
          \smd_2 &
        \end{tpattern} \\
        M_\smd(0):
        & \begin{tpattern}
          s_* & 1 \\
          & \smd_2
        \end{tpattern}
        & \mapsto &
        \begin{tpattern}
          s_* & 1 \\
          \smd_1 &
        \end{tpattern} \\
        & \begin{tpattern}
          s_* & 2 \\
          & \smd_2
        \end{tpattern}
        & \mapsto &
        \begin{tpattern}
          s_* & 1 \\
          \smd_2 &
        \end{tpattern} \\
        & \begin{tpattern}
          s_* & 1 \\
          \smd_1 &
        \end{tpattern}
        & \mapsto &
        \begin{tpattern}
          s_* & 2 \\
          \smd_1 &
        \end{tpattern} \\
        & \begin{tpattern}
          s_* & 2 \\
          \smd_1 &
        \end{tpattern}
        & \mapsto &
        \begin{tpattern}
          s_* & 2 \\
          \smd_2 &
        \end{tpattern}
      \end{array}
    \]
    \normalsize

    \medskip
    \footnotesize
    \[
      \begin{array}{lrcl}
        \text{Special $\smp_1$ higher level:}
        & \begin{tpattern}
          s_* & 0 \\
          & \smp_1
        \end{tpattern}
        & \mapsto &
        \begin{tpattern}
          s_* & 0 \\
          \smp_1 & 
        \end{tpattern} \\
        \text{Special $\smp_2$ higher level:}
        & \begin{tpattern}
          s_* & 0 \\
          \smp_2 &
        \end{tpattern}
        & \mapsto &
        \begin{tpattern}
          s_* & 0 \\
          \smp_2 &
        \end{tpattern} \\
        M_\smp(0):
        & \begin{tpattern}
          s_* & 1 \\
          \smp_2 &
        \end{tpattern}
        & \mapsto &
        \begin{tpattern}
          s_* & 1 \\
          \smp_1 &
        \end{tpattern} \\
        & \begin{tpattern}
          s_* & 2 \\
          \smp_2 & 
        \end{tpattern}
        & \mapsto &
        \begin{tpattern}
          s_* & 1 \\
          \smp_2 &
        \end{tpattern} \\
        & \begin{tpattern}
          s_* & 1 \\
          & \smp_1 
        \end{tpattern}
        & \mapsto &
        \begin{tpattern}
          s_* & 2 \\
          \smp_1 &
        \end{tpattern} \\
        & \begin{tpattern}
          s_* & 2 \\
          & \smp_1
        \end{tpattern}
        & \mapsto &
        \begin{tpattern}
          s_* & 2 \\
          \smp_2 &
        \end{tpattern}
      \end{array}
    \]
    \normalsize

    \medskip
    \footnotesize
    \[
      \begin{array}{lrcl}
        \text{Special $\smq_1$ higher level:}
        & \begin{tpattern}
          0 & s_* \\
          \smq_1 &
        \end{tpattern}
        & \mapsto &
        \begin{tpattern}
          0 & s_* \\
          & \smq_1
        \end{tpattern} \\
        \text{Special $\smq_2$ higher level:}
        & \begin{tpattern}
          0 & s_* \\
          & \smq_2
        \end{tpattern}
        & \mapsto &
        \begin{tpattern}
          0 & s_* \\
          & \smq_2
        \end{tpattern} \\
        M_\smq(0):
        & \begin{tpattern}
          1 & s_* \\
          & \smq_2
        \end{tpattern}
        & \mapsto &
        \begin{tpattern}
          1 & s_* \\
          & \smq_1
        \end{tpattern} \\
        & \begin{tpattern}
          2 & s_* \\
          & \smq_2
        \end{tpattern}
        & \mapsto & 
        \begin{tpattern}
          1 & s_* \\
          & \smq_2
        \end{tpattern} \\
        & \begin{tpattern}
          1 & s_* \\
          \smq_1 & 
        \end{tpattern}
        & \mapsto &
        \begin{tpattern}
          2 & s_* \\
          & \smq_1
        \end{tpattern} \\
        & \begin{tpattern}
          2 & s_* \\
          \smq_1 & 
        \end{tpattern}
        & \mapsto & 
        \begin{tpattern}
          2 & s_* \\
          & \smq_2
        \end{tpattern}
      \end{array}
    \]
    \normalsize
  \end{mdframed}

  \caption{Encoding SMART configurations: $\pencoding_{\mathrm{init}}$}
  \captionsetup{width=.82\linewidth,font=small}
  \caption*{$\pencoding_\mathrm{init}$ rewrites sub-patterns of length $2$ in cyclic SMART configurations.}
  \label{fig:encoding-initialization}
\end{figure}

\begin{figure}
  \vspace*{-2.5em}
  \begin{mdframed}[backgroundcolor=black!5!white]
    From $M_\smb(k)$ to $(k+1)$-level encoding:
    \footnotesize
    \[ 
      \begin{array}{lrcl}
        M_\smb(k+1) (\text{if } \val_3(c) \neq 0):
        & \begin{tpatternc}
          \textcolor{gray}{*} & c & 0 & \textcolor{gray}{*} \\
          & & \smb_1 &
        \end{tpatternc}
        & \mapsto &
        \begin{tpatternc}
          \textcolor{gray}{*} & \widt{\midwid}{[\val_3(c)+0]_{(3)}} & \textcolor{gray}{*} \\
          & & \smb_1
        \end{tpatternc} \\
        & \begin{tpatternc}
          \textcolor{gray}{*} & c & 0 & \textcolor{gray}{*} \\
          & & \smb_2 &
        \end{tpatternc}
        & \mapsto &
        \begin{tpatternc}
          \textcolor{gray}{*} & \widt{\midwid}{[\val_3(c)+0]_{(3)}} & \textcolor{gray}{*} \\
          & & \smb_2
        \end{tpatternc} \\
        M_\smd(k+1) (\text{if } \val_3(c) \neq 0):
        & \begin{tpatternc}
          \textcolor{gray}{*} & c & 1 & \textcolor{gray}{*} \\
          & & \smb_1 &
        \end{tpatternc}
        & \mapsto &
        \begin{tpatternc}
          \textcolor{gray}{*} & \widt{\midwid}{[\val_3(c) + f(k) + 1]_{(3)}} & \textcolor{gray}{*} \\
          \smd_1 & &
        \end{tpatternc}\\
        & \begin{tpatternc}
          \textcolor{gray}{*} & c & 2 & \textcolor{gray}{*} \\
          & & \smb_1 &
        \end{tpatternc}
        & \mapsto &
        \begin{tpatternc}
          \textcolor{gray}{*} & \widt{\midwid}{[\val_3(c) + f(k) + 1]_{(3)}} & \textcolor{gray}{*} \\
          \smd_2 & &
        \end{tpatternc} \\
        M_\smp(k+1) (\text{if } \val_3(c) \neq 0):
        & \begin{tpatternc}
          \textcolor{gray}{*} & c & 1 & \textcolor{gray}{*} \\
          & & \smb_2 &
        \end{tpatternc}
        & \mapsto &
        \begin{tpatternc}
          \textcolor{gray}{*} & \widt{\midwid}{[\val_3(c) + 2]_{(3)}} & \textcolor{gray}{*} \\
          \smp_1 & &
        \end{tpatternc} \\
        & \begin{tpatternc}
          \textcolor{gray}{*} & c & 2 & \textcolor{gray}{*} \\
          & & \smb_2 &
        \end{tpatternc}
        & \mapsto &
        \begin{tpatternc}
          \textcolor{gray}{*} & \widt{\midwid}{[\val_3(c) + 2]_{(3)}} & \textcolor{gray}{*} \\
          \smp_2 & &
        \end{tpatternc}
      \end{array}
    \]
    \normalsize
    From $M_\smd(k)$ to $(k+1)$-level encoding:
    \footnotesize
    \[
      \begin{array}{lrcl}
        M_\smd(k+1) (\text{if } \val_3(c) \neq 0):
        & \begin{tpatternc}
          \textcolor{gray}{*} & 0 & c & \textcolor{gray}{*} \\
          & \smd_1 & &
        \end{tpatternc}
        & \mapsto &
        \begin{tpatternc}
          \textcolor{gray}{*} & \widt{\midwid}{[\val_3(c)+0]_{(3)}} & \textcolor{gray}{*} \\
          \smd_1 & &
        \end{tpatternc} \\
        & \begin{tpatternc}
          \textcolor{gray}{*} & 0 & c & \textcolor{gray}{*} \\
          & \smd_2 & &
        \end{tpatternc}
        & \mapsto &
        \begin{tpatternc}
          \textcolor{gray}{*} & \widt{\midwid}{[\val_3(c)+0]_{(3)}} & \textcolor{gray}{*} \\
          \smd_2 & &
        \end{tpatternc} \\
        M_\smb(k+1) (\text{if } \val_3(c) \neq 0):
        & \begin{tpatternc}
          \textcolor{gray}{*} & 1 & c & \textcolor{gray}{*} \\
          & \smd_1 & &
        \end{tpatternc}
        & \mapsto &
        \begin{tpatternc}
          \textcolor{gray}{*} & \widt{\midwid}{[\val_3(c) + f(k)+1]_{(3)}} & \textcolor{gray}{*} \\
          & & \smb_1
        \end{tpatternc} \\
        & \begin{tpatternc}
          \textcolor{gray}{*} & 2 & c & \textcolor{gray}{*} \\
          & \smd_1 & &
        \end{tpatternc}
        & \mapsto &
        \begin{tpatternc}
          \textcolor{gray}{*} & \widt{\midwid}{[\val_3(c) + f(k)+1]_{(3)}} & \textcolor{gray}{*} \\
          & & \smb_2
        \end{tpatternc} \\
        M_\smq(k+1) (\text{if } \val_3(c) \neq 0):
        & \begin{tpatternc}
          \textcolor{gray}{*} & 1 & c & \textcolor{gray}{*} \\
          & \smd_2 & &
        \end{tpatternc}
        & \mapsto &
        \begin{tpatternc}
          \textcolor{gray}{*} & \widt{\midwid}{[\val_3(c) + 2]_{(3)}} & \textcolor{gray}{*} \\
          & & \smq_1
        \end{tpatternc} \\
        & \begin{tpatternc}
          \textcolor{gray}{*} & 2 & c & \textcolor{gray}{*} \\
          & \smd_2 & &
        \end{tpatternc}
        & \mapsto &
        \begin{tpatternc}
          \textcolor{gray}{*} & \widt{\midwid}{[\val_3(c) + 2]_{(3)}} & \textcolor{gray}{*} \\
          & & \smq_2
        \end{tpatternc}
      \end{array}
    \]
    \normalsize
    From $M_\smp(k)$ to $(k+1)$-level encoding:
    \footnotesize
    \[
      \begin{array}{lrcl}
        M_\smp(k+1) (\text{if } \val_3(c) \neq 0):
        & \begin{tpatternc}
          \textcolor{gray}{*} & 0 & c & \textcolor{gray}{*} \\
          & \smp_1 & &
        \end{tpatternc}
        & \mapsto &
        \begin{tpatternc}[]
          \textcolor{gray}{*} & \widt{\midwid}{[\val_3(c) + 2f(k) + 4]_{(3)}} & \textcolor{gray}{*} \\
          \smp_1 & &
        \end{tpatternc} \\
        & \begin{tpatternc}
          \textcolor{gray}{*} & 0 & c & \textcolor{gray}{*} \\
          & \smp_2 & &
        \end{tpatternc}
        & \mapsto &
        \begin{tpatternc}[]
          \textcolor{gray}{*} & \widt{\midwid}{[\val_3(c) + 2f(k) + 4]_{(3)}} & \textcolor{gray}{*} \\
          \smp_2 & &
        \end{tpatternc} \\
        M_\smb(k+1) (\text{if } 0 \;\; c \neq 0):
        & \begin{tpatternc}
          \textcolor{gray}{*} & 1 & c & \textcolor{gray}{*} \\
          & \smp_1 & &
        \end{tpatternc}
        & \mapsto &
        \begin{tpatternc}[]
          \textcolor{gray}{*} & \widt{\midwid}{[\val_3(c) + 2f(k)+2]_{(3)}} & \textcolor{gray}{*} \\
          & & \smb_1
        \end{tpatternc} \\
        & \begin{tpatternc}
          \textcolor{gray}{*} & 2 & c & \textcolor{gray}{*} \\
          & \smp_1 & &
        \end{tpatternc}
        & \mapsto &
        \begin{tpatternc}[]
          \textcolor{gray}{*} & \widt{\midwid}{[\val_3(c) + 2f(k)+2]_{(3)}} & \textcolor{gray}{*} \\
          & & \smb_2
        \end{tpatternc} \\
        M_\smq(k+1) (\text{if } \val_3(c) \neq 0):
        & \begin{tpatternc}
          \textcolor{gray}{*} & 1 & c & \textcolor{gray}{*} \\
          & \smp_2 & &
        \end{tpatternc}
        & \mapsto &
        \begin{tpatternc}[]
          \textcolor{gray}{*} & \widt{\midwid}{[\val_3(c) + f(k)+3]_{(3)}} & \textcolor{gray}{*} \\
          & & \smq_1
        \end{tpatternc} \\
        & \begin{tpatternc}
          \textcolor{gray}{*} & 2 & c & \textcolor{gray}{*} \\
          & \smp_2 & &
        \end{tpatternc}
        & \mapsto &
        \begin{tpatternc}[]
          \textcolor{gray}{*} & \widt{\midwid}{[\val_3(c) + f(k)+3]_{(3)}} & \textcolor{gray}{*} \\
          & & \smq_2
        \end{tpatternc}
      \end{array}
    \]
    \normalsize
    From $M_\smq(k)$ to $(k+1)$-level encoding:
    \footnotesize
    \[
      \begin{array}{lrcl}
        M_\smq(k+1) (\text{if } \val_3(c) \neq 0):
        & \begin{tpatternc}
          \textcolor{gray}{*} & c & 0 & \textcolor{gray}{*} \\
          & & \smq_1
        \end{tpatternc}
        & \mapsto &
        \begin{tpatternc}
          \textcolor{gray}{*} & \widt{\midwid}{[ \val_3(c) + 2f(k) + 4]_{(3)}} & \textcolor{gray}{*} \\
          & & \smq_1
        \end{tpatternc} \\
        & \begin{tpatternc}
          \textcolor{gray}{*} & c & 0 & \textcolor{gray}{*} \\
          & & \smq_2
        \end{tpatternc}
        & \mapsto &
        \begin{tpatternc}
          \textcolor{gray}{*} & \widt{\midwid}{[\val_3(c) + 2f(k) + 4]_{(3)}} & \textcolor{gray}{*} \\
          & & \smq_2
        \end{tpatternc} \\
        M_\smd(k+1) (\text{if }\val_3(c) \neq 0):
        & \begin{tpatternc}
          \textcolor{gray}{*} & c & 1 & \textcolor{gray}{*} \\
          & & \smq_1 &
        \end{tpatternc}
        & \mapsto &
        \begin{tpatternc}
          \textcolor{gray}{*} & \widt{\midwid}{[\val_3(c) + 2f(k) + 2]_{(3)}} & \textcolor{gray}{*} \\
          \smd_1 & &
        \end{tpatternc} \\
        & \begin{tpatternc}
          \textcolor{gray}{*} & c  & 2 & \textcolor{gray}{*} \\
          & & \smq_1
        \end{tpatternc}
        & \mapsto &
        \begin{tpatternc}
          \textcolor{gray}{*} & \widt{\midwid}{[\val_3(c) + 2f(k) + 2]_{(3)}} & \textcolor{gray}{*} \\
          \smd_2 &
        \end{tpatternc} \\
        M_\smp(k+1) (\text{if } \val_3(c) \neq 0):
        & \begin{tpatternc}
          \textcolor{gray}{*} & c & 1 & \textcolor{gray}{*} \\
          & & \smq_2 &
        \end{tpatternc}
        & \mapsto &
        \begin{tpatternc}
          \textcolor{gray}{*} & \widt{\midwid}{[\val_3(c) + f(k) + 3]_{(3)}} & \textcolor{gray}{*} \\
          \smp_1 & & 
        \end{tpatternc} \\
        & \begin{tpatternc}
          \textcolor{gray}{*} & c & 2 & \textcolor{gray}{*} \\
          & & \smq_2 &
        \end{tpatternc}
        & \mapsto &
        \begin{tpatternc}
          \textcolor{gray}{*} & \widt{\midwid}{[\val_3(c) + f(k) + 3]_{(3)}} & \textcolor{gray}{*} \\
          \smp_2 & &
        \end{tpatternc}
      \end{array}
    \]
    \normalsize
  \end{mdframed}

  \caption{Encoding SMART configurations: $\pencoding_{k \to k+1}$ (Part 1: $k \to k+1$)}
  \captionsetup{width=.95\linewidth,font=small}
  \caption*{First half of rewriting cases of $\pencoding_{k \to k+1}$, which rewrites sub-patterns of length $k+4$. Letters $\textcolor{gray}{*}$ are unmodified. $\pencoding_{k \to k+1}$ extends the word $c \in \{0,1,2\}^{k+1}$ with one additional letter and, considering $c$ as a counter, adds a number of steps in accordance with Lemma~\ref{prop:smart-moves}. \\Note that at $k+1 = \ell-2$ (resp. $k+1 = \ell-1$), the $\textcolor{gray}{*}$-cells overlap on each other (resp.\ the counter), because we reach the length of the cyclic tape.}
  \label{fig:bottom-up-encoding}
\end{figure}

\begin{figure}
  \vspace*{-2.5em}
  \begin{mdframed}[backgroundcolor=black!5!white]
    Encoding special $M_\smb(k+1)$:
    \footnotesize
    \[ 
      \begin{array}{lrcl}
        \text{Special } \smb_1:
        & \begin{tpattern}
          \textcolor{gray}{*} & 1 & 0^{k+1} & \textcolor{gray}{*} \\
          & & & \smb_1
        \end{tpattern}
        & \mapsto &
        \begin{tpattern}
          \textcolor{gray}{*} & \widt{\midwid}{[f(k+1)+1]_{(3)}} & \textcolor{gray}{*} \\
          & & \smb_1
        \end{tpattern} \\
        & \begin{tpattern}
          \textcolor{gray}{*} & 2 & 0^{k+1} & \textcolor{gray}{*} \\
          & & & \smb_1
        \end{tpattern}
        & \mapsto &
        \begin{tpattern}
          \textcolor{gray}{*} & \widt{\midwid}{[f(k+1)+1]_{(3)}} & \textcolor{gray}{*} \\
          & & \smb_2
        \end{tpattern} \\
        \text{Special } \smb_2:
        & \begin{tpattern}
          \textcolor{gray}{*} & 1 & 0^{k+1} & \textcolor{gray}{*} \\
          & & & \smb_2 
        \end{tpattern}
        & \mapsto &
        \begin{tpattern}
          \textcolor{gray}{*} & \widt{\midwid}{[f(k+1)]_{(3)}} & \textcolor{gray}{*} \\
          & & \smb_1
        \end{tpattern} \\
        & \begin{tpattern}
          \textcolor{gray}{*} & 2 & 0^{k+1} & \textcolor{gray}{*} \\
          & & & \smb_2
        \end{tpattern}
        & \mapsto &
        \begin{tpattern}
          \textcolor{gray}{*} & \widt{\midwid}{[f(k+1)]_{(3)}} & \textcolor{gray}{*} \\
          & & \smb_2
        \end{tpattern} \\
        \text{Special $\smb_1$ higher level}:
        & \begin{tpattern}
          \textcolor{gray}{*} & 0 & 0^{k+1} & \textcolor{gray}{*} \\
          & & & \smb_1
        \end{tpattern}
        & \mapsto & 
        \begin{tpattern}
          \textcolor{gray}{*} & \widt{\midwid}{0^{k+2}} & \textcolor{gray}{*} \\
          & & \smb_1
        \end{tpattern} \\
        \text{Special $\smb_2$ higher level}:
        & \begin{tpattern}
          \textcolor{gray}{*} & 0 & 0^{k+1} & \textcolor{gray}{*} \\
          & & & \smb_2
        \end{tpattern}
        & \mapsto & 
        \begin{tpattern}
          \textcolor{gray}{*} & \widt{\midwid}{0^{k+2}} & \textcolor{gray}{*} \\
          & & \smb_2
        \end{tpattern} \\
      \end{array}
    \]
    \normalsize
    Encoding special $M_\smd(k+1)$:
    \footnotesize
    \[
      \begin{array}{lrcl}
        \text{Special } \smd_1:
        & \begin{tpattern}
          \textcolor{gray}{*} & 0^{k+1} & 1 & \textcolor{gray}{*} \\
          \smd_1 & & &
        \end{tpattern}
        & \mapsto &
        \begin{tpattern}
          \textcolor{gray}{*} & \widt{\midwid}{[f(k+1)+1]_{(3)}} & \textcolor{gray}{*} \\
          \smd_1 & &
        \end{tpattern} \\
        & \begin{tpattern}
          \textcolor{gray}{*} & 0^{k+1} & 2 & \textcolor{gray}{*} \\
          \smd_1 & & &
        \end{tpattern}
        & \mapsto &
        \begin{tpattern}
          \textcolor{gray}{*} & \widt{\midwid}{[f(k+1)+1]_{(3)}} & \textcolor{gray}{*} \\
          \smd_2 & &
        \end{tpattern} \\
        \text{Special } \smd_2:
        & \begin{tpattern}
          \textcolor{gray}{*} & 0^{k+1} & 1 & \textcolor{gray}{*} \\
          \smd_2 & & &
        \end{tpattern}
        & \mapsto &
        \begin{tpattern}
          \textcolor{gray}{*} & \widt{\midwid}{[f(k+1)]_{(3)}} & \textcolor{gray}{*} \\
          \smd_1 & &
        \end{tpattern} \\
        & \begin{tpattern}
          \textcolor{gray}{*} & 0^{k+1} & 2 & \textcolor{gray}{*} \\
          \smd_2 & & &
        \end{tpattern}
        & \mapsto &
        \begin{tpattern}
          \textcolor{gray}{*} & \widt{\midwid}{[f(k+1)]_{(3)}} & \textcolor{gray}{*} \\
          \smd_2 & &
        \end{tpattern} \\
        \text{Special $\smd_1$ higher level}:
        & \begin{tpattern}
          \textcolor{gray}{*} & 0^{k+1} & 0 & \textcolor{gray}{*} \\
          \smd_1 & & &
        \end{tpattern}
        & \mapsto &
        \begin{tpattern}
          \textcolor{gray}{*} & \widt{\midwid}{0^{k+2}}  & \textcolor{gray}{*} \\
          \smd_1 & & 
        \end{tpattern} \\
        \text{Special $\smd_2$ higher level}:
        & \begin{tpattern}
          \textcolor{gray}{*} & 0^{k+1} & 0 & \textcolor{gray}{*} \\
          \smd_2 & & &
        \end{tpattern}
        & \mapsto &
        \begin{tpattern}
          \textcolor{gray}{*} & \widt{\midwid}{0^{k+2}} & \textcolor{gray}{*} \\
          \smd_2 & & 
        \end{tpattern}
      \end{array}
    \]
    \normalsize
    Encoding special $M_\smp(k+1)$:
    \footnotesize
    \[
      \begin{array}{lrcl}
        \text{Special } \smp_1:
        & \begin{tpattern}
          \textcolor{gray}{*} & 0^{k+1} & 1 & \textcolor{gray}{*} \\
          \smp_1 & & &
        \end{tpattern}
        & \mapsto &
        \begin{tpattern}
          \textcolor{gray}{*} & [2]_{(3)} & \textcolor{gray}{*} \\
          \smp_1 & &
        \end{tpattern} \\
        & \begin{tpattern}
          \textcolor{gray}{*} & 0^{k+1} & 2 & \textcolor{gray}{*} \\
          \smp_1 & & &
        \end{tpattern}
        & \mapsto &
        \begin{tpattern}
          \textcolor{gray}{*} & [2]_{(3)} & \textcolor{gray}{*} \\
          \smp_2 & &
        \end{tpattern} \\
        \text{Special } \smp_2:
        & \begin{tpattern}
          \textcolor{gray}{*} & 0^{k+1} & 1 & \textcolor{gray}{*} \\
          \smp_2 & & &
        \end{tpattern}
        & \mapsto &
        \begin{tpattern}
          \textcolor{gray}{*} & [1]_{(3)} & \textcolor{gray}{*} \\
          \smp_1 & &
        \end{tpattern} \\
        & \begin{tpattern}
          \textcolor{gray}{*} & 0^{k+1} & 2 & \textcolor{gray}{*} \\
          \smp_2 & & &
        \end{tpattern}
        & \mapsto &
        \begin{tpattern}
          \textcolor{gray}{*} & [1]_{(3)} & \textcolor{gray}{*} \\
          \smp_2 & &
        \end{tpattern} \\
        \text{Special $\smp_1$ higher level}:
        & \begin{tpattern}
          \textcolor{gray}{*} & 0^{k+1} & 0 & \textcolor{gray}{*} \\
          \smp_1 & & &
        \end{tpattern}
        & \mapsto &
        \begin{tpattern}
          \textcolor{gray}{*} & 0^{k+2} & \textcolor{gray}{*} \\
          \smp_1 & &
        \end{tpattern} \\
        \text{Special $\smp_1$ higher level}:
        & \begin{tpattern}
          \textcolor{gray}{*} & 0^{k+1} & 0 & \textcolor{gray}{*} \\
          \smp_2 & & &
        \end{tpattern}
        & \mapsto &
        \begin{tpattern}
          \textcolor{gray}{*} & 0^{k+2} & \textcolor{gray}{*} \\
          \smp_2 & &
        \end{tpattern}
      \end{array}
    \]
    \normalsize
    Encoding special $M_\smq(k+1)$:
    \footnotesize
    \[
      \begin{array}{lrcl}
        \text{Special } \smq_1:
        & \begin{tpattern}
          \textcolor{gray}{*} & 1 & 0^{k+1} & \textcolor{gray}{*} \\
          & & & \smq_1
        \end{tpattern}
        & \mapsto &
        \begin{tpattern}[]
          \textcolor{gray}{*} & [2]_{(3)} & \textcolor{gray}{*} \\
          & & \smq_1
        \end{tpattern} \\
        & \begin{tpattern}
          \textcolor{gray}{*} & 2 & 0^{k+1} & \textcolor{gray}{*} \\
          & & & \smq_1
        \end{tpattern}
        & \mapsto &
        \begin{tpattern}[]
          \textcolor{gray}{*} & [2]_{(3)} & \textcolor{gray}{*} \\
          & & \smq_2
        \end{tpattern} \\
        \text{Special } \smq_2:
        & \begin{tpattern}
          \textcolor{gray}{*} & 1 & 0^{k+1} & \textcolor{gray}{*} \\
          & & & \smq_2
        \end{tpattern}
        & \mapsto &
        \begin{tpattern}[]
          \textcolor{gray}{*} & [1]_{(3)} & \textcolor{gray}{*} \\
          & & \smq_1
        \end{tpattern} \\
        & \begin{tpattern}
          \textcolor{gray}{*} & 2 & 0^{k+1} & \textcolor{gray}{*} \\
          & & & \smq_2
        \end{tpattern}
        & \mapsto &
        \begin{tpattern}[]
          \textcolor{gray}{*} & [1]_{(3)} & \textcolor{gray}{*} \\
          & & \smq_2
        \end{tpattern} \\
        \text{Special $\smq_1$ higher level}:
        & \begin{tpattern}
          \textcolor{gray}{*} & 0 & 0^{k+1} & \textcolor{gray}{*} \\
          & & & \smq_1
        \end{tpattern}
        & \mapsto &
        \begin{tpattern}
          \textcolor{gray}{*} & 0^{k+2} & \textcolor{gray}{*} \\
          & & \smq_1
        \end{tpattern} \\
        \text{Special $\smq_2$ higher level}:
        & \begin{tpattern}
          \textcolor{gray}{*} & 0 & 0^{k+1} & \textcolor{gray}{*} \\
          & & & \smq_2
        \end{tpattern}
        & \mapsto &
        \begin{tpattern}
          \textcolor{gray}{*} & 0^{k+2} & \textcolor{gray}{*} \\
          & & \smq_2
        \end{tpattern}
      \end{array}
    \]
    \normalsize
  \end{mdframed}

  \caption{Encoding SMART configurations: $\pencoding_{k \to k+1}$ (Part 2: special $\to k+1$)}
  \captionsetup{width=.95\linewidth,font=small}
  \caption*{Second half of rewriting cases of $\pencoding_{k \to k+1}$, which rewrites sub-patterns of length $k+4$. Letters $\textcolor{gray}{*}$ are unmodified. Encodes special configurations of level $k+1$ by replacing the $k+2$ other letters by a counter of $\{0,1,2\}^{k+2}$ in accordance with Lemma~\ref{prop:smart-moves}, and preserve special configurations of level $> k+1$. \\Note that at $k+1 = \ell-2$ (resp. $k+1 = \ell-1$), the $\textcolor{gray}{*}$-cells overlap on each other (resp.\ the counter), because we reach the length of the cyclic tape.}
  \label{fig:bottom-up-encoding2}
\end{figure}

\begin{figure}
  \begin{mdframed}[backgroundcolor=black!5!white]
    \footnotesize
    \[ 
      \begin{array}{lrcl}
        & \begin{turing}
          0\;\;\;\;\;\; & \widt{\sidwid}{0^{\ell-2}} & 0 \\
          \smb_1 & &
        \end{turing}
        & \mapsto &
        \begin{turing}
          0 & \widt{\sidwid}{0^{\ell-2}} & 0 \\
          \smb_1 & &
        \end{turing} \\
        & \begin{turing}
          0\;\;\;\;\;\; & \widt{\sidwid}{0^{\ell-2}} & 0 \\
          & \smb_2 &
        \end{turing}
        & \mapsto &
        \begin{turing}
          2 & \widt{\sidwid}{2^{\ell-2}} & 2 \\
          \smb_2 & &
        \end{turing} \\
        & \begin{turing}
          c_0\;\;\;\; & \widt{1.1cm}{\cdots} & c_{\ell-1} \\
          \smb_1 & &
        \end{turing}
        & \mapsto &
        \begin{turing}[]
          c  \\
          \smd_1 \hspace*{1.5cm}
        \end{turing} \\
        & \begin{turing}
          c_0\;\;\;\; & \widt{1.1cm}{\cdots} & c_{\ell-1} \\
          \smb_2 & &
        \end{turing}
        & \mapsto &
        \begin{turing}[]
          c \\
          \smp_1 \hspace*{1.5cm}
        \end{turing} \\
        & & & \\
        & \begin{turing}
          0\;\;\;\;\;\; & \widt{\sidwid}{0^{\ell-2}} & 0 \\
          \smd_1 & &
        \end{turing}
        & \mapsto &
        \begin{turing}
          0 & \widt{\sidwid}{0^{\ell-2}} & 0 \\
          \smd_1 & & 
        \end{turing} \\
        & \begin{turing}
          0\;\;\;\;\;\; & \widt{\sidwid}{0^{\ell-2}} & 0 \\
          & & \smd_2
        \end{turing}
        & \mapsto &
        \begin{turing}
          2 & \widt{\sidwid}{2^{\ell-2}} & 2 \\
          \smd_2 & & 
        \end{turing} \\
        & \begin{turing}
          c_{\ell-1} & \widt{1.1cm}{c_0 \;\; \cdots} & c_{\ell-2} \\
          \smd_1 & &
        \end{turing}
        & \mapsto &
        \begin{turing}[]
          c \\
          \smb_1 \hspace*{1.5cm}
        \end{turing} \\
        & \begin{turing}
          c_{\ell-1} & \widt{1.1cm}{c_0 \;\; \cdots} & c_{\ell-2} \\
          \smd_2 & &
        \end{turing}
        & \mapsto &
        \begin{turing}[]
          c \\
          \smq_1 \hspace*{1.5cm}
        \end{turing} \\
        & & & \\
        & \begin{turing}
          0\;\;\;\;\;\; & \widt{\sidwid}{0^{\ell-2}} & 0 \\
          & & \smp_1
        \end{turing}
        & \mapsto &
        \begin{turing}
          0 & \widt{\sidwid}{0^{\ell-2}} & 0 \\
          \smp_1 & & 
        \end{turing} \\
        & \begin{turing}
          0\;\;\;\;\;\; & \widt{\sidwid}{0^{\ell-2}} & 0 \\
          \smp_2 & &
        \end{turing}
        & \mapsto &
        \begin{turing}
          2 & \widt{\sidwid}{2^{\ell-2}} & 2 \\
          \smp_2 & & 
        \end{turing} \\
        & \begin{turing}
          c_{\ell-1} & \widt{1.1cm}{c_0 \;\;\cdots} & c_{\ell-2} \\
          \smp_1 & &
        \end{turing}
        & \mapsto &
        \begin{turing}[]
          [\val_3(c)-1]_{(3)} \\
          \smb_2 \hspace*{1.48cm}
        \end{turing} \\
        & \begin{turing}
          c_{\ell-1} & \widt{1.1cm}{c_0 \;\;\cdots} & c_{\ell-2} \\
          \smp_2 & &
        \end{turing}
        & \mapsto &
        \begin{turing}[]
          [\val_3(c)-1]_{(3)} \\
          \smq_2 \hspace*{1.48cm}
        \end{turing} \\
        & & & \\
        & \begin{turing}
          0\;\;\;\;\;\; & \widt{\sidwid}{0^{\ell-2}} & 0\\
          & \smq_1 &
        \end{turing}
        & \mapsto &
        \begin{turing}
          0 & \widt{\sidwid}{0^{\ell-2}} & 0 \\
          \smq_1 & &
        \end{turing} \\
        & \begin{turing}
          0\;\;\;\;\;\; & \widt{\sidwid}{0^{\ell-2}} & 0 \\
          \smq_2 & &
        \end{turing}
        & \mapsto &
        \begin{turing}
          2 & \widt{\sidwid}{2^{\ell-2}} & 2\\
          \smq_2 & &
        \end{turing} \\
        & \begin{turing}
          c_0\;\;\;\; & \widt{1.1cm}{\cdots} & c_{\ell-1} \\
          \smq_1 & &
        \end{turing}
        & \mapsto &
        \begin{turing}[]
          [\val_3(c)-1]_{(3)} \\
          \smd_2 \hspace*{1.48cm}
        \end{turing} \\
        & \begin{turing}
          c_0\;\;\;\; & \widt{1.1cm}{\cdots} & c_{\ell-1} \\
          \smq_2 & &
        \end{turing}
        & \mapsto &
        \begin{turing}[]
          [\val_3(c)-1]_{(3)} \\
          \smp_2 \hspace*{1.48cm}
        \end{turing}
      \end{array}
    \]
    \normalsize
  \end{mdframed}

  \caption{Final encoding step of SMART configurations: $\pencoding_{\ell,\mathrm{final}}$}
  \captionsetup{width=.95\linewidth,font=small}
  \caption*{Rewrites complete cyclic configurations of length $\ell$. $\pencoding_{\ell,\mathrm{final}}$ acts according to the proof of Proposition~\ref{prop:smart-eventually-shifts}: it maps encodings of level $\ell-1$ to their final encodings, and ``corrects'' the position the head and shifts the counter when required (in the encodings of $M_\smd(\ell-1)$ and $M_\smp(\ell-1)$, or in initial configurations, whose heads were moved when applying $\pencoding_{\mathrm{init}}$).}
  \label{fig:bottom-up-encoding3}
\end{figure}

In this section, we use the analysis performed in Section~\ref{sec:smart-analysis} to provide a linear-time algorithm that computes this encoding $\encoding_\ell : \ctape_\ell \to \ctape_\ell$ inductively.

\smallskip
Figure~\ref{fig:encoding-initialization} describes a piecewise-defined bijection $\pencoding_{\mathrm{init}} : \ctape_\ell \to \ctape_\ell$: each case describes how a pattern of length $2$ (e.g. $ \begin{tpattern} 1 & s_* \\ \smb_2 & \end{tpattern}$) is bijectively replaced by another (in the previous example, by $\begin{tpattern}1 & s_* \\ & \smb_1 \end{tpattern}$). In other words, if a cyclic configuration $w$ contains a sub-pattern of length $2$ that matches with one case of Figure~\ref{fig:encoding-initialization}, then $\pencoding_\mathrm{init}$ replaces this sub-pattern in $w$ by its image in the figure.

\smallskip
Similarly, Figures~\ref{fig:bottom-up-encoding} and~\ref{fig:bottom-up-encoding2} together describe a piecewise-defined bijection $\pencoding_{k \to k+1} : \ctape_\ell \to \ctape_\ell$ that replaces sub-patterns of length $k+4$. Intuitively, Figure~\ref{fig:bottom-up-encoding} (defining the first half of $\pencoding_{k \to k+1}$) encodes moves of level $k+1$ into counters, while Figure~\ref{fig:bottom-up-encoding2} (the second half of $\pencoding_{k \to k+1}$) encodes special patterns of level $k+1$. 

\smallskip
Finally, Figure~\ref{fig:bottom-up-encoding3} describes a similar bijection $\pencoding_{\ell,\mathrm{final}} : \ctape_\ell \to \ctape_\ell$.

\medskip
We can then prove the following result:
\begin{lemma}\label{lem:encoding-is-correct}Let $w$ be a cyclic configuration of $\ctape_\ell$. Then:
  \[ \encoding_\ell(w) = \left[ \pencoding_{\ell,\mathrm{final}} \circ \prod_{k=0}^{\ell-2} \pencoding_{k \to k+1} \circ \pencoding_{\mathrm{init}} \right] (w). \]
\end{lemma}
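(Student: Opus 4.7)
My plan is to prove the identity by induction on the level $k$, maintaining the invariant that after applying the partial composition $\pencoding_{\mathrm{init}}$ followed by $\pencoding_{0\to 1}, \ldots, \pencoding_{k-1\to k}$ to $w$, some contiguous window of length $k+2$ in the cyclic tape has the form $[c; q_b, \cdot, \ldots]$, where $q \in \{\smb,\smd,\smp,\smq\}$ records the level-$k$ move of $w$ as identified in Section~\ref{sec:smart-analysis}, $b \in \{1,2\}$ stores the boundary symbol $s_+$ of that move, and the base-$3$ counter $c$ satisfies $\val_3(c) = 1 + j$ for $j$ the step of $w$ inside that level-$k$ move. Head-free configurations are fixed by every $\pencoding$ in the composition and by $\encoding_\ell$, so I would dispatch that case first and assume $w$ contains a head.

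For the base case, I would check directly against Figure~\ref{fig:encoding-initialization} that $\pencoding_{\mathrm{init}}$ rewrites each length-$2$ pattern representing a step $j \in \{0,1\}$ of one of the level-$0$ moves $M_\smb(0), M_\smd(0), M_\smp(0), M_\smq(0)$ (with boundary symbol $s_+ \in \{1,2\}$) into a pattern with tape digit $1+j$, first state-component $q$, and second state-component $b = s_+$, while the remaining ``Special $q_b$ higher level'' rules merely normalise the head position of shifted-initial and special-of-higher-level configurations so that later stages can treat them uniformly. For the inductive step, I would appeal to Figure~\ref{fig:bottom-up-analysis-smart}: the two cells adjacent to the current length-$(k+2)$ window uniquely determine both the level-$(k+1)$ move $w$ performs and the offset $o \in \{0, f(k)+1, 2, 2f(k)+2, 2f(k)+4, f(k)+3\}$ at which the inner level-$k$ sub-move begins, so the corresponding subcase in Figures~\ref{fig:bottom-up-encoding} and~\ref{fig:bottom-up-encoding2} is designed to add exactly $o$ to $\val_3(c)$, absorb one neighbour cell to extend the window to length $k+3$, and update the state to the new move-type. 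The ``higher level'' subcases of Figure~\ref{fig:bottom-up-encoding2} initialise configurations that first enter level $k+1$ at this stage, and the $\val_3(c)\neq 0$ guards of Figure~\ref{fig:bottom-up-encoding} ensure that these two families of rules do not conflict.

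After $\ell - 1$ iterations, the window wraps the entire cyclic tape and encodes, together with the move-type state, the step $1+j$ of the level-$(\ell-1)$ move of $w$. I would then analyse $\pencoding_{\ell,\mathrm{final}}$ case by case against Figure~\ref{fig:bottom-up-encoding3}, using the orbit structure from the proof of Proposition~\ref{prop:smart-eventually-shifts}: since each SMART orbit consists of two level-$(\ell-1)$ moves occupying consecutive blocks of $f(\ell-1)+1 = 3^\ell - 1$ orbit positions separated by single-step bridges, the move-type first-component combined with the boundary second-component uniquely identifies the orbit tag $q$ and the high bit $b$; subtracting $1$ from the counter in the $\smp$ and $\smq$ cases then corrects for the fact that those moves start at orbit offset $f(\ell-1)+2 = 3^\ell$, so that $\val_3(c')$ becomes $n \bmod 3^\ell$; and the zero-tape rules restore the head positions of initial and shifted-initial configurations that were displaced by $\pencoding_{\mathrm{init}}$. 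Matching the resulting configuration cell by cell against the formula for $\encoding_\ell$ in Section~\ref{sec:smart-encoding-overall} finishes the identity.

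The hard part will be the bookkeeping for the cyclic wrap-around at $k+1 \in \{\ell-2, \ell-1\}$, where the two ``boundary'' $*$-cells start to overlap each other or the counter itself: I must check that the rewriting rules of $\pencoding_{k\to k+1}$ remain well-defined in these overlapping cases and still produce the intended offsets, and that the sign-and-shift reconciliation in $\pencoding_{\ell,\mathrm{final}}$ between the raw counter $\val_3(c) = 1 + j$ and the orbit-position counter $n \bmod 3^\ell$ is coherent across all four orbits. This is a finite but lengthy case check; once it is done, the rest is a direct verification against Proposition~\ref{prop:smart-moves}.
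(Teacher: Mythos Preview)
Your proposal is correct and follows essentially the same approach as the paper's own (sketched) proof: both argue by induction on the level $k$ with the invariant that the partial composition $\prod_{k'=0}^{k-1}\pencoding_{k'\to k'+1}\circ\pencoding_{\mathrm{init}}$ replaces the level-$k$ window by a pattern whose state records the move-type $q$ and the boundary digit $s_+$, and whose ternary counter $c$ satisfies $\val_3(c)=j+1$ for $j$ the step of $w$ inside $M_q(k)$, and then both conclude by matching $\pencoding_{\ell,\mathrm{final}}$ against the orbit decomposition of Proposition~\ref{prop:smart-eventually-shifts}. Your write-up is in fact more explicit than the paper's sketch about the offsets coming from Figure~\ref{fig:bottom-up-analysis-smart}, the role of the $\val_3(c)\neq 0$ guards, and the wrap-around at $k+1\in\{\ell-2,\ell-1\}$, but the underlying argument is the same.
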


\begin{proof}[Sketch of proof]
  Let $w$ be a configuration, and $k$ be some integer $k \leq \ell-1$. If $w$ is neither the shift of an initial configuration nor a special configuration of level $> k$, then there exists some $q \in \{\smb,\smd,\smp,\smq\}$ and a unique word $p = p_0 \dots p_{k+1}$ of length $k+2$ such that $p \sqsubseteq w$ and $p$ computes the $j\text{-th}$ step of $M_q(k)$ for some $0 \leq j \leq f(k) = 3^{k+2} - 2$.

  Then by induction on $k$, one sees that the partial composition
  \[ \prod_{k'=0}^{k-1} \pencoding_{k' \to k'+1} \circ \pencoding_\mathrm{init}, \]
  when applied on $w$, replaces $p$ in $w$ by another pattern $p'$ of the same length defined as follows:
  \[
    p' = \begin{cases}
      \begin{tpatternc}
        c_0 & \dots & c_k & p_{k+1} \\
        & & & \smb_b
      \end{tpatternc} & \text{(where $b = p_0 \in \{1,2\}$) if $p$ performs $M_\smb(k)$}\\
      \begin{tpatternc}
        p_0 & c_0 & \dots & c_k \\
        \smd_b
      \end{tpatternc} & \text{(where $b = p_{k+1} \in \{1,2\}$) if $p$ performs $M_\smd(k)$}\\
      \begin{tpatternc}
        p_0 & c_0 & \dots & c_k \\
        \smp_b & & &
      \end{tpatternc} & \text{(where $b = p_{k+1} \in \{1,2\}$) if $p$ performs $M_\smp(k)$}\\
      \begin{tpatternc}
        c_0 & \dots & c_k & p_{k+1} \\
        & & & \smq_b
      \end{tpatternc} & \text{(where $b = p_0 \in \{1,2\}$) if $p$ performs $M_\smq(k)$}
    \end{cases}
  \]
  where $c \in \{0,1,2\}^{k+1}$ is a ternary counter such that $\val_3(c) = j+1$. Notice that $0 \leq j \leq f(k) -2$, where $f(k) = 3^{k+2}-2$; so that $1 \leq j +1 \leq f(k)-1$ fits exactly in the space of non-zero counters. The zero counters are reserved for (shifts of) initial and special configurations.

  \medskip
  Finally, 
  \[\pencoding_{\ell,\mathrm{final}} \circ \prod_{k=0}^{\ell-2} \pencoding_{k \to k+1} \circ \pencoding_{\mathrm{init}} \]
  is equal to $\encoding_\ell$ by considering how $\pencoding_{\ell,\mathrm{final}}$ acts in accordance with the structure of the four disjoint cycles of SMART (detailed the proof of Proposition~\ref{prop:smart-eventually-shifts}).
\end{proof}

\bigskip
\begin{example}
  Consider once again the cyclic configuration of length $\ell = 6$ from Example~\ref{ex:FindingStep1}, and let us use the formulas above to encode it into a counter value. This process will roughly mirror Examples~\ref{ex:FindingStep1} and~\ref{ex:FindingStep2}, except that due to our coding convention, the counter value is one larger than the correct one until the very last step. First, we apply $F_{\init}$, which observes based on the highlighted cells

  \footnotesize
  \[ 
    w = \begin{turingc}
   \textcolor{gray}{1} & \textcolor{gray}{2} & \textcolor{gray}{1} & 1 & 2 & \textcolor{gray}{0} \\
    & & & \smb_2 & &
  \end{turingc}
  \]\normalsize
  \noindent that the first case of $M_\smb(0)$ applies, and rewrites this as

  \footnotesize
  \[ 
    w = \begin{turingc}
   \textcolor{gray}{1} & \textcolor{gray}{2} & \textcolor{gray}{1} & 1 & 2 & \textcolor{gray}{0} \\
    & & & & \smb_1 &
  \end{turingc}
  \]\normalsize

  Next, we apply $F_{0 \to 1}$. Based on the highlighted cells the fourth case of $M_\smb(0)$ we rewrite this as

  \vspace*{0.6em}
  \footnotesize
  \[ 
    w = \begin{turingc}
      \textcolor{gray}{1} & \textcolor{gray}{2} & 1 & 1 & 0 & \textcolor{gray}{0} \\
      & & \smd_2 & & &
      \CodeAfter \tikz{\draw [black,decorate,decoration = {brace,raise=0pt,amplitude=3pt,aspect=0.5}] ($(1-|4) + (0.05,0)$) -- ($(1-|6) + (-0.05,0)$) node[pos=0.5,above=0.5mm,black]{\footnotesize $\val_3(1) + f(0) + 1 = 3$};}
    \end{turingc}
  \]\normalsize
  
  Next, we apply $F_{1 \to 2}$. Based on the highlighted cells the fifth case of $M_\smd(1)$ applies and we rewrite this as

  \vspace*{0.6em}
  \footnotesize
  \[ 
    w = \begin{turingc}
      \textcolor{gray}{1} & \textcolor{gray}{2} & 0 & 1 & 2 & 0 \\
      & &  & & & \smq_1
      \CodeAfter
      \tikz{\draw [black,decorate,decoration = {brace,raise=0pt,amplitude=3pt,aspect=0.5}] ($(1-|3) + (0.05,0)$) -- ($(1-|6) + (-0.05,0)$) node[pos=0.5,above=0.5mm,black]{\footnotesize $\val_3(10) + 2 = 5$};}
    \end{turingc} 
  \]\normalsize
  
  Next, we apply $F_{2 \to 3}$. Based on the highlighted cells the first case of $M_\smq(2)$ applies and we rewrite this as

  \vspace*{0.6em}
  \footnotesize
  \[ 
    w = \begin{turingc}
      1 & \textcolor{gray}{2} & 2 & 0 & 1 & 2 \\
      \smq_1 & &  & & & 
      \CodeAfter
      \tikz{\draw [black,decorate,decoration = {brace,raise=0pt,amplitude=3pt,aspect=0.5}] ($(1-|3) + (0.05,0)$) -- ($(1-|7)$) node[pos=0.5,above=0.5mm,black]{\footnotesize $\val_3(012) + 2f(2) + 5 = 59$};}
    \end{turingc}
  \]\normalsize

  Next, we apply $F_{3 \to 4}$. Based on the highlighted cells the third case of $M_\smq(3)$ applies and we rewrite this as

  \vspace*{0.6em}
  \footnotesize
  \[ 
    w = \begin{turingc}
      0 & 2 & 2 & 2 & 0 & 1 \\
      & \smd_1 & & & & 
      \CodeAfter
      \tikz{\draw [draw=none] ($(1-|3) + (0.05,0)$) -- ($(1-|7) + (0.4,0)$) node[pos=0.5,above=0.5mm,black]{\footnotesize $\val_3(2012) + 2f(3) + 2 = 219$};}
      \tikz{\clip ($(1-|3) + (0,-0.3)$) rectangle ($(1-|7) + (0,0.6)$);
      \draw [black,decorate,decoration = {brace,raise=0pt,amplitude=3pt,aspect=0.5}] ($(1-|3) + (0.05,0)$) -- ($(1-|7) + (0.4,0)$);}
      \tikz{\clip ($(1-|1) + (0,-0.3)$) rectangle ($(1-|2) + (0,0.3)$);
      \draw [black,decorate,decoration = {brace,raise=0pt,amplitude=3pt,aspect=0.5}] ($(1-|1) + (-1.6,-0)$) -- ($(1-|2) + (-0.05,-0)$);}
    \end{turingc}
  \]\normalsize    
  
  Next, we apply $F_{4 \to 5}$. Based on the highlighted cells the fourth case of $M_\smd(4)$ applies and we rewrite this as

  \vspace*{0.6em}
  \footnotesize
  \[ 
    w = \begin{turingc}
      2 & 1 & 2 & 2 & 0 & 0 \\
      & \smb_2 & & & & 
      \CodeAfter
      \tikz{\draw [draw=none] ($(1-|2) + (0.05,0)$) -- ($(1-|7) + (0.4,0)$) node[pos=0.5,above=0.5mm,black]{\footnotesize $\val_3(22010) + f(4) + 1 = 461$};}
      \tikz{\clip ($(1-|2) + (0,-0.3)$) rectangle ($(1-|7) + (0,0.6)$);
      \draw [black,decorate,decoration = {brace,raise=0pt,amplitude=3pt,aspect=0.5}] ($(1-|2) + (0.05,0)$) -- ($(1-|7) + (0.4,0)$);}
      \tikz{\clip ($(1-|1) + (0,-0.3)$) rectangle ($(1-|2) + (0,0.3)$);
      \draw [black,decorate,decoration = {brace,raise=0pt,amplitude=3pt,aspect=0.5}] ($(1-|1) + (-1.6,-0)$) -- ($(1-|2) + (-0.05,-0)$);}
    \end{turingc} 
  \]\normalsize    
  
  Finally, we apply $F_{\final}$ (in Figure~\ref{fig:bottom-up-encoding3}). We are the fourth case, where we don't touch the counter and just change the state to $\smp_1$:

  \footnotesize
  \[ 
    w = \begin{turingc}
      2 & 1 & 2 & 2 & 0 & 0 \\
      & \smp_1 & & & & 
    \end{turingc} 
  \]\normalsize
  
 This gives the expected result: as $v_3(122002) = 461$, and the head is shifted once to the right in the tape, this configuration has position $1 \cdot (2 \cdot 3^\ell) + 461 = 1919$ in the orbit of the initial configuration corresponding to state $\smp_1$. \qee
\end{example}

\section{Finitary distortion for SMART}\label{sec:Finitary}
In this section, we first introduce the group $\tmgroup_{\ell}$ generated by Turing machines instructions (Section~\ref{sec:group-turing-machines-finitary-instructions}), and we slightly alter the SMART machine $\smart$ (Section~\ref{sec:decorated-smart}): we call it the \emph{decorated SMART} since we add some additional components to its states. The action of the SMART machine extends trivially to the new decorations -- new components are added as a Cartesian product, and the head simply carries its new components without modifying or reading them.

Denoting $T_{\ell,\smart} : \ctape_\ell \to \ctape_\ell$ the finite action of this decorated SMART on the cyclic tapes of $\ctape_\ell$, we then establish Lemma~\ref{lem:smart-distorted-on-cyclic-tapes}, an intermediary result about $T_{\ell,\smart}$. Namely: this automorphism is ``finitarily distorted'' in $\tmgroup_\ell$, in the sense that all its powers (including ones exponential in $\ell$) have word norm polynomial in $\ell$ under the fixed generators (which is exponentially lower than the order of the group would suggest). The rest of this section is dedicated to the proof of this lemma.

\medskip
Later in Section~\ref{sec:on-full-shift}, we prove that every non-trivial full shift contains a distortion element of infinite order. Technically, Section~\ref{sec:on-full-shift} focuses on transporting the finite actions $T_{\ell,\smart}$ of the decorated SMART into an infinite action on a full shift (and showing that this transposition preserves distortion). In other words, the ``distortion'' aspect of Lemma~\ref{lem:nice-implies-distortion-in-full-shift} entirely comes from Lemma~\ref{lem:smart-distorted-on-cyclic-tapes}, i.e.\ from the main result of this section.

\subsection{Context and results}

\subsubsection{Decorated SMART on \texorpdfstring{$\ctape_\ell$}{C\_{}l}}\label{sec:decorated-smart}

Let $\smart = (\origsmart{Q},\origsmart{\Gamma},\origsmart{\Delta})$ be the SMART machine (see Section~\ref{sec:smart}). Define the \emph{decorated version} of the SMART machine as $\smart_\dec = (Q,\Gamma,\Delta)$, with
\begin{align*}
  Q = &\ \origsmart{Q} \times \duckset \times \ghostset \\
  \Gamma = &\ \origsmart{\Gamma} \\
  \Delta =  & \bigcup_{(d,x) \in \duckset \times \ghostset} \left\{ \Big( (q,d,x),a,(q',d,x),b \Big) : (q,a,q',b) \in \origsmart{\Delta} \right\} \\
  & \cup \bigcup_{(d,x) \in \duckset \times \ghostset} \left\{ \Big((q,d,x),\delta,(q',d,x)\Big) : (q,\delta,q') \in \origsmart{\Delta} \right\}
\end{align*}
for $\duckset = \{\dright,\dleft\}$ and $\ghostset = \llbracket 0, 5 \rrbracket$, i.e.\ the states of SMART now carry a state $\origsmart{q} \in \origsmart{Q}$ of the original machine $\smart$, a special symbol $d \in \duckset$ called the \emph{duck}, and a \emph{ghost symbol} $x \in \ghostset$.  We have $|Q| = 96$.

Technically, the two SMART machines $\smart_\dec$ and $\smart$ are different: for one, they act on different sets of cyclic tapes (since they have different sets of states). However, they have very similar behaviors, as the decorated machine only carries its decoration unmodified in its state while acting on tapes. To refer to the original set of states of SMART, we will use $\origsmart{Q}$, and $Q$ will denote $Q = \origsmart{Q} \times \duckset \times \ghostset$.

Since the remainder of this article only uses the decorated version of SMART, in what follows $\smart$ will refer to the decorated version of the machine, despite lacking the ``$\dec$'' subscript. This should not cause confusion, as the machines act on different sets.

\medskip
The point of the \emph{ghost} $\llbracket 0, 5 \rrbracket$ is to allow us to condition the application of gates, and to build the permutations we perform in Section~\ref{sec:engineering-in-cyclic-tapes}. The \emph{duck} $d \in \{\dright,\dleft\}$ will be important during intermediate steps of computation in Section~\ref{sec:engineering-in-cyclic-tapes}, in order to realize piecewise defined functions.

\medskip
For $S \subseteq \ctape_\ell$ a subset of finite cyclic tapes, we denote $S[\dright]$ and $S[\dleft]$ the subsets of the tapes of $S$ containing a head, and whose ducks respectively are $d = \dright$ and $d = \dleft$. For $d \in \{\dright,\dleft\}$ and a function $f : S \to S$, we abuse notations and denote $\rduck{f}{d}$ the extended restriction $f\rfun_{S[d]}$ of $f$ to $S[d]$:
\[
  \rduck{f}{d}(w) = \begin{cases}
    f(w) & \text{if } w \in S[d] \\
    w & \text{otherwise }
  \end{cases}
\]

\subsubsection{Group of Turing machine instructions on finite cyclic tapes}\label{sec:group-turing-machines-finitary-instructions}

Recall that $\ctape_\ell$ is the set of finite cyclic tapes of length $\ell \geq 2$ (see Section~\ref{sec:defs}) with states $Q$ and tape-alphabet $\Gamma$, containing at most one head (i.e.\ a letter in $Q \times \Gamma)$. 

\medskip
Let $\tmgroup_\ell$ be the finitely generated subgroup of $\Sym(\ctape_\ell)$ generated by state-dependent moves, and the unary gates permuting heads. Formally, for $g \in \Sym(Q \times \Gamma)$, define the \emph{unary gate} $\pi_g \in \Sym(\ctape_\ell)$ as
\begin{align*}
  \pi_g(w)_j = \begin{cases}
    g(w_j) & \text{if } w_j \in (Q \times \Gamma) \\
    w_j & \text{if } w_j \in \Gamma
  \end{cases}
\end{align*}
for a cyclic tape $w \in \ctape_\ell$. In addition, for $q \in Q$, define the \emph{state-dependent right move} $\rho_q \in \Sym(\ctape_\ell)$ as:
\[ \rho_q(w)_j = \begin{cases}
  (q,w_j) & \text{if } w_{j-1 \bmod \ell} \in (\{q\} \times \Gamma) \\
  \pi_\Gamma(w_j) & \text{if } w_{j-1 \bmod \ell} \notin (\{q\} \times \Gamma) \wedge w_j \in (\{q\} \times \Gamma) \cup \Gamma  \\
  w_j & \text{if } w_j \in ((Q \setminus \{q\}) \times \Gamma)
  \end{cases} \]
for $w \in \ctape_\ell$ and $\pi_\Gamma : \Gamma \cup (Q \times \Gamma) \to \Gamma$ the natural projection.

\medskip
We then define the group $\tmgroup_\ell \leq \Sym(\ctape_\ell)$ generated by these permutations:
\[ \tmgroup_\ell = \langle \{\pi_g : g \in \Sym(Q \times \Gamma) \} \cup \{ \rho_q \mid q \in Q\} \rangle \]

We can see the group $\tmgroup_\ell$ as the group generated by the instructions of Turing-machines: moving heads based on their states, or permuting their values. To ease notations, we denote $\prod_{q \in Q} \rho_q$ by $\rho$. This (finite) group is equipped with a metric, that is the word norm given by the generators $\pi_g$ and $\rho_q$.

\medskip
It is easy to see that for any reversible Turing machine $\mathcal{M}$ of states $Q$ and tape-alphabet $\Gamma$, $T_{\ell,\mathcal{M}}$ is an element of $\tmgroup_\ell$. Indeed, a step of computation is the composition of a head permutation $\alpha$ of $Q \times \Gamma$, followed with state-dependent moves $\beta_{+1}$ and $\beta_{-1}$:
\begin{align*}
  \alpha(q,a) & = \begin{cases}
    (q',b) & \text{if } (q,a,q',b) \in \Delta \\
    (q',a) & \text{if } (q,\pm 1, q') \in \Delta
  \end{cases} \\
  \beta_{+1} & = \prod_{q' \mid \exists q, (q,+1,q') \in \Delta} \rho_{q'}  \\
  \beta_{-1} & = \prod_{q' \mid \exists q, (q,-1,q') \in \Delta} {\rho_{q'}}^{-1}
\end{align*}

Finally, we denote by $\delta(\ell,n)$ the word norm of $(T_{\ell,\mathcal{M}})^n$ in $\tmgroup_\ell$. In this chapter, we focus on proving that $\delta(\ell,n)$ is polynomial in $\ell$ for powers of the decorated SMART machine (even for powers exponential in $\ell$).

\begin{remark}
\label{rem:CharacterizationOfGlQGamma}
It can be shown that $\tmgroup_\ell$ is, for large enough $\ell$, $|Q|$ and $|\Gamma|$ (in particular for all versions of the SMART machine we consider and for $\ell \geq 2$), of bounded index in the automorphism group of $\ctape_\ell$ under the shift action of $\Z_\ell$. This is not particularly useful, however, as what we need it for is to provide a group where the SMART machine corresponds to an element of small word norm (far smaller than the radius of the group).
\end{remark}

\subsubsection{Main result: finitary distortion of the decorated SMART}

Recall that $m: \N \to \N$ is the \emph{movement function}, i.e.\ $m(n)$ is the maximal number of cells the machine $\smart$ can visit in $n$ steps; and that $\delta(\ell,n)$ is the word norm of $(T_{\ell,\smart})^n$ in $\tmgroup_\ell$.

\begin{lemma}\label{lem:smart-distorted-on-cyclic-tapes}
  Let $\smart$ be the (decorated) SMART machine.
  \begin{enumerate}
    \item $f_{\smart}$ has infinite order.
    \item There exist some $C,C' > 0$ such that $m(n) \leq C \log n + C'$.
    \item There exists some $p > 0$ such that $\delta(\ell,n) = O(\ell^p)$.
  \end{enumerate}
  In fact, for $\smart$, we can take $C = \ln(2)/\ln(3)$ and $p = 4$.
\end{lemma}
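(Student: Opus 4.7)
My plan splits the lemma into its three parts. For (1), I would argue as follows. By Lemma~\ref{lem:smart-action-as-cycles}, $T_{\ell,\smart}$ has order exactly $2\ell \cdot 3^\ell$ on $\ctape_\ell$. Since any periodic configuration in $\bitape$ projects to some $\ctape_\ell$, if $f_\smart$ on $\bitape$ had finite order $n$ then $2\ell \cdot 3^\ell$ would divide $n$ for every $\ell$, a contradiction. For (2), I would induct on $k$ using Proposition~\ref{prop:smart-moves}: any trajectory of $\smart$ of length at most $n$, restricted to the cells it visits, must be a nested composition of sub-moves $M_q(k')$ ($q \in \{\smb,\smd,\smp,\smq\}$, $k' \leq k$), with the outermost one satisfying $f(k) = 3^{k+1} - 2 \leq n$; this gives $k \leq \log_3(n+2) - 1$, so the trajectory visits at most $k + 2 \leq \log_3(n) + O(1)$ cells, hence $m(n) \leq C \log n + C'$ with $C = \ln 2/\ln 3$.

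Part (3) is the substantial claim, and my strategy is to conjugate $T_{\ell,\smart}$ by $\encoding_\ell$. By the construction of $\encoding_\ell$ in Section~\ref{sec:smart-encoding-overall}, the conjugated map $\encoding_\ell \circ T_{\ell,\smart} \circ \encoding_\ell^{-1}$ is the ``increment by $1$ in base $2\ell \cdot 3^\ell$'' map on the encoded counter. Therefore
\[ (T_{\ell,\smart})^n \;=\; \encoding_\ell^{-1} \circ \addc{n \bmod (2\ell \cdot 3^\ell)} \circ \encoding_\ell, \]
and since the order of $T_{\ell,\smart}$ is $2\ell \cdot 3^\ell$, it suffices to bound the word norm in $\tmgroup_\ell$ of each of $\encoding_\ell$, $\encoding_\ell^{-1}$ and $\addc{m}$ (for $m < 2\ell \cdot 3^\ell$) by a polynomial in $\ell$, uniformly in $m$.

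For the encoding, I would use the decomposition of Lemma~\ref{lem:encoding-is-correct}: $\encoding_\ell$ is the composition of $O(\ell)$ piecewise-defined bijections $\pencoding_\mathrm{init}$, $\pencoding_{k \to k+1}$, $\pencoding_{\ell,\final}$, each acting on a window of size $O(\ell)$ around the head. Each such piecewise map should be realized in $\tmgroup_\ell$ by having the head sweep across its window, using the duck component $d \in \{\dright,\dleft\}$ to mark which branch of the piecewise definition applies, and the ghost component in $\llbracket 0,5\rrbracket$ to condition each gate $\pi_g$ on the relevant local tape letters. This is the role of the decorations introduced in Section~\ref{sec:decorated-smart} and the ``ducking trick'' of Section~\ref{sec:engineering-ducking-trick}; each sweep contributes $O(\ell)$ generators, and summing over the $O(\ell)$ pieces yields $\lVert \encoding_\ell \rVert, \lVert \encoding_\ell^{-1} \rVert$ polynomial in $\ell$.

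For the addition $\addc{m}$, I would use that $m$ has $O(\ell)$ digits in base $2\ell \cdot 3^\ell$ and that integer addition is in $\NC^1$; Lemma~\ref{lem:nc1-conditioning} (via Barrington's theorem) will then produce a realization of $\addc{m}$ in $\tmgroup_\ell$ with word norm polynomial in $\ell$, uniformly in $m$. Combining the two polynomial bounds gives $\delta(\ell,n) = O(\ell^p)$ for some $p$. The main obstacle I anticipate is the careful engineering required to implement the conditional local transformations $\pencoding_{k \to k+1}$ with controllable polynomial overhead: each gate application must be synchronized with the head position and the ghost/duck state, and the bookkeeping must not blow up the exponents. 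Tracking these costs precisely (encoding passes contributing roughly $\ell^3$ and the addition contributing roughly $\ell^4$ through the $\NC^1$ simulation) is what I expect to pin down the final bound $p = 4$.
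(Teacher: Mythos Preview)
Your strategy matches the paper's exactly: conjugate by the encoding $\encoding_\ell$ so that powers of $T_{\ell,\smart}$ become addition on a counter, then bound the encoding and the addition separately using the ghost/duck decorations. Two corrections are worth noting. First, your cost estimates are swapped: in the paper the encoding costs $O(\ell^4)$ (it is a product of $O(\ell)$ maps $\pencoding_{k\to k+1}$, each of norm $O(\ell^3)$) while the addition costs $O(\ell^3)$ via the school algorithm with carries, not the other way around. Second, the ducking trick does \emph{not} realize $\encoding_\ell$ itself in $\tmgroup_\ell$; it only yields a map $\GRencoding_\ell$ that agrees with $\encoding_\ell$ on heads with duck $\dright$ and produces uncontrolled garbage on $\dleft$. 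Consequently your displayed identity must be replaced by the restricted version $(\rduck{T_{\ell,\smart}}{\dright})^n = \bigl((\rduck{\plusorbit{n}}{\dright})_\ell\bigr)^{\GRencoding_\ell}$, where the addition is also restricted to $\dright$ so that the garbage cancels in the conjugation; a final conjugation by the duck-swap then recovers $(T_{\ell,\smart})^n$ in full.
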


Any finite order $T$ satisfies the latter two items, and any non-trivial state-dependent shift satisfies the first and the third items. Achieving the first two items is already difficult, and to our knowledge these properties have only been explicitly shown (in the reversible case) for the SMART machine and the binary SMART machine~\cite{2020-CT}. We expect that the Kari-Ollinger construction in~\cite{2008-KO} can be used to produce more examples of machines satisfying these two properties (at least $m(n) = O(n / \log n)$ follows from general principles for all these machines~\cite{2017-GS}).

\subsubsection{Proof of Lemma \ref{lem:smart-distorted-on-cyclic-tapes}}

For the second item, the logarithmic speed of SMART is well known. To sketch a proof, consider the following computation: after less than 18 steps, the head of SMART is in state $\smp_1$ or $\smq_1$ reading a $0$ (ignoring the ghost and the duck). Then SMART is either at the left (for $\smp_1$) or right (for $\smq_1$) extremity of some word $0^m$ for some $m \geq \log_3(k)+2$, or by~\cite[Lemma 4]{2017-COT} it builds around this position some pattern in the set $C_m$ for $m \geq \log_3(k)+2$ (with the notations of~\cite[Lemma 4]{2017-COT}). Either way, after this point, $k$ steps of computations cannot read more than $\log_3(k)+2$ different cells.

\medskip
The proof of the third item is a matter of programming powers of the machine efficiently with the generators of $\tmgroup_\ell$, which can be considered as the primitive reversible instructions of Turing machines. To achieve this, we encode configurations into their orbit position with the automorphism $\encoding_\ell$ (defined in Section~\ref{sec:smart-encoding-overall}), perform an addition on these positions (as defined below), and decode back, as summarized in the commuting diagram below:

\begin{center}
  \begin{tikzpicture}
    \node[draw, text width=2.5cm, align=center] (A) at (0,0) {$(T_{\ell,\smart})^{n_0}(C_q)$};
    \node[draw, text width=2.5cm, align=center] (B) at (5.5,0) {$(T_{\ell,\smart})^{n_0+\textcolor{red}{n}}(C_q)$};
    \node[draw, text width=3.5cm, align=center] (B') at (0,-2) {$(q,n_0)$};
    \node[draw, text width=3.5cm, align=center] (B'') at (5.5,-2) {$(q,n_0 + \textcolor{red}{n} \bmod 2\ell \cdot 3^\ell)$};
    \draw[-to] (A) -- (B) node[midway,above] {$(T_{\ell,\smart})^{\textcolor{red}{n}}$};
    \draw[-to] (A) -- (B') node[midway,left] {$\encoding_\ell$};
    \draw[-to] (B'') -- (B) node[midway,right] {$\encoding_\ell^{-1}$};
    \draw[-to] (B') -- (B'') node[midway,below] {$+\textcolor{red}{n}$};
  \end{tikzpicture}
\end{center}

More precisely, we prove the two following lemmas:
\begin{restatable}{lemma}{encodingwithgarbage}\label{lem:encoding-with-garbage}
  Let $\encoding_\ell : \ctape_\ell \to \ctape_\ell$ the encoding map defined in Section~\ref{sec:smart-encoding-overall}. There exists some $\GRencoding_{\ell} : \ctape_\ell \to \ctape_\ell$ in $\tmgroup_\ell$ with word norm~$O(\ell^4)$ such that
  \[ w \in \ctape_\ell[\dright] \implies \GRencoding_\ell(w) = \encoding_\ell(w) \]
  where $\ctape_\ell[\dright]$ is the set of cyclic tapes $w \in \ctape_\ell$ having a head with duck $d = \dright$.
\end{restatable}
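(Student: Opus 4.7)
The plan is to rely on Lemma~\ref{lem:encoding-is-correct}, which decomposes $\encoding_\ell$ into the piecewise-defined bijections
\[ \encoding_\ell = \pencoding_{\ell,\mathrm{final}} \circ \Big( \prod_{k=0}^{\ell-2} \pencoding_{k \to k+1} \Big) \circ \pencoding_{\mathrm{init}}, \]
realize each factor as an element of $\tmgroup_\ell$ that agrees with the intended map on configurations with duck $\dright$, and sum up the resulting word norms. The output is then a composition of $\ell+1$ factors, so to reach the target bound $O(\ell^4)$ it suffices that the $k$-th factor contributes word norm $O(\ell^3)$ (with constant cost for $\pencoding_{\mathrm{init}}$).

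To realize a single piecewise-defined map like $\pencoding_{k \to k+1}$, I would invoke the ducking machinery of Section~\ref{sec:engineering-ducking-trick} (Lemmas~\ref{lem:encoding-case-in-group} and~\ref{lem:encoding-steps-in-group}). The idea is that the duck component $d \in \{\dright,\dleft\}$ serves as a ``pending'' flag: a head with $d=\dright$ still has to be processed, while $d = \dleft$ marks a configuration already handled. For each case in the rewriting table of $\pencoding_{k \to k+1}$, one walks the head along the affected window of length $k+4$ to detect that the current configuration matches the case in question, using the ghost $x \in \llbracket 0,5\rrbracket$ to record which case was identified, and then applies the corresponding permutation followed by flipping the duck to $\dleft$ so that this configuration will not be touched again by the subsequent cases. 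The gates $\pi_g$ and the state-dependent moves $\rho_q$ are exactly the primitives that allow detecting, moving, and rewriting one cell at a time; since the tests are local to a window of size $k+4$, the cost of executing one case of $\pencoding_{k \to k+1}$ is $O(k)$ generators. A final cleanup step restores every duck to $\dright$ so that the next factor $\pencoding_{k+1 \to k+2}$ operates on the intended configurations.

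Putting this together, for the nontrivial factors I would account the word norm as follows. The step $\pencoding_{k \to k+1}$ involves a constant number of cases, each requiring (i) a left-right sweep of length $O(k)$ to test the local pattern and set ghosts, (ii) the update of a ternary counter $c \in \{0,1,2\}^{k+1}$ by a constant shift such as $f(k)+1 = 3^{k+1}-1$, implemented as a carry-propagating sweep costing $O(k)$ per digit and hence $O(k^2)$ in total, and (iii) a final sweep of length $O(k)$ to reset ducks. Across all cases this yields $O(k^2)$ per factor, or $O(k^3)$ with a looser but easier-to-verify bookkeeping. The factor $\pencoding_{\mathrm{init}}$ handles patterns of length $2$ and costs $O(1)$, while $\pencoding_{\ell,\mathrm{final}}$ acts on the full cyclic tape and costs $O(\ell^3)$. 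Summing over $k = 0, \dots, \ell-2$ gives total word norm $O(\ell^4)$.

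The main obstacle I expect is the engineering of the piecewise rewriting in $\tmgroup_\ell$: ensuring that processing one case does not disturb configurations intended for a different case, and that counter increments propagate carries correctly while respecting the shift-commutation built into the state-dependent moves $\rho_q$. The ducking trick is precisely designed to address the first point, by serializing the cases through a ``pending'' flag, and to express the second point as a clean inductive construction; once the abstract lemmas of Section~\ref{sec:engineering-ducking-trick} are in place, the proof reduces to verifying that the tables defining $\pencoding_{\mathrm{init}}$, $\pencoding_{k \to k+1}$ and $\pencoding_{\ell,\mathrm{final}}$ satisfy the hypotheses of those lemmas and that the cumulative word norm is $O(\ell^4)$.
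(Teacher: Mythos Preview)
Your high-level structure is exactly the paper's: invoke the decomposition of Lemma~\ref{lem:encoding-is-correct}, realize each factor via Lemmas~\ref{lem:encoding-case-in-group} and~\ref{lem:encoding-steps-in-group}, and sum $O(\ell)$ many factors of cost $O(\ell^3)$ to get $O(\ell^4)$. Since you explicitly defer to those lemmas, the proof is correct.

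Two points of friction between your informal narrative and what those lemmas actually do, worth being aware of. First, the ducking trick is not ``detect the case, apply the rewrite, flip the duck'' as a sequential program; that description is not obviously reversible. The paper instead builds each case as an \emph{involution} $\GRpencoding_{\duckdomain,\duckimage} = (\pi_{d,\duckdomain})^{g^{-1}}$, i.e.\ a conjugate of the conditioned duck-flip by a map $g$ that performs the rewrite only on duck $\dleft$; this is what guarantees the pieces have disjoint support and compose cleanly. Second, the ghost is not used to ``record which case was identified''; it is the scratch space for the Barrington-style commutator conditioning of Lemma~\ref{lem:ghost-conditioning}, and in particular the $O(k^2)$ cost of a single carry step (Lemma~\ref{lem:gate-ordering-condition}) is why the addition in Lemma~\ref{lem:ternary-addition} costs $O(k^3)$ rather than the $O(k^2)$ you first suggest --- your fallback $O(k^3)$ bound is the one the paper actually establishes.
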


Note that we say nothing about the action of $\GRencoding_\ell$ on tapes having duck $d = \dleft$. This restriction comes from our use of the \emph{ducking trick} to build $\GRencoding_\ell$, which produces ``garbage'' (i.e.\ acts with no reasonable interpretation) on heads having duck $d = \dleft$. See Section~\ref{sec:engineering-ducking-trick} for more details.

\begin{restatable}{lemma}{additioninencodedbase}\label{lem:addition-in-encoded-base}
  Let $(\plusorbit{n})_\ell$ be the bijection of $\ctape_\ell$ that performs the addition of $n \in \N$ in base $2\ell \cdot 3^\ell$ on the orbits positions encoded by $\encoding_\ell$. Recall that $(\rduck{\plusorbit{n}}{\dright})_\ell : \ctape_\ell \to \ctape_\ell$ is defined as
  \[ (\rduck{\plusorbit{n}}{\dright})_\ell(w) = \begin{cases} (\plusorbit{n})_\ell(w) & \text{if } w \in \ctape_\ell[\dright] \\
    w & \text{otherwise} \end{cases} \]

  Then $(\rduck{\plusorbit{n}}{\dright})_\ell$ belongs to $\tmgroup_\ell$ with word norm $O(\ell^3)$.
\end{restatable}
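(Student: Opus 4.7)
The plan is to realize $(\rduck{\plusorbit{n}}{\dright})_\ell$ as a short composition of Turing-machine-style sweeps acting on the encoded form produced by $\encoding_\ell$. Recall that under this encoding, a configuration in the orbit of one of the $C_q$ is stored as a tuple whose orbit $q \in \{\smb,\smd,\smp,\smq\}$ and bit $b \in \{1,2\}$ live in the head state, whose ternary counter $c \in \{0,1,2\}^\ell$ lives in the tape cells adjacent to the head, and whose shift $a \in \llbracket 0, \ell-1\rrbracket$ is recorded by the cyclic position of the head in $\ctape_\ell$, with direction $\varepsilon \in \{\pm 1\}$ determined by $q$. Reducing $n$ modulo $2\ell\cdot 3^\ell$ and writing $n = a_0 \cdot 2\cdot 3^\ell + b_0 \cdot 3^\ell + r_0$ with $r_0 < 3^\ell$ and $b_0 \in \{0,1\}$, adding $n$ on the orbit position splits into three tasks: (i) adding $r_0$ to the counter $c$ with ternary carry propagation into $b$; (ii) updating $b$ by $b_0$ together with the counter overflow, with possible carry into $a$; and (iii) cyclically shifting the whole configuration by $\varepsilon$ times the amount $a_0$ plus the bit overflow, modulo $\ell$.

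For task (i), I would implement the update as a single sweep: the head walks across the $\ell$ counter cells, applying at each step a head permutation $\pi_g$ that is hard-coded to the relevant digit of $r_0$ and that reads and updates the current carry, which is stored in the ghost component $\ghostset = \llbracket 0, 5\rrbracket$ of the head state. Each cell costs $O(1)$ generators, so the sweep costs $O(\ell)$ word norm; task (ii) is then a single head permutation acting on the state. For task (iii), a single cyclic shift of the tape by $\pm 1$ is realised by sending the head once around the length-$\ell$ cyclic tape, using the ghost to temporarily hold one cell value at a time and swapping it with the cell currently under the head; this costs $O(\ell)$ word norm per single shift. Since we may need to iterate up to $O(\ell)$ such shifts, task (iii) contributes $O(\ell^2)$ to the total word norm. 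A short finite case analysis handles the four possible orbits $q$ and their corresponding signs $\varepsilon$.

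Composing the three tasks yields an element of $\tmgroup_\ell$ of word norm $O(\ell^2)$ that realises $\plusorbit{n}$ wherever the configuration has a head, but with uncontrolled behaviour on configurations carrying duck $d = \dleft$. To restrict the action to $\ctape_\ell[\dright]$ while leaving $\dleft$-configurations untouched, I would wrap the construction with the ducking trick of Section~\ref{sec:engineering-ducking-trick}, which conditions each elementary step on the duck component of the head and yields the factor pushing the final bound to $O(\ell^3)$. The main obstacle is bookkeeping: the sweeps already rely on the ghost component to carry the ternary carry and the displaced cell value, so the ghost-based carry and the duck-based conditioning must be interleaved carefully, so that heads with $d = \dleft$ are genuinely fixed throughout the composition while heads with $d = \dright$ witness exactly the intended addition in base $2\ell \cdot 3^\ell$.
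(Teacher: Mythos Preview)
Your approach differs from the paper's and has a genuine gap in task~(i). The per-cell ripple-carry update you describe --- read the tape digit $c_i$ together with a carry bit stored in the ghost, then overwrite both with the new digit and the new carry --- is not a bijection of the head alphabet $Q\times\Gamma$. For a fixed addend digit $k_i$, the inputs $(c_i,\text{carry})=(1,0)$ and $(0,1)$ produce the same sum $c_i+k_i+\text{carry}$, hence the same output pair; so no $g\in\Sym(Q\times\Gamma)$ realises this step, and the claimed $O(\ell)$ sweep collapses. More broadly, the ghost is part of the configuration and carries an \emph{arbitrary} initial value that the final map must restore exactly; you never explain how the terminal carry is erased from the ghost without a further computation of comparable cost.

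The paper's proof sidesteps this by never storing a running carry. At each digit position it \emph{re-derives} the carry as a condition on the tape: the carry into position $j$ equals~$1$ precisely when the already-processed suffix is lexicographically smaller than the corresponding suffix of the addend, and Lemma~\ref{lem:gate-ordering-condition} supplies such a comparison with word norm $O(\ell^2)$. Iterating over the $\ell$ positions yields the $O(\ell^3)$ bound for $\addc{k}$ in Lemma~\ref{lem:ternary-addition}, and this step dominates the whole addition. The overflow shift is then handled by a commutator of a state-flip conditioned on the overflow comparison against a single tape rotation. Your task~(iii) idea of threading a tape letter through the ghost does work cleanly (the round trip restores the ghost), but the analogous trick fails for task~(i) because a full adder is irreversible on its two live inputs.

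A smaller point: conditioning on $d=\dright$ does not need the ducking trick of Section~\ref{sec:engineering-ducking-trick}. The duck sits in the head state, so every $\pi_g$ and every $\rho_q$ can be restricted to duck-$\dright$ heads at no extra cost, exactly as in Remarks~\ref{rem:permuting-letters} and~\ref{rem:ternary-addition}; the ducking trick is reserved for piecewise bijections whose pieces are determined by the \emph{tape} rather than the state, and invoking it here does not produce the extra factor of~$\ell$ you rely on.
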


We give a more detailed definition of $(\plusorbit{n})_\ell$ in Section~\ref{sec:proofs-additition-encoded-base}. Informally, $(\plusorbit{n})_\ell$ adds $n$ to the counter $\encoding_\ell(w)$ that encodes the orbit position of the SMART configuration $w$.

\medskip
Then, these two lemmas are enough to prove the third item of Lemma~\ref{lem:smart-distorted-on-cyclic-tapes} about the decorated SMART being distorted on cyclic tapes of length $\ell$ (more precisely, $\delta(\ell,n) = O(\ell^4)$). Indeed, combining the two previous results, we obtain:

\begin{lemma}\label{lem:smart-one-duck}
  Denoting again:
  \[ \rduck{T_{\ell,\smart}}{\dright}(w) =
  \begin{cases}
      T_{\ell,\smart}(w) & \text{if } w \in \ctape_\ell[\dright] \\
      w & \text{otherwise}
  \end{cases} \]
  Then for any $n \in \N$, $(\rduck{T_{\ell,\smart}}{\dright})^n$ belongs to $\tmgroup_\ell$ with word norm $O(\ell^4)$.
\end{lemma}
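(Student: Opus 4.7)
The plan is to combine the two preceding lemmas into the commutative diagram already advertised in the paper: encode configurations via $\GRencoding_\ell$, perform the addition $(\rduck{\plusorbit{n}}{\dright})_\ell$ on the orbit-position counter, then decode with $\GRencoding_\ell^{-1}$. Concretely, I would prove the identity
\[ (\rduck{T_{\ell,\smart}}{\dright})^n \;=\; \GRencoding_\ell^{-1} \circ (\rduck{\plusorbit{n}}{\dright})_\ell \circ \GRencoding_\ell \]
as elements of $\Sym(\ctape_\ell)$, and then use Lemmas~\ref{lem:encoding-with-garbage} and~\ref{lem:addition-in-encoded-base} to conclude the word norm is at most $2 \cdot O(\ell^4) + O(\ell^3) = O(\ell^4)$ in $\tmgroup_\ell$.

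The first step is the preliminary observation that by construction of the decorated SMART, every transition in $\Delta$ leaves the duck and ghost components of the state untouched. Hence $T_{\ell,\smart}$ preserves $\ctape_\ell[\dright]$ setwise, and the $n$-th power of $\rduck{T_{\ell,\smart}}{\dright}$ agrees with $T_{\ell,\smart}^n$ on $\ctape_\ell[\dright]$ and is the identity elsewhere. The same observation applied to the definition of $\encoding_\ell$ shows that $\encoding_\ell$ preserves the duck (only the original SMART state and the tape are rewritten in Section~\ref{sec:smart-encoding-overall}), hence $\encoding_\ell(\ctape_\ell[\dright]) = \ctape_\ell[\dright]$. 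Since $\GRencoding_\ell$ is a bijection of $\ctape_\ell$ that coincides with $\encoding_\ell$ on $\ctape_\ell[\dright]$, it follows that $\GRencoding_\ell$ also sends $\ctape_\ell[\dright]$ to itself and therefore sends its complement to itself.

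The identity is now verified case-by-case. For $w \in \ctape_\ell[\dright]$, one has $\GRencoding_\ell(w) = \encoding_\ell(w) \in \ctape_\ell[\dright]$, so the middle factor acts as $(\plusorbit{n})_\ell$ and produces $\encoding_\ell\bigl(T_{\ell,\smart}^n(w)\bigr)$, still in $\ctape_\ell[\dright]$; then $\GRencoding_\ell^{-1}$ (which coincides with $\encoding_\ell^{-1}$ on $\ctape_\ell[\dright]$, being the inverse of a bijection that matches $\encoding_\ell$ there) returns $T_{\ell,\smart}^n(w) = (\rduck{T_{\ell,\smart}}{\dright})^n(w)$. For $w \notin \ctape_\ell[\dright]$, the image $\GRencoding_\ell(w)$ also lies outside $\ctape_\ell[\dright]$, so $(\rduck{\plusorbit{n}}{\dright})_\ell$ leaves it fixed, and the outer $\GRencoding_\ell^{-1}$ cancels the inner $\GRencoding_\ell$, yielding $w$, which equals $(\rduck{T_{\ell,\smart}}{\dright})^n(w)$.

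The main subtlety (though in the end mild) is exactly the observation that $\GRencoding_\ell$ setwise preserves $\ctape_\ell[\dright]$: this is what allows the garbage behavior of $\GRencoding_\ell$ on the complement to cancel itself in the conjugation without interfering with the restricted-addition factor. Once that is in place the proof is simply a matter of adding the word-norm bounds of the three factors from Lemmas~\ref{lem:encoding-with-garbage} and~\ref{lem:addition-in-encoded-base}, since $\lVert g^{-1} \rVert = \lVert g \rVert$ for any element of a group under a symmetric generating set.
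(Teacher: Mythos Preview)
Your proposal is correct and is exactly the paper's argument: the paper also proves $(\rduck{T_{\ell,\smart}}{\dright})^n = ((\rduck{\plusorbit{n}}{\dright})_\ell)^{\GRencoding_\ell}$ (which in the paper's conjugation convention $a^b = b^{-1}ab$ is your displayed identity) and reads off the $O(\ell^4)$ bound from Lemmas~\ref{lem:encoding-with-garbage} and~\ref{lem:addition-in-encoded-base}. Your version is slightly more explicit than the paper's in spelling out why $\GRencoding_\ell$ setwise preserves $\ctape_\ell[\dright]$ (hence its complement), which is precisely the point that makes the garbage cancel in the conjugation.
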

\begin{proof}
  Let $(\rduck{\plusorbit{n}}{\dright})_\ell$ be given by Lemma~\ref{lem:addition-in-encoded-base}. With Lemma~\ref{lem:encoding-with-garbage}, one can conjugate $(\rduck{\plusorbit{n}}{\dright})_\ell$ with $\GRencoding_\ell$ and obtain a bijection on $\ctape_\ell$ that maps configurations of $\ctape_\ell[\dright]$ to their $n$-th iterate by $\smart$, and is the identity on $\ctape_\ell[\dleft]$. In other words:

  \[ \left(\rduck{T_{\ell,\smart}}{\dright}\right)^n = \left((\rduck{\plusorbit{n}}{\dright})_\ell\right)^{\GRencoding_\ell} \]

  Indeed, the addition of $(\rduck{\plusorbit{n}}{\dright})_\ell$ is only performed on heads having duck $\dright$, so the garbage generated by $\GRencoding_\ell$ on ducks $d=\dleft$ is canceled in the conjugation; additionally, the shift of the tape (which happens when the addition modulo $2 \cdot 3^\ell$ overflows) is performed to the right (resp. to the left), exactly like $(T_{\ell,\smart})^{2 \cdot 3^\ell}$ acts on configurations $C_\smb$ and $C_\smp$ (resp. $C_\smd$ and $C_\smq$).
\end{proof}

And this lemma then leads to:
\begin{proof}[Proof of Lemma~\ref{lem:smart-distorted-on-cyclic-tapes}, 3\textsuperscript{rd} item]
  Let $d' = \dright \leftrightarrow \dleft \in \Sym(\duckset)$ be the involution that swaps ducks $\dright$ and $\dleft$, and $d = \ID \times d' \times \ID \times \ID \in \Sym(H)$ its lift to $H = Q \times \Gamma$ (where $Q = \origsmart{Q} \times \duckset \times \ghostset$). We have:
  \[ \left(T_{\ell,\smart}\right)^n = \left(\pi_d \circ \left(\rduck{T_{\ell,\smart}}{\dright}\right)^n \circ \pi_d \right) \circ \left(\rduck{T_{\ell,\smart}}{\dright}\right)^n \]
  because $(\rduck{T_{\ell,\smart}}{\dright})^{\pi_d} =  \rduck{T_{\ell,\smart}}{\dleft}$. 
\end{proof}

\subsubsection{Overview}

The following subsections deal with the proofs of Lemmas~\ref{lem:encoding-with-garbage} and \ref{lem:addition-in-encoded-base}, which respectively prove that the encoding and the addition can be implemented in $\tmgroup_\ell$ with respective word norms $O(\ell^4)$ and $O(\ell^3)$.

More precisely, Section~\ref{sec:engineering-in-cyclic-tapes} contains our main technical results, which define and implement conditional permutations. It also contains an exposition of the \emph{ducking trick}, a method we use to implement piecewise-defined bijections. Section~\ref{sec:proofs-cyclic-tapes} contains the proofs of Lemmas~\ref{lem:encoding-with-garbage} and \ref{lem:addition-in-encoded-base}.

\subsection{Permutation engineering in \texorpdfstring{$\ctape_\ell$}{C\_{}\{l,Q-dec,Gamma\}}}\label{sec:engineering-in-cyclic-tapes}

In this section, we develop two methods. In Section~\ref{sec:engineering-perm-condition}, we consider \emph{permutation conditioning}, which consists in building permutations $\pi_{g,C}$ (for some $g \in \Sym(Q \times \Gamma)$ a permutation of the head, and some $C \subseteq (Q \times \Gamma) \times \Gamma^{\ell-1}$) that apply the permutation $g$ if the condition $C$ holds. We prove (Lemma~\ref{lem:nc1-conditioning}) that if $C$ has a simple enough description, then $\pi_{g,C}$ has polynomial word norm in $\ell$. In Section~\ref{sec:engineering-ducking-trick} we consider the \emph{ducking trick}, which allows us to efficiently build piecewise-defined bijections.

\subsubsection{Permutation conditioning}\label{sec:engineering-perm-condition}

We call \emph{conditions} the subsets $C$ of $(Q \times \Gamma) \times \Gamma^{\ell-1}$. For $g \in \Sym(Q \times \Gamma)$, if $C$ (considered as a subset of $\ctape_\ell$) is a $\pi_g$-invariant subset, we can define the bijection $\pi_{g,C} : \ctape_\ell \to \ctape_\ell$ as $\pi_{g,C}(w) = w$ if $w \in \ctape_\ell$ contains no head; and if $w \in \ctape_\ell$ contains a head a position, say, $i_0 \in \Z/\ell\Z$, we set $\pi_{g,C}(w)$ to:
\[ \pi_{g,C}(w)_i = \begin{cases}
  w_i & \text{if } i \neq i_0 \\
  g(w_i) & \text{if } i=i_0 \text{ and } w_i \cdot w_{i+1} \dots w_{\ell-1} \cdot w_0 \dots w_{i-1} \in C \\
  w_i & \text{if } i=i_0 \text{ and } w_i \cdot w_{i+1} \dots w_{\ell-1} \cdot w_0 \dots w_{i-1} \notin C
\end{cases} \]
We then call $\pi_{g,C}$ the \emph{conditional application of $g$ under condition $C$}.

\medskip
Recall that the states of $Q$ have a ghost component $\ghostset$. We split $Q$ into two components: $Q = \wgh{Q} \times \ghostset$ (so, with the notation $Q = \origsmart{Q} \times \duckset \times \ghostset$, we have $\wgh{Q} = \origsmart{Q} \times \duckset$; but the exact structure of $\wgh{Q}$ has no importance in this section).

Then, the set $H = Q \times \Gamma$ also splits into $H = \wgh{H} \times \ghostset$, where $\wgh{H} = \wgh{Q} \times \Gamma$. Note that, for any permutation $g \in \Sym(\ghostset)$, the permutation $\ID_{\wgh{H}} \times g$ belongs to $\Sym(H)$; and similarly, if $g \in \Sym(\wgh{H})$, the permutation $g \times \ID_{\ghostset}$ belongs to $\Sym(H)$. Finally, all conditions we define below will be of the form $C = \wgh{C} \times \ghostset$, for $\wgh{C} \subseteq \wgh{H} \times \Gamma^{\ell-1}$.

\medskip
In this section, we prove Lemma~\ref{lem:gate-conditioning}: for gates $g \in \Sym(\wgh{H})$ and $\pi_{g \times \ID}$-invariant conditions $C = \wgh{C} \times \ghostset$ (for some $\wgh{C} \subseteq \wgh{H} \times \Gamma^{\ell-1}$), the conditioned gates $\pi_{g \times \ID, C}$ belong to $\tmgroup_\ell$. We also provide upper bounds on the word norm of $\pi_{g \times \ID, C}$ depending on $C$. This is essentially Barrington's theorem~\cite{1989-Barrington}.

\bigskip
As a first step, we consider the opposite case: instead of leaving the ghost-component of the head intact while permuting, we consider permutations that only permute the ghost-component $\ghostset$. As conditions $C = \wgh{C} \times \ghostset$ (for $\wgh{C} \subseteq \wgh{H} \times \Gamma^{\ell-1}$) are trivially $\pi_{\ID \times g}$-invariant for any $g \in \Sym(\ghostset)$, the conditioned gates $\pi_{\ID \times g, C}$ are always defined and:

\begin{lemma}\label{lem:ghost-conditioning}
  For any $g \in \Alt(\ghostset)$ and condition $C = \wgh{C} \times \ghostset$ (for some $\wgh{C} \subseteq \wgh{H} \times \Gamma^{\ell-1}$), the conditioned gate $\pi_{\ID \times g, C}$ belongs to $\tmgroup_\ell$. Let $T : \mathcal{P}(H \times \Gamma^{\ell-1}) \to \N$ be the optimal function such that $\lVert \pi_{\ID \times g,C} \rVert \leq T(C)$ for all $g \in \Alt(\ghostset)$. Then $T$ satifies the following inequalities: 
  \begin{align*}
    T([a]_j \times \ghostset) & \leq |\min(j, \ell-j)| \\
    T((\wgh{C} \cap \wgh{C'}) \times \ghostset) & \leq 2 \Big(T(\wgh{C} \times \ghostset) + T(\wgh{C'} \times \ghostset) \Big) \\
    T((\wgh{C} \cup \wgh{C'}) \times \ghostset) & \leq \begin{cases}
      T(\wgh{C} \times \ghostset) + T(\wgh{C'} \times \ghostset) & \text{if } \wgh{C} \cap \wgh{C'} = \emptyset \\
      2 \Big(T(\wgh{C} \times \ghostset) + T(\wgh{C'} \times \ghostset) \Big) + 5 & \text{otherwise}
    \end{cases} \\
    T((\wgh{C}^c) \times \ghostset) & \leq T(\wgh{C} \times \ghostset) + 1
  \end{align*}
\end{lemma}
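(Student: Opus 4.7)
The plan is to establish each of the four inequalities via an explicit construction, combining translations of the head with the Barrington-theorem commutator trick.

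For the base cylinder case $T([a]_j \times \ghostset) \leq \min(j, \ell-j)$, I would conjugate by a head translation: cyclically, moving the head by $\min(j, \ell-j)$ steps in the shorter direction brings the tape symbol originally at offset $j$ directly under the head (at a cost linear in $\min(j,\ell-j)$, using repeated state-dependent moves $\rho_q$); then a single unary gate $\pi_h$ that permutes the ghost by $g$ exactly when the head currently reads $a$ (and fixes everything else) realizes the desired conditional action; finally the head is translated back. Up to a constant number of unary head gates, the cost is dominated by the two translations.

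The intersection bound $T(\wgh{C} \cap \wgh{C'}) \leq 2(T(\wgh{C}) + T(\wgh{C'}))$ is the Barrington commutator trick. Since $\Alt(\ghostset) = A_6$ is non-abelian simple (and $|\ghostset| \geq 5$), every $g \in \Alt(\ghostset)$ is a commutator $g = [g_1, g_2]$ with $g_1, g_2 \in \Alt(\ghostset)$, keeping the recursion within the lemma's scope. I would then consider the composition
\[ \pi_{\ID \times g_1^{-1}, \wgh{C} \times \ghostset} \circ \pi_{\ID \times g_2^{-1}, \wgh{C'} \times \ghostset} \circ \pi_{\ID \times g_1, \wgh{C} \times \ghostset} \circ \pi_{\ID \times g_2, \wgh{C'} \times \ghostset}, \]
which composes to $g_1^{-1} g_2^{-1} g_1 g_2 = [g_1,g_2] = g$ on configurations where both conditions hold. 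If exactly one condition holds (say $\wgh{C}$ but not $\wgh{C'}$), the two gates conditioned on $\wgh{C'}$ act as the identity, and the two gates conditioned on $\wgh{C}$ genuinely cancel, yielding $\ID$. The total cost is $2T(\wgh{C}) + 2T(\wgh{C'})$. The complement bound follows from composing the unconditional gate $\pi_{\ID \times g}$ (cost $1$) with the conditioned inverse $\pi_{\ID \times g^{-1}, \wgh{C} \times \ghostset}$ (cost $T(\wgh{C})$); the disjoint union case is immediate by direct composition; and the non-disjoint union follows from De Morgan's identity $\wgh{C} \cup \wgh{C'} = (\wgh{C}^c \cap \wgh{C'}^c)^c$, combining the complement and intersection bounds to give $1 + 2((T(\wgh{C})+1)+(T(\wgh{C'})+1)) = 2(T(\wgh{C})+T(\wgh{C'})) + 5$.

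The main obstacle is the verification of the intersection step: one must check that the commutator really collapses to the identity whenever exactly one of the two conditions fails. This uses crucially that conditions of the form $\wgh{C} \times \ghostset$ do not involve the ghost coordinate, so the intermediate gates $\pi_{\ID \times g_i}$ preserve membership in $\wgh{C}$ and $\wgh{C'}$; thus the two conditioned gates sharing a condition really do combine into their algebraic inverse, rather than being interrupted by the condition dropping out mid-computation. The same ghost-independence of conditions guarantees that each $\pi_{\ID \times g, C}$ is well-defined in the first place, since $\wgh{C} \times \ghostset$ is automatically $\pi_{\ID \times g}$-invariant.
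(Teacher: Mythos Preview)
Your proposal is correct and follows essentially the same approach as the paper: conjugation by $\rho^{\pm\min(j,\ell-j)}$ for cylinders, the commutator identity $\pi_{\ID\times[g_1,g_2],\,(\wgh{C}\cap\wgh{C'})\times\ghostset}=[\pi_{\ID\times g_1,\wgh{C}\times\ghostset},\pi_{\ID\times g_2,\wgh{C'}\times\ghostset}]$ for intersections (the paper invokes Ore's theorem rather than simplicity of $A_6$, but either justification works), composition with the unconditional gate for complements, and De~Morgan for unions. Your explicit remark that ghost-independence of the conditions is what makes the commutator collapse when only one condition holds is exactly the point the paper leaves implicit in its ``straightforward calculation''.
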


\begin{proof}
  We prove by induction over $\wgh{C} \subseteq \wgh{H} \times \Gamma^{\ell-1}$ that, denoting $C = \wgh{C} \times \ghostset$, every $\pi_{\ID \times g,C}$ (for $g \in \Alt(\ghostset)$) has word norm that checks the aforementioned inequalities.

  \begin{enumerate}[label=\textbf{Case \arabic*.}]
    \item If $\wgh{C} = \wgh{B} \times \Gamma^{\ell-1}$ for some $\wgh{B} \subseteq \wgh{H}$, then any such $\pi_{\ID \times g,\wgh{B} \times \ghostset}$ already appears in the set of generators of $\tmgroup_\ell$. 
    \item If $\wgh{C} = [a]_j \triangleq \wgh{H} \times \Gamma^{j-1} \times \{a\} \times \Gamma^{\ell-j-1}$ for some $j \in \llbracket 1, \ell \rrbracket$ and $a \in \Gamma$, define $\wgh{B} = (\wgh{Q} \times \{a\}) \times \Gamma^{\ell-1}$. Then one can conjugate $\pi_{\ID \times g,\wgh{B} \times \ghostset}$ (which belongs to $\tmgroup_\ell$ by the first item) with either the right-move $\rho^{j}$ or the left-move $\rho^{-(\ell-j)}$: the resulting permutation applies $g$ on the ghost symbol if and only if $j$ cells away from the head, the content of the tape is $a$.
    \item If $\wgh{C} = (\wgh{C_1})^c$, then $\pi_{\ID \times g,\wgh{C} \times \ghostset} = \pi_{\ID \times g^{-1},\wgh{C_1} \times \ghostset} \circ \pi_{\ID \times g}$.
    \item If $\wgh{C} = \wgh{C_1} \cap \wgh{C_2}$, we use the ``commutator trick'': as $\ghostset$ has cardinality at least $5$, $g$ is a commutator by Ore's theorem~\cite[Theorem 7]{1951-Ore}, so there exist $g_1,g_2$ such that $g = [g_1,g_2]$. By the induction hypothesis and a straightforward calculation, we conclude that:
    \[ \pi_{\ID \times g, (\wgh{C_1} \cap \wgh{C_2}) \times \ghostset} = \Big[\pi_{\ID \times g_1,\wgh{C_1} \times \ghostset}, \pi_{\ID \times g_2, \wgh{C_2} \times \ghostset}\Big]\]
    \item If $C = (\wgh{C_1} \cup \wgh{C_2}) \times \ghostset$ with $\wgh{C_1} \cap \wgh{C_2} = \emptyset$, then 
    \[ \pi_{\ID \times g,(\wgh{C_1} \cup \wgh{C_2}) \times \ghostset} = \pi_{\ID \times g,\wgh{C_1} \times \ghostset} \circ \pi_{\ID \times g,\wgh{C_2} \times \ghostset}. \]
    \item If $C = (\wgh{C_1} \cup \wgh{C_2}) \times \ghostset$, then
    \[ \pi_{\ID \times g,(\wgh{C_1} \cup \wgh{C_2}) \times \ghostset} = \pi_{\ID \times g,(\wgh{C_1}^c \cap \wgh{C_2}^c)^c \times \ghostset}. \]
  \end{enumerate}

  We conclude that $\pi_{\ID \times g,C} \in \tmgroup_\ell$, and that the provided upper-bounds are correct.
\end{proof}

Note that for any $g \in \Sym(\wgh{H})$, $g \times \ID_{\ghostset} \in \Sym(H)$ is an even permutation since $|\ghostset|$ is even. Combining this with the previous lemma, we obtain the following result which allows the conditioning of gates depending on conditions of the form $\wgh{C} \times \ghostset$, and controls their word norm in $\tmgroup_\ell$:

\begin{lemma}\label{lem:gate-conditioning}
  Let $T : \mathcal{P}(H \times \Gamma^{\ell-1}) \to \N$ be given by the previous lemma. Assume that $g \in \Sym(\wgh{H})$ is a permutation and $C = \wgh{C} \times \ghostset$ is a $\pi_{g \times \ID}$-invariant condition, for some $\wgh{C} \subseteq \wgh{H} \times \Gamma^{\ell-1}$. Then the permutation $\pi_{g \times \ID,C}$ belongs to $\tmgroup_\ell$ with word norm $O(T(C))$.
\end{lemma}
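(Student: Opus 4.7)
The plan is to mimic the inductive structure of Lemma~\ref{lem:ghost-conditioning} as closely as possible, using the ghost coordinate $\ghostset$ as an auxiliary workspace to reduce gate conditioning to ghost conditioning. Two structural observations I would rely on throughout: since $|\ghostset| = 6$ is even, $g \times \ID_\ghostset$ always lies in $\Alt(H)$ for every $g \in \Sym(\wgh H)$; and since any condition of the form $\wgh C \times \ghostset$ is ghost-invariant, every permutation that only modifies the ghost component of the head automatically preserves such a condition for free.

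For the base cases, the easy one is $\wgh C = \wgh B \times \Gamma^{\ell-1}$ with $\wgh B \subseteq \wgh H$ a $g$-invariant subset: then $\pi_{g \times \ID, C}$ is itself a head permutation (apply $g$ when the head's $\wgh H$-part lies in $\wgh B$), hence a generator $\pi_h$ of $\tmgroup_\ell$ with word norm $1$. For $\wgh C = [a]_j$ I would use a compute--act--uncompute pattern: first apply $\pi_{\ID \times \tau, [a]_j \times \ghostset}$ from Lemma~\ref{lem:ghost-conditioning} with a non-trivial $\tau \in \Alt(\ghostset)$ to ``flag'' the ghost whenever the offset-$j$ cell is $a$; then apply a single head-permutation generator that implements $g$ on $\wgh H$ conditioned on the new ghost value; then undo the ghost flagging with $\pi_{\ID \times \tau^{-1}, [a]_j \times \ghostset}$. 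Here the hypothesis that $C$ is $(g\times\ID)$-invariant is what makes the uncomputation correct: whether ``the condition holds'' cannot have changed as a result of applying $g$ to the head, so the second ghost-conditioning sees exactly the cells it did the first time.

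For the inductive cases, I would reuse verbatim the identities from Lemma~\ref{lem:ghost-conditioning}: $\pi_{g \times \ID, C^c} = \pi_{g \times \ID} \circ \pi_{g^{-1} \times \ID, C}$ handles complements; disjoint unions compose directly; the intersection case uses the commutator identity $\pi_{g \times \ID, C_1 \cap C_2} = [\pi_{\alpha, C_1}, \pi_{\beta, C_2}]$ for a commutator decomposition $g \times \ID = [\alpha, \beta]$ in $\Alt(H)$; and non-disjoint unions are reduced to intersections and complements. Ore's theorem supplies a decomposition in $\Alt(H)$ because $|H| \geq 5$, and one uses the freedom to twist the factors $\alpha, \beta$ by ghost permutations (which preserve all $\wgh C_i \times \ghostset$ conditions at no cost to the invariance requirement) to ensure $\alpha$ preserves $C_1$ and $\beta$ preserves $C_2$, which is the hypothesis needed for the commutator trick to apply. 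Because $|\wgh H|$ and $|\ghostset|$ are constants depending only on the fixed alphabets, every step introduces at most an $O(1)$ blow-up over the corresponding bound from $T$, so the final word norm is $O(T(C))$.

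The main obstacle is precisely this last point: arranging the commutator decomposition of $g \times \ID$ in the intersection step so that the two factors individually preserve the two pieces of the condition. The argument uses crucially that the ghost gives ``extra room'' beyond $\Sym(\wgh H)$, and that ghost-only operations come for free in the invariance requirement; this is really the reason the ghost component was introduced in the augmented SMART alphabet in Section~\ref{sec:decorated-smart}.
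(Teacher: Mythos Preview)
Your approach has two genuine gaps that I do not see how to close without changing strategy.

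First, the compute--act--uncompute scheme for the base case $[a]_j$ does not work. After applying $\pi_{\ID\times\tau,\,[a]_j\times\ghostset}$, the ghost value is $\tau(x)$ if the condition holds and $x$ otherwise, where $x$ is the \emph{unknown} initial ghost. No head-permutation generator can read off ``was $\tau$ applied?'' from the ghost alone: every $y\in\ghostset$ arises both as some $\tau(x)$ and as an untouched value $y$. The ghost is not a clean ancilla, so the middle ``act'' step cannot be implemented as you describe.

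Second, the commutator trick for intersections needs more than you grant. For $[\pi_{\alpha,C_1},\pi_{\beta,C_2}] = \pi_{[\alpha,\beta],\,C_1\cap C_2}$ to hold, it is not enough that $\alpha$ preserve $C_1$ and $\beta$ preserve $C_2$: tracing the four cases shows that after applying $\pi_{\beta,C_2}$ to some $w\in C_1\cap C_2$, membership in $C_1$ may change, spoiling the subsequent application of $\pi_{\alpha,C_1}$. In Lemma~\ref{lem:ghost-conditioning} this was automatic, because ghost-only permutations preserve every set of the form $\wgh C\times\ghostset$. Here $\alpha,\beta$ must move the $\wgh H$-coordinate (since $g$ does), and ghost twists do not alter how a permutation acts on the $\wgh H$-part, so your proposed ``twisting'' cannot enforce the required invariances. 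Worse, if both factors did preserve both $C_i$, so would $[\alpha,\beta]=g\times\ID$, which is strictly stronger than the hypothesis that $g\times\ID$ preserves $C_1\cap C_2$. Finally, even granting such $\alpha,\beta$, they are not of the form $g'\times\ID$, so your inductive hypothesis does not apply to $\pi_{\alpha,C_1}$ or $\pi_{\beta,C_2}$.

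The paper avoids both problems by \emph{not} re-running the induction. It first shows that if $g$ is a cycle with support $S\subseteq\wgh H$, then $\pi_{g\times\ID}$-invariance of $C=\wgh C\times\ghostset$ upgrades for free to $\pi_{g'}$-invariance for every $g'\in\Sym(S\times\ghostset)$. It then writes $g\times\ID\in\Alt(S\times\ghostset)$ as a bounded product of 3-cycles, each of which now preserves $C$; and for a single 3-cycle $((h_1,x_1),(h_2,x_2),(h_3,x_3))$ it observes that the conditioned gate is conjugate, by an unconditioned head permutation, to $\pi_{\ID\times\tau,\,(\wgh C\cap[h_1]_0)\times\ghostset}$ for a ghost 3-cycle $\tau$ --- and this is supplied directly by Lemma~\ref{lem:ghost-conditioning} at cost $O(T(C))$.
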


We divide the proof of this lemma in three parts.

\begin{claim}
  For $g \in \Alt(\wgh{H} \times \ghostset)$ a 3-cycle and $C = \wgh{C} \times \ghostset$ a $\pi_{g}$-invariant condition, the permutation $\pi_{g, C}$ belongs to $\tmgroup_\ell$ with word norm $O(T(C))$.
\end{claim}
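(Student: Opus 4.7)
The plan is to reduce a general 3-cycle $g \in \Alt(H)$ (with $H = \wgh H \times \ghostset$) to the conditioned ghost-only permutations already handled by Lemma~\ref{lem:ghost-conditioning}.

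First, I will identify a convenient "standard" 3-cycle to target. For a fixed $\wgh a \in \wgh H$ and distinct $x_0, x_1, x_2 \in \ghostset$, let $\tilde g = ((\wgh a, x_0), (\wgh a, x_1), (\wgh a, x_2))$ and $\tau = (x_0, x_1, x_2) \in \Alt(\ghostset)$. Since the support of $\tilde g$ lies entirely in the $\wgh H$-fibre over $\wgh a$, for every $\tilde g$-invariant condition $D = \wgh D \times \ghostset$ we have
\[
\pi_{\tilde g,\, D} \;=\; \pi_{\ID_{\wgh H} \times \tau,\; D \cap B}
\]
where $B = (\{\wgh a\} \times \ghostset) \times \Gamma^{\ell-1}$. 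The set $B$ constrains only the head and therefore falls under Case~1 of the recursion inside Lemma~\ref{lem:ghost-conditioning}, giving $T(B) = O(1)$. The intersection rule of that lemma then yields $\lVert \pi_{\tilde g, D} \rVert = O(T(D))$.

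Second, I will transport an arbitrary 3-cycle $g = (h_1, h_2, h_3) \in \Alt(H)$ to the standard form by conjugation. Pick $\sigma \in \Sym(H)$ with $\sigma g \sigma^{-1} = \tilde g$; the unconditional gate $\pi_\sigma$ is a single generator of $\tmgroup_\ell$, and a direct computation gives
\[
\pi_\sigma \circ \pi_{g, C} \circ \pi_\sigma^{-1} \;=\; \pi_{\sigma g \sigma^{-1},\; \pi_\sigma(C)},
\]
so that $\pi_{g, C} = \pi_\sigma^{-1} \circ \pi_{\tilde g,\, \pi_\sigma(C)} \circ \pi_\sigma$. Combined with the first step, this produces the desired $O(T(C))$ bound, provided the transformed condition $\pi_\sigma(C)$ still has the form $\wgh D \times \ghostset$ and satisfies $T(\pi_\sigma(C)) = O(T(C))$.

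The main obstacle is precisely this condition-preservation issue. Conjugators $\sigma$ that respect the $\wgh H$-projection (i.e.\ those for which the $\wgh H$-component of $\sigma(\wgh h, x)$ depends only on $\wgh h$) preserve both the structural form $\wgh D \times \ghostset$ and, by inspection of the recursion, the $T$-cost up to constants. However, such a $\sigma$ cannot conjugate an arbitrary $g$ to $\tilde g$ when $h_1, h_2, h_3$ have pairwise distinct $\wgh H$-components, since a $\wgh H$-projection map cannot collapse three distinct values. To handle this, I will decompose $g$ as a short product of 3-cycles whose supports involve fewer distinct $\wgh H$-components, using the factorisation $(a,b,c) = (a,b,d)(d,b,c)$ together with a commutator step $g = [g_1, g_2]$ (possible by Ore's theorem, with $g_i$ themselves 3-cycles) and the identity $\pi_{[g_1, g_2], C} = [\pi_{g_1, C}, \pi_{g_2, C}]$ (valid whenever $C$ is both $\pi_{g_1}$- and $\pi_{g_2}$-invariant, which can be arranged by enlarging $C$ outside the support of the 3-cycles involved, leaving $\pi_{g, C}$ unchanged). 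Each intermediate 3-cycle is then reducible to $\tilde g$ by a $\wgh H$-projection-preserving conjugation, and the costs accumulate to the bound $\lVert \pi_{g, C} \rVert = O(T(C))$ claimed.
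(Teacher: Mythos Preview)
Your first step is correct and is exactly what the paper does: build the conditioned standard 3-cycle $\pi_{\tilde g, C}$ as $\pi_{\ID\times\tau,\, C\cap B}$ with $B=(\{\wgh a\}\times\Gamma^{\ell-1})\times\ghostset$, at cost $O(T(C))$ via Lemma~\ref{lem:ghost-conditioning}.

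The gap is in your second step. You correctly write $\pi_\sigma\circ\pi_{g,C}\circ\pi_\sigma^{-1}=\pi_{\sigma g\sigma^{-1},\pi_\sigma(C)}$ and then treat the possible change of condition as a genuine obstacle, launching into a decomposition/commutator workaround. That workaround does not close: ``enlarging $C$ outside the support'' cannot in general be done while keeping the shape $\wgh{C}\times\ghostset$ (membership at $(\wgh h,x)$ is forced by $\wgh h$, so you cannot adjust it at a single auxiliary point $d$), nor while controlling $T$; and $\wgh H$-projection-preserving conjugators still cannot reduce a two-fibre 3-cycle to a one-fibre one.

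The point you are missing is that the hypothesis already kills the obstacle. Write $g=((\wgh h_1,x_1),(\wgh h_2,x_2),(\wgh h_3,x_3))$. Since $C=\wgh{C}\times\ghostset$ is $\pi_g$-invariant, for every $\gamma\in\Gamma^{\ell-1}$ we have $\wgh h_1\cdot\gamma\in\wgh{C}\iff\wgh h_2\cdot\gamma\in\wgh{C}\iff\wgh h_3\cdot\gamma\in\wgh{C}$. Now take $\wgh a=\wgh h_1$ in your standard cycle $\tilde g=((\wgh h_1,y_1),(\wgh h_1,y_2),(\wgh h_1,y_3))$ and let $\sigma$ be the product of the three transpositions $(\wgh h_i,x_i)\leftrightarrow(\wgh h_1,y_i)$. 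Each such transposition preserves $C$ by the equivalence just noted (and trivially when $i=1$), hence $\pi_\sigma(C)=C$. Therefore $\pi_{g,C}=\pi_\sigma^{-1}\circ\pi_{\tilde g,C}\circ\pi_\sigma$ with $\pi_\sigma$ a single generator, and you are done with cost $O(T(C))$. This is precisely the paper's argument; no factorisation or commutator step is needed.
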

\begin{proof}
  Let $g = ((\wghintext{h_1},x_1),(\wghintext{h_2},x_2), (\wghintext{h_3},x_3))$ be a 3-cycle of $\Sym(\wgh{H} \times \ghostset)$ and $C = \wgh{C} \times \ghostset$ be a $\pi_g$-invariant condition for some $\wgh{C} \subseteq \wgh{H} \times \Gamma^{\ell-1}$.

  Consider the condition $B = \wgh{B} \times \ghostset$ where $\wgh{B} = \wgh{C} \cap [\wghintext{h_1}]_0$, and for $y_1,y_2,y_3 \in \ghostset$ three distinct elements, let us define the following permutations:
  \begin{align*}
    g'_{\ghostset} & = (y_1,y_2,y_3) \in \Alt(\ghostset) \\
    g' & = ((\wghintext{h_1},y_1),(\wghintext{h_1},y_2), (\wghintext{h_1},y_3)) \in \Alt(\wgh{H} \times \ghostset)
  \end{align*}

  By Lemma~\ref{lem:gate-conditioning}, $\pi_{\ID \times g'_{\ghostset}, B}$ belongs to $\tmgroup_\ell$ with word norm $O(T(B)) = O(T(C))$. But $\pi_{\ID \times g'_{\ghostset},B} = \pi_{g',C}$, since $\wgh{B} = \wgh{C} \cap [\wghintext{h_1}]_0$. This proves that $\pi_{g,C}$ belongs to $\tmgroup_\ell$ with word norm $O(T(C))$, since $g$ and $g'$ (hence $\pi_{g,C}$ and $\pi_{g',C}$) are conjugated by the involutions $(\wghintext{h_i},x_i) \leftrightarrow (\wghintext{h_1},y_i)$ of $\Sym(H)$.
\end{proof}

\begin{claim}
  For $g \in \Sym(\wgh{H})$ a cycle of support $S$ and $C = \wgh{C} \times \ghostset$ a $\pi_{g \times \ID}$-invariant condition, the condition $C$ is $\pi_{g'}$-invariant for every $g' \in \Sym(S \times \ghostset)$.
\end{claim}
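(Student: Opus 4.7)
The plan is to unpack the product structure of $C$ and exploit the fact that $g$ is a cycle. First, I would observe that since $C = \wgh{C} \times \ghostset$, membership of a triple $((h, x), w) \in \wgh{H} \times \ghostset \times \Gamma^{\ell-1}$ in $C$ depends only on the pair $(h, w)$, not on the ghost component $x$. Consequently, $\pi_{g \times \ID}$-invariance of $C$ translates into the following condition on $\wgh{C}$: for every fixed $w \in \Gamma^{\ell-1}$, the slice $\wgh{C}_w := \{h \in \wgh{H} : (h, w) \in \wgh{C}\}$ is stable under $g$.

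Next, I would use the hypothesis that $g$ is a cycle with support $S$ to describe the orbits of $g$ acting on $\wgh{H}$: they consist of a single non-trivial orbit equal to $S$ (on which $g$ acts transitively) together with singletons $\{h\}$ for $h \notin S$. Since each stable set $\wgh{C}_w$ is a union of $g$-orbits, this yields the key dichotomy: for every $w \in \Gamma^{\ell-1}$, either $S \subseteq \wgh{C}_w$ or $S \cap \wgh{C}_w = \emptyset$. This is the only point where the cycle hypothesis is used, but it is crucial; for a general permutation $g$, the support $S$ could split into several orbits and the dichotomy would fail.

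The conclusion is then immediate. Given $g' \in \Sym(S \times \ghostset)$ and $((h, x), w) \in C$, there are two cases. If $(h, x) \notin S \times \ghostset$, then $g'$ fixes the head, so the configuration is unchanged. Otherwise $h \in S$, and $(h, w) \in \wgh{C}$ forces $S \subseteq \wgh{C}_w$ by the dichotomy. Writing $g'(h, x) = (h', x')$ with $(h', x') \in S \times \ghostset$, we have $h' \in S \subseteq \wgh{C}_w$, so $(h', w) \in \wgh{C}$ and hence $((h', x'), w) \in \wgh{C} \times \ghostset = C$. Thus $C$ is $\pi_{g'}$-invariant. I do not anticipate any serious obstacle; the statement is essentially a bookkeeping exercise once the orbit structure of $g$ on $\wgh{H}$ is made explicit and the ghost factor is recognized to play no role in the defining condition of $C$.
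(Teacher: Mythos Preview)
Your proof is correct and follows essentially the same approach as the paper. The paper argues by directly finding, for each $(h',x') = g'(h,x)$, a power $k$ with $g^k(h) = h'$ (using that $g$ is a cycle) and then invoking $\pi_{g\times\ID}$-invariance; you phrase the same idea in terms of the orbit decomposition, observing that each slice $\wgh{C}_w$ is a union of $g$-orbits and hence contains $S$ entirely or not at all. The only cosmetic difference is that the paper establishes the biconditional directly, whereas you show one inclusion---but the other follows at once since $(g')^{-1}\in\Sym(S\times\ghostset)$ as well.
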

\begin{proof}
  Let $g' \in \Sym(S \times \ghostset)$, $(\wghintext{h},x) \in \wgh{H} \times \ghostset$ and $\gamma \in \Gamma^{\ell-1}$. Let us denote $g'(\wghintext{h},x) = (\wghintext{h}',x')$. As $g$ is a cycle, there exists $k$ such that $g^k(\wghintext{h}) = \wghintext{h}'$. Then $\pi_{g'}((\wghintext{h},x)\cdot \gamma) = g'(\wghintext{h},x) \cdot \gamma = (g^k(\wghintext{h}),x') \cdot \gamma$, so that:
  \begin{align*}
    & \pi_{g'}((\wghintext{h},x) \cdot \gamma) \in \wgh{C} \times \ghostset \\
    \iff \quad & (g^k(\wghintext{h}),x') \cdot \gamma \in \wgh{C} \times \ghostset \\
    \iff \quad & \pi_{g}^k((\wghintext{h},x') \cdot \gamma) \in \wgh{C} \times \ghostset \\
    \iff \quad & (\wghintext{h},x') \cdot \gamma \in \wgh{C} \times \ghostset \qquad \text{as $\wgh{C} \times \ghostset$ is $\pi_g$-invariant}\\
    \iff \quad & (\wghintext{h},x) \cdot \gamma \in \wgh{C} \times \ghostset \qedhere
  \end{align*}
\end{proof}

We can now conclude the proof:

\begin{proof}[Proof of Lemma~\ref{lem:gate-conditioning}]
  Let $g \in \Sym(\wgh{H})$ and $C = \wgh{C} \times \ghostset$  be a $\pi_{g \times \ID}$-invariant condition for some $\wgh{C} \subseteq \wgh{H} \times \Gamma^{\ell-1}$. Without loss of generality, we can assume that $g$ is a cycle, whose support we denote $S$.

  Then $g \times \ID$ belongs to $\Alt(S \times \ghostset)$, since $\ghostset$ has even cardinality. Additionally, $\Alt(S \times \ghostset)$ is generated by its $3$-cycles since $|S \times \ghostset| \geq 3$. Let us write $g \times \ID = c_1 \circ \dots \circ c_k$ for $c_1,\dots,c_k$ 3-cycles of $\Alt(S \times \ghostset)$. Then $C$ is $\pi_{c_i}$-invariant for every $c_i$ by the second claim, so by the first claim each $\pi_{c_i,C}$ belongs to $\tmgroup_\ell$ with word norm $O(T(C))$.

  As $\Alt(S \times \ghostset) \leq \Alt(H)$ is finite, $k$ is bounded (with bound independent from $\ell$), and $\pi_{g \times \ID, C}$ is the composition of this bounded number of $\pi_{c_i,C}$. This concludes the proof.
\end{proof}

\medskip
We finally state an (optional) rephrasing of Lemma~\ref{lem:gate-conditioning}. Readers with a background in complexity theory will find the following version of the statement useful; it is immediate from the definition of the complexity class $\NC^1$.

\begin{lemma}\label{lem:nc1-conditioning}
  Let $L \subseteq \wgh{H} \times \Gamma^*$ be a language in $\NC^1$, and $L_n \subseteq \wgh{H} \times \Gamma^{n-1}$ its words of size $n$. For any $g \in \Alt(\wgh{H})$, the conditioned gates $f_{g \times \ID, L_n \times \ghostset}$ belong to $\tmgroup_\ell$ with polynomial word norm in $n$.
\end{lemma}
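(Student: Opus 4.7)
The plan is to derive Lemma~\ref{lem:nc1-conditioning} directly from Lemmas~\ref{lem:ghost-conditioning} and~\ref{lem:gate-conditioning}, by reading an $\NC^1$ circuit for $L_n$ as a recursive recipe for building the condition $L_n \times \ghostset$ out of cylinders, intersections, unions, and complements. Throughout I implicitly use that the condition $L_n \times \ghostset$ is $\pi_{g \times \ID}$-invariant (so that the gate $\pi_{g \times \ID, L_n \times \ghostset}$ is defined), which is part of the hypothesis in any application of this lemma.

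First I would fix a binary encoding of $\wgh{H} \times \Gamma^{n-1}$ by strings of length $m = O(n)$, and invoke the $\NC^1$ hypothesis to obtain a family of fan-in-two Boolean circuits $\{C_n\}$ of depth $d_n = O(\log n)$ and polynomial size deciding membership in $L_n$ under this encoding. Each input literal of $C_n$ queries a single bit of the encoding and thus corresponds to a condition of the form $S_i \times \ghostset$, where $S_i$ is a disjoint union (over those symbols in $\wgh{H}$ or $\Gamma$ whose encoding has the queried bit set) of cylinders $[a]_j$ in the sense of Lemma~\ref{lem:ghost-conditioning}. By the first and third inequalities of that lemma together with a short disjoint-union induction, each literal condition satisfies $T(S_i \times \ghostset) = O(n)$.

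Next I would proceed by induction on the gates of $C_n$ in topological order, interpreting each gate as a Boolean combination of the conditions produced so far: NOT adds $1$ to $T$ (fourth inequality of Lemma~\ref{lem:ghost-conditioning}); AND produces an intersection, which at most quadruples $T$ and adds a constant (second inequality); OR is handled similarly (third inequality). A straightforward induction then shows that the condition associated to any gate at depth $k$ in $C_n$ has $T$-value bounded by $c^{O(k)} \cdot \max_i T(S_i \times \ghostset)$ for a universal constant $c$; substituting $k = O(\log n)$ and the bound on the literals yields $T(L_n \times \ghostset) = n^{O(1)}$.

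Finally, since $\card \ghostset = 6$ is even, we have $g \times \ID_\ghostset \in \Alt(H)$ for every $g \in \Alt(\wgh{H})$, and Lemma~\ref{lem:gate-conditioning} concludes that $\lVert \pi_{g \times \ID, L_n \times \ghostset} \rVert = O(T(L_n \times \ghostset))$, which is polynomial in $n$, as desired. The argument is essentially pure bookkeeping and I do not expect any real obstacle; the crucial conceptual point (already packaged into the commutator trick proving Lemma~\ref{lem:ghost-conditioning}) is that each Boolean combination costs only a constant multiplicative factor in $T$, so a logarithmic-depth circuit incurs only polynomial blowup rather than the exponential blowup one would face for an arbitrary formula.
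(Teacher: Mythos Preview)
Your proposal is correct and matches the paper's approach exactly: the paper gives no proof at all, stating only that the lemma ``is immediate from the definition of the complexity class $\NC^1$,'' and you have correctly unpacked that immediacy by reading the $\NC^1$ circuit as a recipe for applying the recursive bounds of Lemma~\ref{lem:ghost-conditioning} and then invoking Lemma~\ref{lem:gate-conditioning}. Your explicit acknowledgment that $\pi_{g\times\ID}$-invariance of $L_n\times\ghostset$ is an implicit hypothesis is appropriate, since without it the conditioned gate is not even defined.
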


\subsubsection{Examples}

To clarify Lemma~\ref{lem:gate-conditioning}, we consider several examples: the permutation of adjacent letters, lexicographic comparisons, and (cyclic) ternary additions.

\paragraph*{Permuting two adjacent tape-letters}

\begin{lemma}\label{lem:permuting-letters}
  Consider the permutation $p \in \Sym(\ctape_\ell)$ of two adjacent tape-letters around the head, i.e.\ the shift-equivariant action on patterns of size $2$:
  \[ p : \begin{tpattern} w_0 & w_1 \\ q & \end{tpattern} \mapsto \begin{tpattern} w_1 & w_0 \\ q & \end{tpattern}\]
  for any $w = w_0 w_1 \in \Gamma^2$ and $q \in Q$.

  Then $p$ belongs to $\tmgroup_\ell$ with constant word norm.
\end{lemma}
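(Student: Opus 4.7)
The plan is to express $p$ as a short alternating product
\[
  p = \Phi_3 \circ \rho^{-1} \circ \Phi_2 \circ \rho \circ \Phi_1,
\]
where $\rho = \prod_{q \in Q} \rho_q$ is the state-agnostic right shift (of word norm at most $|Q|$), and each $\Phi_i$ is a product of $|\Gamma|$ conditional head gates supplied by Lemma~\ref{lem:gate-conditioning} that read the tape letter in one immediately adjacent cell and transform the head's tape letter accordingly.

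To engineer the gates, identify $\Gamma$ with $\Z_{|\Gamma|}$. For each $c \in \Gamma$, take the permutations of $\wgh H$ acting on $(q', a) \in \wgh Q \times \Gamma$ by
\[
  g_1^{(c)}(q', a) = (q', a + c), \quad g_2^{(c)}(q', a) = (q', c - a), \quad g_3^{(c)}(q', a) = (q', a - c),
\]
and set $\Phi_1 = \prod_{c \in \Gamma} \pi_{g_1^{(c)} \times \ID,\, [c]_1 \times \ghostset}$ and $\Phi_3 = \prod_{c \in \Gamma} \pi_{g_3^{(c)} \times \ID,\, [c]_1 \times \ghostset}$ (both conditioning on the right neighbour), and $\Phi_2 = \prod_{c \in \Gamma} \pi_{g_2^{(c)} \times \ID,\, [c]_{\ell-1} \times \ghostset}$ (conditioning on the left neighbour). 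Each condition tests a single cell at distance $1$ from the head and is $\pi_{g \times \ID}$-invariant (the gate only touches position $0$), and Lemma~\ref{lem:ghost-conditioning} yields $T([c]_j \times \ghostset) \leq 1$ for $j \in \{1, \ell - 1\}$; so by Lemma~\ref{lem:gate-conditioning}, each conditional factor has word norm $O(1)$.

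Verification will be a direct telescoping trace. Starting from a head $(q', a)$ at position $i_0$ with tape letter $b$ at $i_0+1$, $\Phi_1$ reads $b$ and shifts the head to $(q', a+b)$; $\rho$ moves the head to $i_0+1$ with letter $b$, leaving $a+b$ at $i_0$; $\Phi_2$ reads $a+b$ at the new left neighbour and reflects the head's letter $b$ to $(a+b) - b = a$; $\rho^{-1}$ returns the head to $i_0$ carrying $a+b$ and drops $a$ at $i_0+1$; finally $\Phi_3$ reads $a$ on the right and shifts the head's letter by $-a$, producing $b$. The end state is exactly $p$ applied to the original tape, and since the gates within each $\Phi_i$ have pairwise disjoint conditions they commute, so the product is unambiguous.

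The total word norm will then be $3|\Gamma| \cdot O(1) + 2|Q| = O(|Q| + |\Gamma|)$, constant in $\ell$. The only real design subtlety is picking the three families $g_i^{(c)}$ so that the successive transformations of the head's tape letter telescope correctly while the cell inspected at each step has predictable content (an intermediate value $a+b$ rather than something depending on history that would need to be encoded in the head's state); the arithmetic in $\Z_{|\Gamma|}$ handles this cleanly.
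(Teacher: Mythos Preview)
Your proof is correct, and the trace is clean. Both your argument and the paper's are instances of the classical three-step swap via controlled operations, but the implementations differ. The paper first decomposes $p$ as a commuting product of $\binom{|\Gamma|}{2}$ pairwise involutions $p_{(a,b)}$ (swapping adjacent cells only when they contain the unordered pair $\{a,b\}$), and realizes each $p_{(a,b)}$ via three conditional applications of the transposition $a \leftrightarrow b$ on the head letter, conditioned on the neighbour being $a$. You instead identify $\Gamma$ with $\Z_{|\Gamma|}$ and do the swap in one pass via the arithmetic sequence ``add right neighbour; reflect through left neighbour; subtract right neighbour'', packaging each step as a product of $|\Gamma|$ disjointly-conditioned gates. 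Your route avoids the outer product over pairs and gives a slightly tighter constant; the paper's route is more elementary in that it only ever uses $2$-cycles on $\Gamma$ and so does not need the additive structure on $\Gamma$. Both invoke Lemma~\ref{lem:gate-conditioning} with $T([c]_{\pm 1} \times \ghostset) = O(1)$ in exactly the same way.
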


\begin{proof}
  The permutation $p$ is the composition of finitely many commuting $p_{(a,b)}$ that only permutes adjacent tape-letters when they differ and belong to the set $\{a,b\} \subseteq \Gamma$, i.e.\ for $a \neq b \in \Gamma$, the involution $p_{(a,b)} \in \Sym(\ctape_\ell)$ is defined as
  \[ \qquad \begin{tpatternc} a & b \\ q & \end{tpatternc} \leftrightarrow \begin{tpatternc} b & a \\ q & \end{tpatternc}\]
  for any $q \in Q$. So we only need to prove that every $p_{(a,b)}$ belong to $\tmgroup_\ell$.

  \smallskip
  Let $a \neq b \in \Gamma$. Define $g_\Gamma \in \Sym(\Gamma)$ the involution $g_\Gamma = a \leftrightarrow b$. Then $\wgh{g} = \ID_{\wgh{Q}} \times g_\Gamma$ is a permutation of $\Sym(\wgh{H})$, and $g = \wgh{g} \times \ID_{\ghostset}$ is a permutation of $\Sym(H)$. By Lemma~\ref{lem:gate-conditioning}, both $\pi_{g,[a]_1 \times \ghostset}$ and $\pi_{g,[a]_{-1} \times \ghostset}$ belong to $\tmgroup_\ell$ with word norm independent of $\ell$, and we have $p_{(a,b)} = \Pi$, where
  \[ \Pi = (\rho^{-1} \circ \pi_{g,[a]_{-1} \times \ghostset} \circ \rho) \circ (\pi_{g,[a]_1 \times \ghostset}) \circ (\rho^{-1} \circ \pi_{g,[a]_{-1} \times \ghostset} \circ \rho) \]
  and $\rho$ denotes the right rotation of the head. Indeed,
  for any $q \in Q$ we calculate
  \begin{align*}
    \Pi\left( \begin{tpatternc} a & b \\ q & \end{tpatternc} \right) & = \begin{tpatternc} b & a \\ q & \end{tpatternc} \\
    \Pi\left( \begin{tpatternc} b & a \\ q & \end{tpatternc} \right) & = \begin{tpatternc} a & b \\ q & \end{tpatternc} \\
    \Pi\left( \begin{tpatternc} a & a \\ q & \end{tpatternc} \right) & = \begin{tpatternc} a & a \\ q & \end{tpatternc}
  \end{align*}
  and nothing of interest (other than the head moving back and forth) happens on other patterns.
\end{proof}

\begin{remark}\label{rem:permuting-letters}
  The previous permutation $p$ can be conditioned on any value of the state $\wgh{q} \in \wgh{Q}$ by simply replacing the conditions $[a]_1 \times \ghostset$ (resp. $[a]_{-1} \times \ghostset$) with $([a]_1 \cap [\{\wgh{q}\} \times \Gamma]_0) \times \ghostset$ (resp. $([a]_{-1} \cap [\{\wgh{q}\} \times \Gamma]_0) \times \ghostset$).
\end{remark}

\paragraph*{Lexicographic comparisons}\label{sec:engineering-additions}

Let us consider the case of conditions that lexicographically compare the content of the tape with arbitrary fixed words (word equalities and comparisons). By reducing the movement of the head, we obtain better upper bounds than what a strict reading of Lemma~\ref{lem:ghost-conditioning} would suggest.

\begin{lemma}\label{lem:gate-ordering-condition}
  Given a lexicographic equality/comparison ${\sim} \in \{{=},{<},{\leq},{>},{\geq}\}$ and a word $c \in \{0,1,2\}^l$ for some $l \leq \ell$, consider a condition $C \subseteq H \times \Gamma^{\ell-1}$ of the form $\sim c$, i.e.
  \[ \begin{tpatternc} a_0 & \dots & a_{l-1} & a_l & \dots & a_{\ell-1} \\ q \end{tpatternc} \in C \quad \iff \quad a_0 \cdots a_{l-1} \sim c_0 \cdots c_{l-1}. \]

  Then $T(C) = O(|c|^2)$.
\end{lemma}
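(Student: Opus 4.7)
The plan is to induct on $l = |c|$, using a balanced divide-and-conquer decomposition of $c$ together with an explicit shift of the head at each recursive call. The naive approaches --- iterating the intersection rule of Lemma~\ref{lem:ghost-conditioning} from the left, or recursing on the tail $c_1 \cdots c_{l-1}$ --- both produce $\Omega(2^l)$-many nested commutators, so a balanced split alone is not enough: one must also shift the head before each recursive call so that the ``leaves'' of the recursion have constant cost.

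I would first treat the equality case $\sim \in \{=\}$. Let $f(l) = \sup_{|c|=l} T(\{{=}c\})$. For the inductive step, factor $c = c_L c_R$ with $|c_L| = \lceil l/2 \rceil$ and decompose $\{{=}c\} = C_L \cap C_R$, where $C_L$ checks equality on positions $0, \ldots, |c_L|-1$ and $C_R$ on positions $|c_L|, \ldots, l-1$. The intersection rule yields $T(\{{=}c\}) \leq 2(T(C_L) + T(C_R))$, and the inductive hypothesis gives $T(C_L) \leq f(\lceil l/2 \rceil)$ directly. For $C_R$, I would conjugate by $\rho^{|c_L|}$ (where $\rho = \prod_{q \in Q} \rho_q$ has constant cost); this transforms $C_R$ into an equality check of length $\lfloor l/2 \rfloor$ starting at the head, giving $T(C_R) \leq O(|c_L|) + f(\lfloor l/2 \rfloor)$. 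The resulting recurrence $f(l) \leq 4 f(l/2) + O(l)$ solves, by the master theorem, to $f(l) = O(l^2)$.

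For strict comparison $\sim \in \{<\}$, I would exploit the identity
\[ \{{<}c\} \;=\; \{{<}c_L\} \;\sqcup\; \bigl(\{{=}c_L\} \cap \{{<}c_R \text{ on positions } |c_L|, \ldots, l-1\}\bigr), \]
in which the two parts are disjoint, since the first requires the length-$|c_L|$ prefix of the tape to be strictly less than $c_L$ while the second requires it to equal $c_L$. Hence the \emph{disjoint}-union rule of Lemma~\ref{lem:ghost-conditioning} applies without the doubling factor. Letting $g(l) = \sup_{|c|=l} T(\{{<}c\})$ and plugging in $f(\lceil l/2 \rceil) = O(l^2)$ from the equality step, the recurrence becomes $g(l) \leq 3 g(l/2) + O(l^2)$, which again solves to $g(l) = O(l^2)$.

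The remaining cases ${\leq}, {>}, {\geq}$ reduce to the previous ones: ${\leq}$ and ${\geq}$ follow by complementation with $T(\wgh{C}^c) \leq T(\wgh{C}) + 1$, while ${>}$ follows from ${<}$ by swapping the roles of the tape symbols and the constant $c$. The main obstacle worth flagging is that the factor of $2$ in the intersection rule is devastating unless one balances the recursion \emph{and} shifts the head at each step: without the shift, even a balanced split has a leaf cost of $\Theta(l)$ and yields only the weaker bound $O(l^3)$ instead of the desired $O(l^2)$.
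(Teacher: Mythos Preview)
Your proposal is correct and follows essentially the same approach as the paper: a balanced divide-and-conquer with conjugation by $\rho^{l'}$ to shift the head before the recursive call on the right half, the same logical decompositions for $=$ and $<$, and elementary Boolean algebra for the remaining relations. The paper writes the commutator formula $\pi_{\ID \times [g_1,g_2],\,=c} = \bigl[\pi_{\ID \times g_1,\,=c_L},\; \rho^{-l'} \circ \pi_{\ID \times g_2,\,=c_R} \circ \rho^{l'}\bigr]$ directly rather than routing through the $T$-inequalities, but the underlying recurrence and the $O(l^2)$ conclusion are identical; one small imprecision is your remark that ${>}$ follows by ``swapping the roles of the tape symbols and the constant $c$'' --- it is cleaner to obtain ${>}$ as the complement of ${\leq} = ({<}) \sqcup ({=})$.
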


\begin{proof}
  To prove this result, we use a divide and conquer approach: given $l \leq \ell$, any integer $l' \leq l$, and two words $w \in \{0,1,2\}^l$ and $c \in \{0,1,2\}^l$,
  \begin{align*}
    w = c & \iff \Big( w_{\llbracket 0,l'-1 \rrbracket} = c_{\llbracket 0,l'-1 \rrbracket} \Big) \wedge \Big( w_{\llbracket l',l-1 \rrbracket} = c_{\llbracket l',l-1 \rrbracket} \Big) \\
    w < c & \iff \Big( w_{\llbracket 0,l'-1 \rrbracket} < c_{\llbracket 0,l'-1 \rrbracket} \Big) \\
    & \hspace*{1cm} \vee \Big( \Big( w_{\llbracket 0,l'-1 \rrbracket} = c_{\llbracket 0,l'-1 \rrbracket} \Big) \wedge \Big( w_{\llbracket l',l-1 \rrbracket} < c_{\llbracket l',l-1 \rrbracket} \Big) \Big) 
  \end{align*}
  So if $C$ is the condition $= c$, and $g_1,g_2 \in \Alt(\ghostset)$, then:
  \[ \pi_{\ID \times [g_1,g_2],=c} = \Big [ \pi_{\ID \times g_1,= c_{\llbracket 0,l'-1 \rrbracket}}, \; \rho^{-l'} \circ \pi_{\ID \times g_2,= c_{\llbracket l',l-1 \rrbracket}} \circ \rho^{l'} \Big ]\]
  Taking $l' = \lfloor l/2 \rfloor$ and iterating, we obtain $T(C) = O(l^2) = O(|c|^2)$. Similarly, if $C$ is the condition $< c$, we obtain $T(C) = O(|c|^2)$. The other cases follow using elementary Boolean algebra on the operators of $\{{=},{<},{\leq},{>},{\geq}\}$.
\end{proof}

\begin{remark}
\label{rem:Wildcards}
  The previous lemma can still be used to compare words $w$ and $c$ at non-contiguous positions. Denote $\bullet$ a wildcard symbol that represents positions which will be ignored when performing the comparison $w \sim c$, i.e.\ for any two words $w_0 \cdots w_{l-1} \in \{0,1,2\}^l$ and $c_0 \dots c_{l-1} \in (\{0,1,2\} \cup \{\bullet\})^l$, if $i_0 < \dots < i_n$ are the non-wildcard positions in $c$ (i.e.\ $c_i \neq \bullet$ if and only if $i \in \{i_0,\dots,i_n\}$), we say that $w \sim c$ if the usual lexicographic comparison $w_{i_0} \cdots w_{i_n} \sim c_{i_0} \cdots c_{i_n}$ is true.

  Then the previous lemma still holds for comparisons $w \sim c$ for $w \in \{0,1,2\}^l$ and $c \in (\{0,1,2\} \cup \{\bullet\})^l$.
\end{remark}

\paragraph*{(Cyclic) ternary addition}
Finally, let us consider ternary additions, i.e.\ adding the number $v_3(k)$ for $k \in \{0,1,2\}^*$ to the $|k|$ letters on the right side of the head by considering them as a counter in base 3.

\begin{lemma}\label{lem:ternary-addition}
  For $k \in \{0,1,2\}^*$, $|k| \leq \ell$, let $\addc{k} \in \Sym(\ctape_\ell)$ be defined as the following shift-equivariant action:
  \[ \addc{k} : \begin{tpatternc}
    a_0 & \dots & a_{|k|-1} & a_{|k|} & \dots & a_{\ell-1} \\
    q & & & & &
  \end{tpatternc}
  \mapsto
  \begin{tpatternc}
    a_0' & \dots & a_{|k|-1}' & a_{|k|} & \dots & a_{\ell-1} \\
    q & & & & &
  \end{tpatternc}\] 
  where $v_3(a_0' \dots a_{|k|-1}') = v_3(a_0 \dots a_{|k|-1}) + v_3(k) \bmod 3^{|k|}$.

  Then $\addc{k}$ belongs to $\tmgroup_\ell$ with word norm $O(|k|^3)$.
\end{lemma}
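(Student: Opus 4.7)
My plan is to implement $\addc{k}$ by the school-book ternary addition algorithm, processing digit positions from the least significant $a_{|k|-1}$ to the most significant $a_0$. At each position $i$, the update is $a_i \mapsto (a_i + k_i + c_i) \bmod 3$, where the incoming carry $c_i \in \{0,1\}$ depends only on the digits $a_{i+1}, \ldots, a_{|k|-1}$ strictly to the right. The key combinatorial observation is that the predicate ``$c_i = 1$'' is a \emph{single} lexicographic comparison: it holds iff $\val_3(a_{i+1}\cdots a_{|k|-1}) \geq 3^{|k|-1-i} - \val_3(k_{i+1}\cdots k_{|k|-1})$, i.e.\ iff $a_{i+1}\cdots a_{|k|-1} \geq t_i$ for a fixed ternary word $t_i$ of length $|k|-1-i$ (with the convention that this is vacuously false when $\val_3(k_{i+1}\cdots k_{|k|-1}) = 0$). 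Using the wildcard convention of Remark~\ref{rem:Wildcards} to ignore the head cell itself, the resulting condition $C_i^1 \subseteq H \times \Gamma^{\ell-1}$ falls under Lemma~\ref{lem:gate-ordering-condition}, so $T(C_i^1 \times \ghostset) = O(|k|^2)$.

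Writing $\tau_c \in \Sym(\Gamma)$ for the cyclic shift $a \mapsto a + c \bmod 3$ and $g_i^c = \ID_{\wgh{Q}} \times \tau_{(k_i + c) \bmod 3} \in \Sym(\wgh{H})$ for $c \in \{0,1\}$, the permutation $\pi_{g_i^c \times \ID}$ alters only the $\Gamma$-component of the head, so $C_i^c \times \ghostset$ (with $C_i^0$ the complement of $C_i^1$) is $\pi_{g_i^c \times \ID}$-invariant. Lemma~\ref{lem:gate-conditioning} then yields $\pi_{g_i^c \times \ID, C_i^c \times \ghostset} \in \tmgroup_\ell$ of word norm $O(|k|^2)$, and I define the composite
\[
  M_i = \pi_{g_i^1 \times \ID,\, C_i^1 \times \ghostset} \circ \pi_{g_i^0 \times \ID,\, C_i^0 \times \ghostset},
\]
which carries out the correct digit update when the head sits at position $i$, and has word norm $O(|k|^2)$.

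The final assembly is
\[
  \addc{k} = M_0 \circ \rho^{-1} \circ M_1 \circ \rho^{-1} \circ \cdots \circ \rho^{-1} \circ M_{|k|-1} \circ \rho^{|k|-1},
\]
which first moves the head to position $|k|-1$ and then walks it back one cell at a time, applying the $M_i$ in right-to-left order. Since $\addc{k}$ and every building block is shift-equivariant, it suffices to check correctness on configurations with the head at position $0$, where the computation matches the school-book algorithm step by step. The $|k|$ digit-update factors contribute $|k| \cdot O(|k|^2) = O(|k|^3)$ to the word norm and the $2(|k|-1)$ head rotations add an additional $O(|k|)$, giving the claimed $O(|k|^3)$ bound. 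The only step demanding real thought is recognizing that the carry predicate reduces to a single lexicographic comparison against a precomputed threshold $t_i$; once this is in hand, the estimate follows immediately by combining Lemmas~\ref{lem:gate-conditioning} and~\ref{lem:gate-ordering-condition}.
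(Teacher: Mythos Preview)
Your overall strategy is the same as the paper's: run the school-book addition right-to-left, detect the carry at each position by a single lexicographic comparison of cost $O(|k|^2)$ via Lemma~\ref{lem:gate-ordering-condition}, and sum over $|k|$ positions for the $O(|k|^3)$ bound. That part is fine.

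However, your carry predicate is wrong as stated. You characterize $c_i = 1$ by the inequality $\val_3(a_{i+1}\cdots a_{|k|-1}) \geq 3^{|k|-1-i} - \val_3(k_{i+1}\cdots k_{|k|-1})$, which is correct \emph{in terms of the original digits} $a_{i+1},\ldots,a_{|k|-1}$. But by the time you apply $M_i$, those cells have already been overwritten by $M_{|k|-1},\ldots,M_{i+1}$ and now hold the updated values $a'_{i+1},\ldots,a'_{|k|-1}$; your condition $C_i^1$ therefore tests the wrong quantity. Concretely, take $|k|=2$, $k=12$, $a=21$: after $M_1$ the tape reads $a'_1=0$, your test $a'_1 \geq t_0 = 1$ fails, and you miss the carry into position~$0$.

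The fix is immediate and is exactly what the paper does: observe that a carry occurred out of positions $i+1,\ldots,|k|-1$ iff the \emph{updated} suffix satisfies $a'_{i+1}\cdots a'_{|k|-1} < k_{i+1}\cdots k_{|k|-1}$ (if the sum did not wrap, the result is $\geq k'$; if it wrapped, the result is $<k'$). Replacing your $\geq t_i$ test by this $<k'$ test (with the wildcard $\bullet$ at the head cell) repairs the argument without changing any of the complexity estimates. The paper also separates ``add the digit $k_i$ unconditionally, then add the carry conditionally'' rather than bundling both into $M_i$ via complementary conditions, but that is only a cosmetic difference.
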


\begin{proof}
  Let $r = \ID_Q \times (0,1,2) \in \Sym(Q \times \Gamma)$ denote the rotation of the tape-letter by the 3-cycle $(0,1,2) \in \Alt(\Gamma)$. 
  To perform additions modulo $3^{|k|}$, we apply the standard ``school algorithm'', whose main difficulty consists in performing carries.
  
  The application of a carry only needs to be performed when the addition of the previously added digits has overflowed, i.e.\ when the digits on which the addition has already been performed (from right to left) have a smaller value than the rightmost digits of $k$. This is exactly what we do.

  \medskip
  First, move the head to the right of the counter by applying $\rho^{|k|-1}$. Then, apply either $\pi_{r}$ if the last digit of $k$ is $1$ (i.e. $k_{|k|-1} = 1$), or $\pi_{r^2}$ if $k_{|k|-1} = 2$, or nothing if $k_{|k|-1} = 0$. 
  
  Then, for $j \in \llbracket 1,|k|-2 \rrbracket$, do:
  \begin{enumerate}
    \item Move the head to the left with $\rho^{-1}$.
    \item Apply either $\pi_{r}$ if $k_{|k|-j-1} = 1$, or $\pi_{r^2}$ if $k_{|k|-j-1} = 2$, or nothing if $k_{|k|-j-1} = 0$.
    \item Perform the carry: denoting $k' = k_{\llbracket |k|-j,|k|-1 \rrbracket}$, apply $\pi_{r, < \bullet \:\! k'}$ where $\bullet$ is the wildcard symbol, using the notations of Lemma~\ref{lem:gate-ordering-condition} and Remark~\ref{rem:Wildcards}.
  \end{enumerate}

  Then $\addc{k}$ is the composition of $O(|k|)$ permutations of $\tmgroup_\ell$, each having word norm $O(|k|^2)$ (according to Lemma~\ref{lem:gate-ordering-condition}). This concludes the proof.
\end{proof}

\begin{remark}\label{rem:ternary-addition}
  The addition $\addc{k}$ can be conditioned on any value of the state $\wgh{q} \in \wgh{Q}$ by simply intersecting all the conditions for applying $\pi_{r}$ (or $\pi_{r^2}$) with $[\{\wgh{q}\} \times \Gamma]_0 \times \ghostset$.
\end{remark}

\subsubsection{The ducking trick}\label{sec:engineering-ducking-trick}

Recall that the set $Q$ splits into $Q = \wgh{Q} \times \ghostset$, and that $\wgh{Q}$ itself splits into $\wgh{Q} = \origsmart{Q} \times \duckset$: the states $\origsmart{Q} = \{\smb,\smd,\smp,\smq\} \times \{1,2\}$ of the original SMART machine, and the duck $\duckset = \{\dright,\dleft\}$. Recall that, for $S \subseteq \ctape_\ell$, we denote by $S[\dright]$ (resp. $S[\dleft]$) the subset of tapes in $S$ containing a head whose duck is $d = \dright$ (resp $d = \dleft$).

In this section, we introduce the \emph{ducking trick}: this method uses this ducking component $\duckset$ to realize piecewise-defined bijections, in particular in the proof of Lemma~\ref{lem:encoding-with-garbage} that follows.

\medskip
Let $f = \origsmart{f} \times \ID_{\duckset \times \ghostset} : Q \times \Gamma^{\ell} \to Q \times \Gamma^{\ell}$ be a map defined on patterns of length $\ell$ by some $\origsmart{f} : \origsmart{Q} \times \Gamma^\ell \to \origsmart{Q} \times \Gamma^\ell$. Assume that $f$ is a piecewise-defined bijection, i.e.\ that there exist partitions $\sqcup_{i=1}^p \origsmart{\duckdomain_i}$ and $\sqcup_{i=1}^p \origsmart{\duckimage_i}$ of $\origsmart{Q} \times \Gamma^{\ell}$ and maps $\origsmart{f_i} : \origsmart{\duckdomain_i} \to \origsmart{\duckimage_i}$ such that $\origsmart{f} = \bigsqcup \origsmart{f_i}$ (as the union of graphs of functions).

In what follows, we denote $\duckdomain_i = \origsmart{\duckdomain_i} \times \duckset \times \ghostset \subseteq Q \times \Gamma^{\ell}$ and $\duckimage_i = \origsmart{\duckimage_i} \times \duckset \times \ghostset \subseteq  Q \times \Gamma^{\ell}$, and $f_i = \origsmart{f_i} \times \ID : \duckdomain_i \to \duckimage_i$, so that $f = \bigsqcup_i f_i$.

Assume that
\begin{enumerate}
  \item There exist maps $g_i \in \tmgroup_\ell$ with polynomial word norm in $\ell$ such that $g_i$ agrees with $f_i$ on $\duckdomain_i[\dleft]$ (i.e. $g_i|_{\duckdomain_i[\dleft]} = f_i$), and is the identity on $\duckdomain_i[\dright]$ (i.e. $g_i|_{\duckdomain_i[\dright]} = \ID$);
  \item Each $\duckdomain_i$ a simple description. Specifically, we should have a Boolean circuit of type $\NC^1$ for it.
\end{enumerate}
In our application, the descriptions of the $\duckdomain_i$ involve only numerical comparisons and direct letter comparisons, and the simplicity is encapsulated in the formulas in Section~\ref{sec:engineering-additions}. The maps $g_i$ differ from $f_i$ as the following: instead of having domain $\duckdomain_i$, they act on the whole set of patterns of length $\ell$, and are only required to act on $\duckdomain_i[\dleft]$ as $f_i$ does (and be the identity on $\duckdomain_i[\dright]$).

\begin{claim}[Ducking trick]
  Consider $F : Q \times \Gamma^\ell \to Q \times \Gamma^\ell$ defined as:
  \[ F(w) = \begin{cases} f(w) & \text{if }w \in Q \times \Gamma^{\ell}[\dright] \\
    f^{-1}(w) & \text{if } w \in Q \times \Gamma^{\ell}[\dleft] \\
    w & \text{otherwise}
  \end{cases}
  \]
  Under the assumptions above, $F$ belongs to $\tmgroup_\ell$ with polynomial word norm in $\ell$.
\end{claim}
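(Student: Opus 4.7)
The plan is to use the duck as a workspace that reversibly realizes $F$ from the $g_i$'s, by first reducing to a single-duck-value operation and then assembling it piece by piece.

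I would first write $F = F_\dright \circ F_\dleft$, where $F_\dright$ acts as $f$ on duck-$\dright$ heads and as the identity on duck-$\dleft$ heads and on head-free configurations, and analogously $F_\dleft$ acts as $f^{-1}$ on duck-$\dleft$ heads and identity on duck-$\dright$ ones. These two factors commute (disjoint supports) and $F_\dleft$ is conjugate to $(F_\dright)^{-1}$ via the constant-norm generator $\pi_d$ that globally swaps the two duck values. So it suffices to implement $F_\dright$ in $\tmgroup_\ell$ with polynomial word norm.

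To build $F_\dright$, I would process the pieces $i = 1, \ldots, p$ in turn. Using the NC$^1$ description of $U_i$ (and correspondingly of $V_i = f_i(U_i)$), Lemma~\ref{lem:nc1-conditioning} yields two conditional duck-flip bijections $\alpha_i, \beta_i \in \tmgroup_\ell$ of polynomial word norm: $\alpha_i$ flips the duck precisely on $U_i[\dright]$, and $\beta_i$ precisely on $V_i[\dleft]$. Define $F_i = \beta_i \circ g_i \circ \alpha_i$. On $w \in U_i[\dright]$, the composition sends $w$ first into $U_i[\dleft]$ via $\alpha_i$, then to $f_i(w) \in V_i[\dleft]$ via $g_i$, and finally to $f_i(w) \in V_i[\dright]$ via $\beta_i$. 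The hypothesis $g_i|_{U_i[\dright]} = \ID$ together with the selectivity of $\alpha_i$, $\beta_i$ is what makes $F_i$ act as identity on duck-$\dleft$ heads and on the parts of $U_j[\dright]$ that do not coincide with $U_i[\dright]$.

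Finally, I would compose the $F_i$'s in an order respecting the dependency among the $U_i$ and $V_i$ so that each duck-$\dright$ head is processed exactly once, obtaining $F_\dright = \prod_{i=1}^p F_i$. Since $\alpha_i$, $\beta_i$, and $g_i$ each have polynomial word norm and $p$ is bounded (indeed a constant in the intended applications), the resulting word norm is polynomial in $\ell$. The main obstacle I anticipate is the verification that $F_i$ acts trivially off $U_i[\dright]$: outside $U_i$, the ``garbage'' of $g_i$ is not explicitly controlled by the bare hypothesis. One resolves this either by exploiting that each $g_i$ produced in practice via Lemma~\ref{lem:nc1-conditioning} is itself built from conditional gates whose support lies inside $U_i[\dleft]$ (so $g_i$ is actually identity outside $U_i[\dleft]$), or by augmenting the construction with an auxiliary ghost-based ``processed'' marker (again from Lemma~\ref{lem:gate-conditioning}) that $\alpha_j$ additionally tests, thereby preventing any head from being acted on twice while adding only a constant factor to the word norm.
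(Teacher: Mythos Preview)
Your decomposition $F = F_\dright \circ F_\dleft$ and the reduction via $\pi_d$-conjugacy are fine, but the construction of $F_i = \beta_i \circ g_i \circ \alpha_i$ has a real gap that your proposed fixes do not close. The hypotheses only constrain $g_i$ on $U_i[\dright]$ and $U_i[\dleft]$; outside $U_i$ nothing at all is assumed. So for $w \notin U_i$ (with either duck), $\alpha_i$ fixes $w$, then $g_i$ sends it to an uncontrolled $g_i(w)$, and $\beta_i$ does not undo this. Your fix (a) is not actually satisfied in the paper's application: the $g_i$'s there are built from operations conditioned only on the duck being $\dleft$, not on membership in $U_i$, so they genuinely act nontrivially outside $U_i[\dleft]$. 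Fix (b) would require the marker to survive through $g_i$, but $g_i$'s garbage on the complement of $U_i$ could scramble any such marker.

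The missing idea is to use \emph{conjugation} rather than a sandwich: set $F_i = g_i \circ \pi_{d,U_i} \circ g_i^{-1}$. Then for any $w$ with $g_i^{-1}(w) \notin U_i$, the middle map $\pi_{d,U_i}$ does nothing and $g_i$ cancels $g_i^{-1}$ exactly, regardless of what garbage $g_i$ produces. The only nontrivial action occurs when $g_i^{-1}(w) \in U_i$, i.e.\ when $w \in g_i(U_i[\dright]) = U_i[\dright]$ or $w \in g_i(U_i[\dleft]) = V_i[\dleft]$, and there one checks directly that $F_i$ swaps $U_i[\dright] \leftrightarrow V_i[\dleft]$ via $f_i$ composed with a duck flip. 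The $F_i$ then have genuinely disjoint supports, so $F = \pi_d \circ \prod_i F_i$ with no ordering issues and no auxiliary markers needed. This is what the paper does; note that it builds $F$ directly rather than passing through $F_\dright$.
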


\begin{proof}
  Denote $d' \in \Sym(\duckset)$ be the involution $d' = \dright \leftrightarrow \dleft$. Then $d = \ID_{\origsmart{Q}} \times d' \times \ID_{\ghostset} \times \ID_{\Gamma} \in \Sym(H)$ is an involution. As the sets $\duckdomain_i$ are $\pi_d$-invariant, by Lemma~\ref{lem:gate-conditioning} each $\pi_{d,\duckdomain_i}$ belongs to $\tmgroup_\ell$ with word norm $O(T(\duckdomain_i))$ (which is polynomial in $\ell$, because of the $\NC^1$-description).

  Now conjugate the permutations $\pi_{d,\duckdomain_i}$ by their respective $g_i^{-1}$,
  to get
  \[ F_i = (\pi_{d,\duckdomain_i})^{(g_i^{-1})} = g_i \circ \pi_{d,\duckdomain_i} \circ g_i^{-1}. \]
  This is an involution that acts like $f_i$ on $\duckdomain_i[\dright]$ (and flips the duck), like $f_i^{-1}$ on $\duckimage_i[\dleft]$ (and flips the duck), and is the identity on the rest of $Q \times \Gamma^{\ell}$.
  Indeed, $\pi_{d,\duckdomain_i}$ exchanges words $w \in \duckdomain_i[\dright]$ with $\pi_d(w) \in \duckdomain_i[\dleft]$. Conjugating it with $g_i^{-1}$ effectively means that we translate the domain of this permutation by $g_i$. More precisely, this is the involution
  \begin{align*}
    F_i : g_i(\duckdomain_i[\dright]) & \leftrightarrow g_i(\duckdomain_i[\dleft]) \\
    g_i(w) & \leftrightarrow g_i(\pi_d(w)) 
  \end{align*}
  As $g_i$ acts like $f_i$ on $\duckdomain_i[\dleft]$ and is the identity on $\duckdomain_i[\dright]$, we have in fact:
  \begin{align*}
    F_i : \duckdomain[\dright] & \leftrightarrow \duckimage_i[\dleft] \\
    w & \leftrightarrow f_i(\pi_d(w)) 
  \end{align*}
  And $F_i$ has polynomial word norm because $g_i$ and $\pi_{d,\duckdomain_i}$ have polynomial word norm. 
  
  Finally, each time $F_i$ acts non-trivially it also flips the duck, so all the $F_i$ have disjoint support. Composing them in any arbitrary order, we obtain that $F = \pi_d \circ \prod_i F_i$ belongs to $\tmgroup_\ell$ with polynomial word norm in $\ell$.
\end{proof}

\subsection{Implementing encodings and additions}\label{sec:proofs-cyclic-tapes}

\subsubsection{Proof of Lemma~\ref{lem:encoding-with-garbage}}

In this section, we use the previous lemmas to implement the inductive encoding of Section~\ref{sec:smart-encoding} in $\tmgroup_\ell$. More precisely, recall the statement of Lemma~\ref{lem:encoding-with-garbage}:

\encodingwithgarbage*

This section is dedicated to the proof of this lemma. Informally, we use the inductive decomposition of $\encoding_\ell$ into 
\[ \encoding_\ell = \pencoding_{\ell,\mathrm{final}} \circ \Big( \prod_{k=0}^{\ell-2} \pencoding_{k \to k+1} \Big) \circ \pencoding_{\mathrm{init}}\]
defined in Section~\ref{sec:smart-encoding}, and build every $\pencoding_{\mathrm{init}}, \pencoding_{k \to k+1}$ and $\pencoding_{\ell,\mathrm{final}}$ in the group~$\tmgroup_\ell$. Each of these steps being defined as piecewise-defined bijections, we use the \emph{ducking trick} mentioned in Section~\ref{sec:engineering-ducking-trick}: with the duck $\duckset = \{\dright,\dleft\}$, we write each bijection as a product of involutions that swap ducks~$\dright$ and~$\dleft$.

\medskip
Let $\pencoding$ be one of $\pencoding_{\mathrm{init}}$, $\pencoding_{k \to k+1}$ or $\pencoding_{\ell,\mathrm{final}}$. $\pencoding$ rewrites patterns as explained in Section~\ref{sec:smart-encoding}, and copies all other symbols unchanged. Let $\duckdomain \xmapsto{\pencoding} \duckimage$ be one of the cases of $\pencoding$, i.e.\ one of the rules of pattern rewriting that defines $\pencoding$. (Apart from the proof of Lemma~\ref{lem:encoding-case-in-group}, there is no need to actually know what $\duckdomain$, $\duckimage$ and $\pencoding$ precisely are in order to understand the following statements.)

Define
\begin{align*}
  \GRpencoding_{\duckdomain,\duckimage} : \duckdomain[\dright] & \leftrightarrow \duckimage[\dleft] \\
  w & \leftrightarrow \pi_d(\pencoding(w))
\end{align*}
as the involution of $\Sym(\ctape_\ell)$ that swaps words $w \in \duckdomain[\dright]$ with words $\pi_d (\pencoding(w)) \in \duckimage[\dleft]$, where $\pi_d$ is the duck-flipping involution. The involution $\GRpencoding_{\duckdomain,\duckimage}$ applies $\pencoding$ forward on tapes with duck $\dright$ and swaps the duck, and applies $\pencoding$ backwards on tapes with duck $\dleft$ and swaps the duck.

\begin{lemma}\label{lem:encoding-case-in-group}
  For any such case $\duckdomain \xmapsto{\pencoding} \duckimage$, the permutation $\GRpencoding_{\duckdomain,\duckimage}$ belongs to $\tmgroup_\ell$ with norm $O(\ell^3)$.
\end{lemma}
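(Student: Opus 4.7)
The plan is to apply the ducking trick from Section~\ref{sec:engineering-ducking-trick} separately to each rewriting case $\duckdomain \xmapsto{\pencoding} \duckimage$. To do so I will have to check its two hypotheses: that $\duckdomain$ admits a sufficiently simple conditioning description, and that there exists a map $g \in \tmgroup_\ell$ of word norm $O(\ell^3)$ which agrees with $\pencoding$ on $\duckdomain[\dleft]$ and is the identity on $\duckdomain[\dright]$. Granting both, the trick outputs the involution
\[
\GRpencoding_{\duckdomain,\duckimage} \;=\; g \circ \pi_{d,\duckdomain} \circ g^{-1}
\]
of word norm $\le 2\|g\| + \|\pi_{d,\duckdomain}\|$, which is exactly the permutation we want.

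Conditioning on $\duckdomain$ is the easy part. Inspecting Figures~\ref{fig:encoding-initialization}, \ref{fig:bottom-up-encoding}, \ref{fig:bottom-up-encoding2}, \ref{fig:bottom-up-encoding3}, each $\duckdomain$ is the intersection of a bounded number of simple constraints: the head state lies in a fixed singleton of $\origsmart{Q}$, and a prescribed contiguous factor of length at most $\ell$ either equals a specific ternary word, is of the form $0^m$, or is a non-zero ternary word. All of these fit the framework of Lemma~\ref{lem:gate-ordering-condition} and Remark~\ref{rem:Wildcards}, and combining them via Lemma~\ref{lem:gate-conditioning} (using the even-cardinality of $\ghostset$ and the commutator trick) yields $\pi_{d,\duckdomain} \in \tmgroup_\ell$ of word norm $O(\ell^2)$.

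The main work is building $g$. I will decompose each rule into three kinds of elementary operations, each conditioned on the head's duck component being $\dleft$ (so that $g$ is automatically trivial on every tape with duck $\dright$, in particular on $\duckdomain[\dright]$): (i) modifications of a bounded number of symbols around the head --- a state permutation in $\origsmart{Q}$ and, in $\pencoding_{\ell,\mathrm{final}}$, some adjacent $0/2$ letter swaps; (ii) translations of the head by at most $\ell$ positions (e.g.\ repositioning the head from one side of the pattern to the other, or implementing the final shift of $\pencoding_{\ell,\mathrm{final}}$); (iii) a ternary addition of a constant of value at most $3^{k+2}$ to a counter of length at most $\ell$ sitting to one side of the head. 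For (i), Lemma~\ref{lem:gate-conditioning} combined with Lemma~\ref{lem:permuting-letters} and Remark~\ref{rem:permuting-letters} gives cost $O(\ell^2)$ each; for (ii), the state-dependent moves $\rho_q$ restricted to duck-$\dleft$ states cost $O(\ell)$; for (iii), Lemma~\ref{lem:ternary-addition} together with Remark~\ref{rem:ternary-addition} realizes the addition in $\tmgroup_\ell$ with word norm $O(\ell^3)$. Since the ternary addition dominates and only finitely many such operations occur per case, $\|g\| = O(\ell^3)$.

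On $\duckdomain[\dleft]$ the conditions defining $\duckdomain$ hold by hypothesis, so each elementary operation fires exactly as in $\pencoding$; hence $g|_{\duckdomain[\dleft]} = \pencoding|_{\duckdomain[\dleft]}$. Combining the two bounds gives $\|\GRpencoding_{\duckdomain,\duckimage}\| = O(\ell^3)$, as claimed. The only subtle case is $k = \ell-1$ (and to a lesser extent $k = \ell-2$) in $\pencoding_{k \to k+1}$, where the rewritten factor wraps around the cyclic tape and the ``$\textcolor{gray}{*}$-cells'' collide with themselves or with the counter; I expect this to be the main bookkeeping obstacle, but it is harmless for the conditioning machinery because every condition still inspects at most $\ell$ cells, and for the addition because the counter length remains $\le \ell$.
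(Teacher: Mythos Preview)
Your proposal is correct and follows essentially the same approach as the paper's proof: condition the duck-flip $\pi_{d,\duckdomain}$ at cost $O(\ell^2)$, then conjugate by a sequence of duck-$\dleft$-conditioned elementary operations (state permutations, head moves, adjacent letter swaps, and one ternary addition) whose total cost is dominated by the $O(\ell^3)$ addition from Lemma~\ref{lem:ternary-addition}. One small imprecision: the counter-shifting step (moving a block of $k{+}1$ letters one cell over via repeated applications of the letter-swap $p$) is not quite captured by your items (i)--(iii) as written, but it costs only $O(\ell)$ and so does not affect the bound.
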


The proof relies on the structure of the different cases $\duckdomain \xmapsto{\pencoding} \duckimage$ defined in Section~\ref{sec:smart-encoding}.
\begin{proof}[Sketch of a proof.]
  Let $\duckdomain \xmapsto{\pencoding} \duckimage$ be such a piece.

  The condition $\duckdomain$ checks the state of the head, the value of a few distinct tape-letters, and a counter being non-zero (in the case of Figure~\ref{fig:bottom-up-encoding}) or full-zero (in the case of Figure~\ref{fig:bottom-up-encoding2}). Denoting by $d = \ID_{\origsmart{Q}} \times d' \times \ID_{\ghostset} \times \ID_\Gamma \in \Sym(H)$ for $d' : \dright \leftrightarrow \dleft \in \Sym(\duckset)$ the involution that swaps the duck, the gate $\pi_{d,\duckdomain}$ conditioned on $\duckdomain$ belongs to $\tmgroup_\ell$ with norm $O(\ell^2)$.

  By conjugating $\pi_{d,\duckdomain}$ by a sequence of permutations conditioned on the duck being $\dleft$, we can then build $\GRencoding_{\duckdomain,\duckimage}$ with norm $O(\ell^3)$ in $\tmgroup_\ell$.
\end{proof}

\begin{example}
  On an example, consider $\duckdomain \xmapsto{\pencoding} \duckimage$ to be the following transformation between levels $k$ and $k+1$ (this is the third rewrite in Figure~\ref{fig:bottom-up-encoding}):

  \footnotesize
  \[ \begin{tpatternc}
    \textcolor{gray}{*} & \hspace*{1.5em}c\hspace*{1.5em} & 1 & \textcolor{gray}{*} \\
    & & \smb_1 &
  \end{tpatternc}
  \quad \rightarrow \quad
  \begin{tpatternc}
    \textcolor{gray}{*} & [\val_3(c) + f(n) + 1]_{(3)} & \textcolor{gray}{*} \\
    \smd_1 & &
  \end{tpatternc} \] 
  \normalsize
  Then the condition $\duckdomain$ is the conjunction of the head being in state $\smb_1$, on top of the tape-letter $1$, and the $k+1$ tape-letters on its left being non-zero. By Lemma~\ref{lem:gate-ordering-condition}, $\pi_{d,\duckdomain}$ belongs to $\tmgroup_\ell$ with norm $O(k^2)$. 
  
  To obtain $\GRencoding_{\duckdomain,\duckimage}$, we then conjugate $\pi_{d,\duckdomain}$ with the sequence of \emph{inverses} of the following permutations, all of which are \emph{conditioned on the head having duck $\dleft$}:
  \begin{enumerate}
    \item At first, when $\pi_{d,{U}}$ is conjugated by nothing (i.e.\ the identity), we have:
    
    \footnotesize
    \[ \begin{tpatternc}
      \textcolor{gray}{*} & \hspace*{1.5em}c\hspace*{1.5em} & 1 & \textcolor{gray}{*} \\
      & & \smb_1^{\dright} &
    \end{tpatternc}
    \quad \leftrightarrow \quad
    \begin{tpatternc}
      \textcolor{gray}{*} & \hspace*{1.5em}c\hspace*{1.5em} & 1 & \textcolor{gray}{*} \\
      & & \smb_1^{\dleft} &
    \end{tpatternc} \] 
    \normalsize

    and, again, this permutation is only applied if $c$ is a non-zero counter. (Note that we see here the full support of the permutation.)
    \item Let $g' = (1,0) \in \Sym(\Gamma)$, and $g = \ID_{\origsmart{Q}} \times \{\dleft\} \times \ID_{\ghostset} \times g' \in \Sym(H)$. Apply the gate $\pi_g$, which has word norm $O(1)$ in $\tmgroup_\ell$. At the moment, we have the permutation:
    
    \footnotesize
    \[ \begin{tpatternc}
      \textcolor{gray}{*} & \hspace*{1.5em}c\hspace*{1.5em} & 1 & \textcolor{gray}{*} \\
      & & \smb_1^{\dright} &
    \end{tpatternc}
    \quad \leftrightarrow \quad
    \begin{tpatternc}
      \textcolor{gray}{*} & \hspace*{1.5em}c\hspace*{1.5em} & 0 & \textcolor{gray}{*} \\
      & & \smb_1^{\dleft} &
    \end{tpatternc} \] 
    \normalsize
    \item Using the permutation $p$ from Lemma~\ref{lem:permuting-letters} (that exchanges two adjacent letters) conditioned on the duck being $d=\dleft$ (see Remark~\ref{rem:permuting-letters}), and moving the head using the generator $\rho$ of $\tmgroup_\ell$, we move the counter $c$ one step to its right and the tape-letter $0$ under the head $k+1$ steps to its left. At the moment, we have the permutation:
    
    \footnotesize
    \[ \begin{tpatternc}
      \textcolor{gray}{*} & \hspace*{1.5em}c\hspace*{1.5em} & 1 & \textcolor{gray}{*} \\
      & & \smb_1^{\dright} &
    \end{tpatternc}
    \quad \leftrightarrow \quad
    \begin{tpatternc}
      \textcolor{gray}{*} & 0 & \hspace*{1.5em}c\hspace*{1.5em} & \textcolor{gray}{*} \\
      & \smb_1^{\dleft} & &
    \end{tpatternc} \] 
    \normalsize
    \item For $w \in \{0,1,2\}^{k+2}$ such that $v_3(w) = f(k)+1$, we apply $\addc{w}$ from Lemma~\ref{lem:ternary-addition} (namely, the permutation that adds $f(k)+1$ to the ternary number of length $k+2$ to the right of the head) conditioned on the duck being $d = \dleft$ (see Remark~\ref{rem:ternary-addition}). It has word norm $O(k^3)$. Then, apply $\rho^{-1}$ on ducks $d = \dleft$. At the moment, we have the permutation:
    
    \footnotesize
    \[ \begin{tpatternc}
      \textcolor{gray}{*} & \hspace*{1.5em}c\hspace*{1.5em} & 1 & \textcolor{gray}{*} \\
      & & \smb_1^{\dright} &
    \end{tpatternc}
    \quad \leftrightarrow \quad
    \begin{tpatternc}
      \textcolor{gray}{*} & [\val_3(c)+f(k)+1]_{(3)} & \textcolor{gray}{*} \\
      \smb_1^{\dleft} & &
    \end{tpatternc} \] 
    \normalsize
    \item Let $g' = \smb_1^{\dleft} \leftrightarrow \smd_1^{\dleft} \in \Sym(Q)$ and $g = g' \times \ID_\Gamma \in \Sym(H)$. We apply $\pi_g$ and finally obtain $\GRpencoding_{\duckdomain,\duckimage}$:
    
    \footnotesize
    \[ \begin{tpatternc}
      \textcolor{gray}{*} & \hspace*{1.5em}c\hspace*{1.5em} & 1 & \textcolor{gray}{*} \\
      & & \smb_1^{\dright} &
    \end{tpatternc}
    \quad \leftrightarrow \quad
    \begin{tpatternc}
      \textcolor{gray}{*} & [\val_3(c) + f(k) + 1]_{(3)} & \textcolor{gray}{*} \\
      \smd_1^{\dleft} & &
    \end{tpatternc} \] 
    \normalsize
  \end{enumerate}

  \vspace*{-1.8em}
  \qee
\end{example}

\medskip
Having built each case of the piecewise-defined function $\pencoding$, we complete the ``ducking'' process and obtain as an immediate corollary:
\begin{lemma}\label{lem:encoding-steps-in-group}
  For $\pencoding$ being either $\pencoding_{\mathrm{init}}$, $\pencoding_{k \to k+1}$ or $\pencoding_{\ell,\mathrm{final}}$, define
  \[ \GRpencoding(w) = \begin{cases}
    \pencoding(w) & \text{if } w \in \ctape_\ell[\dright] \\
    \pencoding^{-1}(w) & \text{if } w \in \ctape_\ell[\dleft] \\
    w & \text{otherwise}
  \end{cases} \]
  Then $\GRpencoding$ belong to $\tmgroup_\ell$ with word norm $O(\ell^3)$.
\end{lemma}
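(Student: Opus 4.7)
The plan is to simply compose the pieces supplied by Lemma~\ref{lem:encoding-case-in-group} using the ducking-trick pattern exposed in Section~\ref{sec:engineering-ducking-trick}. First I would observe that each of the three bijections $\pencoding_{\mathrm{init}}$, $\pencoding_{k \to k+1}$ and $\pencoding_{\ell,\mathrm{final}}$ is, by inspection of Figures~\ref{fig:encoding-initialization}, \ref{fig:bottom-up-encoding}, \ref{fig:bottom-up-encoding2} and~\ref{fig:bottom-up-encoding3}, defined by a constant number $N$ of rewriting rules $\duckdomain_i \xmapsto{\pencoding} \duckimage_i$, where $N$ is independent of $\ell$, the $\duckdomain_i$ partition the set of configurations on which $\pencoding$ acts non-trivially, and the $\duckimage_i$ partition the corresponding image (this is the main ``bookkeeping'' step; it is a routine case analysis from the figures).

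Next, for each $i$, Lemma~\ref{lem:encoding-case-in-group} gives an involution $\GRpencoding_{\duckdomain_i,\duckimage_i} \in \tmgroup_\ell$ of word norm $O(\ell^3)$ whose support is exactly $\duckdomain_i[\dright] \cup \duckimage_i[\dleft]$ and which swaps a configuration $w \in \duckdomain_i[\dright]$ with $\pi_d(\pencoding(w)) \in \duckimage_i[\dleft]$. Because the $\duckdomain_i[\dright]$ are pairwise disjoint (as are the $\duckimage_i[\dleft]$), the supports of these involutions are pairwise disjoint; in particular the involutions commute, and the product
\[ \Pi = \prod_{i=1}^{N} \GRpencoding_{\duckdomain_i,\duckimage_i} \]
is well-defined independently of the ordering, and lies in $\tmgroup_\ell$ with word norm $N \cdot O(\ell^3) = O(\ell^3)$.

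A short direct computation then evaluates $\Pi$: for $w \in \duckdomain_i[\dright] \subseteq \ctape_\ell[\dright]$, a single involution fires and yields $\pi_d(\pencoding(w))$, which the remaining involutions leave untouched (it sits in $\duckimage_i[\dleft]$ and is outside all other $\duckimage_j[\dleft]$); symmetrically, on $w \in \duckimage_i[\dleft] \subseteq \ctape_\ell[\dleft]$ the product returns $\pencoding^{-1}(\pi_d(w)) = \pi_d(\pencoding^{-1}(w))$, using that $\pencoding$ leaves the duck component untouched and so commutes with $\pi_d$; headless configurations are fixed. Consequently, composing on the left by the generator $\pi_d$ (of word norm $1$) exactly produces the desired map:
\[ \GRpencoding = \pi_d \circ \Pi, \]
with total word norm $O(\ell^3) + 1 = O(\ell^3)$, completing the argument.

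The only genuinely non-trivial point is the verification that the rewriting rules in the four figures really do define a partition, so that the involutions $\GRpencoding_{\duckdomain_i,\duckimage_i}$ have disjoint supports; everything else is bookkeeping and the invocation of Lemma~\ref{lem:encoding-case-in-group}. I do not anticipate any obstacle beyond this finite verification.
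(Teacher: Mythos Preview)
Your approach is essentially identical to the paper's: compose the finitely many involutions $\GRpencoding_{\duckdomain_i,\duckimage_i}$ supplied by Lemma~\ref{lem:encoding-case-in-group} (which commute by disjointness of supports) and then apply a final $\pi_d$. The paper's proof says exactly this, in fewer words.

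One small wording issue: you say the $\duckdomain_i$ ``partition the set of configurations on which $\pencoding$ acts non-trivially''. This is not quite right and, taken literally, would break your formula $\GRpencoding = \pi_d \circ \Pi$: a configuration $w \in \ctape_\ell[\dright]$ fixed by $\pencoding$ but lying in no $\duckdomain_i$ would be sent by $\pi_d \circ \Pi$ to $\pi_d(w) \in \ctape_\ell[\dleft]$, not to $w$. In fact the figures list rules covering \emph{all} configurations with a head, including identity rules (e.g.\ the ``higher level'' cases in Figures~\ref{fig:encoding-initialization} and~\ref{fig:bottom-up-encoding2}); so the $\duckdomain_i$ partition the full set of headed configurations, and the corresponding involutions $\GRpencoding_{\duckdomain_i,\duckimage_i}$ for identity cases are just $\pi_{d,\duckdomain_i}$. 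With that correction your argument goes through verbatim.
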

\begin{proof}
  Each of these transformations $\pencoding$ is composed of finitely many cases $\duckdomain \xmapsto{\pencoding} \duckimage$, and by the previous lemma, each $\GRpencoding_{\duckdomain,\duckimage}$ belong to $\tmgroup_\ell$ with word norm $O(\ell^3)$. Denote $d \in \Sym(H)$ the involution that swaps ducks $d=\dright$ and $d=\dleft$. As the $\GRpencoding_{\duckdomain,\duckimage}$ commute (since they have disjoint support), the involution $\GRpencoding$ can be written as the composition of finitely many $\GRpencoding_{\duckdomain,\duckimage}$ and $\pi_d$.
\end{proof}

We then conclude and obtain Lemma~\ref{lem:encoding-with-garbage}:
\encodingwithgarbage*

\begin{proof}[Proof of Lemma~\ref{lem:encoding-with-garbage}]
  Recall that the encoding map $\encoding_\ell$ from Section~\ref{sec:smart-encoding-overall} is defined as $\encoding_\ell = \pencoding_{\ell,\mathrm{final}} \circ \prod_{k=0}^{\ell-2} \pencoding_{k \to k+1} \circ \pencoding_{\mathrm{init}}$.

  By the previous lemma, there exists $\GRpencoding_{\mathrm{init}}$, $\GRpencoding_{k \to k+1}$ and $\GRpencoding_{\ell,\mathrm{final}}$ in $\tmgroup_\ell$ with word norm $O(\ell^3)$ that agree on $\ctape_\ell[\dright]$ with their respective $\pencoding_{\ell,\mathrm{final}}$, $\pencoding_{k \to k+1}$ and $\pencoding_{\mathrm{init}}$. As the set $\ctape_\ell[\dright]$ is stable by these permutations, we define
  \[ \GRencoding_\ell = \GRpencoding_{\ell,\mathrm{final}} \circ \prod_{k=0}^{\ell-2} \GRpencoding_{k \to k+1} \circ \GRpencoding_{\mathrm{init}}.\]
  Then the map $\GRencoding_\ell$ satisfies:
  \[ w \in \ctape_\ell[\dright] \implies \GRencoding_\ell(w) = \encoding_\ell(w) \in \ctape_\ell[\dright].\qedhere \]
\end{proof}

One should note that, on configurations $w \in \ctape_\ell[\dleft]$, $\GRencoding_\ell$ ``produces garbage'': we claim no meaningful interpretation for the image of such $w$. In particular, we note that while for $w \in \ctape_\ell[\dleft]$, we have $\GRpencoding_{\mathrm{init}}(w) = \pencoding_{\mathrm{init}}^{-1}(w)$, the composition $\pencoding_{\ell,\mathrm{final}}^{-1} \circ \prod_{k=0}^{\ell-2} \pencoding_{k \to k+1}^{-1} \circ \pencoding_{\mathrm{init}}^{-1}$ does \textbf{not} equal the inverse of $\encoding$.

\subsubsection{Proof of Lemma \ref{lem:addition-in-encoded-base}}\label{sec:proofs-additition-encoded-base}

Let us recall the statement of Lemma~\ref{lem:addition-in-encoded-base}:
\additioninencodedbase*

More precisely, denoting by $q_b$ any state in $[q_b] = \{q_b\} \times \duckset \times \ghostset$, the map $(\rduck{\plusorbit{n}}{\dright})_\ell$ bijectively acts on the following patterns of length $\ell$ as:
\[ 
  \rduck{\plusorbit{n}}{\dright}
  \begin{tpatternc}
  c_0 & \dots & c_{\ell-1} \\
  q_b & &
  \end{tpatternc}
  \to 
  \sigma^{- \varepsilon a} \left(
  \begin{tpatternc}
    c_0' & \dots & c_{\ell-1}' \\
    q_{b'}
  \end{tpatternc} \right)
\]
if $q_b \in \{q_b\} \times \{\dright\} \times \ghostset$, where $b' \cdot c' \in \{1,2\} \cdot \{0,1,2\}^\ell$ encodes $((b-1)\cdot 3^\ell + v_3(c)) + n \bmod 2 \cdot 3^\ell$, $a$ is the quotient of $((b-1)\cdot 3^\ell + v_3(c)) + n$ by $2 \cdot 3^\ell$, and $\epsilon = +1$ if $q \in \{\smb,\smp\}$, $\epsilon = -1$ if $q \in \{\smd,\smq\}$; and is the identity otherwise.

\medskip
We now prove Lemma~\ref{lem:addition-in-encoded-base}.

\begin{proof}
  Using the permutation $p$ of constant word norm from Lemma~\ref{lem:permuting-letters}, conditioned on the duck being $d = \dright$ and the state being either in $\{\smb,\smp\}$ or in $\{\smd,\smq\}$ (see Remark~\ref{rem:permuting-letters}), we can rotate the tape either right (if $\smb$ or $\smp$) or left (if $\smd,\smq$) at most $\ell$ times with word norm $O(\ell^2)$. We can now assume that $0 \leq n < 2 \cdot 3^\ell$.

  \medskip
  Let $b \cdot k \in \{1,2\} \cdot \{0,1,2\}^\ell$ encode $n$, i.e.\ $v_3(k) = n \bmod 3^\ell$, and $b-1$ be the quotient of $n$ by $3^\ell$. We first add $b \cdot k$ without rotating the tape.

  To do so, apply the permutation $\addc{k}$ of Lemma~\ref{lem:ternary-addition} to add $n \bmod 3^\ell$ to the cyclic tape conditioned on $d = \dright$ (see Remark~\ref{rem:ternary-addition}). Then, very similarly to the proof of the same lemma, we perform a carry in the component $\{1,2\}$ of the state (denoting $s = s' \times \{\dright\} \times \ID_\ghostset \times \ID_\Gamma \in \Sym(H)$, where $s' = (q,1) \leftrightarrow (q,2) \in \Sym(\origsmart{Q})$, we perform the conditioned gate $\pi_{s,<k}$ defined in Lemma~\ref{lem:gate-ordering-condition}), and depending on the bit $b$ we cyclically rotate the bit carried by state (if $b=2$, we apply $\pi_s$, and if $b=1$ we apply the identity).

  \medskip
  We are now left with performing a cyclic rotation of the whole tape if the addition of $n$ in the previous paragraph overflowed. Let $s_1 \in \Sym(H)$ and $s_2 \in \Sym(H)$ respectively flip filled arrows $\smb \leftrightarrow \smd$ and $\smp \leftrightarrow \smq$, i.e.\ $s_1 = s_1' \times \{\dright\} \times \ID \times \ID \in \Sym(H)$ (resp. $s_2 = s_2' \times \{\dright\} \times \ID \times \ID \in \Sym(H)$), for $s_1' = (\smb \leftrightarrow \smd) \times \ID \in \Sym(\origsmart{Q})$ (resp. $s_2' = (\smp \leftrightarrow \smq) \times \ID \in \Sym(\origsmart{Q})$). By Lemma~\ref{lem:gate-ordering-condition}, both $\pi_{s_1,C}$ and $\pi_{s_2,C}$ belong to $\tmgroup_\ell$ with word norm $O(\ell^2)$, for $C$ the overflowing condition $C =\; < (b \cdot k)$.\footnote{We slightly abuse notations here when writing $< (b \cdot k)$: these conditions were defined for comparisons of length $\leq \ell$ of tape-letters, here the comparison is of length $\ell+1$. Instead of just comparing on the tape, the bit $\{1,2\}$ carried in the state is also part of the comparison.}

  Let $r_\smb \in \Sym(\ctape_\ell)$ (resp. $r_\smp \in \Sym(\ctape_\ell)$) be the right cyclic shift of whole tapes having states in $\{\smb\} \times \{1,2\} \times \{\dright\} \times \ghostset$ (resp. $\{\smp\} \times \{1,2\} \times \{\dright\} \times \ghostset$). These permutations belong to $\tmgroup_\ell$ with word norm $O(\ell)$ by Lemma~\ref{lem:permuting-letters}.

  Then the commutator $[\pi_{s_1,C},r_\smb]$ (resp. $[\pi_{s_2,C},r_\smp]$) shifts the whole tape if and only if an overflow happened, and it either shifts to the right if the state is in $\{\smb\} \times \{1,2\} \times \{\dright\} \times \ghostset$ (resp. $\{\smp\} \times \{1,2\} \times \{\dright\} \times \ghostset$), or to the left if the state is in $\{\smd\} \times \{1,2\} \times \{\dright\} \times \ghostset$ (resp. $\{\smq\} \times \{1,2\} \times \{\dright\} \times \ghostset$). 

  \medskip
  We conclude that $\rduck{\plusorbit{n}}{\dright}$, being the composition of the previous paragraphs, belongs to $\tmgroup_\ell$ with word norm $O(\ell^3)$.
\end{proof}

\section{Distortion of automorphisms of the full shift}\label{sec:on-full-shift}
This chapter contains a proof of Theorem~\ref{thm:distortion-every-full-shift}:
\distortioneveryfullshift*

By~\cite{1990-KR}, the automorphism groups of full shifts with different (non-trivial) alphabets embed in each other, so we only need to prove that \emph{there exists $\Sigma$ and some $g \in \Aut(\Sigma^\Z)$ of infinite order such that $|g^n|_F = O(\log^4 n)$ for some finite set $F$.}

\medskip
We first introduce the conveyor belt construction, which allows to embbed a given Turing machine into a full shift. We then define a group $\tmgroup_*$, which contains every Turing machine on a given set of states and alphabet, along with some instructions to modify the conveyor belt structures in configurations. Then, we state Lemma~\ref{lem:nice-implies-distortion-in-full-shift}: every finitarily distorted Turing machine $\mathcal{M}$ gives rise to a distorted automorphism in $\tmgroup_*$. Overall, the proof consists in transporting the already existing (finitary) distortion into a full shift.

We finally add some various optimizations in the case of the SMART machine. All our results are summarized in Lemma~\ref{lem:better-bounds-for-smart}, which precisely defines a full shift $\Sigma^\Z$ and a distorted automorphism $f_\smart$ of infinite order which satisfies the aforementioned polylogarithmic word norm of degree four.

\subsection{Context and statements of results}\label{sec:alaala}

\subsubsection{Conveyor belts}

Recall that a Turing machine $\mathcal{M} = (Q,\Gamma,\Delta)$ acts on the related subshift $\bitape$ (see Section~\ref{sec:defs}), i.e.\ on the set of bi-infinite tapes containing at most one head (i.e.\ one symbol of $Q \times \Gamma)$.

To make a Turing machine $\mathcal{M} = (Q,\Gamma,\Delta)$ act on a full shift instead, we use the conveyor belt trick (see for example~\cite[Lemma 3]{2017-GS}). Let
\[ \Sigma = \Big(\Gamma^2 \times \{+1,-1\}\Big) \sqcup \Big( (Q \times \Gamma) \times \Gamma \Big) \sqcup \Big(\Gamma \times (Q \times \Gamma)\Big) \]
where we call \emph{conveyor bits} the bits of $\{+1,-1\}$ in $(\Gamma^2 \times \{+1,-1\})$, and define the action of $\mathcal{M}$ on $\Sigma^\Z$ as the following automorphism $f_\mathcal{M}$.

Any configuration $x \in \Sigma^\Z$ uniquely splits into $x = \dots w_{-2}w_{-1} w_0 w_1 w_2 \dots$ such that for every $i \in \Z$, we have either
\begin{align*}
  & w_i \in \Big(  \Gamma^2 \times \{+1\} \Big)^* 
    \Big( \Big((Q \times \Gamma) \times \Gamma \Big) \cup \Big(\Gamma \times (Q \times \Gamma)\Big)\Big)
    \Big ( \Gamma^2 \times \{-1\} \Big)^* \\
  \text{or } & w_i \in \Big(  \Gamma^2 \times \{+1\} \Big)^+ \Big(  \Gamma^2 \times \{-1\} \Big)^+
\end{align*}
with the exception that on some configurations, their might exist a leftmost or rightmost word with an infinite number of $+1$ or $-1$.\footnote{The corresponding decomposition claim in~\cite{2017-GS} has a misprint, as it uses Kleene stars also in the second form.} We describe the action of $f_\mathcal{M}$ on such finite words $w = w_i$ of these two forms: as configurations made of infinitely many finite $w$ are dense, and $f_\mathcal{M}$ will be uniformly continuous on these, $f_\mathcal{M}$ will uniquely extend to an automorphism on the full shift.

\begin{itemize}
    \item On words $w \in \Big(\Gamma^2 \times \{+1\} \Big)^+ \Big(\Gamma^2 \times \{-1\} \Big)^+$, we do nothing.
    \item On words $w \in \Big(\Gamma^2 \times \{+1\} \Big)^* \Big(\Big((Q \times \Gamma) \times \Gamma \Big) \cup \Big(\Gamma \times (Q \times \Gamma)\Big) \Big) \Big(\Gamma^2 \times \{-1\}\Big)^*$, let 
    \[ w' \in \Big(\Gamma^2\Big)^* \Big(\Big((Q \times \Gamma) \times \Gamma\Big) \cup \Big(\Gamma \times (Q \times \Gamma)\Big)\Big) \Big(\Gamma^2\Big)^* \]
    be the word obtained by erasing the conveyor bits $+1$ and $-1$ from $w$. We see $w'$ as a conveyor belt of length $2|w|$, that is the superimposition of a top word $u$ and a bottom word $v$, glued together at their borders as if the words were laid down on a conveyor belt.

    More precisely, for $\pi_1$ and $\pi_2$ the projections to the first and second component of the alphabet, let $u = \pi_1(w')$ and $v = \overline{\pi_2(w')}$ the reverse of~$\pi_2(w')$. One of these words is in $\Gamma^+$, and the other is in $\Gamma^* (Q \times \Gamma) \Gamma^*$. We make $\mathcal{M}$ act on $(uv)^\Z$ (despite it having infinitely many heads, it should be clear what this means, as all the heads move with the same transition), i.e.\ we define
    \[ u'v' = T_{\mathcal{M}} \left( (uv)^\Z \right)_{\llbracket 0,2|w|-1 \rrbracket} \]
    Note that $u'v'$ also contains exactly one head. We then rewrap $u'v'$ into a conveyor belt of $(\Gamma^2)^* (((Q \times \Gamma) \times \Gamma) \cup (\Gamma \times (Q \times \Gamma))) (\Gamma^2)^*$, and add conveyor bits $+1$ (resp. $-1$) to the cell symbols in $\Gamma^2$ to the left of the head (resp. right).

    This defines how $f_\mathcal{M}$ acts on such words $w$. This can be summarized as: $f_\mathcal{M}$ considers such words as a cyclic tape folded in the shape of a conveyor belt, and acts on the cyclic tape.
\end{itemize}

Note that $x$ and $f_\mathcal{M}(x)$ have the same decomposition into a product of conveyor belts, and that if $\mathcal{M}$ is reversible, then $f_\mathcal{M}$ is an automorphism of $\Aut(\Sigma^\Z)$.

\subsubsection{Group of Turing machines on conveyor belts}

Similar to the group of Turing machines $\tmgroup_\ell$ acting on the finite cyclic tapes of $\ctape_\ell$, let us define another point of view on Turing machines acting on full-shifts. For $g \in \Sym(Q \times \Gamma)$, define $f_{g}^{\aup}, f_{g}^{\adown}, f_g \in \Aut(\Sigma^\Z)$ as:
\begin{align*}
  f_{g}^{\aup}(x)_i & = \begin{cases}
    x_i & \text{if } x_i \in \Gamma^2 \times \{+1,-1\} \text{ or } x_i \in (\Gamma \times (Q \times \Gamma)) \\
    (g(a),b) & \text{if } x_i = (a,b) \in ((Q \times \Gamma) \times \Gamma )
  \end{cases} \\
  f_{g}^{\adown}(x)_i & = \begin{cases}
    x_i & \text{if } x_i \in \Gamma^2 \times \{+1,-1\} \text{ or } x_i \in ((Q \times \Gamma) \times \Gamma ) \\
    (a,g(b)) & \text{if } x_i = (a,b) \in (\Gamma \times (Q \times \Gamma))
  \end{cases} \\
  f_g(x)_i & = \begin{cases}
    x_i & \text{if } x_i \in \Gamma^2 \times \{+1,-1\} \\
    (g(a),b) & \text{if } x_i = (a,b) \in ((Q \times \Gamma) \times \Gamma ) \\
    (a,g(b)) & \text{if } x_i = (a,b) \in (\Gamma \times (Q \times \Gamma))
  \end{cases}
\end{align*}
for $x$ a configuration of $\Sigma^\Z$.

For every $q \in Q$, define $\rho_q$ as the right movement of heads in state $q$ \emph{inside their own conveyor belts}, and $\rho = \prod_{q \in Q} \rho_q$. Then define $\tmgroup$ the group they generate:

\[ \tmgroup = \langle \{ f_{g}^\aup, f_{g}^\adown, f_g \mid g \in \Sym(Q \times \Gamma) \} \cup \{ \rho_q \mid q \in Q \} \rangle \]

The generators of $\tmgroup_\ell$ introduced in Section~\ref{sec:Finitary} are in direct correspondence with the generators $\{ f_g \mid g \in \Sym(Q \times \Gamma) \}$ and $\{ \rho_q \mid q \in Q\}$ of $\tmgroup$, and the latter can also be seen as the basic instructions of Turing machines: moving heads based on their states, or permuting their values. 

\medskip
In Section~\ref{sec:Finitary}, we mentioned that every Turing machine $\mathcal{M} = (Q,\Gamma,\dots)$ belongs to the groups $\tmgroup_\ell$. Similarly, for any such machine, the automorphism $f_\mathcal{M}$ belongs to $\tmgroup$, since it is the composition $\beta_{-1} \circ \beta_{+1} \circ f_\alpha$:
\begin{align*}
  \alpha(q,a) & = \begin{cases}
    (q',b) & \text{if } (q,a,q',b) \in \Delta \\
    (q',a) & \text{if } (q,\pm 1, q') \in \Delta
  \end{cases} \\
  \beta_{+1} & = \prod_{q' \mid \exists q, (q,+1,q') \in \Delta} \rho_{q'}  \\
  \beta_{-1} & = \prod_{q' \mid \exists q, (q,-1,q') \in \Delta} {\rho_{q'}}^{-1}
\end{align*}

\subsubsection{Decorating Turing machines}

Given any two sets $\Gamma_0$ and $Q_0$, define:
\begin{align*}
  \Gamma & = \Gamma_0 \\
  Q & = Q_0 \times \duckset \times \ghostset \\
  \Sigma & = \Big( \Gamma^2 \times \{+1,-1\} \Big) \sqcup \Big( (Q \times \Gamma) \times \Gamma \Big) \sqcup \Big( \Gamma \times (Q \times \Gamma) \Big)
\end{align*}
where $\duckset = \{\dforw,\dback\}$ and $\ghostset = \llbracket 0, 5 \rrbracket$. As in Section~\ref{sec:decorated-smart}, $\duckset$ is called the \emph{duck} and $\ghostset$ is called the \emph{ghost}.

We say that $\Gamma$ and $Q$ are the decorated versions of $\Gamma_0$ and $Q_0$.

\subsubsection{Generalized group \texorpdfstring{$\tmgroup_*$}{G\_{}*} of Turing machines on conveyor belts}\label{sec:generalized-group-of-tm}
Let us now define an automorphism $\theta$ that intuitively moves all (decorated) heads to the right, \emph{disregarding the conveyor belt structure}, allowing heads to visit areas outside of their conveyor belts. This is completely \emph{ad-hoc}, and only used to build three specific automorphisms in Section~\ref{sec:creating-erasing-cb}.

\medskip
First, split $\ghostset = \llbracket 0,5 \rrbracket$ into $\ghostset \simeq \{+1,-1\} \times \llbracket 0, 2 \rrbracket$, where $\{+1,-1\}$ is its \emph{sign component}. Now every symbol of $\Sigma$ contains a sign $\{+1,-1\}$, either in the sign-component of the state, or as its conveyor bit. Let $\pi_\sign : \Sigma \to \{+1,-1\}$ return the sign of any symbol in $\Sigma$ bit, i.e.:
\[ \pi_\sign(z) = \begin{cases}
  \varepsilon & \text{if } z = ((a,b),\varepsilon) \in \Gamma^2 \times \{+1,-1\} \\
  \varepsilon & \text{if } z = ((q,a),b) \in (Q \times \Gamma) \times \Gamma$ or $z = (a,(q,b)) \in (\Gamma \times (Q \times \Gamma)) \\
  & \text{where } q = (q_0,d,x) \text{ and } x \simeq (\varepsilon,x') \in \ghostset
  \end{cases} \]

We can then define the automorphism $\theta$ that moves every head to the right while leaving this sign symbol intact. More precisely, let $\sigma_\sign : Q \times \{+1,-1\} \to Q$ rewrite the sign of the state:
\[ \sigma_\sign : ((q_0,d,(\_{},x')),\, \varepsilon) \to (q_0,d,(\varepsilon,x')) \]
i.e.\ $\pi_\sign(\sigma_\sign(q,\varepsilon)) = \varepsilon$, let $\pi_{\aup} : \Sigma \to \Gamma$ return the top tape-letter, $\pi_{\adown} : \Sigma \to \Gamma$ return the bottom tape-letter, and $\pi_Q : ((Q \times \Gamma) \times \Gamma) \cup (\Gamma \times (Q \times \Gamma)) \to Q$ return the state. Define $\theta : \Sigma^\Z \to \Sigma^\Z$ as:
\[ \theta(x)_i = \begin{cases}
  ((q',a),b) & \text{if } x_{i-1} \in (Q \times \Gamma) \times \Gamma \\
  (a,(q',b)) & \text{if }x_{i-1} \in \Gamma \times (Q \times \Gamma) \\
  ((a,b),\pi_\sign(x_i)) & \text{if } x_{i-1} \in \Gamma^2 \times \{+1,-1\}
\end{cases}\]
for $x \in \Sigma^\Z$, where $a = \pi_\aup(x_i)$, $b = \pi_\adown(x_i)$, and $q'$ is the state of $x_{i-1}$ with sign component $\pi_\sign(x_i)$, i.e.\ $q' = \sigma_\sign(\pi_Q(x_{i-1}), \pi_\sign(x_i))$.

Then $\theta$ is an automorphism of bi-radius $1$, and we define
\[  \tmgroup_* = \langle \tmgroup \cup \{ \theta \} \rangle \]
the finitely-generated group generated by $\theta$ and the Turing-machine instructions of $\tmgroup$. This finitely-generated group $\tmgroup_*$ is where we prove distortion in Lemma~\ref{lem:nice-implies-distortion-in-full-shift}.

\subsubsection{Main result: distortion in \texorpdfstring{$\Sigma^\Z$}{Sigma\^{}Z}}\label{sec:automorphism-main-result}

Let $\mathcal{M}_0 = (Q_0,\Gamma_0,\Delta_0)$ be an arbitrary Turing machine. Denoting $Q = Q_0 \times \duckset \times \ghostset$ and $\Gamma = \Gamma_0$ the decorated versions of $Q_0$ and $\Gamma_0$, we define the symmetrized Turing machine $\mathcal{M} = (Q,\Gamma,\Delta)$ that acts forward in time on ducks $\dforw$ and backward on ducks $\dback$:
\begin{align*}
  \Delta =\ & \bigcup_{x \in \ghostset} \left\{ \Big( (q,\dforw,x),a,(q',\dforw,x),b \Big) : (q,a,q',b) \in \Delta_0 \right\} \\
  & \qquad \cup \left\{ \Big((q,\dforw,x),\delta,(q',\dforw,x)\Big) : (q,\delta,q') \in \Delta_0 \right\} \\
  & \bigcup_{x \in \ghostset} \left\{ \Big( (q',\dback,x),b,(q,\dback,x),a \Big) : (q,a,q',b) \in \Delta_0 \right\} \\
  & \qquad \cup \left\{ \Big((q',\dback,x),\delta,(q,\dback,x)\Big) : (q,\delta,q') \in \Delta_0 \right\}
\end{align*}

We can now establish Lemma~\ref{lem:nice-implies-distortion-in-full-shift}, which is the main result of this section.

\begin{restatable}{lemma}{niceimpliesdistortioninfullshift}\label{lem:nice-implies-distortion-in-full-shift}
  Assume some Turing machine $\mathcal{M}_0$ satisfies the three properties of Lemma~\ref{lem:smart-distorted-on-cyclic-tapes}. Then, denoting $\mathcal{M}$ its symmetrized (and decorated) version, the automorphism $(f_{\mathcal{M}})^n$ of $\Aut(\Sigma^\Z)$ (i.e.\ the action of $\mathcal{M}$ on the conveyor belts of $\Sigma^\Z$) has word norm $O(\log^{p+1} n + \log^2 n)$ in $\tmgroup_*$.
\end{restatable}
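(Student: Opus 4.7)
I would transport the finitary distortion of $\mathcal{M}$ on cyclic tapes (item~(3) of Lemma~\ref{lem:smart-distorted-on-cyclic-tapes}) to its conveyor-belt action on $\Sigma^\Z$, exploiting that in $n$ steps any head stays within a window of size $L := \Theta(\log n)$ by item~(2). First I would verify that $f_\mathcal{M}$ acts on each maximal conveyor belt independently, and that its restriction to a belt of length $k$ is conjugate to the cyclic-tape action $T_{2k,\mathcal{M}}$ on $\ctape_{2k}$; the symmetrization of $\mathcal{M}$ (forward on ducks $\dforw$, backward on $\dback$) is precisely what allows a head on the ``bottom'' side of a belt to execute an $\mathcal{M}_0$-step despite moving in the opposite direction when the belt is unfolded into a cyclic tape. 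Infinite order of $f_\mathcal{M}$ is then immediate from item~(1) applied to any single belt of appropriate length.

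For each $\ell \in \{1, \ldots, L\}$ I would construct an element $h_\ell \in \tmgroup$ that realizes $T_{2\ell,\mathcal{M}}^n$ on every belt of length exactly $\ell$ and is the identity elsewhere, with $\|h_\ell\| = O(\ell^p)$. The construction proceeds in three stages: (a) use one coordinate of the ghost component $\ghostset$ to ``mark'' every head whose belt has length exactly $\ell$, which is a conditional ghost flip under the local $\NC^1$ predicate ``the two $-1\!\to\!+1$ conveyor-bit transitions bounding the head occur at prescribed offsets summing to $\ell$'', of word norm polynomial in $\ell$ via Lemma~\ref{lem:nc1-conditioning}; (b) translate the $O(\ell^p)$-long presentation of $T_{2\ell,\mathcal{M}}^n$ from item~(3) into $\tmgroup$, replacing each generator $\pi_g$ or $\rho_q$ of $\tmgroup_{2\ell}$ by its analogue constrained to act only on marked heads (the constraint is free since the mark is a coordinate of the state); (c) undo the marking. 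Summing gives $\sum_{\ell=1}^{L} O(\ell^p) = O(L^{p+1}) = O(\log^{p+1} n)$.

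For belts of length greater than $L$, the head never reaches the boundary in $n$ steps, so $f_\mathcal{M}^n$ agrees on such a belt with its action on any cyclic-tape truncation of length $\geq 2L$. I would therefore use $\theta$ together with conditional gates from Lemma~\ref{lem:gate-conditioning} to temporarily flip a constant number of conveyor bits at distance $L$ on each side of every such head, cutting the long belts down to length exactly $L+1$ at word-norm cost $O(L^2) = O(\log^2 n)$; apply $h_{L+1}$ (cost $O(\log^p n)$); and then undo the cuts. Combining with the bound from the previous paragraph gives $|f_\mathcal{M}^n|_{\tmgroup_*} = O(\log^{p+1} n + \log^p n + \log^2 n) = O(\log^{p+1} n + \log^2 n)$, as claimed.

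The main technical obstacle is keeping $\|h_\ell\|$ within $O(\ell^p)$: a naive approach that conditions each of the $O(\ell^p)$ generators of the finitary presentation individually on ``belt length $= \ell$'' would cost $\Omega(\ell^{p+2})$ and blow the total up to $\Omega(L^{p+3})$. The device of marking once in a spare ghost coordinate and then threading the mark through every subsequent generator at zero extra cost is what prevents this blow-up, and is also what makes the cutting-and-restoring step for long belts an additive $O(\log^2 n)$ contribution rather than a multiplicative factor.
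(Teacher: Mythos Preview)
Your treatment of short belts is essentially sound and parallels the paper's: mark the heads lying on belts of a given length, run the finitary presentation on those heads only, unmark, and sum over $\ell\le L$ to get $O(L^{p+1})$. (The paper achieves the same effect slightly differently, via the symmetrized duck-flip in Lemma~\ref{lem:from-cyclic-tapes-to-cb}, but your ghost-marking variant is fine.)

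The gap is in your handling of long belts. You propose: cut at distance $L$ on each side of the head, apply $h_{L+1}$, then ``undo the cuts''. But the cutting map $g$ flips conveyor bits at distance $L$ \emph{from the current head position}; after $h_{L+1}$ the head has moved by some tape-dependent offset $t\in[-L,L]$, so $g^{-1}$ flips bits at distance $L$ from the \emph{new} head, i.e.\ at absolute positions shifted by $t$ from the original cuts. You do not recover the original belt structure, and since $t$ depends on the tape content there is no single element of $\tmgroup_*$ that undoes the cut. This is not a technicality: the paper identifies exactly this obstruction (``the difficulty lies in properly removing the temporary conveyor belts once the machine has been applied'') and resolves it with the \emph{two-scale trick}. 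One conjugates not the machine application but the duck-flip involution $f_{d,C}$ (which does not move the head) by a carefully interleaved word $\lambda_n$ that alternates cuts at two different scales $L$ and $2L$ with forward/backward runs of the machine on belts of the corresponding fixed lengths $L_1,L_2$; the point is that each cut is removed while the head is provably back at the position from which that cut was made. See Figure~\ref{fig:two-scale-trick} and the definition of $\lambda_n$ in the proof. Without this device your argument for long belts does not go through.
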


\begin{remark}
  Note that the assumption in this lemma focuses on the original machine $\mathcal{M}_0$ (and not $\mathcal{M}$) verifying the properties of Lemma~\ref{lem:smart-distorted-on-cyclic-tapes}, while the conclusion of this lemma focuses on the symmetrized version $(f_{\mathcal{M}})^n$ (and not $(f_{\mathcal{M}_0})^n$).
\end{remark}

\subsubsection{Overview}

As the decorated SMART machine satisfies Lemma~\ref{lem:smart-distorted-on-cyclic-tapes}, combining it with Lemma~\ref{lem:nice-implies-distortion-in-full-shift} above we obtain a distortion element of infinite order in $\tmgroup_*$, whose powers have word norm $O(\log^5 n)$. However, this does not yet prove Theorem~\ref{thm:distortion-every-full-shift}: to obtain an upper bound $O(\log^4 n)$ on the word norm, we add a few additional tricks and optimizations in Section~\ref{sec:full-shift-optimize-degree}.

\medskip
This chapter focuses on the proof of Lemma~\ref{lem:nice-implies-distortion-in-full-shift} and Theorem~\ref{thm:distortion-every-full-shift}.

Intuitively, we prove Lemma~\ref{lem:nice-implies-distortion-in-full-shift} by applying the $\ell$-cyclic automorphism $T_{\ell,\mathcal{M}}$ in the conveyor belts of length $\ell$. The automorphisms $T_{\ell,\mathcal{M}}$ are finitarily distorted, and the distortion is transported from the cyclic automorphisms to $f_{\mathcal{M}}$.

Section~\ref{sec:engineering-full-shift} develops several tools for this proof. First, we build the tools to condition the application of automorphisms by the length of conveyor belts, as to apply the correct $T_{\ell,\mathcal{M}}$ to the conveyor belts of the correct length. Morally, $f_\mathcal{M}$ is then the infinite product of all the $T_{\ell,\mathcal{M}}$.

Then, we develop a method called the \emph{two-scale trick} to express $f_\mathcal{M}$ as a product of finitely many $T_{\ell,\mathcal{M}}$. It generates temporary conveyor belts of sufficient length so as to move the head (with finitary distortion) without it ``seeing'' the temporary borders, and then erase them.

Section~\ref{sec:full-shift-main-proof} then contains the proof of Lemma~\ref{lem:nice-implies-distortion-in-full-shift}. As mentioned above, Section~\ref{sec:full-shift-optimize-degree} covers various optimizations in the case of the SMART machine, in order to obtain the final degree four of polylogarithmic norm growth.

\begin{remark}
  At this point, the reader may think that conveyor belts are a restriction imposed by the context (as a way to embed Turing machines into a full-shift); and that proving distortion in a similar setting without conveyor belts, for example on the bi-infinite tapes of $\bitape$ (i.e.\ in the groups of generalized Turing machines $\RTM(n, k)$) would be easier.

  While the former is true, we think the latter is misleading. Indeed, the idea of temporary conveyor belts (from the two-scale trick) is a key component of our proof of distortion, even in the groups $\RTM(n, k)$. Without the ability to mark and erase temporary borders, we do not know how to prove the distortion of (what morally is) the SMART machine.
\end{remark}

\subsection{Permutation engineering in \texorpdfstring{$\Sigma^\Z$}{Sigma\^{}Z}}\label{sec:engineering-full-shift}

Similarly to Section~\ref{sec:engineering-perm-condition}, in this new setting the states of $Q$ once again have a ghost component $\ghostset$. We separate $Q$ into two components: $Q = \wgh{Q} \times \ghostset$ (so, with the notation $Q = Q_0 \times \duckset \times \ghostset$, we have $\wgh{Q} = \origsmart{Q} \times \duckset$).

With similar notations, the set of heads $H = Q \times \Gamma$ also split as $H = \wgh{H} \times \ghostset$, where $\wgh{H} = \wgh{Q} \times \Gamma$. Note that, for any $g \in \Sym(\wgh{H})$, the permutation $g \times \ID_{\ghostset}$ belongs to $\Sym(H)$.

In this section, we focus on building a new sort of gate-conditioning: conditioning on the structure of conveyor belts. To proceed, we somewhat follow the same ideas that we already developed in Section~\ref{sec:engineering-perm-condition}. Finally, we prove that we can modify the conveyor belt structure in Section~\ref{sec:creating-erasing-cb}.

\subsubsection{Structure conditioning}\label{sec:struct-conditioning}

Let us define $\cbstructcond$, the set of conditions that focus on the structure of conveyor belts.

Given a head and a conveyor belt, we denote by $\len \in \N$ the length of the conveyor belt (i.e.\ the length of the underlying finite cyclic tape). The integers $\rcbdist \in \N$ and $\lcbdist \in \N$ refer to the distance between a head and, respectively, the right and left border of the conveyor belt. In this context, $\cbstructcond$ is defined as the smallest set of conditions satisfying:
\begin{itemize}
  \item For every $n \in \N$, the condition $\len | n$ (i.e.\ the length of the conveyor belt divides $n$) belongs to $\cbstructcond$.
  \item The conditions $\rcbdist = 0$ and $\lcbdist = 0$ belong to $\cbstructcond$.
  \item For $n \in \Z$, the condition $\rho^n(C)$ belongs to $\cbstructcond$.
  \item If $C_1,C_2 \in \cbstructcond$, then $C_1 \wedge C_2 \in \cbstructcond$.
  \item If $C_1,C_2 \in \cbstructcond$, then $C_1 \vee C_2 \in \cbstructcond$.
  \item If $C \in \cbstructcond$, then $\neg C \in \cbstructcond$.
\end{itemize}
Here, for $C$ a condition, the condition $\rho^n(C)$ (for $n \in \Z$) holds if and only if $C$ holds after applying $\rho^n$.

\medskip
For any permutation $g \in \Sym(H)$ and any condition $C \in \cbstructcond$, define $f_{g,C}$ to be the conditioned gate that applies $g$ on a head if and only if this head belongs to a conveyor belts that verifies the condition $C$. 

\begin{lemma}\label{lem:condition-cb-struct}
  For any permutation $g \in \Sym(\wgh{H})$ and condition $C \in \cbstructcond$, the conditioned gate $f_{g \times \ID,C}$ belongs to $\tmgroup$. Additionally, if $T : \cbstructcond \to \N$ denotes the following function:
  \[ T(C) = \begin{cases}
    n & \text{if } C \text{ is } \len | n \\
    1 & \text{if } C \text{ is } \rcbdist = 0 \text{ or } \lcbdist = 0 \\
    T(C_1) + |n| & \text{if } C = \rho^n(C_1) \\
    2T(C_1) + 2T(C_2) & \text{if } C = C_1 \wedge C_2 \\
    2T(C_1) + 2T(C_2) + 5 & \text{if } C = C_1 \vee C_2 \\
    T(C_1) + 1 & \text{if } C = \neg C_1
  \end{cases}\]
  then the word norm of $f_{g \times \ID,C}$ in $\tmgroup$ is $O(T(C))$.
\end{lemma}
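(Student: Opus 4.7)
The proof follows the template of Lemma~\ref{lem:gate-conditioning}: I first establish the result for $g' \in \Alt(\ghostset)$ acting on the ghost component only, then lift to arbitrary $g \in \Sym(\wgh{H})$ via the commutator/evenness argument. Since $|\ghostset| = 6$ is even, $g \times \ID_\ghostset$ lies in $\Alt(\wgh{H} \times \ghostset)$ and decomposes into a bounded number of $3$-cycles, each conjugate via elements of $\Sym(H)$ to a pure ghost $3$-cycle exactly as in the proof of Lemma~\ref{lem:gate-conditioning}; it therefore suffices to realize $f_{\ID \times g', C}$ for $g' \in \Alt(\ghostset)$ with the stated norm bound.

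For the ghost-only case, I induct on the structure of $C \in \cbstructcond$. The Boolean cases $\wedge$, $\vee$, $\neg$ reuse the argument of Lemma~\ref{lem:ghost-conditioning} verbatim: conjunction uses Ore's theorem to write $g' = [g'_1, g'_2]$ in $\Alt(\ghostset)$ (which applies since $|\ghostset| \geq 5$), yielding
\[
f_{\ID \times g',\, C_1 \wedge C_2} \;=\; \bigl[\,f_{\ID \times g'_1,\, C_1},\; f_{\ID \times g'_2,\, C_2}\,\bigr],
\]
while negation is composition with a global $f_{\ID \times g'}$ and disjunction follows by De Morgan (or by direct composition when the supports are disjoint, giving the additive bound). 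The new inductive ingredient, not present in Lemma~\ref{lem:ghost-conditioning}, is the rotation case $C = \rho^n(C_1)$: here I conjugate,
\[
f_{\ID \times g',\, \rho^n(C_1)} \;=\; \rho^n \circ f_{\ID \times g',\, C_1} \circ \rho^{-n},
\]
since pre- and post-composing with $\rho^n$ precisely shifts the belt condition tested on each head, contributing additive cost $2|n|$.

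It remains to implement the base cases. The border conditions $\rcbdist = 0$ and $\lcbdist = 0$ are locally detectable: a head at a belt border sits in a specific local pattern of conveyor-bit signs and cell types around the head, and the conditioned ghost permutation is realized by a single basic gate $f_h \in \tmgroup$ with $h \in \Sym(Q \times \Gamma)$ suitably chosen, giving a constant word norm. For $\len \mid n$, I exploit that $\rho^n$ acts as the identity on conveyor belts of length dividing $n$ and as a non-trivial shift otherwise: I mark the head's current cell at constant cost by modifying its tape letter via $f_{g_0}^\aup$ (or $f_{g_0}^\adown$, according to the head's row), apply $\rho^n$, apply the desired ghost permutation at constant cost conditioned on the head's current cell still carrying the mark, apply $\rho^{-n}$, and unmark. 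On belts with $\ell \mid n$, the head returns to the marked cell and the ghost permutation fires; on belts with $\ell \nmid n$, the head lies at a different, unmarked cell and nothing happens. The dominant cost is the $2n$ applications of $\rho$, yielding $T(\len \mid n) = O(n)$. The main obstacle is making the mark locally unique within each belt at the midway gate, which is arranged by designing $g_0$ so that the mark combines a tape-letter change with auxiliary data stored via the ghost, so that no other cell in the same belt can spuriously satisfy the conditional applied after $\rho^n$.
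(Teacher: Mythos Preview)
Your overall architecture is sound and matches the paper: reduce to $g' \in \Alt(\ghostset)$ via the argument of Lemma~\ref{lem:gate-conditioning}, then induct on $C$, handling $\wedge$ by Ore/commutator, $\neg$ by composition with a global gate, $\vee$ by De~Morgan, and $\rho^n(C_1)$ by conjugation with $\rho^{\pm n}$. The difficulties lie entirely in the two base cases, and in both your proposed implementation does not work.

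For $\rcbdist = 0$ (and $\lcbdist = 0$): a generator $f_h$ with $h \in \Sym(Q \times \Gamma)$ only reads the head's own symbol $(q,a)$; it cannot inspect the neighbouring conveyor bits, so no single such gate can tell whether the head sits at a border. The paper's device is a commutator that detects the row flip occurring at a corner: $[f_{g_1}^\aup,\; \rho^{-1} \circ f_{g_2}^\adown \circ \rho]$ applies $[g_1,g_2]$ exactly when the head is currently on the top row and, one $\rho$-step later, lands on the bottom row, i.e.\ at the top-right corner of its belt. Four such corner detectors combine to realise the border conditions at $O(1)$ cost.

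For $\len \mid n$: the mark--move--check--unmove--unmark scheme necessarily produces false positives. The tape alphabet has only $|\Gamma|=3$ letters, so any ``mark'' you leave is a value that other cells of the belt may already carry; after $\rho^n$ with $\len \nmid n$ the head may land on such a cell and fire spuriously. Your proposed fix of recording auxiliary data in the ghost cannot help, because the ghost travels with the head and therefore carries no information distinguishing the original cell from any other. The paper sidesteps uniqueness entirely with a commutator comparing the tape letter at relative position~$0$ with that at relative position~$n$: with $C = \bigcup_{a \in \Gamma} \bigl([Q \times \{a\}]_0 \cap [a]_n\bigr)$ and $r \in \Sym(H)$ the $3$-cycle on the tape letter, one has
\[
f_{\ID \times g,\,\len \mid n} \;=\; \bigl[\,f_{\ID \times g_1, C},\; f_r \circ f_{\ID \times g_2, C} \circ f_r^{-1}\,\bigr].
\]
When $\len \mid n$ the two positions coincide, so $C$ holds both before and after $f_r$ and the commutator yields $[g_1,g_2]=g$. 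When $\len \nmid n$ the two positions are distinct cells with independent letters $x_0,x_n$; since $r$ has no fixed point the events $x_0 = x_n$ and $r^{-1}(x_0) = x_n$ are mutually exclusive, hence at most one of the two conditioned gates acts nontrivially and the commutator is the identity.
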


We divide the proof of this lemma into the following claims.
\begin{claim}\label{lem:condition-cb-divide-length}
  Let $g \in \Alt(\ghostset)$ and $\ell \in \N$. Then the conditioned gate $f_{\ID \times g,\len | \ell}$ (that applies $\ID_{\wgh{H}} \times g$ on conveyor belts whose length divides $\ell$) belongs to $\tmgroup$ with word norm $O(\ell)$.
\end{claim}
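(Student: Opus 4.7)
The plan is to realize $f_{\ID \times g, \len \mid \ell}$ as a bounded product of chained walks of length $\ell$, exploiting the equivalence $\len \mid \ell \iff \rho^\ell \equiv \ID$ on the belt (the head returns to its starting position after $\ell$ right-rotations within its conveyor belt).

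The main technical tool is the \emph{chained walk} of length $\ell$,
\[
  \Omega(h) \;=\; \rho^{-\ell} \;\circ\; \Bigl(\rho \circ f_{\ID \times h, \lcbdist = 0}\Bigr)^{\ell},
\]
which rotates the head through $\ell$ positions inside its belt, applies $h \in \Alt(\ghostset)$ to the ghost each time the head lands on the left border, and then returns the head to its starting position. Because the $\ell$ conditional gates and the $\ell$ intermediate $\rho$-steps can be interleaved (and each $f_{\ID \times h, \lcbdist = 0}$ costs $O(1)$), the word norm of $\Omega(h)$ is $O(\ell)$. A direct counting argument shows that on a belt of length $L$ with head initially at position $p_0$, the walk $\Omega(h)$ modifies the ghost by $h^{V(L,p_0)}$, where $V(L,p_0)$ counts left-border visits. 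The crucial computation is that $V(L,p_0) = \ell/L$ whenever $L \mid \ell$ (uniform in $p_0$), whereas $V(L,p_0) \in \{\lfloor \ell/L \rfloor,\, \lceil \ell/L \rceil\}$ when $L \nmid \ell$ (varying with $p_0$ by at most one).

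To extract the conditioned gate itself, I would use Ore's theorem (valid since $|\ghostset| = 6 \geq 5$) to write $g = [g_1, g_2]$ as a commutator in $\Alt(\ghostset)$, and then combine a bounded number of chained walks---of the form $\Omega(g_1)$, $\Omega(g_2)$, their inverses, and a one-step conjugate $\Omega(h)^{\rho}$---in a commutator pattern mimicking $[g_1, g_2]$. On belts with $L \mid \ell$, the exponents $V(L,p_0) = \ell/L$ are position-independent, so the commutator evaluates (after rewriting $\Omega(h_i)$ acting as a uniform power of $h_i$ on every head) to the desired $g$-action on the head's ghost. On belts with $L \nmid \ell$, the position-dependent off-by-one discrepancy between $\Omega(h)$ and $\Omega(h)^\rho$ produces exactly the matching error needed so that the commutator collapses to the identity.

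The principal obstacle is this last cancellation: showing that the position-dependent error on $\len \nmid \ell$ belts vanishes cleanly in the chosen commutator pattern. This is the Barrington-style heart of the argument---the non-solvability of $\Alt(\ghostset)$ is what allows the ``$h^{V+1}$ versus $h^V$'' discrepancy to be packaged inside a commutator that is identically trivial on the off-diagonal case. Once this is set up, the word-norm bound follows by a straightforward accounting: each walk contributes $O(\ell)$, and only a constant number of walks and $\rho^{\pm 1}$-shifts are composed.
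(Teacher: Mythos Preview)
Your approach has a genuine gap at its core. The chained walk $\Omega(h)$ on a belt of length $L$ with $L \mid \ell$ applies $h^{V}$ to the ghost where $V = |B|\cdot \ell/L$ (with $|B|$ the number of left-border positions). This exponent depends on $L$, not merely on the truth value of $L \mid \ell$. Consequently, any commutator built from $\Omega(g_1)$, $\Omega(g_2)$ and their $\rho$-conjugates will, on an $L \mid \ell$ belt, compute something like $[g_1^{k}, g_2^{k}]$ with $k = k(L)$ varying across divisors $L$ of $\ell$. In $\Alt(\ghostset)$ this is \emph{not} equal to $[g_1,g_2] = g$ for general $k$, so you do not obtain a uniform application of $g$ on all belts with $\len \mid \ell$. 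Your sentence ``the commutator evaluates \ldots\ to the desired $g$-action'' hides exactly this difficulty, and the Barrington-style heuristic you invoke does not resolve it: Barrington converts a Boolean condition into a fixed group element, but your walk does not produce a Boolean---it produces an integer count that varies with $L$.

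The paper sidesteps this entirely by using the \emph{tape} as a probe rather than counting border visits. Let $r$ rotate the tape letter under the head. The Boolean condition $C$ ``the letter at position $0$ equals the letter at position $\ell$'' is always true when $\len \mid \ell$ (they are the same cell), and moreover is \emph{preserved by $f_r$} exactly in that case; when $\len \nmid \ell$, applying $f_r$ changes the letter at position $0$ but not at position $\ell$, so the conditions ``$C$ holds'' and ``$C$ holds after $f_r^{-1}$'' become mutually exclusive. Writing $g = [g_1,g_2]$ via Ore, one then has
\[
  f_{\ID \times g,\, \len \mid \ell} \;=\; \bigl[\, f_{\ID \times g_1,\, C},\; f_r \circ f_{\ID \times g_2,\, C} \circ f_r^{-1}\,\bigr],
\]
since on $\len \mid \ell$ belts both factors always fire (giving $[g_1,g_2]$), while on $\len \nmid \ell$ belts at most one fires, so they commute. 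The tape-letter trick is what turns the divisibility test into a genuine Boolean, and this is the missing idea in your outline.
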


\begin{proof}
  Let $r' = (0,1,\dots,|\Gamma|-1) \in \Sym(\Gamma)$ and $r = \ID_Q \times r' \in \Sym(H)$ (recall $H = Q \times \Gamma)$ be the rotation of the tape-letter under the head. This proof relies on the following trivial observation: applying $f_r$ changes the letter at both position $0$ and position $\ell$ if and only if the length of the conveyor belt divides $\ell$ (since the tapes are cyclic).

  Let $C = \bigcup_{a \in \Gamma} \Big([Q \times \{a\}]_0 \cap [a]_{\ell}\Big)$ be the condition that checks whether cells at position $0$ and $\ell$ have the same tape-letter, and $g \in \Alt(\ghostset)$. Since $\ghostset$ has cardinality at least five, by Ore's theorem~\cite[Theorem 7]{1951-Ore}, there exist $g_1,g_2 \in \Alt(\ghostset)$ such that $g = [g_1,g_2]$; and according to Lemma~\ref{lem:ghost-conditioning}, both $f_{\ID \times g_1,C}$ and $f_{\ID \times g_2,C}$ belong to $\tmgroup$ with word norm $O(\ell)$. Then,
  \[ f_{\ID \times g, \len | \ell} = \left[ f_{\ID \times g_1, C}, f_r \circ f_{\ID \times g_2, C} \circ f_r^{-1} \right].\qedhere \]
\end{proof}

\begin{claim}\label{lem:condition-cb-on-borders}
  Let $g \in \Alt(\ghostset)$ and $C$ be a condition $\rcbdist = 0$ or $\lcbdist = 0$. Then the conditioned gate $f_{\ID \times g,C}$ (that applies $\ID_{\wgh{H}} \times g$ on heads that are respectively on the left or right border of their conveyor belt) belongs to $\tmgroup$ with word norm $O(\ell)$.
\end{claim}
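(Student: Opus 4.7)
By the reflection symmetry of conveyor belts (swapping the top and bottom rows and reversing the linear order, implemented by a bounded-norm element of $\tmgroup$), the case $C = (\lcbdist = 0)$ reduces to $C = (\rcbdist = 0)$; we treat only the latter. Since $\Alt(\ghostset)$ is a fixed finite group generated by its 3-cycles, it further suffices to handle $g$ a fixed 3-cycle and then compose the finitely many resulting gates.

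The key observation, parallel to the trick used in Claim~\ref{lem:condition-cb-divide-length}, is that $\rho$ detects belt borders via the $\aup/\adown$ toggle. Writing the cyclic tape explicitly for a belt $(+1)^a\,\mathrm{head}\,(-1)^b$, one checks that $\rho$ converts an upper head with $\rcbdist = 0$ into a lower head at the \emph{same} linear cell and, symmetrically, converts a lower head with $\lcbdist = 0$ into an upper head at the same linear cell, while on every other head $\rho$ simply translates the head one linear cell without changing its $\aup/\adown$ status. Thus a head sits at a belt border precisely when one application of $\rho^{\pm 1}$ flips its $\aup/\adown$ flag.

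Using this, the plan is to build the bounded-length element
\[
  A_g \;=\; f_{\ID \times g^{-1}}^{\adown}\;\circ\;\rho^{-1}\;\circ\;f_{\ID \times g}^{\adown}\;\circ\;\rho \;\in\; \tmgroup,
\]
and verify by a short case analysis that $A_g$ rotates the ghost by $g$ on every upper head at $\rcbdist = 0$, rotates by $g^{-1}$ on every lower head at $\lcbdist = 0$, and is the identity on every other head (the outer $f^{\adown}_{g^{-1}}$ and inner $f^{\adown}_{g}$ cancel on heads that remain lower throughout the sandwich and on heads that remain upper throughout). A companion element $A'_g$ with $\aup \leftrightarrow \adown$ and $\rho \leftrightarrow \rho^{-1}$ handles lower heads at $\rcbdist = 0$ together with upper heads at $\lcbdist = 0$. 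The desired gate $f_{\ID \times g, \rcbdist = 0}$ is then obtained by combining $A_g$, $A'_g$, and their reflected analogues for $\lcbdist = 0$, together with Ore's commutator decomposition $g = [g_1, g_2]$ inside $\Alt(\ghostset)$, so as to cancel the off-target $g^{\pm 1}$-contributions at $\lcbdist = 0$ while leaving the intended $g$-action at $\rcbdist = 0$ intact.

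The principal obstacle is organising this cancellation: the naive product $A_g A'_g$ leaves residual $g^{\pm 1}$-rotations on all heads at $\lcbdist = 0$, so one must combine with a carefully chosen word for the $\lcbdist = 0$ companion that, together with Ore's decomposition, eliminates the residue without reintroducing any at $\rcbdist = 0$. The case analysis is most delicate in degenerate belts of length one, where a single head is simultaneously at $\rcbdist = 0$ and $\lcbdist = 0$; in every such case the behaviour of $\rho^{\pm 1}$ on the associated cyclic tape of length two can be computed directly, confirming the claimed action and yielding an overall bounded-length word in the generators of $\tmgroup$, consistent with $T(\rcbdist = 0) = 1$ in the recurrence of Lemma~\ref{lem:condition-cb-struct}.
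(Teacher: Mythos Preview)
Your core observation—that a head sits at a belt border exactly when $\rho^{\pm 1}$ flips its $\aup/\adown$ flag—is correct and is precisely the mechanism the paper uses. Your element $A_g = f^{\adown}_{\ID\times g^{-1}}\,\rho^{-1}\,f^{\adown}_{\ID\times g}\,\rho$ is also correctly analyzed: it applies $g$ on upper-right corners and $g^{-1}$ on lower-left corners, and similarly for $A'_g$.

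The gap is in the cancellation step, which you flag as ``the principal obstacle'' but do not actually resolve. Writing $B_h = A_h A'_h$ (which gives $h$ on the right border and $h^{-1}$ on the left border) and taking the commutator $[B_{g_1}, B_{g_2}]$ with $g=[g_1,g_2]$ does \emph{not} kill the left-border contribution: on the right you get $[g_1,g_2]=g$, but on the left you get $[g_1^{-1},g_2^{-1}] = g_1 g_2 g_1^{-1} g_2^{-1}$, which is not the identity in a nonabelian group. Your reflected companions do not help either, since they contribute pairs of the form $(h^{-1},h)$ and the subgroup of $\Alt(\ghostset)\times\Alt(\ghostset)$ generated by all $(h,h^{-1})$ is not obviously the full product; showing $(g,e)$ lies in it would require a separate (Goursat-type) argument you have not supplied.

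The paper sidesteps all of this with a single clean commutator. Using Ore to write $g=[g_1,g_2]$, it takes
\[
  \big[\,f^{\aup}_{\ID\times g_1},\ \rho^{-1}\circ f^{\adown}_{\ID\times g_2}\circ \rho\,\big].
\]
The first factor acts only on upper heads; the second acts only on heads that are lower \emph{after} $\rho$. These interact precisely on upper heads at the right border (where $\rho$ flips $\aup\to\adown$), so the commutator equals $f_{\ID\times g}$ on the top-right corner and the identity elsewhere—no off-target residue, no cleanup. Analogous bounded-length commutators handle the other three corners, and composing the two right-side corners gives $f_{\ID\times g,\,\rcbdist=0}$ with word norm $O(1)$. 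The difference from your $A_g$ is simply that the paper mixes an $\aup$-gate with a $\adown$-gate inside the commutator rather than using two $\adown$-gates in a sandwich; this asymmetry is exactly what localizes the effect to a single corner.
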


\begin{proof}
  As $|\ghostset| \geq 5$, by Ore's theorem there exists $g_1,g_2 \in \Alt(\ghostset)$ such that $g = [g_1,g_2]$. Applying the commutator trick,
  \[ [f_{g_1}^{\aup}, \; \rho^{-1} \circ f_{g_2}^{\adown} \circ \rho ] \]
  applies $f_{[g_1,g_2]}$ on heads that are exactly in the top-right corner of their conveyor belts. Similar formulas exist for bottom-right, top-left and bottom-left corners of conveyor belts, so that one can condition any $f_g$ on being applied on heads that are in the left or right corners of their conveyor belts with word norm~$O(1)$.
\end{proof}

We can now proceed with the proof of Lemma~\ref{lem:condition-cb-struct}. This proof is very similar to the proofs of Lemmas~\ref{lem:ghost-conditioning} and~\ref{lem:gate-conditioning}.

\begin{proof}[Proof of Lemma~\ref{lem:condition-cb-struct}]
  By the proof of Lemma~\ref{lem:gate-conditioning}, we only need to prove that for every $g \in \Alt(\ghostset)$ and $C \in \cbstructcond$, the conditioned gate $f_{\ID \times g, C}$ belongs to $\tmgroup$.

  Let $C \in \cbstructcond$. If $C$ is $\len | n$, then for any $g \in \Alt(\ghostset)$, the conditioned gate $f_{g,C}$ belongs to $\tmgroup$ with word norm $O(n)$ by Claim~\ref{lem:condition-cb-divide-length}. If $C$ is $\rcbdist = 0$ or $\lcbdist = 0$, then $f_{g,C}$ belongs to $\tmgroup$ with word norm $O(1)$ by Claim~\ref{lem:condition-cb-on-borders}. If $C = \rho^n(C_1)$ for some $C_1 \in \cbstructcond$, then for any $g \in \Alt(\ghostset)$, we have $f_{g,C} = \rho^{-n} \circ f_{g,C_1} \circ \rho^n$.

  Finally, if $C = C_1 \wedge C_2$, $C = C_1 \vee C_2$ or $C = \neg C_1$, the proof of Lemma~\ref{lem:ghost-conditioning} applies \emph{mutatis mutandis}.
\end{proof}

\begin{remark}
  The very same proof shows that $f_{g \times \ID}^{\aup}$ and $f_{g \times \ID}^{\adown}$ can also be conditioned by $\cbstructcond$.
\end{remark}

\subsubsection{Corollary: conditioning on the length of conveyor belts}

For $\ell \in \N$ and ${\sim} \in \{<, \leq, =, \geq, >\}$, let us define conditions $\len \sim \ell$ (comparisons on the length of the conveyor belt).

\begin{lemma}\label{lem:condition-cb-length}
  Let $\ell \in \N$ and ${\sim} \in \{<,\leq,=,\geq,>\}$. If $C = \len \sim \ell$, then $C$ belongs to $\cbstructcond$ and $T(C) = O(\ell^2)$.

  In other words, for any $g \in \Sym(\wgh{H})$, the conditioned gates $f_{g \times \ID,C}$ belong to $\tmgroup$ with word norm $O(\ell^2)$.
\end{lemma}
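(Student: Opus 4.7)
My plan is to reduce all five comparisons to the single case $\len \leq \ell$, and then to construct the conditioned gate $f_{g \times \ID, \len \leq \ell}$ directly in $\tmgroup$ as a commuting product realizing word norm $O(\ell^2)$.

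For the reduction: membership in $\cbstructcond$ comes from the equivalence $\len \leq \ell \iff \bigvee_{k=1}^{\ell}(\len \mid k)$, where the forward direction takes $k = L$ and the reverse uses $L \mid k \Rightarrow L \leq k$. The four other relations follow by constant-depth Boolean combinations: $\len < \ell = \len \leq \ell-1$, $\len \geq \ell = \neg(\len < \ell)$, $\len > \ell = \neg(\len \leq \ell)$, and $\len = \ell = (\len \leq \ell) \wedge \neg(\len < \ell)$. Each propagates the $O(\ell^2)$ bound with only constant overhead.

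For the word-norm bound, a naive binary-tree instantiation of the $\vee$-recursion of Lemma~\ref{lem:condition-cb-struct} on the above disjunction (with leaves of cost up to $\ell$) would give $O(\ell^3)$, so I would work directly in $\tmgroup$ via M\"obius inversion. Let $M(n) := \sum_{j=1}^n \mu(j)$ be the Mertens function, and consider
\[
\Phi \;:=\; \prod_{k=1}^{\ell} f_{g^{M(\lfloor \ell/k \rfloor)} \times \ID,\; \len \mid k}.
\]
The factors commute pairwise because each condition $\len \mid k$ depends only on the conveyor-belt structure (which the head-action preserves) and the applied head-permutations are powers of a common $g$. On a belt of length $L$, $\Phi$ applies $g$ raised to the exponent
\[
\sum_{j=1}^{\lfloor \ell/L \rfloor} M(\lfloor \ell/(Lj) \rfloor) \;=\; \sum_{i=1}^{\lfloor \ell/L \rfloor} \mu(i) \lfloor \ell/(Li) \rfloor,
\]
which by the classical identity $\sum_{i=1}^{N} \mu(i) \lfloor N/i \rfloor = \sum_{k=1}^{N} \sum_{i \mid k} \mu(i) = 1$ (valid for every $N \geq 1$) equals $1$ exactly when $L \leq \ell$ and is $0$ otherwise. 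Hence $\Phi = f_{g \times \ID, \len \leq \ell}$. By Claim~\ref{lem:condition-cb-divide-length} (applied to the fixed element $g^{M(\lfloor \ell/k \rfloor)} \in \Sym(\wgh{H})$), each factor has word norm $O(k)$ in $\tmgroup$ uniformly in the integer exponent, giving a total of $\sum_{k=1}^{\ell} O(k) = O(\ell^2)$.

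The main technical obstacle is recognising the commuting-product viewpoint and verifying the Mertens identity; once in hand, the $O(\ell^2)$ bound is immediate, matching the ``in other words'' reformulation of the lemma (so $T(C) = O(\ell^2)$ should be read as a bound on the actual word norm in $\tmgroup$, rather than as an output of the abstract $T$-recursion of Lemma~\ref{lem:condition-cb-struct}, which does not capture this commuting-product construction).
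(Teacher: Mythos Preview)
Your approach is essentially the same as the paper's: both express $f_{g \times \ID,\len \leq \ell}$ as a commuting product $\prod_{k=1}^\ell f_{g^{a_k} \times \ID,\len\mid k}$ with suitable exponents $a_k$, then sum the $O(k)$ costs to get $O(\ell^2)$. The paper determines the $a_k$ greedily in decreasing order of $k$ (and treats $=$ analogously over divisors), while you give the closed form $a_k = M(\lfloor \ell/k \rfloor)$ and verify it via the identity $\sum_{i\leq N}\mu(i)\lfloor N/i\rfloor = 1$; the paper's post-proof remark even names this as M\"obius inversion, so the two arguments coincide in spirit. Your reduction of all five comparisons to $\leq$ via constant-depth Boolean algebra matches the paper's final paragraph.

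Two small points. First, you cite Claim~\ref{lem:condition-cb-divide-length}, but that claim is stated only for $g\in\Alt(\ghostset)$; to apply it directly to $g^{a_k}\in\Sym(\wgh{H})$ you should invoke Lemma~\ref{lem:condition-cb-struct} (base case $\len\mid k$), which is the correct statement for $\Sym(\wgh{H})$-valued permutations. The uniformity in the exponent is fine since $g$ has finite order, so only boundedly many distinct powers occur. Second, your closing caveat is apt: the recursive $T$ of Lemma~\ref{lem:condition-cb-struct} applied to the nested disjunction would indeed yield $O(\ell^3)$, so ``$T(C)=O(\ell^2)$'' in the lemma statement should be read as a direct word-norm bound rather than an output of that recursion --- the paper is equally informal on this point.
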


\begin{proof}
  By the proof of Lemma~\ref{lem:gate-conditioning}, we only need to prove that for every $g \in \Alt(\ghostset)$ and condition $C = \len \sim \ell$, the conditioned gate $f_{\ID \times g, C}$ belongs to $\tmgroup$.

  Assume that $\sim$ is equality. By going through the divisors of $\ell$ in decreasing order, we can build any $f_{\ID \times g,\len = \ell}$ with word norm $O(\ell^2)$.\footnote{The number of divisors function $d$ satisfies $d(\ell) = o(\ell^\epsilon)$ for any $\epsilon > 0$, so we even get word norm $O(\ell^{1+\epsilon})$ for $f_{\ID \times g,\len = \ell}$.} For example, if $\ell = 6$,
  \[ f_{\ID \times g,\len=6} = f_{\ID \times g, \len \mid 1} \circ f_{\ID \times g^{-1}, \len \mid 2} \circ f_{\ID \times g^{-1}, \len \mid 3} \circ f_{\ID \times g,\len \mid 6}. \]
  (Our conveyor belts cannot actually have length $1$, so $f_{\ID \times g, \len \mid 1}$ may be dropped.)
  
  \medskip
  If $\sim$ is $\leq$, we similarly build $f_{\ID \times g,\len \leq \ell}$ with word norm $O(\ell^2)$ by going through the interval $\llbracket 1,\ell \rrbracket$ in decreasing order and picking suitable powers of $g$. For example, if $\ell = 6$, 
  \[ f_{\ID \times g,\len \leq 6} = f_{\ID \times g^{-2}, \len \mid 1} \circ f_{\ID \times g^{-1}, \len \mid 2} \circ f_{\ID \times g^0, \len \mid 3} \circ f_{\ID \times g, \len \mid 4} \circ f_{\ID \times g, \len \mid 5} \circ f_{\ID \times g,\len \mid 6}. \]
  So we obtain $f_{\ID \times g,\len \sim \ell}$ for the relations $\sim \ell$ with ${\sim} \in \{\leq,=\}$. From this, the automorphisms with ${\sim} \in \{<, \geq, >\}$ are easy to obtain with elementary boolean algebra: for example, $\len > \ell$ is the condition $\neg (\len \leq \ell)$.
\end{proof}

\begin{remark}
  One may view the above proof as an instance of M\"obius inversion. If $g$ has order $m$, take $K = \oplus_{\Z_+} \Z_m$ the commutative ring of infinitely many copies of $\Z_m$. We see $K$ as keeping track of how many times $g$ is applied at each conveyor belt length. Define functions $\iota, \gamma : \Z_+ \to K$ where $\iota(n)$ as the indicator function of $n$ (as an element of $K$), and $\gamma(n)$ the indicator function of the divisor poset of $n$. Then $\gamma(n) = \sum_{d | n} \iota(d)$ so by M\"obius inversion $\iota(n) = \sum_{d | n} \gamma(d) \mu(d, n)$ where $\mu$ is the M\"obius function of the divisibility poset; thus $\mu(d, n)$ tells us which power of $g$ we should use for each divisor to get $\iota(n)$. The values of $\iota$ are a basis of $K$, so we can get other conditional applications of $g$ with linear combinations.
\end{remark}

\subsubsection{Corollary: conditioning on the distance to borders}

Recall that $\rcbdist \in \N$ and $\lcbdist \in \N$ denote the distance between the head and the right (resp.\ left) border of the conveyor belt.
\begin{lemma}\label{lem:condition-cb-borders}
  For $t \in \N$ and ${\sim} \in \{\leq,<,=,>,\geq\}$, let $C$ be a condition of the form $\rcbdist \sim t$ (resp. $\lcbdist \sim t$).

  Then $C$ belongs to $\cbstructcond$ and $T(C) = O(t^2)$. In other words, for any $g \in \Sym(\wgh{H})$, the conditioned gate $f_{g \times \ID,C}$ belongs to $\tmgroup$ with word norm $O(t^2)$.
\end{lemma}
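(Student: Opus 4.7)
The plan is to exhibit $C_t := (\rcbdist \leq t)$ as an element of $\cbstructcond$ whose cost $T(C_t)$ satisfies a divide-and-conquer recurrence solving to $O(t^2)$; the other four relations then follow by a constant number of Boolean operations on $C_t$ and $C_{t-1}$. The base case $C_0 = (\rcbdist = 0)$ is a primitive of $\cbstructcond$ with $T(C_0) = 1$, and the $\lcbdist$ statement will be the verbatim symmetric version obtained by replacing the basic condition $\rcbdist = 0$ by $\lcbdist = 0$ and each $\rho^n$ by $\rho^{-n}$ throughout.

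For $t \geq 1$ I would use the identity
\[
C_t \;=\; C_{\lfloor t/2 \rfloor} \;\vee\; \rho^{\lfloor t/2 \rfloor + 1}\bigl(C_{\lceil t/2 \rceil - 1}\bigr).
\]
Geometrically, the first disjunct catches heads within $\lfloor t/2 \rfloor$ of the right border, while the second, after shifting the head $\lfloor t/2 \rfloor + 1$ steps to the right, catches heads whose original right-distance lay in the range $[\lfloor t/2 \rfloor + 1,\, t]$. On belts of length $>t$ these two ranges exactly partition $\{0,1,\ldots,t\}$, so the formula encodes $\rcbdist \leq t$ correctly. On belts of length $\leq t$ the intended value is simply ``true'', and an inductive case split on whether $\ell$ is $\leq \lfloor t/2 \rfloor$, $\leq \lceil t/2 \rceil - 1$, or in the intermediate range (together with a case analysis on the head position) shows that at least one disjunct always fires; cyclic wrap-around in $\rho^{\lfloor t/2 \rfloor + 1}$ can only add truth, never remove it, so correctness is preserved.

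Applying the cost bounds of Lemma~\ref{lem:condition-cb-struct} to this expression gives
\[
T(C_t) \;\leq\; 2\,T(C_{\lfloor t/2 \rfloor}) \;+\; 2\bigl(T(C_{\lceil t/2 \rceil - 1}) + \lfloor t/2 \rfloor + 1\bigr) \;+\; 5,
\]
and letting $U(t) = \max_{s \leq t} T(C_s)$ yields $U(t) \leq 4\,U(\lceil t/2 \rceil) + O(t)$, which by the master theorem (case $\log_2 4 = 2 > 1$) solves to $U(t) = O(t^2)$.

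The remaining relations follow from the equivalences
\[
(\rcbdist < t) \equiv C_{t-1}, \quad
(\rcbdist \geq t) \equiv \neg C_{t-1}, \quad
(\rcbdist > t) \equiv \neg C_t, \quad
(\rcbdist = t) \equiv C_t \wedge \neg C_{t-1},
\]
each costing $O(t^2)$ by the $\neg$- and $\wedge$-rules of Lemma~\ref{lem:condition-cb-struct}. The main obstacle I anticipate is purely bookkeeping: carefully verifying the short-belt case for the recursive identity, where several edge cases of $\ell$ relative to $\lfloor t/2 \rfloor$ and $\lceil t/2 \rceil - 1$ must be checked; but because the intended condition is trivially true there, any ``accidental'' wrap-around matches are benign, so this reduces to a routine finite case analysis rather than a delicate argument.
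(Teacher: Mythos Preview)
Your proposal is correct and follows essentially the same divide-and-conquer strategy as the paper. The only cosmetic differences are that the paper takes $\rcbdist < t$ (rather than $\rcbdist \leq t$) as the primary recursion, and in its displayed equivalence the second disjunct is phrased via $\lcbdist$ after the shift rather than via $\rcbdist$; both variants give the same recurrence $T(t) \leq 4\,T(t/2) + O(t)$ and hence the same $O(t^2)$ bound, and both derive the remaining comparisons by elementary Boolean algebra.
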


\begin{proof}
  Very similarly to the proof of Lemma~\ref{lem:gate-ordering-condition}, we rely on a divide and conquer approach. For example,
  \begin{align*}
    \rcbdist < t & \text{ is equivalent to } (\rcbdist < t/2+1) \; \vee \; \rho^{n/2}(\lcbdist < t/2) \\
    \rcbdist = t & \text{ is equivalent to } \neg(\rcbdist < t/2) \; \wedge \; \rho^{n/2}(\rcbdist = t/2) \\
    \rcbdist \leq t & \text{ is equivalent to } \rcbdist < t+1
  \end{align*}
  The rest of comparisons on $\rcbdist$ follows using basic Boolean algebra; and the case of $\lcbdist$ is symmetric.
\end{proof}

\subsubsection{Corollary: from cyclic tapes \texorpdfstring{$\ctape_\ell$}{C\_{}l} to conveyor belts in \texorpdfstring{$\Sigma^\Z$}{Sigma\^{}Z}}\label{sec:transport-gl-to-g}

For $T \in \tmgroup_\ell$, there exists by definition some $T_1, \dots, T_N \in \{\pi_g \mid g \in \Sym(Q \times \Gamma)\} \cup \{ \rho_q \mid q \in Q \}$ such that $T = T_N \circ \cdots \circ T_1$. Then, as each generator $\pi_g$ with $g \in \Sym(Q \times \Gamma)$ (resp.\ $\rho_q$ of $\tmgroup_\ell$) corresponds to a generator $f_g$ of $\tmgroup$ (resp. $\rho_q$ of $\tmgroup$), $T$~defines an automorphism $f_T$ of $\Aut(\Sigma^\Z)$ of the same word norm. By construction, for any $T \in \tmgroup_\ell$, the corresponding $f_T \in \tmgroup$ acts like $T$ on conveyor belts of length $\ell$ (and produces garbage on conveyor belts of length $\neq \ell$).

\begin{remark}
\label{rem:NotCanonical}
  The choice of $f_T$ is not canonical, in the sense that two different presentations $T_1,\dots,T_N$ and $T_1',\dots,T_{N'}'$ of $T$ may define two different automorphisms $f_T$ (they would define the same action on conveyor belts of length~$\ell$, but produce different garbage on conveyor belts of length~$\neq \ell$). This has no importance in what follows, but for the sake of cleanliness, fix an arbitrary order on the generators of $\tmgroup_\ell$ and define $f_T$ from the lexicographically minimal presentation (among the presentations of shortest length) of $T$.
\end{remark}

We now use the previous lemma to condition $f_T$ so that it acts only in conveyor belts of length $\ell$. Recall that $T\rfun_\dforw$ acts like $T$ on ducks $d = \dforw$, and is the identity otherwise, and denote $T_s$ the symmetrized version of $T$, i.e.
\[ T_s(w) = \begin{cases}
  T(w) & \text{if } w \in \ctape_\ell[\dforw] \\
  T^{-1}(w) & \text{if } w \in \ctape_\ell[\dback]\\
  w & \text{otherwise }
\end{cases} \]

\begin{lemma}\label{lem:from-cyclic-tapes-to-cb}
  Assume $T\rfun_\dforw$ belongs to $\tmgroup_\ell$. Then $f_{T_s,\len = \ell} \in \Aut(\Sigma^\Z)$ belongs to $\tmgroup$ with word norm $O(\lVert T\rfun_\dforw \rVert + \ell^{2})$.
\end{lemma}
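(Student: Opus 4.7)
The plan is to lift a presentation of $T\rfun_\dforw$ from $\tmgroup_\ell$ to $\tmgroup$ and then wrap it in a ``mark/unmark'' sandwich that confines its action to conveyor belts of length $\ell$, finishing with a duck-flip conjugation to symmetrize. First, fix a shortest presentation $T\rfun_\dforw = T_N \circ \cdots \circ T_1$ in $\tmgroup_\ell$ with $N = \lVert T\rfun_\dforw \rVert$, where each $T_i$ is either a gate $\pi_g$ or a move $\rho_q$. By the generator correspondence of Section~\ref{sec:transport-gl-to-g} and Remark~\ref{rem:NotCanonical}, this lifts to $f := f_{T_N} \circ \cdots \circ f_{T_1} \in \tmgroup$ of word norm $N$, which acts as $T\rfun_\dforw$ on every length-$\ell$ conveyor belt but may produce extraneous ``garbage'' action on belts of other lengths.

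The core step is to cancel this garbage with a marker-based sandwich. Using Lemma~\ref{lem:condition-cb-length}, I would build an involution $M \in \tmgroup$ of word norm $O(\ell^2)$ that toggles a reserved component of the ghost on precisely those heads whose conveyor belt has length $\ell$. Each generator $f_{T_i}$ in the lift is then replaced by a marker-conditioned variant $\tilde f_{T_i}$ of word norm $O(1)$, which acts as $f_{T_i}$ on marker-toggled heads and trivially otherwise, and in either case leaves the marker component untouched. The sandwich $M \circ (\tilde f_{T_N} \circ \cdots \circ \tilde f_{T_1}) \circ M$ then realizes $f_{T\rfun_\dforw,\,\len = \ell}$ in $\tmgroup$ with total word norm $O(N + \ell^2)$.

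To pass from $T\rfun_\dforw$ to its symmetrized version $T_s$, let $d' \in \Sym(H)$ be the duck-swap involution, of constant word norm. Since $T\rfun_\dforw$ preserves the duck (it acts as $T$ on $\dforw$-heads, which stay $\dforw$, and as identity on $\dback$-heads), a direct case check on the duck of the head yields
\[
  f_{T_s,\,\len = \ell} \;=\; f_{T\rfun_\dforw,\,\len = \ell} \;\circ\; f_{d'} \;\circ\; (f_{T\rfun_\dforw,\,\len = \ell})^{-1} \;\circ\; f_{d'},
\]
which at most doubles the word norm, giving the desired bound $O(\lVert T\rfun_\dforw \rVert + \ell^2)$. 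The hard part is the marker design in the middle paragraph: the original $T$ may touch the full ghost structure, so a naively reserved bit could be overwritten by some $T_i$. I would reserve a small sub-component of the six-element ghost $\ghostset$ as the marker and redesign each $\tilde f_{T_i}$ to decouple this sub-component from what $T_i$ does on the rest of the ghost, so that the marker genuinely persists through the entire block of modified generators and the outer $M$ exactly cancels the inner one.
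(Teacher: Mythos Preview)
Your step 2 has a genuine gap. The generators of $\tmgroup_\ell$ include $\pi_g$ for \emph{arbitrary} $g \in \Sym(Q \times \Gamma)$, so a shortest presentation of $T\rfun_\dforw$ may use permutations that freely scramble the entire ghost $\ghostset$. Reserving a sub-component of $\ghostset$ as a marker and then ``decoupling'' each $\tilde f_{T_i}$ from it changes the permutation $T_i$ itself: the modified composition $\tilde T_N \circ \cdots \circ \tilde T_1$ need no longer compute $T\rfun_\dforw$ on length-$\ell$ belts. Without an extra bit that is genuinely orthogonal to all of $Q \times \Gamma$ (which the fixed alphabet of $\tmgroup$ does not provide), the marker sandwich cannot be made to work for arbitrary $T$.

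The paper avoids building $f_{T\rfun_\dforw,\,\len=\ell}$ altogether. The duck $\duckset$ is \emph{already} a marker that $T\rfun_\dforw$ respects by hypothesis: it acts only on $\dforw$-heads and preserves the duck. So one conditions the duck-swap rather than the machine. With $d \in \Sym(Q\times\Gamma)$ the duck involution, Lemma~\ref{lem:condition-cb-length} gives $f_{d,\len=\ell} \in \tmgroup$ of word norm $O(\ell^2)$, and one checks directly that
\[
  f_{T_s,\,\len=\ell} \;=\; f_{d,\len=\ell} \circ (f_{T\rfun_\dforw})^{-1} \circ f_{d,\len=\ell} \circ f_{T\rfun_\dforw},
\]
where $f_{T\rfun_\dforw}$ is your raw, unconditioned lift. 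On belts of length $\neq \ell$ the two duck-swaps are trivial and the garbage from $f_{T\rfun_\dforw}$ cancels against its inverse; on length-$\ell$ belts the middle swap protects the $\dforw$-result from being undone and simultaneously exposes the $\dback$-head to $T^{-1}$. This is essentially your step-3 identity with the roles of ``which factor carries the length condition'' reversed --- and that reversal is exactly what eliminates the need for your problematic step 2.
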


\begin{proof}
  Let $d \in \Sym(Q \times \Gamma)$ be the involution that swaps $\dforw$ and $\dback$ on the head. By Lemma~\ref{lem:condition-cb-length}, $f_{d,\mathrm{len}=\ell}$ has word norm $O(\ell^{2})$ and:
  \[ f_{T_s,\mathrm{len} = \ell} = f_{d,\mathrm{len}=\ell} \circ (f_{T\rfun_\dforw})^{-1} \circ f_{d,\mathrm{len}=\ell} \circ (f_{T\rfun_\dforw}) \qedhere \]
\end{proof}

\begin{remark}
  Note that the assumption of this lemma focuses on $T\rfun_\dforw$, but the conclusion focuses on $T_s$. Note also that while $f_T$ is not canonical (see Remark~\ref{rem:NotCanonical}), $f_{T_s,\len = \ell}$ is.
\end{remark}

\subsubsection{Creating/erasing conveyor belts}\label{sec:creating-erasing-cb}

Let $\tau_\mathrm{cb} = \ID_{\wgh{H}} \times \tau' \in \Sym(H)$, where $\tau' \in \Sym(\ghostset)$ is the involution that permutes the sign $+1 \leftrightarrow -1$ in $\ghostset$ (when considering $\ghostset$ as $\ghostset \simeq \{+1,-1\} \times \llbracket 0, 2 \rrbracket$).

For $t \in \N$ define:
\begin{align*}
  f_{\tau_\mathrm{cb}, \rightarrow t} & = \theta^{-t} \circ f_{\tau_\mathrm{cb}} \circ \theta^t \\
  f_{\tau_\mathrm{cb}, t \leftarrow} & = \theta^{t} \circ f_{\tau_\mathrm{cb}} \circ \theta^{-t} \\
  f_{\tau_\mathrm{cb}, t \leftrightarrow t} & = f_{\tau_\mathrm{cb}, \rightarrow t} \circ f_{\tau_\mathrm{cb}, t \leftarrow}
\end{align*}
where $\theta \in \tmgroup_*$ (defined in Section~\ref{sec:generalized-group-of-tm}) is the right movement of heads ignoring the conveyor belt structure. These automorphisms all belong to $\tmgroup_*$ with word norm $O(t)$, and they modify the conveyor belt structure: they can create or erase conveyor belts.

In general, these modifications to the conveyor belt structure are quite unpredictable (and meaningless), so we will only apply them through conjugation (i.e.\ by conjugating automorphisms with ``small support'' by these automorphisms), so that they effectively only act in very specific situations.

\subsection{Proof of Lemma~\ref{lem:nice-implies-distortion-in-full-shift}}\label{sec:full-shift-main-proof}

Let us briefly recall the statement of Lemma~\ref{lem:nice-implies-distortion-in-full-shift}. For the precise context and notations, see Section~\ref{sec:automorphism-main-result}.

\niceimpliesdistortioninfullshift*

\begin{figure}
\begin{center}
  \begin{tikzpicture}[scale = 0.4]
  \draw (0,0) -- (12,0);
  \draw (0,1) -- (12,1);
  \node (qf1) at (6,1.5) {$q_{\dforw}$};
  \node () at (14, 0.5) {$\longleftrightarrow$};
  \node () at (14, 1.5) {$f$};
  \draw (16,0) -- (28,0);
  \draw (16,1) -- (28,1);
  \node (qb1) at (22,1.5) {$q_{\dback}$};
  
  \node () at (6, -0.5) {$i$};
  \node () at (22, -0.5) {$i$};
  \end{tikzpicture}
  
  \vspace{0.2cm}
  
  \begin{tikzpicture}[scale = 0.4]
  \draw (0,0) -- (1.5,0);
  \draw (0,1) -- (1.5,1);
  \draw (1.5,0) edge [out=0, in=0]  (1.5,1);
  \draw (2.5,0) edge [out=180, in=180]  (2.5,1);
  \draw (2.5,0) -- (9.5,0);
  \draw (2.5,1) -- (9.5,1);
  \draw (9.5,0) edge [out=0, in=0]  (9.5,1);
  \draw (10.5,0) edge [out=180, in=180]  (10.5,1);
  \draw (10.5,0) -- (12,0);
  \draw (10.5,1) -- (12,1);
  \node (qf1) at (6,1.5) {$q_{\dforw}$};
  \node () at (14, 0.5) {$\longleftrightarrow$};
  \node () at (14, 1.5) {$f^{f_1}$};
  \node () at (14, -0.5) {\shortstack{\tiny{chop at}\\\tiny{distance $2L$}}};

  \draw (16,0) -- (16+1.5,0);
  \draw (16,1) -- (16+1.5,1);
  \draw (16+1.5,0) edge [out=0, in=0]  (16+1.5,1);
  \draw (16+2.5,0) edge [out=180, in=180]  (16+2.5,1);
  \draw (16+2.5,0) -- (16+9.5,0);
  \draw (16+2.5,1) -- (16+9.5,1);
  \draw (16+9.5,0) edge [out=0, in=0]  (16+9.5,1);
  \draw (16+10.5,0) edge [out=180, in=180]  (16+10.5,1);
  \draw (16+10.5,0) -- (16+12,0);
  \draw (16+10.5,1) -- (16+12,1);
  \node (qb1) at (22,1.5) {$q_{\dback}$};
  
  \node () at (2, -0.5) {$i-2L$};
  \node () at (10, -0.5) {$i+2L$};
  \node () at (18, -0.5) {$i-2L$};
  \node () at (26, -0.5) {$i+2L$};
  \end{tikzpicture}
  
  \vspace{-0.2cm}
  
  \hspace*{0.45cm}\begin{tikzpicture}[scale = 0.4]
  
  \draw[->] (6,1+3) --  (6.25,1+2.8) -- (6.5,1+2.6) -- (6.75,1+2.4) -- (6.5,1+2.2) -- (6.25,1+2) --
  (6,1+1.8) -- (5.75,1+1.6) -- (5.5,1+1.4) -- (5.25,1+1.2); 
  \node[right, align=left] () at (6.5,3) {\shortstack{\tiny{Machine runs}\\\tiny{backward in time.}}};

  \draw[<-] (17+6,1+1.2) --  (17+6.25,1+1.4) -- (17+6.5,1+1.6) -- (17+6.75,1+1.8) -- (17+6.5,1+2) -- (17+6.25,1+2.2) --
  (17+6,1+2.4) -- (17+5.75,1+2.6) -- (17+5.5,1+2.8) -- (17+5.25,1+3); 
  \node[right, align=left] () at (17+7,2.8) {\shortstack{\tiny{Machine runs}\\\tiny{forward in time.}}};
  
  \draw (0,0) -- (1.5,0);
  \draw (0,1) -- (1.5,1);
  \draw (1.5,0) edge [out=0, in=0]  (1.5,1);
  \draw (2.5,0) edge [out=180, in=180]  (2.5,1);
  \draw (2.5,0) -- (9.5,0);
  \draw (2.5,1) -- (9.5,1);
  \draw (9.5,0) edge [out=0, in=0]  (9.5,1);
  \draw (10.5,0) edge [out=180, in=180]  (10.5,1);
  \draw (10.5,0) -- (12,0);
  \draw (10.5,1) -- (12,1);
  \node (qf1) at (5,1.65) {$q^-_{\dforw}$};
  \node () at (14, 0.5) {$\longleftrightarrow$};
  \node () at (14, -0.5) {\shortstack{\tiny{move}}};
  \node () at (14, 1.5) {$f^{f_2}$};
  \draw (16,0) -- (16+1.5,0);
  \draw (16,1) -- (16+1.5,1);
  \draw (16+1.5,0) edge [out=0, in=0]  (16+1.5,1);
  \draw (16+2.5,0) edge [out=180, in=180]  (16+2.5,1);
  \draw (16+2.5,0) -- (16+9.5,0);
  \draw (16+2.5,1) -- (16+9.5,1);
  \draw (16+9.5,0) edge [out=0, in=0]  (16+9.5,1);
  \draw (16+10.5,0) edge [out=180, in=180]  (16+10.5,1);
  \draw (16+10.5,0) -- (16+12,0);
  \draw (16+10.5,1) -- (16+12,1);
  \node (qb1) at (23,1.65) {$q^+_{\dback}$};
  
  \node () at (5, -0.5) {$i-t$};
  \node () at (23, -0.5) {$i+t'$}; 
  \end{tikzpicture}
    \vspace{0.2cm}
  
  \begin{tikzpicture}[scale = 0.4]
  
  \draw (0,0) -- (1.5,0);
  \draw (0,1) -- (1.5,1);
  \draw (1.5,0) edge [out=0, in=0]  (1.5,1);
  \draw (2.5,0) edge [out=180, in=180]  (2.5,1);
  \draw (2.5,0) edge [out=0, in=0]  (2.5,1);
  \draw (3.5,0) edge [out=180, in=180]  (3.5,1);
  \draw (3.5,0) -- (6.5,0);
  \draw (3.5,1) -- (6.5,1);
  \draw (6.5,0) edge [out=0, in=0]  (6.5,1);
  \draw (7.5,0) edge [out=180, in=180]  (7.5,1);
  \draw (7.5,0) -- (9.5,0);
  \draw (7.5,1) -- (9.5,1);
  \draw (9.5,0) edge [out=0, in=0]  (9.5,1);
  \draw (10.5,0) edge [out=180, in=180]  (10.5,1);
  \draw (10.5,0) -- (12,0);
  \draw (10.5,1) -- (12,1);
  \node (qf1) at (5,1.65) {$q^-_{\dforw}$};
  \node () at (14, 0.5) {$\longleftrightarrow$};
  \node () at (14, 1.5) {$f^{f_3}$};
  \node () at (14, -0.5) {\shortstack{\tiny{chop at}\\\tiny{distance $L$}}};
  \draw (16,0) -- (16+1.5,0);
  \draw (16+0,1) -- (16+1.5,1);
  \draw (16+1.5,0) edge [out=0, in=0]  (16+1.5,1);
  \draw (16+2.5,0) edge [out=180, in=180]  (16+2.5,1);
  \draw (16+2.5,0) -- (16+4.5,0);
  \draw (16+2.5,1) -- (16+4.5,1);
  \draw (16+4.5,0) edge [out=0, in=0]  (16+4.5,1);
  \draw (16+5.5,0) edge [out=180, in=180]  (16+5.5,1);
  \draw (16+5.5,0) -- (16+8.5,0);
  \draw (16+5.5,1) -- (16+8.5,1);
  \draw (16+8.5,0) edge [out=0, in=0]  (16+8.5,1);
  \draw (16+9.5,0) edge [out=180, in=180]  (16+9.5,1);
  \draw (16+9.5,0) edge [out=0, in=0]  (16+9.5,1);
  \draw (16+10.5,0) edge [out=180, in=180]  (16+10.5,1);
  \draw (16+10.5,0) -- (16+12,0);
  \draw (16+10.5,1) -- (16+12,1);
  \node (qb1) at (22.75,1.65) {$q^+_{\dback}$};
  
  \node () at (3, -0.5) {$i-t-L$};
  \node () at (7, -0.5) {$i-t+L$};
  \node () at (21, -0.5) {$i+t'-L$};
  \node () at (25, -0.5) {$i+t'+L$};
  \end{tikzpicture}
  
  \hspace*{0.35cm}\begin{tikzpicture}[scale = 0.4]
  \draw[<-] (6,1+1.2) --  (6.25,1+1.4) -- (6.5,1+1.6) -- (6.75,1+1.8) -- (6.5,1+2) -- (6.25,1+2.2) --
  (6,1+2.4) -- (5.75,1+2.6) -- (5.5,1+2.8) -- (5.25,1+3); 
  \node[right, align=left] () at (7,2.8) {\shortstack{\tiny{Machine runs}\\\tiny{forward in time.}}};
  \draw[->] (17+6,1+3) --  (17+6.25,1+2.8) -- (17+6.5,1+2.6) -- (17+6.75,1+2.4) -- (17+6.5,1+2.2) -- (17+6.25,1+2) --
  (17+6,1+1.8) -- (17+5.75,1+1.6) -- (17+5.5,1+1.4) -- (17+5.25,1+1.2); 
  \node[right, align=left] () at (17+6.5,3) {\shortstack{\tiny{Machine runs}\\\tiny{backward in time.}}};
  
  \draw (0,0) -- (1.5,0);
  \draw (0,1) -- (1.5,1);
  \draw (1.5,0) edge [out=0, in=0]  (1.5,1);
  \draw (2.5,0) edge [out=180, in=180]  (2.5,1);
  \draw (2.5,0) edge [out=0, in=0]  (2.5,1);
  \draw (3.5,0) edge [out=180, in=180]  (3.5,1);
  \draw (3.5,0) -- (6.5,0);
  \draw (3.5,1) -- (6.5,1);
  \draw (6.5,0) edge [out=0, in=0]  (6.5,1);
  \draw (7.5,0) edge [out=180, in=180]  (7.5,1);
  \draw (7.5,0) -- (9.5,0);
  \draw (7.5,1) -- (9.5,1);
  \draw (9.5,0) edge [out=0, in=0]  (9.5,1);
  \draw (10.5,0) edge [out=180, in=180]  (10.5,1);
  \draw (10.5,0) -- (12,0);
  \draw (10.5,1) -- (12,1);
  \node (qf1) at (6,1.5) {$q_{\dforw}$};
  \node () at (14, 0.5) {$\longleftrightarrow$};
  \node () at (14, 1.5) {$f^{f_4}$};
  \node () at (14, -0.5) {\shortstack{\tiny{move}}};
  
  \draw (16,0) -- (16+1.5,0);
  \draw (16+0,1) -- (16+1.5,1);
  \draw (16+1.5,0) edge [out=0, in=0]  (16+1.5,1);
  \draw (16+2.5,0) edge [out=180, in=180]  (16+2.5,1);
  \draw (16+2.5,0) -- (16+4.5,0);
  \draw (16+2.5,1) -- (16+4.5,1);
  \draw (16+4.5,0) edge [out=0, in=0]  (16+4.5,1);
  \draw (16+5.5,0) edge [out=180, in=180]  (16+5.5,1);
  \draw (16+5.5,0) -- (16+8.5,0);
  \draw (16+5.5,1) -- (16+8.5,1);
  \draw (16+8.5,0) edge [out=0, in=0]  (16+8.5,1);
  \draw (16+9.5,0) edge [out=180, in=180]  (16+9.5,1);
  \draw (16+9.5,0) edge [out=0, in=0]  (16+9.5,1);
  \draw (16+10.5,0) edge [out=180, in=180]  (16+10.5,1);
  \draw (16+10.5,0) -- (16+12,0);
  \draw (16+10.5,1) -- (16+12,1);
  \node (qb1) at (22,1.5) {$q_{\dback}$};
  \end{tikzpicture}
  
  \vspace{0.2cm}
  \hspace*{0.14cm}\begin{tikzpicture}[scale = 0.4]  
  \draw (0,0) -- (2.5,0);
  \draw (0,1) -- (2.5,1);
  \draw (2.5,0) edge [out=0, in=0]  (2.5,1);
  \draw (3.5,0) edge [out=180, in=180]  (3.5,1);
  \draw (3.5,0) -- (6.5,0);
  \draw (3.5,1) -- (6.5,1);
  \draw (6.5,0) edge [out=0, in=0]  (6.5,1);
  \draw (7.5,0) edge [out=180, in=180]  (7.5,1);
  \draw (7.5,0) -- (12,0);
  \draw (7.5,1) -- (12,1);
  \node (qf1) at (6,1.5) {$q_{\dforw}$};
  \node () at (14, 0.5) {$\longleftrightarrow$};
  \node () at (14, 1.5) {$f^{f_5}$};
  \node () at (14, -0.5) {\shortstack{\tiny{glue at}\\\tiny{distance $2L$}}};
  \draw (16,0) -- (16+4.5,0);
  \draw (16+0,1) -- (16+4.5,1);
  \draw (16+4.5,0) edge [out=0, in=0]  (16+4.5,1);
  \draw (16+5.5,0) edge [out=180, in=180]  (16+5.5,1);
  \draw (16+5.5,0) -- (16+8.5,0);
  \draw (16+5.5,1) -- (16+8.5,1);
  \draw (16+8.5,0) edge [out=0, in=0]  (16+8.5,1);
  \draw (16+9.5,0) edge [out=180, in=180]  (16+9.5,1);
  \draw (16+9.5,0) -- (16+12,0);
  \draw (16+9.5,1) -- (16+12,1);
  \node (qb1) at (22,1.5) {$q_{\dback}$};
  \end{tikzpicture}
  \vspace{-0.2cm}
  
  \hspace*{0.35cm}\begin{tikzpicture}[scale = 0.4]  
  \draw[->] (6,1+3) --  (6.25,1+2.8) -- (6.5,1+2.6) -- (6.75,1+2.4) -- (6.5,1+2.2) -- (6.25,1+2) --
  (6,1+1.8) -- (5.75,1+1.6) -- (5.5,1+1.4) -- (5.25,1+1.2); 
  \node[right, align=left] () at (6.5,3) {\shortstack{\tiny{Machine runs}\\\tiny{backward in time.}}};
  \draw[<-] (17+6,1+1.2) --  (17+6.25,1+1.4) -- (17+6.5,1+1.6) -- (17+6.75,1+1.8) -- (17+6.5,1+2) -- (17+6.25,1+2.2) --
  (17+6,1+2.4) -- (17+5.75,1+2.6) -- (17+5.5,1+2.8) -- (17+5.25,1+3); 
  \node[right, align=left] () at (17+7,2.8) {\shortstack{\tiny{Machine runs}\\\tiny{forward in time.}}};
  \draw (0,0) -- (2.5,0);
  \draw (0,1) -- (2.5,1);
  \draw (2.5,0) edge [out=0, in=0]  (2.5,1);
  \draw (3.5,0) edge [out=180, in=180]  (3.5,1);
  \draw (3.5,0) -- (6.5,0);
  \draw (3.5,1) -- (6.5,1);
  \draw (6.5,0) edge [out=0, in=0]  (6.5,1);
  \draw (7.5,0) edge [out=180, in=180]  (7.5,1);
  \draw (7.5,0) -- (12,0);
  \draw (7.5,1) -- (12,1);
  \node (qf1) at (5,1.65) {$q^-_{\dforw}$};
  \node () at (14, 0.5) {$\longleftrightarrow$};
  \node () at (14, 1.5) {$f^{f_6}$};
  \node () at (14, -0.5) {\shortstack{\tiny{move}}};
  \draw (16,0) -- (16+4.5,0);
  \draw (16+0,1) -- (16+4.5,1);
  \draw (16+4.5,0) edge [out=0, in=0]  (16+4.5,1);
  \draw (16+5.5,0) edge [out=180, in=180]  (16+5.5,1);
  \draw (16+5.5,0) -- (16+8.5,0);
  \draw (16+5.5,1) -- (16+8.5,1);
  \draw (16+8.5,0) edge [out=0, in=0]  (16+8.5,1);
  \draw (16+9.5,0) edge [out=180, in=180]  (16+9.5,1);
  \draw (16+9.5,0) -- (16+12,0);
  \draw (16+9.5,1) -- (16+12,1);
  \node (qb1) at (23,1.65) {$q^+_{\dback}$};
  
  \node () at (5, -0.5) {$i-t$};
  \node () at (23, -0.5) {$i+t'$};
  \end{tikzpicture}
  
  \vspace{0.2cm}
  
  \begin{tikzpicture}[scale = 0.4]
  \draw (0,0) -- (12,0);
  \draw (0,1) -- (12,1);
  \node (qf1) at (5,1.65) {$q^-_{\dforw}$};
  \node () at (14, 0.5) {$\longleftrightarrow$};
  \node () at (14, 1.5) {$f^{f_7}$};
  \node () at (14, -0.5) {\shortstack{\tiny{glue at}\\\tiny{distance $L$}}};
  
  \draw (16,0) -- (28,0);
  \draw (16+0,1) -- (28,1);
  \node (qb1) at (23,1.65) {$q^+_{\dback}$};
  
  \node () at (5, -0.5) {$i-t$};
  \node () at (23, -0.5) {$i+t'$};
  \end{tikzpicture}
  \end{center}
  
  \caption{The ``two-scale trick''}
  \captionsetup{width=.95\linewidth,font=small}
  \caption*{Illustration of the two-scale trick. We show the conveyor belts as lines (without tape letters). The head may be on either track. The head is in position $i$ in state $q$ initially, was in position $i-t$ in state $q^-$ at time $-n$, and will be in position $i+t'$ in state $q^+$ at time $+n$. The automorphism $f$ acts trivially unless we are in the situation of the first line, so the conjugated automorphisms also act non-trivially only in the shown situation. In particular $f^{f_7}$ behaves as expected.}
  \label{fig:two-scale-trick}
\end{figure}

Before going into the precise math, here is an overview of the proof. The main idea consists in building $(f_{\mathcal{M}})^n$ (the action of $\mathcal{M}$ on the conveyor belts of~$\Sigma^\Z$) from the automorphisms $(T_{\ell,\mathcal{M}_0})^n$ (each action of $\mathcal{M}_0$ on each length $\ell$ of finite cyclic tapes~$\ctape_\ell$), the latter being finitarily distorted by hypothesis (i.e.\ having small word norm $O(\ell^p)$). Conditioning each $(T_{\ell,\mathcal{M}_0})^n$ on the correct length of conveyor belt with Lemma~\ref{lem:from-cyclic-tapes-to-cb}, we can use these automorphisms and the distortion carries from finite cyclic tapes to conveyor belts.

Informally, $(f_\mathcal{M})^n$ then becomes the product of infinitely many $(T_{\ell,\mathcal{M}})^n$. In order to write $(f_\mathcal{M})^n$ as a finite product, consider the movement of $\mathcal{M}$ and notice that $m(n)$ -- the number of cells visited by $\mathcal{M}$ in $n$ steps -- is assumed to be $O(\log n$). As a consequence, a head at distance more than $m(n)$ from the borders of its conveyor belts will not see the borders in question. In such a case, we can create temporary conveyor belts of size $O(\log n)$, apply the corresponding $(T_{\ell,\mathcal{M}})^n$, and erase the temporary borders: the applied operation coincides with $(f_{\mathcal{M}})^n$ in the original large conveyor belt.

\bigskip
Let us formalize these ideas. In particular, generating temporary conveyor belts is not a reversible operation: to solve this issue, we introduce the \emph{two-scale trick} (which introduces not one, but two temporary conveyor belts of different sizes, hence its name).

\begin{proof}[Proof of Lemma~\ref{lem:nice-implies-distortion-in-full-shift}]
  Fix an integer, which is the power of $f_{\mathcal{M}}$ that we want to build. Without any loss of generality, we assume that it is even. Indeed, if some $2n+1$ is odd, then $2n$ is even, and $\lVert (f_{\mathcal{M}})^{2n+1} \rVert = \lVert (f_{\mathcal{M}})^{2n} \rVert + O(1)$. We denote this even integer by $2n$. With notations from Lemma~\ref{lem:smart-distorted-on-cyclic-tapes}, let $L = C \cdot \log n + C'$. By hypothesis, every $(T_{\ell,\mathcal{M}_0})^{n}$ has word norm $O(\ell^p)$ in $\tmgroup_{\ell}$.
  
  \medskip
  First, we use Lemma~\ref{lem:from-cyclic-tapes-to-cb} to condition each $(T_{\ell,\mathcal{M}_0})^{2n}$ to apply only on conveyor belts of length $\ell$. In other words, since $\mathcal{M}$ is the symmetrized version of $\mathcal{M}_0$, we obtain that every automorphism 
  \[ (f_{(T_{\ell,\mathcal{M}}),\len = \ell})^{2n} = f_{(T_{\ell,\mathcal{M}})^{2n}, \len = \ell}  \]
  belongs to $\tmgroup$ with word norm $O(\ell^p)$, so that we can manage all conveyor belts of length $< 12 L$ with word norm $O(L \cdot L^p)$:
  \[ (f_{\mathcal{M},\len < 12L})^{2n} = \prod_{\ell=1}^{6L-1} ({f_{(T_{2\ell,\mathcal{M}}),\len = 2\ell}})^{2n} \]
  (recall that conveyor belts in $\Sigma^\Z$ are cyclic tapes of even length).

  \medskip
  Then, to manage larger conveyor belts, we use what we call the \emph{two-scale trick}. In the introductory paragraphs, we mentioned the idea of introducing temporary conveyor belts to move heads that originally belonged in very large conveyor belts, and then removing the temporary borders. The difficulty lies in properly removing the temporary conveyor belts once the machine has been applied. To solve this, we will actually use temporary conveyor belts twice, with different sizes. We give a visual explanation of this trick in Figure~\ref{fig:two-scale-trick}. 

  Define $L_1 = 4L - 2$, $L_2 = 8L-2$, $L_3 = 12L-2$. Note that $L_1$ (resp.\ $L_2$) is the length of a conveyor belt constructed by $f_{\tau_\mathrm{cb},L \leftrightarrow L}$ (resp.\ $f_{\tau_\mathrm{cb},2L \leftrightarrow 2L}$) defined in Section~\ref{sec:creating-erasing-cb}. Then, let:
  \[ \begin{split}
    \lambda_{n} = \Big(f_{\tau_\mathrm{cb},2L \leftrightarrow 2L} \circ \Big(f_{(T_{L_2,\mathcal{M}})^{n},\len = L_2}\Big) \circ f_{\tau_\mathrm{cb},L \leftrightarrow L} \circ \Big(f_{(T_{L_1,\mathcal{M}})^{n},\len = L_1}\Big)^{-1} \\
    \circ f_{\tau_\mathrm{cb},2L \leftrightarrow 2L} \circ \Big(f_{(T_{L_1,\mathcal{M}})^{n},\len = L_1}\Big) \circ f_{\tau_\mathrm{cb},L \leftrightarrow L} \Big).
  \end{split} \]
  Let $f_i$ denote the composition of the first $i$ automorphisms on this list, i.e.\ $f_1 = f_{\mathrm{cb},2L \leftrightarrow 2L}, \ldots, f_7 = \lambda_{n}$. The actions of the inverses of the automorphisms $f_i$ are illustrated in Figure~\ref{fig:two-scale-trick} (on a certain subset of configurations).
 
  Then, denote $d \in \Sym(H)$ the ducking involution, i.e.\ the involution that flips ducks $\dforw$ and $\dback$ in $Q = Q_0 \times \duckset \times \ghostset$. Let $C \in \cbstructcond$ be the structural condition $(\len \geq 12L) \wedge (\lcbdist \geq 3L) \wedge (\rcbdist \geq 3L)$ (see Section~\ref{sec:struct-conditioning}). By Lemmas~\ref{lem:condition-cb-length}, \ref{lem:condition-cb-borders} and \ref{lem:condition-cb-struct}, the automorphism $f_{d,C}$ belongs to $\tmgroup$ with word norm $O(L^2)$. Defining:
  \[ f_{\mathcal{M},2n,C} = \left({\lambda_{n}}\right)^{-1} \circ \big( f_{d,C} \big) \circ \left({\lambda_{n}}\right), \]
  we see from Figure~\ref{fig:two-scale-trick}, reading the successive partial conjugations ${f_{d,C}}^{f_i}$ top-down, that $f_{d,C}$ gets conjugated to a map that applies the machine $(\mathcal{M})^{n}$ twice if it is on a conveyor belt of length $\geq 12L$ and the head is sufficiently far (i.e.\ at distance at least $3L$) from both its left and right borders; and flips the duck as a side product.
  
  \medskip
  Denoting $C_\mathrm{r}$ the condition $(\len \geq 12L) \wedge (\lcbdist \geq 3L) \wedge (\rcbdist < 3L)$ and $C_\mathrm{l}$ the condition $(\len \geq 12L) \wedge (\lcbdist < 3L) \wedge (\rcbdist \geq 3L)$, we can build automorphisms $f_{\mathcal{M},2n,C_\mathrm{r}}$ (resp.\ $f_{\mathcal{M},2n,C_\mathrm{l}}$) that manage heads in conveyor belts of size $\geq 12L$ containing heads at distance less than $3L$ from their right (resp.\ left) border.
  
  By a very similar reasoning, $f_{\mathcal{M},2n,C_\mathrm{r}}$ and $f_{\mathcal{M},2n,C_\mathrm{l}}$ belong to $\tmgroup_*$ with word norm $O(L^{p+1} + L^2)$. We need to alter this reasonning only twice: first, instead of using $f_{\tau_\mathrm{cb}, \ell \leftrightarrow \ell}$ to create/erase borders both left and right, we use respectively $f_{\tau_\mathrm{cb}, \ell \leftarrow}$ and $f_{\tau_\mathrm{cb}, \rightarrow \ell}$ to create/erase borders only in one direction; second, as the size of the conveyor belt is no longer precisely $L_1$ or $L_2$, but only bounded by $L_1$ or $L_2$, we have to replace occurrences of $(f_{(T_{\ell,\mathcal{M}})^{n},\len = \ell})$ (for $\ell = L_1$ or $\ell = L_2$) in the previous formulas with 
  \[ \prod_{j=1}^\ell f_{(T_{j,\mathcal{M}})^{n},\len = j}. \]
  
  \medskip
  In any case, these permutations have disjoint support (because the distance to a conveyor belt border is checked ``after $n$ steps of computation'' in the conjugations ${\lambda_{n}}^{-1} \circ \big( f_{d,C} \big) \circ {\lambda_{n}}$) and word norm $O(L^{p+1} + L^2)$. We obtain that:
  \[ (f_{\mathcal{M}})^{2n} = \Big(f_{d,\len \geq L_3} \circ f_{\mathcal{M},2n,C_\mathrm{r}} \circ f_{\mathcal{M},2n,C_\mathrm{l}} \circ f_{\mathcal{M},2n,C} \Big) \circ \left(\prod_{n = 1}^{6L-1} ({f_{{T_{\ell,\mathcal{M}}},\mathrm{len} = 2l}})^{2n} \right) \]
  which concludes the proof.
\end{proof}

\subsection{Improving the upper bound on SMART}\label{sec:full-shift-optimize-degree}

Lemma~\ref{lem:nice-implies-distortion-in-full-shift} was a general result telling how finitely distorted machines give rise to distorted automorphisms on full-shifts. In the context of the SMART machine, we provide some additional improvements in order to get the upper bound from $O(\log^5 n)$ to $O(\log^4 n)$.

These improvements are based on the following idea: the computation of the $(T_{\ell,\smart})^n$ is ``uniform'' in $\ell$, as the $k$-th step of the encoding (from Lemma~\ref{lem:encoding-with-garbage}) and the $k$-th step of the addition (from Lemma~\ref{lem:addition-in-encoded-base}) are the same on all conveyor belts of length~$\geq k$. Thus, we can compute the action of SMART on all conveyor belts in parallel, improving the word norm as mentioned. We also make minor optimizations to the alphabet, by joining the decorations used for the decorated SMART and the decorated automorphism.

All in all, Lemma~\ref{lem:better-bounds-for-smart} concludes the proof of Theorem~\ref{thm:distortion-every-full-shift}. It also provides a self-contained result about distortion.

\begin{lemma}\label{lem:better-bounds-for-smart}
  Let $\smart = (\origsmart{\Gamma},\origsmart{Q},\origsmart{\Delta})$ be the original SMART machine introduced in Section~\ref{sec:smart}, and define its decorated symmetrized version $\smart = (\Gamma,Q,\Delta)$ as follows:
  \begin{align*}
    \Gamma =\ & \origsmart{\Gamma} \\
    Q =\ & \origsmart{Q} \times \duckset \times \ghostset \\
    \Delta =\ & \bigcup_{x \in \ghostset} \left\{ \Big( (q,\dforw,x),a,(q',\dforw,x),b \Big) : (q,a,q',b) \in \origsmart{\Delta} \right\} \\
      & \qquad \cup \left\{ \Big((q,\dforw,x),\delta,(q',\dforw,x)\Big) : (q,\delta,q') \in \origsmart{\Delta} \right\} \\
      & \bigcup_{x \in \ghostset} \left\{ \Big( (q',\dback,x),b,(q,\dback,x),a \Big) : (q,a,q',b) \in \origsmart{\Delta} \right\} \\
      & \qquad \cup \left\{ \Big((q',\dback,x),\delta,(q,\dback,x)\Big) : (q,\delta,q') \in \origsmart{\Delta} \right\}
  \end{align*}
  where $\duckset = \{\dforw,\dback\}$ and $\ghostset = \llbracket 0,5 \rrbracket \simeq \{+1,-1\} \times \llbracket 0,2 \rrbracket$; and denote
  \[ \Sigma = \Big( \Gamma^2 \times \{+1,-1\} \Big) \sqcup \Big( (Q \times \Gamma) \times \Gamma \Big) \sqcup \Big( \Gamma \times (Q \times \Gamma) \Big) \]
  Then $(f_{\smart})^n$ has word norm $O(\log^4 n)$ in a finitely-generated subgroup $\tmgroup_*$ of $\Aut(\Sigma^\Z)$.
\end{lemma}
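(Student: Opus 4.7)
The plan is to refine the proof of Lemma~\ref{lem:nice-implies-distortion-in-full-shift} specifically for SMART, saving one factor of $\log n$ by performing the encoding, addition, and decoding in parallel across all conveyor-belt lengths. A direct application of Lemma~\ref{lem:nice-implies-distortion-in-full-shift} with $p = 4$ would give only $O(\log^5 n)$, so two improvements are needed.

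First, a cosmetic simplification: the alphabet in the statement identifies the two copies of $\duckset \times \ghostset$ that would otherwise appear, one from the decorated SMART of Section~\ref{sec:decorated-smart} and one from the decorated conveyor-belt automorphism of Section~\ref{sec:automorphism-main-result}. This merging does not change any previously established argument, but it yields the alphabet $\Sigma$ stated in the lemma.

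The main step is to construct a parallel encoding $\GRencoding_*$ and a parallel addition $+_{*}n$ acting on $\Sigma^{\Z}$ that simultaneously perform, on every conveyor belt of length $\ell \le 6L$ with $L = \Theta(\log n)$, the operations from Lemmas~\ref{lem:encoding-with-garbage} and~\ref{lem:addition-in-encoded-base}. The key observation is that each rewrite $\GRpencoding_{k \to k+1}$ is a pattern-rewriting on patterns of length $k+4$ that applies uniformly to every conveyor belt of length at least $k+4$; the belt length $\ell$ only enters in the edge cases $k+1 \in \{\ell-2, \ell-1\}$ where the $*$-cells overlap (as noted in the captions of Figures~\ref{fig:bottom-up-encoding} and~\ref{fig:bottom-up-encoding2}). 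Using Lemma~\ref{lem:gate-conditioning} for pattern conditioning together with the conveyor-belt length and border-distance conditioning from Lemmas~\ref{lem:condition-cb-length} and~\ref{lem:condition-cb-borders}, each parallel step has word norm $O(L^3)$. Composing the $O(L)$ encoding steps yields $\GRencoding_*$ with word norm $O(L^4)$; the parallel addition $+_{*}n$ has word norm $O(L^3)$ by the analogous construction following Lemma~\ref{lem:addition-in-encoded-base}.

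Finally, these parallel versions plug into the two-scale trick of Lemma~\ref{lem:nice-implies-distortion-in-full-shift}. The piece $(f_{\smart,\len<12L})^{2n}$, which was previously written as a product over $\ell$ with word norm $O(L \cdot L^{p}) = O(L^{5})$, becomes a single conjugation of the form $(\GRencoding_*)^{-1} \circ (+_{*}n) \circ \GRencoding_*$ of word norm $O(L^4)$; the two-scale trick pieces handling belts of length $\ge 12L$ use the same parallel machinery restricted to temporary belts of length $\Theta(L)$, also at cost $O(L^4)$. Summing all contributions gives $O(\log^4 n)$, establishing the lemma and thereby Theorem~\ref{thm:distortion-every-full-shift}. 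The hard part will be verifying correctness of $\GRencoding_*$: the rewrites of Section~\ref{sec:smart-encoding} are stated for a fixed $\ell$, and applying them in parallel requires handling the finitely many overlap edge cases per belt length via additional conditioned gates provided by the structural conditioning of Section~\ref{sec:struct-conditioning}, without allowing the bookkeeping to blow up the asymptotic word norm.
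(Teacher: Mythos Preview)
Your proposal is correct and follows essentially the same approach as the paper: merge the two decoration layers into one, then parallelize the encoding and addition across all conveyor-belt lengths by conditioning each $\GRpencoding_{k \to k+1}$ on the structural condition $k+2 \leq \len < L_3$ (the paper's threshold is $k+2$ rather than your $k+4$, but this is immaterial), compose $O(L)$ such steps at cost $O(L^3)$ each, and feed the resulting $O(L^4)$ building blocks into the two-scale trick. The only nuance you slightly overstate is the ``overlap edge cases'': the paper handles these not by extra conditioned gates but simply by applying the final step $\GRpencoding_{\ell,\mathrm{final}}$ separately for each length $\ell$ (still a product of $O(L)$ terms of cost $O(L^3)$), which your bookkeeping already accommodates.
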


\begin{proof}
  Once again, we prove that $(f_\smart)^n$ has word norm $O(\log^4 n)$ in the following subgroup of $\Aut(\Sigma^\Z)$
  \begin{align*}
  \tmgroup_* = \langle & \{ f_{g}^\aup, f_{g}^\adown, f_g : g \in \Sym(Q \times \Gamma) \} \\
  & \cup \{ \rho_q \mid q \in Q \} \cup \{ \theta \} \rangle
  \end{align*}
  
  We start by explaining the new alphabet. We have dropped the duck $\{\dright,\dleft\}$ to fuse it with $\{\dforw,\dback\}$. Our assumption in the previous section is that we can efficiently apply the Turing machine when the duck is $\dforw$, while doing nothing on ducks $\dback$, and this is exactly how the duck was used in Section~\ref{sec:Finitary} (see Lemma~\ref{lem:smart-one-duck}).

  We also have fused the two ghosts: as both ghosts were only used temporarily in our construction, always returning to their original value after each step of computation, it is safe to fuse them.

  \medskip
  Now we explain the optimization; we only give a high-level explanation, as this is completely analogous to what was done in the previous section. We only amend the proof above by proving that both $(f_{\mathcal{M},\len < L_3})^{2n}$ and $(f_{\mathcal{M},\len \leq L_3})^{2n}$ have word norm $O(L^4)$, as they are sufficient to manage both small conveyor belts of length $< L_3$ and large conveyor belts in the two-scale trick.

  \begin{enumerate}
    \item The encoding (word norm $O(L^4)$). Let $\GRpencoding$ being either $\GRpencoding_{\mathrm{init}}$, $\GRpencoding_{k \to k+1}$ or $\GRpencoding_{\ell,\final}$, the steps of encoding of $\smart$ defined in Lemma~\ref{lem:encoding-steps-in-group}. Recall that
    \[ \GRpencoding(w) = \begin{cases}
      \pencoding(w) & \text{if } w \in \ctape_\ell[\dforw] \\
      \pencoding^{-1}(w) & \text{if } w \in \ctape_\ell[\dback] \\
      w & \text{otherwise}
    \end{cases} \]

    As explained in Section~\ref{sec:transport-gl-to-g}, the generators of $\tmgroup_\ell$ correspond to generators of $\tmgroup_*$, so that $\GRpencoding$ can also be considered as an element $f_{\GRpencoding}$ of $\tmgroup_*$, which acts like $\GRpencoding$ on conveyor belts of length $\ell$, and produces garbage on conveyor-belts of length $\neq \ell$.

    The key point of this proof consists in noticing that $\GRpencoding_{k \to k+1}$ behaves correctly on every conveyor belt of length $\geq k+2$, and produces garbage on conveyor belts of length $\leq k+1$.

    \medskip
    Recall that each $\GRpencoding$ was a piecewise-defined bijection defined as a product of finitely many $\GRpencoding_{\duckdomain,\duckimage}$. Each $\GRpencoding_{\duckdomain,\duckimage}$ was built in Lemma~\ref{lem:encoding-case-in-group} by conjugating some conditioned gate $\pi_{d,\duckdomain}$ ($d \in \Sym(H)$ be the ducking involution, which swaps ducks $\dforw$ and $\dback$) with some automorphism $g \in \tmgroup_\ell$. 

    Combining Lemmas~\ref{lem:gate-conditioning} and~\ref{sec:struct-conditioning}, we now condition simultaneously on the content of the tape (i.e.\ the condition $\duckdomain$ in the proof of Lemma~\ref{lem:encoding-steps-in-group}) and the structure of conveyor belts. In other words, for any structural condition $C \in \cbstructcond$, the automorphism $f_{d,\duckdomain \wedge C}$ belongs to $\tmgroup_*$ and we can define
    \[ f_{\GRpencoding_{\duckdomain,\duckimage},C} = f_{g^{-1}} \circ f_{d,\duckdomain \wedge C} \circ f_{g} \]
    (where $f_g \in \tmgroup_*$ denotes the automorphism $g \in \tmgroup_\ell$ considered as an element of $\tmgroup_*$).
    
    Then $f_{\GRpencoding_{\duckdomain,\duckimage},C}$ acts like $\GRpencoding_{\duckdomain,\duckimage}$ on conveyor belts which respect the structural condition $C$. And since the word norm of $\GRpencoding_{\duckdomain,\duckimage}$ was already the word norm of $g \in \tmgroup_\ell$, i.e.\ $O(\ell^3)$, we obtain that $f_{\GRpencoding_{\duckdomain,\duckimage},C}$ also has word norm $O(\ell^3 + T(C))$. Composing finitely many of those, and adding a final $f_{d,C}$, we obtain automorphisms $f_{\GRpencoding,C}$ for $\GRpencoding$ being either $\GRpencoding_{\mathrm{init}}$, $\GRpencoding_{k \to k+1}$ or $\GRpencoding_{\ell,\final}$.

    \medskip
    Now, we consider the following automorphisms (for $0 \leq k < L_3 - 2$):
    \[
      f_{\GRpencoding_{\mathrm{init}},\mathrm{\len} < L_3}, \quad 
      f_{\GRpencoding_{k \to k+1},k+2 \leq \mathrm{\len} < L_3}, \quad
      f_{\GRpencoding_{\ell,\mathrm{final}}, \mathrm{\len} = \ell}
    \]
    Each of these elements has word norm $O(L_3^2 + L_3^3)$. Then, define:
    \[ f_{\encode} = \left( \prod_{\ell=1}^{L_3-1} f_{\GRpencoding_{\ell,\mathrm{final}}, \mathrm{\len} = \ell} \right) \circ \left( \prod_{k=1}^{L_3-3} f_{\GRpencoding_{k \to k+1},k+2 \leq \mathrm{\len} < L_3} \right) \circ f_{\GRpencoding_{\mathrm{init}},\mathrm{\len} < L_3} \]

    The automorphism $f_{\encode}$, which acts on conveyor belts of length $< L_3$ and encodes SMART configurations with ducks $\dforw$ into their correct encoding, and produces garbage on ducks $\dback$, has word norm $O(L^4)$. (A similar automorphism obviously exists for conveyor belts of length $\leq L_3$).
  
    \item The addition of $2n$ on ducks $\dforw$ (word norm $O(L^3)$). We follow the proof of Lemma~\ref{lem:addition-in-encoded-base}, which applied the ``school algorithm'' for additions. The key idea of this proof is that the $j$-th digit of $2n$ can be applied with the same automorphism to every conveyor belt of length $\geq j$.
    
    \medskip
    As in the proof of Lemma~\ref{lem:addition-in-encoded-base}, we first manage the addition of $2n$ modulo $2 \cdot 3^\ell$ in every conveyor belt of length $\ell < L_3 = 12L-2$ with the school-like algorithm. To do so, we rely on conditions from Lemma~\ref{lem:condition-cb-length}. To fix notations, for every $\ell < L_3$, let $k[\ell] \in \{1,2\} \times \{0,1,2\}^\ell$ encode $2n \bmod 2 \cdot 3^\ell$.
    
    Let $\rho_{\dforw} = \prod_{q \in \origsmart{Q} \times \{\dforw\} \times \ghostset} \rho_q$ move heads with duck $\dforw$ right. Still denoting $d \in \Sym(H)$ the ducking involution and $C$ a condition, the automorphism $f_{d,C} \circ \rho_{\dforw}^{-1} \circ f_{d,C} \circ \rho_{\dforw}$ moves head in conveyor belts that verify condition $C$, and heads with duck $\dforw$ move to the right (while heads with duck $\dback$ move left). Denote this automorphism $\rho_{\dforw,C}$.
    
    We now proceed as follows. First, apply $(\rho_{\dforw,\len < L_3})^{-1}$ to move heads with duck $\dforw$ left, i.e.\ as to go to the last digit of the counter to which we want to add $2n$. Denote $r' = (0,1,2) \in \Sym(\Gamma)$ be the addition of a single digit, and $r_\dforw = \ID \times \{\dforw\} \times \ID \times r' \in \Sym(H)$. Then, for $0 \leq \ell < L_3$, we do the following:
    \begin{enumerate}
      \item Add the second digit of $k[\ell]$ on the tape in conveyor belts of length $\ell \leq \len < L_3$: apply $f_{r_\dforw,\ell \leq \len < L_3}$ if $k[\ell]_1 = 1$, or $f_{r_\dforw,\ell \leq \len < L_3}$ if $k[\ell]_1 = 2$.
      \item Perform the carry in conveyor belts of length $\ell \leq \len < L_3$. Let $k' = k[\ell]_{\llbracket 2,|k|-1 \rrbracket}$ be the digits of $k$ we want to check an overflow for, and apply $f_{r_\dforw,(\ell \leq \len < L_3) \wedge (< \bullet k')}$: this applies the carry if the addition on the cells on the right overflowed.
      \item Add the first digit of $k[\ell]$ (i.e.\ $k[\ell]_0 \in \{1,2\}$) in conveyor belts of length $\len = \ell-1$. Recall that $\origsmart{Q} = \{\smb,\smd,\smp,\smq\} \times \{1,2\}$: denoting $b' = (q,1) \leftrightarrow (q,2) \in \Sym(\origsmart{Q})$ and $b_\dforw = b' \times \{\dforw\} \times \ID \times \ID \in \Sym(H)$, apply $f_{b_\dforw,\len = \ell-1}$ if $k[\ell]_0 = 2$, and the identity otherwise.
      \item Perform the carry in conveyor belts of length $\len = \ell-1$: apply $f_{b_\dforw,(\len = \ell-1) \wedge (< k[\ell]_{\llbracket 1, |k[\ell]|-1 \rrbracket})}$, which applies the carry in the state if the addition on the tape overflowed.
      \item Apply $(\rho_{\dforw,\ell \leq \len < L_3})^{-1}$ to move heads with duck $\dforw$ left (and heads with duck $\dback$ right).
    \end{enumerate}

    These steps apply the correct addition on heads having duck $d = \dforw$. On duck $d = \dback$, the head simply moves to the right at each step, not modifying anything, until it gets back to its initial positions. All these steps together have word norm $O(L^3)$.
    
    We are left with shifting the tape $a$ times (left or right, depending on the state $\{\smb,\smd,\smp,\smq\}$), where $a = \lfloor 2n / (2 \cdot 3^\ell) \rfloor$; and apply a final shift if the addition in the previous paragraph overflowed. We do so for each length $0 \leq \ell < L_3$ independently.
    
    Let $s \in \Sym(H)$ be the involution that exchanges states $\smb \leftrightarrow \smd$ and $\smp \leftrightarrow \smq$. Recall that $\rho_\dforw$ moves every head with duck $\dforw$ to the right, and that $p_\dforw$ exchanges two adjacent letters if the head has duck $\dforw$: these let us define $\sigma_\dforw = \prod_{i=1}^\ell \rho_\dforw \circ p_\dforw$, the left shift of a whole cyclic tape of length $\ell$ for states in $\{\smb,\smp\} \times \{1,2\} \times \{\dforw\} \times \ghostset$. The automorphism $\sigma_\dforw$ has word norm $O(\ell)$.
    
    Denoting $C_1$ the condition $\len = \ell$ and $C_2$ the overflowing condition (i.e.\ the lexicographic comparison $< k[\ell]$), we successively apply $[f_{s,C_1},(\sigma_\dforw)^{-a}]$ (of word norm $O(\ell^2)$) and $[f_{s,C_1 \wedge C_2},(\sigma_\dforw)^{-1}]$ (of word norm $O(\ell^2)$).

    \medskip
    Let $f_{+2n}$ be the composition of all these steps. Then $f_{+2n}$ performs the addition of $2n$ in all the conveyor belts of length $< L_3$ with ducks $\dforw$ simultaneously, is the identity on ducks $\dback$, and has word norm $O(L^3)$.
  \end{enumerate}
  Then, conjugating $f_{+2n}$ by $f_{\encode}$ performs $(f_{\smart})^{2n}$ on heads with duck $\dforw$ on conveyor belts of length $< L_3$, and is the identity otherwise. Adding the same automorphism conjugated with $f_d$ and composing (for $d \in \Sym(H)$ the ducking involution), we obtain $(f_{\mathcal{M},\len < L_3})^{2n}$ with word norm $O(L^4)$.
  
  A similar formula exists for $(f_{\mathcal{M},\len \leq L_3})^{2n}$.
\end{proof}

\begin{remark}
  The word norm of this implementation of $(f_{\smart})^n$ is $O(\log n \cdot (\omega_{\leq}(\log n) + \omega_{+}(\log n)))$, where $\omega_{\leq}(N)$ is the complexity of the lexicographic inequalities on words of length $N$, and $\omega_{+}(N)$ is the complexity of the ternary addition on words of length $N$. While we could not find a way to perform $\omega_{+}(N)$ with complexity less than $O(N^3)$, it would be interesting to optimize this specific operation by itself.
\end{remark}

\section{Corollaries}
\label{sec:Corollaries}

We prove the other theorems listed in the introduction, all of which are straightforward corollaries of Theorem~\ref{thm:distortion-every-full-shift} (and its proof).

\subsection{Distortion in other subshifts}

\Sofic*

\begin{proof}
If $X$ is uncountable, then $\Aut(A^\Z) \leq \Aut(X)$~\cite{2018-Salo-NoteSAFullS,1990-KR}.
If $X$ is countable, then the proof of Proposition~2 in~\cite{SaTo12} shows that every automorphism $f \in \Aut(X)$
is either periodic or admits a \emph{spaceship}, namely a configuration of the form $x = ...uuuuvwww...$
which is not spatially periodic, and $f^n(x) = \sigma^m(x)$ for some $m \neq 0$. Clearly this prevents
distortion.
\end{proof}

Recall that the lower entropy dimension~\cite{2011-Meyerovitch} is 
\[ \underline{D}(X) = \liminf_{k \to \infty} \frac{\log(\log N_k(X))}{\log k} \]
We recall and prove Lemma~\ref{lem:LowComplexity} (which was used to prove Theorem~\ref{thm:LowerEntropyDim}).


\LowComplexity*

\begin{proof}
Suppose we have $|f^n| = O(\log^d n)$ for large $n$. Then the radius of $f^n$ is also $O(\log^d n)$. It follows that the trace subshift of $f$ has complexity function at most $n \mapsto N_{\lfloor C \log^d n \rfloor}(X)$ for some constant $C$. If $f$ is not of finite order, by the Morse-Hedlund theorem we must have $N_{\lfloor C \log^d n \rfloor}(X) > n$ for all $n$. Substituting $\lfloor e^{\sqrt[d]{n/C}} \rfloor$ for $n$ we get
$N_n(X) \geq e^{\sqrt[d]{an}}$ for some constant $a > 1$.
Substituting this lower bound into the definition of lower entropy dimension, we get $\underline{D}(X) \geq \frac1d$.
\end{proof}

\subsection{Distortion in the group of Turing machines}

We recall the definition of the group of Turing machines from~\cite{BaKaSa16}.

\begin{definition}
Let $n \geq 2$ and $k \geq 1$. Let $Y_n$ be the full shift on $n$ letters, and $X_k = \{x \in \{0,1,...,k\}^\Z \;|\; 0 \notin \{x_i, x_j\} \implies i = j\}$. Then
\[ \RTM(n, k) = \{f \in \Aut(Y_n \times X_k) \;|\; f|_{Y_n \times \{0^\Z\}} = \ID|_{Y_n \times \{0^\Z\}}\}. \]
\end{definition}


\TuringMachines*

\begin{proof}
We show that it immediately follows from the main theorem that $\RTM(18, 96)$ has a distortion element. We then explain how to conclude this for all $\RTM(n, k)$.

Recall that our automorphisms use the alphabet
  \[ \Sigma = \Big( \Gamma^2 \times \{+1,-1\} \Big) \sqcup \Big( (Q \times \Gamma) \times \Gamma \Big) \sqcup \Big( \Gamma \times (Q \times \Gamma) \Big) \]
  where $\Gamma = \origsmart{\Gamma}$ and $Q = \origsmart{Q} \times \{\dforw,\dback\} \times \{+1,-1\} \times \llbracket 0, 2 \rrbracket$.
  
  We may instead view this as
  \[ (\Gamma^2 \times \{+1, -1\}) \sqcup (Q \times \{\uparrow, \downarrow\} \times \Gamma^2), \]
  by grouping $((Q \times \Gamma) \times \Gamma)$ and $(\Gamma \times (Q \times \Gamma))$ together and replacing the choice with an arrow from $\{\uparrow, \downarrow\}$. Next, moving $\{\uparrow, \downarrow\}$ to the state and dropping $\{+1, -1\}$ out of it, we may view this as
  \[ (\Gamma^2 \times \{+1, -1\}) \sqcup (Q' \times \{+1, -1\} \times \Gamma^2), \]
for a certain set of states $Q'$ with $|Q'| = |Q| = 96$.

Consider the sofic subshift $Z$ where a symbol of $(Q' \times \{+1, -1\} \times \Gamma^2)$ can appear at most once. We clearly have a conjugacy $Z \cong X_{96} \times Y_{18}$, since $|\Gamma^2 \times \{+1,-1\}| = 18$.

It is easy to see that all of the generators $F$ defined in Lemma~\ref{lem:better-bounds-for-smart} fix $Z$. Furthermore, our generators only act near the head, so by definition this restricted action makes them elements of $\RTM(18, 96)$. The element $f_{\smart}$ coming from the SMART machine clearly has infinite order, since it acts as the SMART machine on infinite configurations. The word norm of $f_{\smart}$ w.r.t.\ $F$ of course cannot grow faster after restricting these elements to an invariant subspace, so we obtain that the subgroup of $\RTM(18, 96)$ generated by $F$ still has a distortion element, and the distortion is at least as bad as on the full shift.

\medskip
Now, we describe some minor modifications to the main construction that allow to conclude the result for $\RTM(n, k)$. In the construction of the main theorem, in place of the alphabet recalled above, take any finite set $S$ and use instead
\[ ((\Gamma^2 \times \{+1, -1\}) \sqcup S) \sqcup (Q' \times ((\Gamma^2 \times \{+1, 1\}) \sqcup S)). \]
Imagining elements of $S$ as new empty conveyor belts of size $1$, it is clear how most generators of $F$ should act, as their action is defined by how they act on finite conveyor belts. The element $\theta$ does not respect the conveyor belts, but it is also clear how it should act (now that we have moved $\{+1, -1\}$ out of the state onto the tape) -- it simply moves all heads.

Now recall that the only use of $\theta$ was to make sure that the automorphisms $f_{\tau_\mathrm{cb},\rightarrow t}$ (resp.\ $f_{\tau_\mathrm{cb},t \leftarrow}$) are in our group. These automorphisms apply the involution $(+1) \leftrightarrow (-1)$ on the sign carried either by $\Gamma^2 \times \{+1,-1\}$ or by a head, at distance $t$ to the right of the heads (resp.\ left of the heads, resp.\ both left and right of the heads). The correct extension of these is simply that the flip $(+1) \leftrightarrow (-1)$ does nothing on symbols in $S$. Then $\theta$ allows the implementation of natural analogs of the automorphisms $f_{\tau_\mathrm{cb},\rightarrow t}$ and $f_{\tau_\mathrm{cb},t \leftarrow}$ (with the exact same description).

Next, we recall that the automorphisms $f_{\tau_\mathrm{cb},\rightarrow t}$ are only used ``through conjugation'', i.e.\ they are used during the two-scale trick in very specific situations where we already know the head is on a large conveyor belt, in particular there are no $S$-symbols in the affected part. Thus the proof goes through without any modifications.

The introduction of $S$ with $|S| = t$ changes the group of Turing machines from $\RTM(18, 96)$ to $\RTM(18 + t, 96)$, and $\RTM(18 + t, 96)$ clearly embeds in $\RTM(18 + t , 96 + \ell)$ for any $\ell \geq 0$ (by behaving as the identity when the head is in one of the $\ell$ many new states). In particular for large enough $m$, we can pick $t, \ell$ such that $18 + t = n^m$ and $96 + \ell = k n^m$, to get a distortion element in a subgroup of $\RTM(n^m, k n^m)$. Finally, there is an embedding of $\RTM(n^m, k n^m)$ into $\RTM(n^m, k)$, by considering $m$-blocks of cells as individual cells, and considering the word on the tape at the origin as part of the state; and then $\RTM(n^m, k)$ embeds into $\RTM(n,k)$ by moving by $m$ steps at once.
\end{proof}

\subsection{Distortion in the Brin-Thompson group \texorpdfstring{$mV$}{mV}}

It was shown by Belk and Bleak that classical reversible Turing machines embed in the Brin-Thompson group $2V$. More generally, the group of Turing machines embeds in $2V$, and indeed this embedding is entirely transparent. For this, we recall the \emph{moving-tape model} of Turing machines.

\begin{definition}
Write $\RTM_{\mathrm{fix}}(n, k)$ for the family of homeomorphisms $f : \llbracket k \rrbracket \times \llbracket n \rrbracket^\Z \to \llbracket k \rrbracket \times \llbracket n \rrbracket^\Z $ such that for some radius $r \geq 1$ and local rule $f_{\mathrm{loc}} : \{0,1\}^r \times \{0,1\}^r \times \llbracket k \rrbracket \to \{0,1\}^* \times \{0,1\}^* \times \llbracket k \rrbracket$ we have
\[ f(xu.vy, a) = (xu'.v'y, b) \mbox{ whenever } f_{\mathrm{loc}}(u, v, a) = (u', v', b) \]
and for all $u, v$, $f_{\mathrm{loc}}(u, v) = (u', v', n)$ satisfies $|u'| + |v'| = 2r$.
\end{definition}

A proof of the following easy fact was outlined in~\cite{BaKaSa16}; one simply translates tape shifts into head movement into the opposite direction.

\begin{lemma}
The family of homeomorphisms $\RTM_{\mathrm{fix}}(n, k)$ forms a group under composition, and there is a canonical group isomorphism $\RTM_{\mathrm{fix}}(n, k) \cong \RTM(n, k)$.
\end{lemma}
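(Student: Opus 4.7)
The plan is to establish the group axioms for $\RTM_{\mathrm{fix}}(n,k)$ directly from the local-rule definition, and then to build explicit mutually inverse maps between the two groups by translating tape shifts into (opposite) head shifts, as hinted by the authors.

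First I would verify closure under composition and inverses. If $f_1, f_2 \in \RTM_{\mathrm{fix}}(n,k)$ have radii $r_1, r_2$, then $f_1 \circ f_2$ admits a local rule of radius $r_1 + r_2$: the rule $(f_2)_{\mathrm{loc}}$ rewrites a window of $2r_2$ cells around position $0$, so reading $r_1 + r_2$ cells on each side gives enough context to apply $f_2$ and then $f_1$. The identity is visibly in $\RTM_{\mathrm{fix}}(n,k)$ with radius $1$. For inverses, since $f$ is a bijection which only modifies cells in positions $-|u'|, \ldots, |v'|-1$ with $|u'|+|v'| = 2r$, the inverse only modifies a window of length $2r$, and a standard compactness argument on the homeomorphism $f^{-1}$ extracts a local rule of radius at most $2r$.

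Next I would construct the isomorphism $\Phi : \RTM_{\mathrm{fix}}(n,k) \to \RTM(n,k)$. Given $f$ of radius $r$ with local rule $f_{\mathrm{loc}}$, let $\Phi(f)$ act as the identity on configurations of $Y_n \times X_k$ with no head. On a configuration whose $X_k$-component has its unique non-zero entry at position $i$ with value $a$, let $u, v$ be the $Y_n$-content of the $r$ cells immediately to the left and right of $i$, compute $(u',v',b) = f_{\mathrm{loc}}(u,v,a)$, overwrite the $Y_n$-content in positions $i-r, \ldots, i+r-1$ by $u'v'$, and relocate the head to position $i + (|u'|-r)$ in state $b$. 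The shift $|u'|-r$ is exactly what cancels the apparent tape shift in the moving-tape frame, so that cells far from the head retain the same absolute $Y_n$-content. The inverse map $\Psi$ is built dually: for $g \in \RTM(n,k)$, take $r$ larger than both the cellular-automaton radius of $g$ and the maximum head displacement per step, place $(u,v,a)$ around a virtual head at $0$, apply $g$ once to get a new head position $d$ (with $|d| \le r$) and new state $b$, and read off $u', v'$ as the resulting $Y_n$-content in positions $-r,\ldots,d-1$ and $d,\ldots,r-1$, so that $|u'|+|v'|=2r$.

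Finally I would verify the homomorphism and inverse-pair properties. For composition, expanding $\Phi(f_1 \circ f_2)$ via the composition-of-local-rules argument above, the total head displacement $(|u_1'|-r_1)+(|u_2'|-r_2)$ and the nested tape rewrite exactly match what $\Phi(f_1) \circ \Phi(f_2)$ produces; the analogous computation handles $\Psi$, and $\Phi\Psi = \ID$, $\Psi\Phi = \ID$ reduce to unwinding definitions on a single head-carrying configuration. The main obstacle I anticipate is pure bookkeeping: keeping track of the variable split $(|u'|,|v'|)$ and making the sign conventions for ``head shift cancels tape shift'' consistent, especially when composing two rules whose head shifts go in opposite directions. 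Once those conventions are pinned down, all remaining checks are immediate from reading off definitions.
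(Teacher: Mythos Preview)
Your proposal is correct and follows exactly the approach the paper indicates: the paper does not actually prove this lemma but simply cites \cite{BaKaSa16} and says ``one simply translates tape shifts into head movement into the opposite direction,'' which is precisely what your construction of $\Phi$ and $\Psi$ does. Your treatment is in fact considerably more detailed than anything in the paper itself, and the bookkeeping you anticipate (the sign convention $|u'|-r$ for the head displacement, and the composition of two local rules) is the only real content here.
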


\begin{lemma}
\label{lem:TMin2V}
The group $\RTM(n, k)$ embeds in the Brin-Thompson group $mV$ for all $m \geq 2, n \geq 2, k \geq 1$.
\end{lemma}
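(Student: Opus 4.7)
The plan is to use the moving-tape isomorphism $\RTM(n,k) \cong \RTM_{\mathrm{fix}}(n,k)$ from the preceding lemma, and to realize each element of $\RTM_{\mathrm{fix}}(n,k)$ as an element of the Brin--Thompson group $2V$ acting on a product $C \times C$ of two copies of the Cantor space. The extension to $mV$ for $m \geq 2$ will then follow from the standard embedding $2V \hookrightarrow mV$ that acts by $2V$ on the first two coordinates and by the identity on the remaining $m-2$ factors.

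First, I would reduce to a binary tape alphabet by the routine block-coding embedding $\RTM(n,k) \hookrightarrow \RTM(2, k')$ for a suitable $k'$: group $\lceil \log_2 n \rceil$ consecutive tape cells into superblocks and enlarge the state set to record the intra-block offset and block content under the head, giving a moving-tape machine over alphabet $\{0,1\}$. Next, fixing $M := \lceil \log_2 k' \rceil$ and an injection $\encode : \llbracket k' \rrbracket \to \{0,1\}^M$, I would absorb the finite state into a length-$M$ prefix of the left half of the tape. Concretely, writing a configuration of $\RTM_{\mathrm{fix}}(2,k')$ as $(a, \mathbf{x}_L, \mathbf{x}_R)$ with $\mathbf{x}_L = (x_{-1}, x_{-2}, \ldots) \in C$ and $\mathbf{x}_R = (x_0, x_1, \ldots) \in C$, the phase space $\llbracket k' \rrbracket \times \{0,1\}^\Z$ identifies with the clopen subset $Y := A \times C \subseteq C \times C$, where $A \subseteq C$ is the union of the $k'$ length-$M$ cylinders cut out by valid state encodings.

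Given $f \in \RTM_{\mathrm{fix}}(2, k')$ of radius $r$ and local rule $f_{\mathrm{loc}}(u,v,a) = (u',v',b)$ satisfying $|u'| + |v'| = 2r$, I would then express $f$ as a finite rectangle-prefix-replacement map on $Y$: for each $(u,v,a) \in \{0,1\}^r \times \{0,1\}^r \times \llbracket k' \rrbracket$, the rectangle $[\encode(a) \cdot \overline{u}] \times [v]$ in $C \times C$ is sent bijectively to $[\encode(b) \cdot \overline{u'}] \times [v']$ by replacing the indicated prefixes and fixing the suffixes. This is exactly the defining data of a $2V$ element acting on $Y$. Extending by the identity on the remaining finitely many cylinders making up $(C \setminus A) \times C$ yields an element $\Phi(f) \in 2V$, and it is immediate from the dictionary that $\Phi$ is a group homomorphism and injective (since $f$ is recovered from $\Phi(f)|_Y$).

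The only real bookkeeping is verifying the block-coding step $\RTM(n,k) \hookrightarrow \RTM(2,k')$ and checking that the described rectangle replacements genuinely satisfy the defining axioms of $2V$; neither poses any serious obstacle, since a moving-tape Turing machine step is literally a finite collection of rectangle prefix replacements, which is precisely the combinatorial skeleton of $2V$. Morally, this is the observation (going back essentially to Belk--Bleak) that reversible Turing machines live inside $2V$, here adapted to the $\RTM(n,k)$ formalism and then pushed into $mV$ via $2V \leq mV$.
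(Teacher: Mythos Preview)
Your overall strategy is correct and is the paper's Belk--Bleak approach: realize moving-tape machine steps as rectangle prefix replacements in $C \times C$, land in $2V$, then use $2V \hookrightarrow mV$. The paper orders the reductions slightly differently, first absorbing the state into the tape via $\RTM(n,k) \hookrightarrow \RTM(n,n^m) \cong \RTM(n,1)$, and then conjugating directly into $2V$ by a homeomorphism $\phi:\{0,1\}^\Z \to \llbracket n \rrbracket^\Z$ built from a complete suffix code on the left half and a complete prefix code on the right half; it never passes through $\RTM(2,\cdot)$.

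That intermediate passage is where your write-up has a gap. Your fixed-length superblock coding $\RTM(n,k)\hookrightarrow\RTM(2,k')$ is not routine when $n$ is not a power of~$2$: the image of $\llbracket n\rrbracket^\N$ in $\{0,1\}^\N$ is then closed but not open, and the rule ``act as identity when an invalid block is in view'' depends on the chosen radius and is not multiplicative (for instance $\tilde f\circ\tilde f$ can carry the head into an invalid region that the larger-radius simulation $\widetilde{f^2}$ would refuse to enter), so no group homomorphism results this way. The fix is simply to drop this step: your own final paragraph already writes $f$ as a finite union of $n$-ary rectangle prefix replacements, and identifying $\llbracket n\rrbracket^\N \cong \{0,1\}^\N$ via a complete prefix code (rather than fixed-length blocks) turns these into honest binary rectangles. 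Together with your prefix-encoding of the state and extension by identity on the complement, that is exactly the paper's argument.
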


\begin{proof}
The group $2V$ embeds in $mV$, so it is enough to show this for $m = 2$. This is very similar to the proof in Belk-Bleak~\cite{BeBl17}, and was also essentially stated in~\cite{BaKaSa16}, so we only outline the proof. First, it is enough to embed $\RTM(n, 1)$, since $\RTM(n, k)$ embeds in $\RTM(n, k + \ell)$ for all $\ell \geq 0$, thus in particular in $\RTM(n, n^m)$ for sufficiently large $m$, and this group is isomorphic to $\RTM(n, 1)$ (see the end of the proof of Theorem~\ref{thm:TuringMachines}).

Now pick a complete suffix code $C \subset \{0,1\}^*$ of cardinality $n$, and a complete prefix code $D \in \{0,1\}^*$ of cardinality $n$. One can uniquely parse any $x.y \in \{0,1\}^\Z$ as $\cdots u_{-2} u_{-1}. v_0 v_1 v_2 \cdots$ with $u_{-i} \in C, v_i \in D$ for all applicable $i$, which gives a homeomorphism $\phi : \{0,1\}^\Z \to \llbracket n \rrbracket^\Z$. For $g \in \RTM(n, 1)$, the map $g^\phi$ is easily seen to be in $2V$, so this gives a group-theoretic embedding of $\RTM(n, 1)$ into $2V$.
\end{proof}

Dynamically, the proof gives a topological conjugacy between the natural action of $\RTM(n, 1)$ and the natural action of the subgroup of $2V$ that respects the encoding. 


\TwoV*

\begin{proof}
The embedding of the group $\RTM(n, k)$ in particular embeds the group where we constructed a distortion element. Adding the finite generating set of $mV$ clearly cannot make the element less distorted.
\end{proof}

\section{Open questions}
\label{sec:OpenQuestions}

\begin{question}
Are there ever distortion elements in $\Aut(X)$ for $X$ a minimal subshift? What about $X$ of zero entropy?
\end{question}

Minimal subshifts are interesting, because at present we do not know any restrictions on their automorphism groups, yet all known examples are locally virtually abelian. Zero entropy is interesting because on the one hand there are many known examples of interesting behaviors in their automorphism groups, but \cite{2018-CFKP} shows that exponential distortion is impossible.

Next, it seems worth recalling the remaining parts of~\cite[Questions~5.1--3]{2018-CFKP}.

\begin{question}
Are there more natural (in terms of the group structure) subgroups having distortion elements in $\Aut(A^\Z)$, or even in $\Aut(X)$, where $X \subset A^G$ is an arbitrary subshift on an abelian group $G$? For example, can we embed the Heisenberg group (or more generally $\mathrm{SL}(3, \Z)$), or the Baumslag-Solitar group $\mathrm{BS}(1,n)$?
\end{question}

The Heisenberg group was originally asked about in~\cite{1990-KR} (though not explicitly due to distortion concerns). One important note about this group is that every (infinite f.g.\ torsion-free nonabelian) nilpotent group contains a copy of it. Nilpotent groups are considered some of the simplest (in the non-technical sense) kinds of infinite groups after abelian groups; in the case of automorphism groups of subshifts we can implement a wide variety of behaviors, yet embeddability of nilpotent groups remains a mystery.

Embedding the Baumslag-Solitar group is the same as finding an element of infinite order that is conjugate to a higher power of itself. We believe the SMART machine does not have this property (before or after an embedding into $\Aut(A^\Z)$), but we have not proved this.

A slightly more abstract question of interest is whether there exists an amenable subgroup of the automorphism group of a subshift which has distortion elements. One thing amenability rules out is groups that are too large, e.g.\ f.g.-universal subgroups. The Heisenberg group and $\mathrm{BS}(1,n)$ are of course amenable (even solvable).

\begin{question}
Can a one-sided subshift have distortion elements in its automorphism group? Does $\Aut(A^\N)$ have distortion elements?
\end{question}

Note that $\Aut(A^\N)$ is simply the subgroup of $\Aut(A^\Z)$ consisting of automorphisms $f$ such that both $f$ and $f^{-1}$ have ``one-sided radius'', i.e.\ $f(x)_i$, $f^{-1}(x)_i$ depend only on $x_{\llbracket i, i+r \rrbracket}$ for some $r$. We do not even know whether one-sided automorphisms of subshifts can have sublinear radius growth.

\medskip

As mentioned in the introduction, the true word norm growth of our automorphism is between $\Omega(\log n)$ and $O(\log^4 n)$. It would be of great interest to pinpoint the growth up to a multiplicative constant for our automorphism, or for any other automorphism with sublinear growth.

\begin{question}
What are the distortion functions (word norm growth rates) of elements of $\Aut(A^\Z)$ (or $\Aut(X)$ for more general subshifts)?
\end{question}

Of course, in the case of a non-finitely generated group like $\Aut(A^\Z)$, the distortion function depends on the finite generating set chosen. While it is of interest to implement distortion functions with respect to subgroups, a more natural object to consider is the directed set of distortion functions with increasing generating sets, and especially the eventual behavior as the generating set increases.

Similar questions can be asked about groups of Turing machines and the Brin-Thompson $2V$, where we also exhibit elements whose word norm grows polylogarithmically, but have no further control on the distortion.

A natural idea for getting control over the distortion function would be to use, in place of SMART, a general-purpose Turing machine, which is made to have sublinear movement using the reversible Hooper trick from~\cite{2008-KO} (and finally embedded in some natural way into the automorphism group of a subshift). It is known that this construction always produces Turing machines with zero Lyapunov exponents, i.e.\ with sublinear movement~\cite{2017-GS,Je14}.

\begin{question}
Does the Kari-Ollinger construction in~\cite{2008-KO} always produce distortion elements?
\end{question}



\bibliographystyle{plain}
\bibliography{bib}{}

\end{document}